\title{Gromov boundaries as Markov compacta}
\author{Dominika Pawlik} 
\theoremstyle{definition}
\newtheorem{df}{Definition}[section]
\newtheorem{uwaga}[df]{Remark}
\theoremstyle{plain}
\newtheorem{tw}[df]{Theorem}
\newtheorem*{tw*}{Theorem}
\newtheorem*{twsm}{Theorem~\ref*{tw-semi-markow-0}}
\newtheorem{fakt}[df]{Lemma}
\newtheorem{wn}[df]{Corollary}
\newtheorem{lem}[df]{Proposition}
\newtheorem{ozn}[df]{Denotation}
\numberwithin{equation}{section}
\DeclareMathOperator{\diam}{diam}
\DeclareMathOperator{\rank}{rank}
\DeclareMathOperator{\sppan}{span}
\DeclareMathOperator{\innt}{int}
\DeclareMathOperator{\intr}{int}
\newcommand{\dzm}[1]{#1}
\newcommand{\qedthm}{\pushQED{\qed}\qedhere\popQED}
\newcommand{\qhitem}[3]{\expandafter\xdef\csname qhlabel#1\endcsname{#3}\item[(#2)]\hypertarget{qh-#1}{}}
\newcommand{\qhlink}[1]{(\hyperlink{qh-#1}{\csname qhlabel#1\endcsname})}
\newcommand{\divs}{\mathrel{|}}
\newcommand{\liminv}{\mathop{\lim}\limits_{\longleftarrow}}
\let\oldUparrow\Uparrow
\renewcommand{\Uparrow}{{\,\oldUparrow}}
\begin{document}

\maketitle

\begin{abstract}
We prove that the Gromov boundary of every hyperbolic group is homeomorphic to some Markov compactum. Our reasoning is based on constructing a~sequence of covers of~$\partial G$, which is \textit{quasi-$G$-invariant} wrt. the \textit{ball $N$-type} (defined by Cannon) for $N$ sufficiently large.
We also ensure certain additional properties for the inverse system representing~$\partial G$, leading to a~finite description which defines it uniquely. 

By defining a~natural metric on the inverse limit $\liminv K_n$ and proving it to be bi-Lipschitz equivalent to an accordingly chosen visual metric on~$\partial G$, we prove that our construction enables providing a~simplicial description of the natural quasi-conformal structure on~$\partial G$. We also point out that the initial system of covers can be modified so that all the simplices in the resulting inverse system are of dimension less than or equal to $\dim \partial G$. 

We also generalize --- from the torsion-free case to all finitely generated hyperbolic groups --- a~theorem guaranteeing the existence of~a~finite representation of~$\partial G$ of another kind, namely a~\textit{semi-Markovian structure} (which can be understood as an analogue of the well-known automatic structure of $G$ itself).
\end{abstract}

In this paper, we consider Gromov boundaries of hyperbolic groups and their presentations of a~combinatorial nature, describing the topology of~$\partial G$ in a ``recursively finite'' manner.

Our main goal will be to present the space $\partial G$ as a \textit{Markov compactum} (see Definition~\ref{def-kompakt-markowa}). This notion, introduced in~\cite{Dra} by Dranishnikov (who claims that its basic idea comes from Gromov), refers to inverse limits of systems of polyhedra with certain finiteness conditions for the bounding maps (which we will call \textit{Markov systems}). The existence of such presentation is generally known (\cite{Kap-prob}, \cite{JS-trees}); however, it seems that no explicit proof of this fact has been given so far. 

In fact, as we explain in Remark~\ref{uwaga-sk-opis}, a Markov system is \textit{finitely describable}, i.e. it can be uniquely determined out of a~finite set of data (using an appropriate algorithmic procedure), provided that it satisfies several additional conditions, which we call \textit{barycentricity} and \textit{distinct types property} (see Definitions~\ref{def-kompakt-barycentryczny} and~\ref{def-kompakt-wlasciwy}). This feature of Markov compacta seems to be important for potential applications.

In our construction of the Markov system, we will also ensure the \textit{mesh property} (see Definition~\ref{def-mesh}), considered already in~\cite{Dra}, which may be interpreted as a reasonable connection between the topology of a Markov compactum and the simplicial structure of the underlying Markov system.

The main result of the paper can be summarised as follows:

% Apart from building a Markov system corresponding to~$\partial G$, we will adjust this presentation to satisfy several additional conditions, summarised in the theorem stated below. (Since the precise statement requires several notions from the paper, we refer the reader to their definitions).

\begin{tw}
\label{tw-kompakt}
Let $G$ be a~hyperbolic group. Then, $\partial G$ is homeomorphic to a Markov compactum $\liminv K_i$ defined by a Markov system $(K_i, f_i)_{i \geq 0}$. Moreover, we can require (simultaneously) that:
\begin{itemize}
 \item the system $(K_i, f_i)_{i \geq 0}$ is barycentric and satisfies distinct types property and mesh property;
 \item the dimensions of the complexes $K_i$ are bounded from above by the topological dimension $\dim \partial G$.
\end{itemize}
%Let $G$ be a hyperbolic group. Then, $\partial G$ is homeomorphic to a Markov compactum $\liminv K_i$ defined by a Markov system~$(K_i, f_i)_{i \geq 0}$.
%Moreover, we can require the following additional properties:
%\begin{itemize}[nolistsep]
% \item[\textbf{(a)}] the system $(K_i, f_i)_{i \geq 0}$ is barycentric,  has property of distict types and mesh property (see definitions in Section~\ref{sec-def-markow}); in particular, it is finitely describable (see Remark~\ref{uwaga-sk-opis});
% \item[\textbf{(b)}] the dimensions of the complexes $K_i$ are bounded from above by~$\dim \partial G$;
% \item[\textbf{(c)}] the visual metric on~$\partial G$ is bi-Lipschitz equivalent with the simplicial metric on~$\partial G$, for a compatible choice of exponents (see Definition~\ref{def-metryka-komp} and Theorem~\ref{tw-bi-lip} in Section~\ref{sec-bi-lip}).
% \end{itemize}
\end{tw}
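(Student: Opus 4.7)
The plan is to follow the strategy foreshadowed in the abstract, namely Cannon's analysis of hyperbolic groups via \emph{ball $N$-types}. For each $n \geq 0$, I would build a cover $\mathcal{U}_n$ of $\partial G$ indexed by the sphere $S_n = \{g \in G : |g| = n\}$, taking as $U_g$ (after suitable thickening) the ``shadow at infinity'' of $g$ --- the set of boundary points admitting a geodesic ray from $1_G$ passing within a bounded distance of $g$. For a generator $s$ with $|gs|=n+1$, the inclusions $U_{gs} \subset U_g$ give a natural refinement $\mathcal{U}_{n+1} \to \mathcal{U}_n$; taking nerves yields simplicial complexes $K_n$ with bonding maps $f_n \colon K_{n+1} \to K_n$, and a standard anti-Čech inverse-limit argument identifies $\liminv K_n$ with $\partial G$ provided the meshes of $\mathcal{U}_n$ (measured in a visual metric) tend to zero, which holds with exponential decay.

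The decisive step is to upgrade this combinatorial picture into a Markov system. The input is Cannon's finiteness theorem: for each $N$, there are only finitely many isomorphism types of labelled balls $B(g,N) \subset G$. Choosing $N$ sufficiently large in terms of the hyperbolicity constant $\delta$ and the thickening parameters used to define the $U_g$, I would show that the local combinatorial picture of $(K_n, K_{n+1}, f_n)$ around the vertex corresponding to $g \in S_n$ --- i.e.\ which neighbouring shadows intersect $U_g$ and how the descendants $U_{gs}$ sit inside $U_g$ --- depends only on the $N$-type of $g$. This is the \emph{quasi-$G$-invariance} promised in the abstract, and it yields the defining Markov condition: finitely many isomorphism types of simplices and of restrictions $f_n|_\sigma$.

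The additional features would be added on top of the base construction. The mesh property is essentially free from exponential shrinking in a visual metric. Barycentricity and the distinct types property would be enforced by passing to a suitable subdivision and performing combinatorial surgery (identifying simplices of equal type, splitting those that violate barycentricity), always taking care that every modification is itself dictated by the $N$-type of the underlying group element so that Markov-ness is preserved. For the dimension bound $\dim K_i \leq \dim \partial G$, I would apply a thinning procedure using the classical fact that any open cover of a compact metric space of dimension $d$ can be refined to one of multiplicity at most $d+1$; to keep the Markov structure intact, the thinning choices themselves must be expressed as a local rule in terms of $N$-types, possibly after enlarging $N$.

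The hard part will be making all four conditions coexist. Each property in isolation is accessible, but enforcing one can easily destroy another: for instance, a naive subdivision enforcing barycentricity can reintroduce dimension, and a naive thinning that reduces dimension can break the finite-type classification. In particular, the dimension-reduction step is global in spirit and must be encoded as a local, $N$-type equivariant rule; I expect this to be the main technical hurdle, requiring a careful joint choice of the thickening scale and a substantially larger $N$, plus a detailed verification that all intermediate surgeries preserve both quasi-$G$-invariance and the topological identification $\liminv K_n \cong \partial G$.
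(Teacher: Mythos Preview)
Your overall strategy matches the paper's: shadows indexed by spheres in $G$, nerves with bonding maps, ball $N$-types to produce the Markov finiteness, and a type-equivariant refinement for the dimension bound. Two of your specific mechanisms, however, diverge from what the paper does and would not work as stated.

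Barycentricity is not obtained by subdivision or surgery. In the paper the bonding map is \emph{defined} so that each vertex $v_U\in K_{n+1}$ goes to the barycentre of the simplex spanned by $\{v_V : V\in\mathcal{U}_n,\ V\supseteq U\}$ (Definition~\ref{def-konstr-nerwy}); barycentricity is then automatic, and the star property of the cover sequence (Proposition~\ref{lem-gwiazda}) is what makes this map well-defined. More seriously, your plan for the distinct types property (``identifying simplices of equal type'') points the wrong way: the difficulty is that two distinct vertices in $f_n^{-1}(s)$ may receive the same type, and one must \emph{refine} the type function rather than collapse anything. The paper's key device is the $B$-type (Section~\ref{sec-sm-abc-b}), a strengthening of the ball type that separates any two $16\delta$-fellows of equal length. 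The obstruction here is torsion: for torsion-free $G$ the ball type already separates fellows (Lemma~\ref{fakt-kuzyni-lub-torsje}), but in general one must record, for each nearby pair of cousins, the first ancestral generation at which their $A$-types differ. You do not mention torsion, and without this ingredient no purely local $N$-type rule will give distinct types. Your intuition on the dimension step is correct and matches Section~\ref{sec-wymd}: the reduction to multiplicity $\leq\dim\partial G+1$ proceeds by an inductive thinning in which every choice is made once per (strengthened) type and then transported by the group action, using a hierarchy of type functions~$T^B_\theta$ to preserve quasi-invariance.
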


Apart from the above topological and combinatorial properties, we also claim that the above presentation is well-behaved with respect to the metric. More precisely, we associate to every Markov system a (rather natural) family of \textit{simplicial metrics} $d_a^M$ (for all $a > 1$) on the limit compactum (see Definition~\ref{def-metryka-komp}). Then, we show that these metrics correspond to the Gromov \textit{visual metrics} $d_v^{(a)}$ on the boundary $\partial G$ (see Section~\ref{sec-def-hip}), as follows:

% To each Markov system one can associate a family $d_a^\delta$ of metrics an the coresponding Markov compact, where $a > 1$ is arbitury real parameter (see Definition \ref{def-metryka-komp}). We call these metric \textit{simplicial}. Our second main result is the following (see Theorem \ref{tw-bi-lip} in the text).

\begin{tw}
\label{tw-bi-lip-0}
%  Let $G$ be a~hyperbolic grup. Then, $\partial G$ equipped with a visual metric is quasi-conformally equivalent to a Markov compactum $\liminv K_i$ satisfying all the properties as in the statement of Theorem \ref{tw-kompakt} equipped with any simplicial metric $d_a^M$ with $a$ sufficiently close to $1$. More precisely, if $a > 1$ is sufficiently close to $1$ and if $d_a$ denotes the visual metric on $\partial G$ with (THE?) exponent $a$, than the metric space $(\partial G, d_a)$ and $\liminv K_i, d_a^{\delta}$ are bilipschitz equivalent. 
 Let $G$ be a~hyperbolic group and $(K_i, f_i)_{i \geq 0}$ be the Markov system representing $\partial G$, constructed as in the proof of Theorem~\ref{tw-kompakt}. Then, the quasi-conformal structures on $\partial G$ and $\liminv K_i$ defined respectively by the visual metrics $\{ d_v^{(a)} \}$ and the simplicial metrics $\{ d_a^M \}$ (for sufficiently small values of~$a$) coincide.
%  $\partial G$ equipped with a visual metric is quasi-conformally equivalent to a Markov compactum $\liminv K_i$ satisfying all the properties as in the statement of Theorem \ref{tw-kompakt} equipped with any simplicial metric $d_a^M$ with $a$ sufficiently close to $1$. 
More precisely, for $a > 1$ sufficiently close to $1$, the metric spaces $(\partial G, d_v^{(a)})$ and $(\liminv K_i, d_a^M)$ are bi-Lipschitz equivalent. 
\end{tw}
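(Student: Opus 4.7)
The strategy is to exploit the geometric origin of the Markov system built in Theorem~\ref{tw-kompakt}: each complex $K_n$ arises (modulo identification via ball $N$-types) as the nerve of a cover $\mathcal{U}_n$ of~$\partial G$ by shadows of balls of radius~$\approx n$ in the Cayley graph of~$G$, and $f_n : K_{n+1} \to K_n$ encodes refinement of~$\mathcal{U}_{n+1}$ into~$\mathcal{U}_n$. The homeomorphism $\phi : \partial G \to \liminv K_i$ provided by Theorem~\ref{tw-kompakt} sends $\xi$ to the thread $(x_n)$ in which $x_n$ lies in the closed simplex of~$K_n$ labelled by those $U \in \mathcal{U}_n$ that contain~$\xi$. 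The whole proof amounts to comparing $d_v^{(a)}(\xi,\eta)$ with $d_a^M(\phi(\xi),\phi(\eta))$ by passing through a common combinatorial quantity, namely the largest level~$n$ at which $\xi$ and~$\eta$ fall into a common element of~$\mathcal{U}_n$.

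The first ingredient I would establish is a uniform two-sided scale estimate for~$\mathcal{U}_n$: there exist $c,C > 0$ independent of~$n$ such that every $U \in \mathcal{U}_n$ has $d_v^{(a)}$-diameter at most $C a^{-n}$, and every metric ball $B_{d_v^{(a)}}(\xi,\,c a^{-n})$ is contained in some element of~$\mathcal{U}_n$. Both follow from the classical identity $(\xi|\eta)_e \asymp -\log_a d_v^{(a)}(\xi,\eta)$ combined with the fact that $\xi$ and~$\eta$ share a shadow of an element $g$ with $|g|\approx n$ if and only if $(\xi|\eta)_e \gtrsim n$; this is precisely the admissibility condition that forces $a$ to be close to~$1$. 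The second ingredient is to unpack Definition~\ref{def-metryka-komp} and obtain a combinatorial description
\[
 d_a^M\bigl((x_n),(y_n)\bigr) \;\asymp\; a^{-N((x_n),(y_n))},
\]
where $N((x_n),(y_n))$ is the largest level~$n$ at which $x_n, y_n$ still belong to a common closed simplex of~$K_n$. Barycentricity and the distinct types property ensure that the geometric realisations $|K_n|$ have uniformly bounded local geometry, and that this purely combinatorial separation level matches, up to an additive constant, the first level at which $\xi,\eta$ fall into disjoint elements of~$\mathcal{U}_n$.

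Putting the two ingredients together completes the proof. If $\xi,\eta$ share a cover element at level~$n$, then $d_v^{(a)}(\xi,\eta)\leq C a^{-n}$, and the previous paragraph gives $N(\phi(\xi),\phi(\eta)) \geq n - O(1)$, so $d_a^M(\phi(\xi),\phi(\eta))\lesssim a^{-n}$; conversely, if $d_v^{(a)}(\xi,\eta)\leq c a^{-n}$, the Lebesgue-type bound forces $\xi,\eta$ into a common $U \in \mathcal{U}_n$ and hence $N(\phi(\xi),\phi(\eta))\geq n$, yielding $d_a^M(\phi(\xi),\phi(\eta))\gtrsim a^{-n}$. This is the bi-Lipschitz equivalence for any sufficiently small admissible~$a$. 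The quasi-conformal part then follows automatically, because different admissible parameters yield quasisymmetrically equivalent metrics on each side, so coincidence at one~$a$ upgrades to coincidence of the entire quasi-conformal structures.

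The main obstacle I expect is the second ingredient. The metric $d_a^M$ is a priori a weighted sum (or supremum) of polyhedral distances across \emph{all} levels of the inverse system, so pinning down its behaviour by a single ``first separation'' index requires showing that the contribution of level~$N$ dominates the tail uniformly. This is exactly where the Markov condition pays off: finitely many simplex and bonding-map types translate into uniformly bounded local complexity of every~$K_n$, while barycentricity and distinct types prevent pathological nesting in the nerves that would otherwise let lower-level contributions pile up and distort the comparison.
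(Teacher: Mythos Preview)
Your plan follows essentially the same route as the paper: compare $d_v^{(a)}$ and $d_a^M$ through the ``separation level'' in the system of covers, using an upper diameter estimate (Lemma~\ref{fakt-pokrycie-male}) and a Lebesgue-number estimate (Lemma~\ref{fakt-duze-gwiazdy}) for the covers~$\mathcal{U}_n$, and then summing contributions of $d_{K_t}$ above and below that level. So the strategy is correct.

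Where your account diverges from the paper is in the mechanism behind your ``second ingredient''. You attribute the control of the low-level contributions to the Markov condition and to distinct types; neither is used. What the paper actually invokes is the purely affine estimate
\[
 \diam f_i^j(\sigma) \;\leq\; \Bigl(\tfrac{n}{n+1}\Bigr)^{j-i}
 \qquad\text{for every simplex } \sigma \subseteq K_j,
\]
coming from barycentricity together with the uniform bound $n$ on $\dim K_t$ (this is \eqref{eq-konstr-szacowanie-obrazow}, taken from Engelking's proof of Theorem~1.13.2). That single inequality is what makes the sum $\sum_{t\leq l} a^{-t} d_{K_t}(\pi_t\overline p,\pi_t\overline q)$ comparable to $a^{-l}$, and it is also what pins down the threshold: one needs $a < \tfrac{n+1}{n}$ so that $a \cdot \tfrac{n}{n+1} < 1$ and the partial sums stay bounded. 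This is the exact source of ``$a$ close to $1$'' that you left unspecified. The distinct types property plays no role in Theorem~\ref{tw-bi-lip}; it is used only later, in Section~\ref{sec-sk-opis}, for finite describability.

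A smaller point: your separation index $N$ (largest level at which $\pi_n(\phi\xi),\pi_n(\phi\eta)$ lie in a common closed simplex) is not quite the quantity the paper tracks. The paper works directly with the largest $n$ for which some $U\in\mathcal{U}_n$ contains both $\xi$ and $\eta$; for $n$ beyond that level the carriers $K_n(\xi), K_n(\eta)$ share no vertex and are therefore disjoint, so Lemma~\ref{fakt-rozlaczne-symp-daleko} gives $d_{K_n}=2$. Your $N$ can in principle be smaller (sharing a vertex is weaker than spanning a common simplex), so if you insist on your formulation you would need an extra argument, e.g.\ via the star property, to show the two indices differ by at most a constant. The paper sidesteps this entirely.
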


Another kind of ``finitely recursive'' description of $\partial G$ is the structure of a \textit{semi-Markovian space} (see Definition~\ref{def-sm-ps}), a~notion introduced in~\cite{zolta} (with its main idea coming from Gromov), intuitively being a~strengthened analogue of infinite-word automata. In this topic, we prove the following theorem:

\begin{tw}
\label{tw-semi-markow-0}
The boundary of any hyperbolic group $G$ is a semi-Markovian space.
\end{tw}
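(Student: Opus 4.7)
The plan is to adapt the argument used in the torsion-free case (established in \cite{zolta}) by replacing the technical device of Cannon's \emph{cone types} with his \emph{ball $N$-types}, for $N$ chosen sufficiently large; this is precisely the substitution that underlies the quasi-$G$-invariance of covers which forms the backbone of this paper. The semi-Markovian structure on $\partial G$ will be encoded, as in the torsion-free situation, by a subshift of infinite geodesic words in the Cayley graph, together with an asymptoticity equivalence on that subshift.

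First, I would fix a finite generating set $S$ of $G$ and, for each $g \in G$, choose a canonical geodesic word representing it (e.g.\ the shortlex minimum). A key point is that, whether or not $G$ has torsion, the admissibility of extending a canonical geodesic word by a generator $s \in S$ depends only on the ball $N$-type of its endpoint. Since the number of such ball $N$-types is finite, the set $\Sigma \subseteq S^{\mathbb{N}}$ of canonical geodesic rays based at the identity forms a one-sided subshift of finite type over the alphabet $S$, describable by a finite list of allowed transitions indexed by ball $N$-types.

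Second, I would define the coding map $\pi \colon \Sigma \to \partial G$ sending a canonical geodesic ray to its endpoint at infinity; it is surjective because every point of $\partial G$ is represented by some geodesic ray from the identity. It then remains to check that the fibres of $\pi$ form a semi-Markovian equivalence. For this I would use the standard hyperbolic-geometric fact that two geodesic rays $\xi,\eta$ converge to the same boundary point iff they synchronously fellow-travel at uniformly bounded distance. The condition that a pair of synchronized vertices $(\xi(n),\eta(n))$ can be prolonged by generators $s, s' \in S$ while remaining at bounded distance is, once again, determined by the ball $N$-type of the pair; hence the set of asymptotic pairs is itself a subshift of finite type in $\Sigma \times \Sigma$, yielding the required semi-Markovian structure.

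The main obstacle, and the main novelty relative to \cite{zolta}, is the presence of torsion. In its absence, Cannon's cone types are finite and carry enough information to describe both the admissible continuations of a geodesic and the asymptotic relation between two geodesics. With torsion, cone types may be too coarse for the latter purpose; but the ball $N$-types for $N$ sufficiently large are still finite in number and, as has been established throughout the preparatory sections of this paper, do capture all the local data required. Granting this, the proof of \cite{zolta} transfers with only minor bookkeeping changes: after the systematic replacement of cone types by ball $N$-types, each lemma used in the torsion-free argument remains valid, and the semi-Markovian structure on $\partial G$ is obtained.
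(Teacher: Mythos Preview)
Your proposal contains a genuine gap at its central claim. You assert that ball $N$-types, for $N$ large, ``do capture all the local data required'' and that the argument of \cite{zolta} then ``transfers with only minor bookkeeping changes''. This is precisely what fails in the presence of torsion, and the paper devotes an entire section to overcoming it.

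The concrete obstruction is this: for the coding by types to yield an injective map on compatible sequences (condition~(a) of Lemma~\ref{fakt-sm-kryt-zbior}), one needs that distinct nearby elements of the same length receive distinct types. Lemma~\ref{fakt-kuzyni-lub-torsje} shows that two $r$-fellows $g$ and $gh$ with the same ball $N$-type force $h$ to be torsion; in a torsion-free group this gives the required injectivity, but with torsion it does not. Two distinct p-grandchildren of a given element can share the same ball $N$-type for every~$N$, so the map $T^*$ built from ball types need not be injective (this is exactly the content of Remark~\ref{uwaga-zolta}). The paper's actual proof therefore does \emph{not} use $T^b_N$ as the type function: it constructs a hierarchy of refinements --- the $A$-type (Definition~\ref{def-sm-typ-A}, via prioritised ancestors and descendant numbers), the $B$-type (Definition~\ref{def-sm-typ-B}, recording ``genealogical differences'' between cousins), and finally the $C$-type (Definition~\ref{def-sm-typ-C}) --- whose key property, Proposition~\ref{lem-sm-kuzyni}, is that $16\delta$-fellows always receive distinct $B$-types. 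Establishing that this refined type is still children-deterministic (Proposition~\ref{lem-sm-B-dzieci}) is nontrivial and is where the real work lies.

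A secondary issue is your choice of alphabet. Taking $\Sigma = S$ and encoding geodesics as words of generators yields a sofic shift for the set of canonical geodesics and for the fellow-travelling relation (a finite automaton tracking the relative position $\xi(n)^{-1}\eta(n)$ in $B(e,8\delta)$), but not in general a subshift of finite type over~$S$, which is what semi-Markovian requires. The paper avoids this by taking the alphabet to be the set of type values themselves, so that the automaton state is already part of each symbol.
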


This generalises the main result of~\cite{zolta}, where such presentation has been constructed for torsion-free hyperbolic groups. The outline of our reasoning is based on the proof from~\cite{zolta}; our crucial improvement comes from a \textit{type strengthening} technique described in Section~\ref{sec-abc} (which is also used in the proof of Theorem~\ref{tw-kompakt}). We also rectify a mistake in the proof from~\cite{zolta} (which is precisely indicated in Remark~\ref{uwaga-zolta}).

\subsubsection*{Organisation of the paper}

The paper is organised as follows. In Section~\ref{sec-def}, we briefly recall the basic properties of hyperbolic groups which we will need, and we introduce Markov compacta. Section~\ref{sec-typy} contains auxiliary facts regarding mainly the \textit{conical} and \textit{ball types} in hyperbolic groups (defined in~\cite{CDP}), which will be the key tool in the proof of Theorem~\ref{tw-kompakt}.

The main claim of Theorem~\ref{tw-kompakt} is obtained by constructing an appropriate family of covers of $\partial G$ in Section~\ref{sec-konstr}, considering the corresponding inverse sequence of nerves in Section~\ref{sec-engelking-top} and finally verifying the Markov property in Section~\ref{sec-markow}. An outline of this reasoning is given in the introduction to Section~\ref{sec-konstr}, and its summary appears in Section~\ref{sec-markow-podsum}. Meanwhile, we give the proof of Theorem~\ref{tw-bi-lip-0} (as a~corollary of Theorem~\ref{tw-bi-lip}) in Section~\ref{sec-bi-lip}, mostly by referring to the content of Section~\ref{sec-engelking-top}.

While the Markov system obtained at the end of Section~\ref{sec-markow} will be already barycentric and have mesh property, in Sections~\ref{sec-abc} and \ref{sec-wymd} we focus respectively on ensuring distinct types property and bounding the dimensions of involved complexes. This will lead to a complete proof of Theorem~\ref{tw-kompakt}, summarised in Section~\ref{sec-wymd-podsum}. 

Finally, Section~\ref{sec-sm} contains the proof of Theorem~\ref{tw-semi-markow-0}; its content is basically unrelated to Sections~\ref{sec-konstr}--\ref{sec-wymd}, except for that we re-use the construction of \textit{$B$-type} from Section~\ref{sec-sm-abc-b}.

% Finally, in Section~\ref{sec-sm} we construct another combinatorial presentation of~$\partial G$ of a recursive nature, defined in~\cite{zolta} as a \textit{semi-Markovian space}. (Intuitively, it is a strengthened analogue of automaticity of hyperbolic groups; see Definition~\ref{def-sm-ps}). This generalises the main result of~\cite{zolta}, where such presentation has been constructed for torsion-free hyperbolic groups. The outline of our reasoning is based on the proof from~\cite{zolta}; the crucial improvement comes from strengthening the ball types in~$G$, performed in Section~\ref{sec-abc} and Section~\ref{sec-sm-abc-c}.

\subsubsection*{Acknowledgements}

I would like to thank my supervisors Jacek Świątkowski and Damian Osajda for recommending this topic, for helpful advice and inspiring conversations, and Aleksander Zabłocki for all help and careful correcting.

\section{Introduction}
\label{sec-def}

\subsection{Hyperbolic groups and their boundaries}
\label{sec-def-hip}

Throughout the whole paper, we assume that $G$ is a~hyperbolic group in the sense of~Gromov~\cite{G}. We implicitly assume that $G$ is equipped with a~fixed, finite generating set~$S$, and we identify~$G$ with its Cayley graph~$\Gamma(G, S)$. As a~result, we will often speak about ``distance in~$G$'' or ``geodesics in~$G$'', referring in fact to the Cayley graph. Similarly, the term ``dependent only on~$G$'' shall be understood so that dependence on~$S$ is also allowed. By~$\delta$ we denote some fixed constant such that $\Gamma(G, S)$ is a $\delta$-hyperbolic metric space; we assume w.l.o.g. that $\delta \geq 1$.

We denote by~$e$ the identity element of~$G$, and by~$d(x, y)$ the distance of elements $x, y \in G$. The distance $d(x, e)$ will be called the \textit{length} of~$x$ and denoted by~$|x|$. 
% The symbol $[x, y]$ will denote (any) geodesic segment in~$G$ between $x$ and~$y$.
We use a notational convention that $[x, y]$ denotes a geodesic segment between the points $x, y \in G$, that is, an isometric embedding $\alpha : [0, n] \cap \mathbb{Z} \rightarrow G$ such that $\alpha(0) = x$ and $\alpha(n) = y$, where $n$ denotes $d(x, y)$. In the sequel, geodesic segments as well as geodesic rays and bi-infinite geodesic paths (i.e. isometric embeddings resp. of~$\mathbb{N}$ and $\mathbb{Z}$) will be all refered to as ``geodesics in $G$''; to specify which kind of geodesic is meant (when unclear from context), we will use adjectives \textit{finite}, \textit{infinite} and \textit{bi-infinite}.

We denote by~$\partial G$ the Gromov boundary of~$G$, defined as in~\cite{Kap}. We recall after \cite[Chapter 1.3]{zolta} that, as a~set, it is the~quotient of the set of all infinite geodesic rays in~$G$ by the relation of being close:
\[ (x_n) \sim (y_n) \qquad \Leftrightarrow \qquad \exists_{C > 0} \ \forall_{n \geq 0} \ d(x_n, y_n) < C; \]
moreover, in the above definition one can equivalently assume that $C = 4\delta$. It is also known that the topology defined on $\partial G$ is compact, preserved by the natural action of~$G$, and compatible with a~family of \textit{visual metrics}, defined depending on a~parameter~$a > 1$ with values sufficiently close to~$1$. Although we will not refer directly to the definition and properties of these metrics, we will use an estimate stated as (P2) in~\cite[Chapter~1.4]{zolta} which guarantees that, for every sufficiently small~$a > 1$, the visual metric with parameter~$a$ (which we occasionally denote by $d_v^{(a)}$) is bi-Lipschitz equivalent to the following \textit{distance function}:
\[ d_a \big( p, q \big) = a^{-l} \qquad \textrm{ for } p, q \in \partial G, \]
where $l$ is the largest possible distance between $e$ and any bi-infinite geodesic in~$G$ joining $p$ with $q$. As we will usually work with a~fixed value of~$a$, we will drop it in the notation.

For $x, y \in G \cup \partial G$, the symbol $[x, y]$ will denote \textit{any} geodesic in~$G$ joining~$x$ with~$y$. We will use the following fact from~\cite[Chapter~1.3]{zolta}:

\begin{fakt}
\label{fakt-waskie-trojkaty}
 Let $\alpha, \beta, \gamma$ be the sides of a~geodesic triangle in~$G$ with vertices in~$G \cup \partial G$. Then, $\alpha$ is contained in the $4(p+1)\delta$-neighbourhood of $\beta \cup \gamma$, where $p$ is the number of vertices of the triangle which lie in~$\partial G$.
\end{fakt}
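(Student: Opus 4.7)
The plan is to proceed by induction on $p$, the number of vertices of the geodesic triangle that lie in $\partial G$.

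The base case $p = 0$ is the standard thin-triangles property of any $\delta$-hyperbolic geodesic metric space: each side is contained in the $\delta$-neighbourhood, hence certainly in the $4\delta = 4(0+1)\delta$-neighbourhood, of the union of the other two sides.

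For the inductive step I would pick one ideal vertex $v$ of the triangle and replace it by a finite ``truncation'' $v_T$ chosen suitably far along one of the two sides adjacent to $v$. Concretely, if $v$ is a common endpoint of $\alpha$ and $\beta$, I parametrise both as geodesic rays towards $v$ and set $v_T := \alpha(T)$ for some large parameter $T > |Bx| + O(\delta)$, where $B = \alpha(0)$ and $x \in \alpha$ is the point we wish to locate. This produces a new triangle with $p-1$ ideal vertices whose sides are $\alpha|_{[0,T]} \subset \alpha$, the unchanged side $\gamma$, and a new finite geodesic $\beta_T := [A, v_T]$. The inductive hypothesis then gives that $x \in \alpha|_{[0,T]}$ lies in the $4p\delta$-neighbourhood of $\gamma \cup \beta_T$. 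The case in which $v$ is opposite $\alpha$ rather than adjacent to it is entirely symmetric after relabelling: one truncates $v$ to a finite point on either $\beta$ or $\gamma$, leaving $\alpha$ unchanged and introducing a new side that must be controlled.

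To close the induction I must show that $\beta_T$ stays uniformly within $4\delta$ of $\beta$, which upgrades the estimate to $4p\delta + 4\delta = 4(p+1)\delta$. The key ingredient here is the strengthening $C = 4\delta$ of the equivalence relation $\sim$ recalled in the excerpt: the rays $\alpha$ and $\beta$, both targeting $v$, fellow-travel within distance $4\delta$, so the endpoint $v_T \in \alpha$ is within $4\delta$ of some point $v_T' \in \beta$. Since $v_T'$ lies on the geodesic ray $\beta$ issuing from $A$, the initial segment $[A, v_T']$ is a sub-path of $\beta$; applying the $p = 0$ thin-triangles condition to the auxiliary finite triangle $A, v_T, v_T'$ (whose third side is short, of length at most $4\delta$) then yields that $\beta_T = [A, v_T]$ lies in a uniform neighbourhood of $\beta$. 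The main obstacle will be the quantitative bookkeeping required to make the cost of replacing a single ideal vertex come out to exactly $4\delta$ and not slightly more: a naïve concatenation of ``thin triangle + fellow-travelling'' tends to add up to $5\delta$ per step. Extracting the sharp constant forces one to apply the fellow-travelling estimate directly to $\alpha$ and $\beta$ (rather than via an intermediate geodesic) and to handle the initial segment of $\alpha$, where the $4\delta$ fellow-travelling has not yet taken effect, by a separate argument noting that there $x$ is close to $B \in \gamma$ and can therefore be absorbed into the $\gamma$-contribution of the bound.
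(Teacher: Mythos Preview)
The paper does not actually prove this lemma: it is stated as a fact imported from \cite[Chapter~1.3]{zolta}, with no argument given. So there is no ``paper's own proof'' to compare against.

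Your inductive strategy (truncate an ideal vertex, apply the case with one fewer ideal vertex, and then control the discrepancy between the truncated side $\beta_T$ and the original $\beta$ via asymptotic fellow-travelling) is the standard route to results of this kind, and is essentially what is carried out in \cite{zolta}. You have correctly identified the only real difficulty, namely that a careless bookkeeping yields $5\delta$ rather than $4\delta$ per ideal vertex. One further point to watch: the $4\delta$ fellow-travelling constant quoted in the paper is stated for geodesic rays issuing from a \emph{common} basepoint, whereas in your argument $\alpha$ and $\beta$ issue from possibly different (and possibly ideal) points $B$ and $A$. To make the sharp constant go through you will need the slightly stronger statement that asymptotic rays are eventually $4\delta$-close regardless of basepoint, or else arrange the truncation so that the rays being compared do share an endpoint. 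These refinements are routine but must be handled explicitly if you want exactly $4(p+1)\delta$.
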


\subsection{Markov compacta}

\label{sec-def-markow}

\begin{df}[{\cite[Definition~1.1]{Dra}}]
\label{def-kompakt-markowa}
Let $(K_i, f_i)_{i \geq 0}$ be an inverse system consisting of the spaces $K_i$ and maps $f_i: K_{i + 1} \rightarrow K_i$ for $i \geq 0$.
Such system will be called \textit{Markov} (or said to satisfy \textit{Markov property}) if the following conditions hold:
\begin{itemize}
 \item[(i)] $K_i$ are finite simplicial complexes which satisfy the inequality $\sup \dim K_i < \infty$;
 \item[(ii)] for every simplex $\sigma$, in $K_{i+1}$ its image $f_i(\sigma)$ is contained in some simplex belonging to $K_i$ and the restriction $f_i|_\sigma$ is an affine map;
  \item[(iii)] simplexes in $\amalg_i K_i$ can be assigned finitely many \textit{types} so that for any simplexes $s \in K_i$ and~$s' \in K_j$ of the same type there exist isomorphisms of subcomplexes $i_k : (f^{i+k}_i)^{-1}(s) \rightarrow (f^{j+k}_j)^{-1}(s')$ for $k \geq 0$ such that the following diagram commutes:
\begin{align}
\label{eq-markow-drabinka}
\xymatrix@+3ex{
 s \ar[d]_{i_0} & \ar[l]_{f_i} f_i^{-1}(s) \ar[d]_{i_1} & \ar[l] \ldots & \ar[l] (f^{i+k}_i)^{-1}(s) \ar[d]_{i_k} & \ar[l]_{f_{i+k}} (f^{i+k+1}_i)^{-1}(s) \ar[d]_{i_{k+1}} & \ar[l] \ldots \\
 s' & \ar[l]_{f_j} f_j^{-1}(s') & \ar[l] \ldots & \ar[l] (f^{j+k}_j)^{-1}(s') & \ar[l]_{f_{j+k}} (f^{j+k+1}_j)^{-1}(s') & \ar[l] \ldots
}
\end{align}
where $f^a_b$ (for $a \geq b$) means the composition $f_b \circ f_{b+1} \circ \ldots \circ f_{a-1} : K_a \rightarrow K_b$.
\end{itemize}
\end{df}

\begin{df}[{\cite[Definition~1.1]{Dra}}]
\label{def-kompakt-markowa2}
A topological space $X$ is a \textit{Markov compactum} if it is the inverse limit of a Markov system.
\end{df}

\begin{df}[{cf.~\cite[Lemma~2.3]{Dra}}]
\label{def-mesh}
\ 
\begin{itemize}
 \item[\textbf{(a)}] A sequence $(\mathcal{A}_n)_{n \geq 0}$  of families of subsets in a compact metric space has \textit{mesh property} if  \[ \lim_{n \rightarrow \infty} \, \max_{A \in \mathcal{A}_n} \diam A = 0. \]
 \item[\textbf{(b)}] An inverse system of polyhedra $(K_n, f_n)$ has \textit{mesh property} if, for any $i \geq 0$, the sequence $(\mathcal{F}_n)_{n \geq i}$ of families of subsets in $K_i$ has mesh property, where
 \[ \mathcal{F}_n = \big\{ f^n_i(\sigma) \ \big|\ \sigma \textrm{ is a simplex in }K_n \big\}. \] 
\end{itemize}
\end{df}

\begin{uwaga}
\label{uwaga-mesh-bez-metryki}
We can formulate Definition \ref{def-mesh}a in an equivalent way (regarding only the topology): for any open cover $\mathcal{U}$ of $X$ there exists $n \geq 0$ such that, for every $m \geq n$, every set $A \in \mathcal{A}_m$ is contained in some $U \in \mathcal{U}$. 
In particular, this means that the sense of Definition \ref{def-mesh}b does not depend on the choice of a metric (compatible with the topology) in $K_i$.
\end{uwaga}

\begin{df}
\label{def-kompakt-barycentryczny}
A Markov system $(K_i, f_i)$ is called \textit{barycentric} if, for any $i \geq 0$, the vertices of $K_{i+1}$ are mapped by $f_i$ to the vertices of the first barycentric subdivision of $K_i$.
\end{df}

\begin{df}
\label{def-kompakt-wlasciwy}
A Markov system $(K_i, f_i)$ has \textit{distinct types property} if for any $i \geq 0$ and any simplex $s \in K_i$ all simplexes in the pre-image $f_i^{-1}(s)$ have pairwise distinct types.
\end{df}

\begin{uwaga}
\label{uwaga-sk-opis}
A motivation for the above two definitions is the observation that barycentric Markov systems with distinct types property are \textit{finitely describable}. In more detail, if the system $(K_i, f_i)_{i \geq 0}$ satisfies the conditions from Definitions ~\ref{def-kompakt-markowa}, \ref{def-kompakt-barycentryczny} and~\ref{def-kompakt-wlasciwy}, and if $N$ is so large that complexes $K_0, \ldots, K_N$ contain simplexes of all possible types, then the full system $(K_i, f_i)_{i = 0}^\infty$ can by rebuilt on the base of the initial part of the system (which is finitely describable because of being barycentric).   
\[
\xymatrix@C+3ex{
K_0 & \ar[l]_{f_0} K_1 & \ar[l]_{f_1} \ldots & \ar[l]_{f_{N-1}} K_N & \ar[l]_{f_N} K_{N+1}.
}
\]
The proof is inductive: for any $n \geq N+1$ the complex $K_{n+1}$ with the map $f_n : K_{n+1} \rightarrow K_n$ is given uniquely by the subsystem $K_0 \longleftarrow \ldots \longleftarrow K_n$. This results from the following:
\begin{itemize}

 \item for any simplex $s \in K_n$ there exists a model simplex $\sigma \in K_m$ of the same type, where $m < n$, and then the pre-image $f_n^{-1}(s)$ together with the types of its simplexes and the restriction $f_n \big|_{f_n^{-1}(s)}$ is determined by the pre-image $f_{m}^{-1}(\sigma)$ and the restriction $f_m \big|_{f_m^{-1}(\sigma)}$ (which follows from Definition \ref{def-kompakt-markowa});
 \item for any pair of simplexes $s' \subseteq s \in K_n$ the choice of a type preserving injection $f_n^{-1}(s') \rightarrow f_n^{-1}(s)$ is uniquely determined by the fact that vertices in $f_n^{-1}(s)$ have pairwise distinct types (by Definition \ref{def-kompakt-wlasciwy});
 \item since $K_{n+1}$ is the union of the family of pre-images of the form $f_n^{-1}(s)$ for $s \in K_n$, which is closed with respect to intersecting, the knowledge of these pre-images and the type preserving injections between them is sufficient to recover $K_{n+1}$; obviously we can reconstruct~$f_n$ too, by taking the union of the maps $f_n^{-1}(s) \rightarrow s$ determined so far.
\end{itemize}
\end{uwaga}

\section{Types of elements of~$G$}

\label{sec-typy}

The goal of this section is to introduce the main properties of the \textit{cone types} (Definition \ref{def-typ-stozkowy}) and \textit{ball types} (Definition \ref{def-typ-kulowy}) for elements of a hyperbolic group $G$. These classical results will be used in the whole paper.

The connection between cone types (which describe the natural structure of the group and its boundary) and ball types (which are obviously only finite in number) in the group $G$ was described for the first time by Cannon in \cite{Cannon} and used to prove properties of the growth function for the group. This result turns out to be an important tool in obtaining various finite presentations of Gromov boundary: it is used  in \cite{CDP} to build an automatic structure on $\partial G$ and in \cite{zolta} to present $\partial G$ as a semi-Markovian space in torsion-free case (the goal of Section \ref{sec-sm} is to generalise this result to all groups). Therefore it is not surprising that we will use this method to build the structure of Markov compactum for the space $\partial G$.

\subsection{Properties of geodesics in~$G$}

\begin{fakt}
\label{fakt-geodezyjne-pozostaja-bliskie}
Let $\alpha = [e, x]$ and $\beta = [e, y]$, where $|x| = |y| = n$ and $d(x, y) = k$. Then, for $0 \leq m \leq n$, the following inequality holds:
\[ d \big( \alpha(m), \beta(m) \big) \ \leq \ 8\delta + \max \big( k + 8\delta - 2(n-m), \, 0 \big). \]
In particular, for $0 \leq m \leq n - \tfrac{k}{2} - 4\delta$, we have $d(\alpha(m), \beta(m)) \leq 8\delta$.
\end{fakt}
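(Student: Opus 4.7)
I would apply the ``thin triangles'' property (Lemma~\ref{fakt-waskie-trojkaty} with $p = 0$, giving constant $4\delta$) to the geodesic triangle with vertices $e, x, y$ and sides $\alpha, \beta, \gamma$, where $\gamma = [x, y]$ has length $k$. The argument has two main steps.

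First, I would observe that every point of $\gamma$ is far from $e$: by the triangle inequality applied from $x$ and from $y$,
\[
|\gamma(j)| \,\geq\, \max\bigl(n - j,\, n - (k - j)\bigr) \,\geq\, n - \tfrac{k}{2} \qquad \text{for all } j \in [0, k].
\]

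\textbf{Step 1 (baseline bound).} Suppose $n - m \geq \tfrac{k}{2} + 4\delta$. By Lemma~\ref{fakt-waskie-trojkaty} there is some $z \in \beta \cup \gamma$ with $d(\alpha(m), z) \leq 4\delta$; in particular $|z| \leq m + 4\delta$. The lower bound $|\gamma(j)| \geq n - \tfrac{k}{2} \geq m + 4\delta$ then rules out $z \in \gamma$, forcing $z = \beta(m')$ for some $m'$. The identity $|m - m'| = \bigl||\alpha(m)| - |\beta(m')|\bigr| \leq d(\alpha(m), \beta(m')) \leq 4\delta$ yields
\[
d(\alpha(m), \beta(m)) \,\leq\, d(\alpha(m), \beta(m')) + d(\beta(m'), \beta(m)) \,\leq\, 8\delta,
\]
which is precisely the ``in particular'' clause.

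\textbf{Step 2 (propagation).} In the remaining range, where $k + 8\delta - 2(n - m) > 0$, I would pick the largest $m^* \leq m$ to which Step~1 applies (taking $m^* = 0$ if $n - \tfrac{k}{2} - 4\delta < 0$) and use that both $\alpha$ and $\beta$ are geodesics emanating from $e$:
\[
d(\alpha(m), \beta(m)) \,\leq\, (m - m^*) + d(\alpha(m^*), \beta(m^*)) + (m - m^*) \,\leq\, 8\delta + 2(m - m^*).
\]
Since $m - m^* \leq \tfrac{k + 8\delta}{2} - (n - m)$, this gives the desired $d(\alpha(m), \beta(m)) \leq 8\delta + \bigl(k + 8\delta - 2(n - m)\bigr)$.

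\textbf{Where the work lies.} The crucial step is the dichotomy in Step~1: one has to combine the lower bound $|\gamma(j)| \geq n - k/2$ with the upper bound $|z| \leq m + 4\delta$ from the thin-triangle witness in order to locate $z$ on $\beta$ rather than on $\gamma$. Everything else is a routine triangle-inequality bootstrap, with any minor integer-rounding losses absorbed into the slack provided by $\delta \geq 1$.
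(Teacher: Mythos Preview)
Your argument is essentially correct, but it follows a different route from the paper's, and Step~2 carries a small rounding issue worth naming explicitly.

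The paper does not split into ``small~$m$'' and ``large~$m$''. Instead, for each fixed~$m$ it runs a single three-way case split: either $\alpha(m)$ is $4\delta$-close to some $\beta(m')$ (giving $\leq 8\delta$ at once), or symmetrically $\beta(m)$ is $4\delta$-close to~$\alpha$, or else both $\alpha(m)$ and $\beta(m)$ are $4\delta$-close to points $a,b\in[x,y]$. In that last case, $|a|,|b|\le m+4\delta$ forces $a$ and $b$ to sit at distance $\ge D:=n-m-4\delta$ from each endpoint of $[x,y]$, hence $d(a,b)\le k-2D$, yielding exactly $8\delta+\bigl(k+8\delta-2(n-m)\bigr)$ with no anchoring or rounding needed.

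Your Step~1 is morally the paper's first two cases, with the observation $|\gamma(j)|\ge n-k/2$ used to rule out the third. Your Step~2, however, is genuinely different: rather than analysing the positions of $a,b$ on~$[x,y]$, you bootstrap from an anchor~$m^*$. The catch is that your displayed inequality $m-m^*\le\tfrac{k+8\delta}{2}-(n-m)$ is equivalent to $m^*\ge n-k/2-4\delta$, whereas the largest \emph{integer}~$m^*$ to which Step~1 applies satisfies $m^*\le n-k/2-4\delta$; when $n-k/2-4\delta\notin\mathbb{Z}$ (e.g.\ $k$ odd with integer~$\delta$) you overshoot by~$1$, and the slack ``$\delta\ge 1$'' does not absorb this. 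The cleanest fix is the one you implicitly gesture at: interpret $\alpha,\beta$ as geodesics in the metric realisation of the Cayley graph and take $m^*=n-k/2-4\delta$ as a real parameter, so the inequality becomes an equality. Alternatively, just replace Step~2 by the paper's third case, which gives the sharp constant directly.
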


\begin{proof}
Let us consider the points $\alpha(m), \beta(m)$ lying on the sides of a $4\delta$-narrow geodesic triangle $[e, x, y]$. We will consider three cases.

If $\alpha(m)$ lies at distance at most $4\delta$ from $\beta$, then we have $d(\alpha(m), \beta(m')) \leq 4\delta$ for some $m'$, so from the triangle inequality in the triangle $[e, \alpha(m), \beta(m')]$ we obtain $|m' - m| \leq 4\delta$, so
\[ d(\alpha(m), \beta(m)) \leq d(\alpha(m), \beta(m')) + |m' - m| \leq 8\delta, \]
which gives the claim.

If $\beta(m)$ lies at distance at most $4\delta$ from $\alpha$, the reasoning is analogous.

It remains to consider the case when $\alpha(m)$, $\beta(m)$ are at distance at most $4\delta$ respectively from $a, b \in [x, y]$. Then, $|a|, |b| \leq m + 4\delta$, so $a$, $b$ are at distance at least $D = n - m - 4\delta$ from the both endpoints of $[x, y]$. Therefore, $d(a, b) \leq k - 2D$, and so
\[ d \big( \alpha(m), \beta(m) \big) \leq d \big( \alpha(m), a \big) + d(a, b) + d \big( b, \beta(m) \big) \leq 8\delta + k - 2D = 16\delta + k - 2(n - m). \qedhere \]
\end{proof}

\begin{wn}
\label{wn-krzywe-geodezyjne-pozostaja-bliskie}
Let $\alpha = [e, x]$ and $\beta = [e, y]$, with $|x| =  n$ and $d(x, y) = k$. Then, for $0 \leq m \leq \min(n, |y|)$, we have:
\[ d \big( \alpha(m), \beta(m) \big) \ \leq \ 8\delta + \max \big( 2k + 8\delta - 2(n-m), \, 0 \big). \]
\end{wn}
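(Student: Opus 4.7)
The plan is to adapt the three-case analysis of Fakt~\ref{fakt-geodezyjne-pozostaja-bliskie} to the situation where $|x|$ and $|y|$ may differ. Set $n' = |y|$ and consider the geodesic triangle $[e, x, y]$ with sides $\alpha$, $\beta$ and $\gamma = [y, x]$, of length $k$. By Fakt~\ref{fakt-waskie-trojkaty} (with $p = 0$), every point of $\alpha$ lies within $4\delta$ of $\beta \cup \gamma$, and symmetrically for $\beta$.

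First I would dispose of the easy cases: if $\alpha(m)$ is within $4\delta$ of some $\beta(m'')$, the triangle inequality in $[e, \alpha(m), \beta(m'')]$ (both endpoints are at known distance from $e$) forces $|m - m''| \leq 4\delta$, and since $m, m'' \in [0, n']$ we conclude $d(\alpha(m), \beta(m)) \leq 8\delta$; the case of $\beta(m)$ close to $\alpha$ is analogous. This already yields the claimed estimate since $8\delta$ is bounded by the right-hand side.

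The main case is when $\alpha(m)$ and $\beta(m)$ are both close to $\gamma$, say $d(\alpha(m), \gamma(l)) \leq 4\delta$ and $d(\beta(m), \gamma(l')) \leq 4\delta$ with $l, l' \in [0, k]$. Comparing $|\gamma(l)|$ to $m = |\alpha(m)|$ and using the triangle inequalities relating $\gamma(l)$ to the endpoints $x$ and $y$ (recall $d(\gamma(l), x) = k - l$ and $d(\gamma(l), y) = l$), I would obtain
\[ l \ \in \ \big[ n' - m - 4\delta, \ k - (n - m) + 4\delta \big], \]
and the same interval for $l'$. Hence $|l - l'|$ is bounded by the length of this interval, namely $k + 2m - n - n' + 8\delta$ (clipping to $[0,k]$ can only shorten it). At this point the key new ingredient enters: the triangle inequality $n = |x| \leq |y| + d(x, y) = n' + k$ gives $n - n' \leq k$, which absorbs the asymmetry and upgrades the bound to $2k + 8\delta - 2(n - m)$. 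Together with the two approximation errors of $4\delta$ each we get $d(\alpha(m), \beta(m)) \leq 8\delta + 2k + 8\delta - 2(n - m)$, finishing this case.

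Combining the cases yields the claim, with the $\max$ against $0$ accounting automatically for the sub-case in which the third case cannot occur (so only the easy cases apply and the $8\delta$ bound suffices). I expect no serious obstacle: the whole argument is a direct repetition of the proof of Fakt~\ref{fakt-geodezyjne-pozostaja-bliskie} with one extra use of the triangle inequality $|n - n'| \leq k$, and the slight book-keeping needed when the interval for $l$ must be clipped to $[0, k]$.
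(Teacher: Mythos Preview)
Your proof is correct but takes a different route from the paper. Instead of repeating the three-case analysis of Fakt~\ref{fakt-geodezyjne-pozostaja-bliskie}, the paper observes that the truncated geodesics $\alpha|_{[0,n']}$ and $\beta|_{[0,n']}$ with $n' = \min(n, |y|)$ end at points at distance at most $2k$ (by the triangle inequality through $x$ or $y$, using $\big||x|-|y|\big| \leq k$), and then applies Fakt~\ref{fakt-geodezyjne-pozostaja-bliskie} as a black box with $n'$ in place of $n$ and $2k$ in place of $k$. This reduction is shorter and avoids all casework. On the other hand, taken literally it yields the bound $8\delta + \max\big(2k + 8\delta - 2(n'-m),\,0\big)$, which is slightly weaker than the stated inequality when $|y| < n$; your direct argument, which keeps the roles of $n$ and $|y|$ separate in the interval estimate for $l$ and only merges them at the end via $n - |y| \leq k$, delivers exactly the stated form with $n-m$.
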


\begin{proof}
From the triangle inequality we have $\big| n - |y| \big| = \big| |x| - |y| \big| \leq k$. Let $n' = \min(n, |y|)$; we claim that $d(\alpha(n'), \beta(n')) \leq 2k$. Indeed: if $n' = n$, we have
\[ d(\alpha(n'), \beta(n')) \leq d(x, y) + d(y, \beta(n)) \leq k + \big| |y| - n \big| \leq 2k; \]
otherwise $n' = |y|$ and so
\[ d(\alpha(n'), \beta(n')) \leq d(\alpha(|y|), x) + d(x, y) \leq \big| |y| - n \big| + k \leq 2k. \]
It remains to use Lemma~\ref{fakt-geodezyjne-pozostaja-bliskie} for geodesics $\alpha, \beta$ restricted to the interval $[0, n']$ and the doubled value of~$k$.
\end{proof}

\begin{fakt}
\label{fakt-geodezyjne-przekatniowo}
Let $(\alpha_k)_{k \geq 0}$ be a sequence of geodesic rays in~$G$ which start at~$e$. Denote $x_k = \lim_{n \rightarrow \infty} \alpha_k(n)$. Then, there exists a subsequence $(\alpha_{k_i})_{i \geq 0}$ and a geodesic $\alpha_{\infty}$ such that $\alpha_{k_i}$ coincides with~$\alpha_{\infty}$ on the segment~$[0, i]$. Moreover, the point $x_{\infty} = \lim_{n \rightarrow \infty} \alpha_{\infty}(n)$ is the limit of $(x_{k_i})$.
\end{fakt}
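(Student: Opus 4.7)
The plan is a standard Cantor diagonal argument exploiting local finiteness of the Cayley graph $\Gamma(G, S)$: since $S$ is finite, each ball $B(e, n) = \{ g \in G : |g| \leq n \}$ is finite, so for every fixed $n$ the vertex $\alpha_k(n) \in B(e, n)$ takes only finitely many values as $k$ varies.

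First I would construct the subsequence inductively. Setting $(\alpha_k^{(0)})_k := (\alpha_k)_k$, suppose we have extracted a subsequence $(\alpha_k^{(n)})_k$ along which $\alpha_k^{(n)}(m)$ is constant in $k$ for every $m \leq n$. Since $\alpha_k^{(n)}(n+1)$ lies in the finite set $B(e, n+1)$, by pigeonhole we pass to a further subsequence $(\alpha_k^{(n+1)})_k$ along which $\alpha_k^{(n+1)}(n+1)$ is also constant. The diagonal subsequence $\alpha_{k_i} := \alpha_i^{(i)}$ then satisfies, for each fixed $n$, that $\alpha_{k_i}(n)$ equals a single element $v_n \in G$ for all $i \geq n$. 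Setting $\alpha_\infty(n) := v_n$ gives a geodesic ray, because for any $m, n \geq 0$ the equality $d(\alpha_\infty(m), \alpha_\infty(n)) = |m - n|$ is inherited from $\alpha_{k_i}$ for any $i \geq \max(m, n)$; moreover $\alpha_{k_i}$ coincides with $\alpha_\infty$ on $[0, i]$ by construction.

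To verify $x_{k_i} \to x_\infty := \lim_n \alpha_\infty(n)$, let $\gamma$ be any bi-infinite geodesic joining $x_{k_i}$ with $x_\infty$, and consider the ideal triangle with vertices $v_i := \alpha_\infty(i) \in G$ and $x_{k_i}, x_\infty \in \partial G$; its sides are the rays $\alpha_{k_i}|_{[i, \infty)}$, $\alpha_\infty|_{[i, \infty)}$ and $\gamma$. By Lemma~\ref{fakt-waskie-trojkaty} (with $p = 2$), every point of $\gamma$ lies within $12\delta$ of the other two sides; but every point of those rays has distance at least $i$ from $e$, so every point of $\gamma$ has length at least $i - 12\delta$. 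Hence the quantity $l$ from the definition of the distance function $d_a$ in Section~\ref{sec-def-hip} satisfies $l \geq i - 12\delta$, giving $d_a(x_{k_i}, x_\infty) \leq a^{-(i - 12\delta)} \to 0$ and convergence in $\partial G$ by bi-Lipschitz equivalence with the visual metric. The main obstacle in this last step is bridging pointwise agreement of $\alpha_{k_i}$ with $\alpha_\infty$ on ever-longer initial segments (a statement about rays from $e$) with topological convergence of the endpoints (phrased via bi-infinite geodesics between them); this is handled cleanly by Lemma~\ref{fakt-waskie-trojkaty} at the cost of an $O(\delta)$ loss that disappears in the limit.
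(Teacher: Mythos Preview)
Your proof is correct and the diagonal construction of $\alpha_\infty$ is essentially identical to the paper's. For the convergence $x_{k_i} \to x_\infty$, the paper simply cites Lemma~5.2.1 of~\cite{zolta}, whereas you give a self-contained argument via Lemma~\ref{fakt-waskie-trojkaty} and the distance function~$d_a$; this is a clean alternative that stays within the paper's own toolkit.
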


\begin{proof}
The first part of the claim is obtained from an easy diagonal argument: since, for every $n \geq 0$, the set $\{ x \in G \,|\, |x| \leq n \}$ is finite, the set of possible restrictions $\{ \alpha_k \big|_{[0, n]} \,|\, k \geq 0 \}$ must be finite too. This allows to define inductively $\alpha_{\infty}$: we take $\alpha_\infty(0) = e$, and for the consecutive $n > 0$ we choose $\alpha_\infty(n)$ so that $\alpha_\infty$ coincides on $[0, n]$ with infinitely many among the~$\alpha_k$'s. Such choice is always possible and guarantees the existence of a subsequence $(\alpha_{k_i})$.

The obtained sequence $\alpha_\infty$ is a geodesic because every its initial segment $\alpha_\infty \big|_{[0, i]}$ coincides with an initial segment $\alpha_{k_i} \big|_{[0, i]}$ of a geodesic. (We note that we can obtain an increasing sequence $(k_i)$). In this situation, from Lemma~5.2.1 in~\cite{zolta} and the definition of the topology in $G \cup \partial G$ it follows that $x_{\infty} = \lim_{i \rightarrow \infty} x_{k_i}$ holds in~$\partial G$. 
On the other hand, we have $\gamma_\infty(k) = g$, and so $x = [\gamma_\infty] \in \sppan(g)$.
\end{proof}

\subsection{Cone types and their analogues in~$\partial G$}

\begin{df}[cf.~\cite{CDP}]
\label{def-typ-stozkowy}
We define the \textit{cone type} $T^c(x)$ of $x \in G$ as the set of all $y \in G$ such that there exists a geodesic connecting $e$ to $xy$ and passing through $x$.
\end{df}

Elements of the set $xT^c(x)$ will be called \textit{descendants} of~$x$.

\begin{fakt}[{\cite[Chapter~12.3]{CDP}}]
\label{fakt-przechodniosc-potomkow}
The relation of being a descendant is transitive: if $y \in T^c(x)$ and $w \in T^c(xy)$, then $yw \in T^c(x)$.
\end{fakt}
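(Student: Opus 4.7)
The plan is to rephrase the condition ``$y \in T^c(x)$'' as an additive length relation, and then chain these relations together. Specifically, I first observe that a geodesic from $e$ to $xy$ passing through $x$ exists precisely when $|xy| = |x| + |y|$: the ``only if'' is immediate from the definition of a geodesic and the left-invariance of the metric (which gives $d(x, xy) = |y|$), and the ``if'' direction follows by concatenating any geodesic $[e,x]$ with any geodesic $[x, xy]$ (the latter of length $|y|$) to obtain a path of length $|x| + |y| = |xy|$, which is therefore a geodesic.

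With this reformulation in hand, the hypotheses become $|xy| = |x| + |y|$ and $|xyw| = |xy| + |w|$. Substituting gives
\[ |xyw| \;=\; |x| + |y| + |w|. \]
I then want to conclude $|xyw| = |x| + |yw|$, which is the length characterisation of $yw \in T^c(x)$. For this I sandwich: on the one hand $|xyw| \leq |x| + |yw|$ by the triangle inequality, and on the other hand $|x| + |yw| \leq |x| + |y| + |w| = |xyw|$. Hence equality holds throughout, which also yields the side observation $|yw| = |y| + |w|$.

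Finally, from $|xyw| = |x| + |yw|$ I build the required geodesic by concatenating any geodesic $[e, x]$ with any geodesic $[x, xyw]$; the total length matches $d(e, xyw)$, so the concatenation is a geodesic from $e$ to $xyw$ through $x$, witnessing $yw \in T^c(x)$.

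I do not anticipate a serious obstacle here: the only mildly delicate point is the equivalence between ``a geodesic through $x$ exists'' and the additive length identity, which however is standard and follows directly from the Cayley-graph metric being left-invariant. Once this equivalence is recorded, the lemma reduces to a two-line manipulation of lengths.
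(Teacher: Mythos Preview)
Your proof is correct. The paper does not actually give its own proof of this lemma; it merely cites \cite[Chapter~12.3]{CDP}. Your argument via the additive length characterisation $y \in T^c(x) \Leftrightarrow |xy| = |x| + |y|$ is the standard one and is precisely what the cited reference does, so there is nothing to add.
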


\begin{fakt}
\label{fakt-synowie-typy-stozkowe}
If $y \in T^c(x)$, then the cone type $T^c(xy)$ is determined by $T^c(x)$ and $y$. 
\end{fakt}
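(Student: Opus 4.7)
The plan is to give an explicit set-theoretic formula expressing $T^c(xy)$ in terms of $T^c(x)$, $y$, and $T^c(y)$, and then observe that $T^c(y)$ itself is determined by $y$ alone. First I would recall the length-based reformulation of the definition: $a \in T^c(b)$ holds if and only if $|ba| = |b| + |a|$, which is immediate from the fact that a geodesic $[e, ba]$ passes through $b$ exactly when the concatenation of $[e, b]$ with $b \cdot [e, a]$ is itself a geodesic.

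Assuming $y \in T^c(x)$, I would then prove the key identity
\[ T^c(xy) \;=\; \bigl\{ z \in G \;:\; yz \in T^c(x) \bigr\} \,\cap\, T^c(y). \]
For the inclusion ``$\subseteq$'', if $z \in T^c(xy)$, then Lemma~\ref{fakt-przechodniosc-potomkow} applied to $y \in T^c(x)$ and $z \in T^c(xy)$ directly yields $yz \in T^c(x)$. Moreover, combining $|xyz| = |xy| + |z| = |x| + |y| + |z|$ (using $y \in T^c(x)$) with the triangle inequalities $|xyz| \leq |x| + |yz|$ and $|yz| \leq |y| + |z|$, both of these must be equalities; the second one says precisely $z \in T^c(y)$. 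For the inclusion ``$\supseteq$'', if $yz \in T^c(x)$ and $z \in T^c(y)$, then
\[ |xyz| \;=\; |x| + |yz| \;=\; |x| + |y| + |z| \;=\; |xy| + |z|, \]
again using $y \in T^c(x)$, so $z \in T^c(xy)$.

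Finally I would conclude by observing that the right-hand side of the identity depends only on the datum of $T^c(x)$ and $y$: the membership condition $yz \in T^c(x)$ is decided by left-multiplying $z$ by the given element $y$ and checking membership in the given set $T^c(x)$, while $T^c(y)$ is a function of $y$ alone. Hence any two elements $x_1, x_2$ with $T^c(x_1) = T^c(x_2)$ must yield $T^c(x_1 y) = T^c(x_2 y)$, which is the claim.

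I do not anticipate a serious obstacle; the argument is purely combinatorial manipulation of lengths together with the transitivity Lemma~\ref{fakt-przechodniosc-potomkow}. The only point worth flagging is that one is tempted to guess the simpler characterization $z \in T^c(xy) \iff yz \in T^c(x)$, but the backward direction of this fails without the extra condition $z \in T^c(y)$, since $yz \in T^c(x)$ alone only gives $|xyz| = |x| + |yz| \leq |xy| + |z|$ and not equality. Recognizing that this second condition is needed, and that it is automatically available once $y$ is given, is the substantive step.
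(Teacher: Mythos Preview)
Your proof is correct. The identity $T^c(xy) = \{ z : yz \in T^c(x) \} \cap T^c(y)$ is established cleanly, and the observation that the right-hand side depends only on $T^c(x)$ and $y$ (via $T^c(y)$) gives the claim. In fact you could even dispense with the transitivity lemma in the ``$\subseteq$'' direction: the same chain of inequalities $|x| + |y| + |z| = |xyz| \leq |x| + |yz| \leq |x| + |y| + |z|$ that gives $z \in T^c(y)$ simultaneously gives $|xyz| = |x| + |yz|$, i.e.\ $yz \in T^c(x)$.

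The paper takes a different route: it simply cites Lemma~12.4.3 of \cite{CDP}, which handles the case $|y|=1$ (a single generator step), and invokes it repeatedly along a geodesic from $x$ to $xy$. Your argument is more direct and more informative, since it produces an explicit closed formula for $T^c(xy)$ valid for arbitrary $y$ in one shot, whereas the iterative approach hides this structure. The trade-off is that the paper's version offloads all the work to an external reference, while yours keeps everything internal to the length characterisation of cone types.
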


\begin{proof}
This results from multiple application of Lemma 12.4.3 in~\cite{CDP}.
\end{proof}

\begin{df}
The \textit{span} of an element $g \in G$ (denoted $\sppan(g)$) is the set of all $x \in \partial G$ such that there exists a geodesic from $e$ to $x$ passing through $g$. 
\end{df}

\begin{fakt}
The set $\sppan(g)$ is closed for every $g \in G$. 
\end{fakt}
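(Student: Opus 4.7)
The plan is to argue by sequential closedness, using the diagonal extraction from Lemma~\ref{fakt-geodezyjne-przekatniowo}. Fix $g \in G$ and take any sequence $(x_n)_{n \geq 0}$ in $\sppan(g)$ converging to some $x \in \partial G$; I want to produce a geodesic ray from $e$ to $x$ that passes through $g$. By the definition of $\sppan(g)$, for each $n$ there exists a geodesic ray $\gamma_n$ starting at $e$ with $\lim_{k \to \infty} \gamma_n(k) = x_n$ which passes through $g$. Because $\gamma_n$ is a geodesic starting at $e$ and hitting $g$, necessarily $\gamma_n(|g|) = g$.

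Next, I would apply Lemma~\ref{fakt-geodezyjne-przekatniowo} to the sequence $(\gamma_n)$: this yields a subsequence $(\gamma_{n_i})$ and a limiting geodesic ray $\gamma_\infty$ starting at $e$ such that $\gamma_\infty$ coincides with $\gamma_{n_i}$ on the segment $[0, i]$, and such that $x_\infty := \lim_{k \to \infty} \gamma_\infty(k) = \lim_{i \to \infty} x_{n_i}$ in~$\partial G$. Since the subsequence of the convergent sequence $(x_n)$ has the same limit, $x_\infty = x$.

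Finally, I would verify that $\gamma_\infty$ still passes through $g$. For every $i \geq |g|$ we have $\gamma_\infty(|g|) = \gamma_{n_i}(|g|) = g$, so $g$ lies on $\gamma_\infty$. Consequently $x \in \sppan(g)$, which proves the closedness.

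There is essentially no obstacle here beyond invoking the previous lemma correctly; the only point one has to be careful about is that ``passing through $g$'' along a geodesic starting at $e$ forces $g$ to occupy the specific position $|g|$ on the ray, which is what makes the coincidence on the initial segment $[0, i]$ transfer the property ``$g$ lies on the geodesic'' from each $\gamma_{n_i}$ to the limit $\gamma_\infty$.
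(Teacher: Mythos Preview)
Your proof is correct and follows essentially the same approach as the paper's own proof: both choose geodesic rays $\gamma_n$ from $e$ through $g$ to $x_n$, apply Lemma~\ref{fakt-geodezyjne-przekatniowo} to extract a limiting geodesic $\gamma_\infty$, and observe that $\gamma_\infty(|g|) = g$ while $[\gamma_\infty] = x$. Your write-up is in fact slightly more explicit about why $\gamma_\infty$ passes through~$g$ (namely, once $i \geq |g|$ the coincidence on $[0,i]$ forces $\gamma_\infty(|g|) = \gamma_{n_i}(|g|) = g$).
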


\begin{proof}
Denote $|g| = k$ and let $x_i$ be a sequence in $\sppan(g)$ converging to $x \in \partial G$. We will show that $x$ also belongs to $\sppan(g)$. Let $\gamma_i$ be a geodesic in~$G$ starting in~$e$, converging to~$x_i$ and such that $\gamma_i(k) = g$. By Lemma~\ref{fakt-geodezyjne-przekatniowo}, there is a subsequence $(\gamma_{i_j})$ which is increasingly coincident with some geodesic~$\gamma_\infty$; in particular, we have $\gamma_\infty(0) = e$ and $\gamma_\infty(k) = g$. Moreover, Lemma~\ref{fakt-geodezyjne-przekatniowo} ensures that~$[\gamma_\infty] \in \partial G$ is the limit of~$x_{i_j}$, so it is equal to~$x$. This means that $x \in \sppan(g)$.
\end{proof}

\begin{fakt}
\label{fakt-stozek-a-span}
For any $g \in G$, $\sppan(g)$ is the set of limits in $\partial G$ of all geodesic rays in $G$ starting at $g$ and contained in $gT^c(g)$.
\end{fakt}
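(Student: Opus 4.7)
My plan is to prove both inclusions of the claimed set equality separately, with the key observation being that a geodesic ray from $e$ through $g$ naturally decomposes into a geodesic segment $[e,g]$ followed by a ``tail'' ray starting at $g$, and conversely such pieces can be concatenated whenever the tail stays in $gT^c(g)$.

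For the inclusion $\supseteq$, I would start with a geodesic ray $\alpha:\mathbb{N}\to G$ with $\alpha(0)=g$ converging to some $x\in\partial G$ and such that $\alpha(n)\in gT^c(g)$ for every $n$. Writing $k=|g|$, the hypothesis $\alpha(n)\in gT^c(g)$ means by Definition~\ref{def-typ-stozkowy} that there is a geodesic from $e$ to $\alpha(n)$ passing through $g$, and in particular $|\alpha(n)|=k+n$. Pick any geodesic $\sigma:[0,k]\cap\mathbb{Z}\to G$ from $e$ to $g$ and define $\gamma:\mathbb{N}\to G$ by $\gamma(i)=\sigma(i)$ for $i\leq k$ and $\gamma(i)=\alpha(i-k)$ for $i\geq k$. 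For any $m$, the initial segment $\gamma|_{[0,m]}$ has length $m$ and endpoints $e,\gamma(m)$, whose distance equals $m$ (either trivially or by the length computation above), so $\gamma$ is a geodesic ray. Its limit in $\partial G$ equals $[\alpha]=x$, because $d(\gamma(n),\alpha(n))\leq 2k$ for all $n$, so $(\gamma(n))\sim(\alpha(n))$ under the equivalence relation from Section~\ref{sec-def-hip}. Since $\gamma(k)=g$, this shows $x\in\sppan(g)$.

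For the inclusion $\subseteq$, suppose $x\in\sppan(g)$ and let $\gamma$ be a geodesic ray from $e$ to $x$ passing through $g$, necessarily with $\gamma(k)=g$. Define $\alpha(n):=\gamma(n+k)$; this is a geodesic ray starting at $g$. For every $n$, the initial segment $\gamma|_{[0,n+k]}$ is a geodesic from $e$ to $\alpha(n)$ passing through $g$, so $g^{-1}\alpha(n)\in T^c(g)$, i.e.\ $\alpha(n)\in gT^c(g)$. Finally, $\alpha$ and $\gamma$ agree up to an index shift by $k$, so $d(\alpha(n),\gamma(n))\leq k$ for all $n$ large enough and hence $[\alpha]=[\gamma]=x$.

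No step here is really an obstacle: the only mildly delicate point is to verify that the concatenated ray $\gamma$ in the first direction is genuinely a geodesic, which reduces to the identity $|\alpha(n)|=k+n$ guaranteed by membership in $gT^c(g)$. Everything else is direct manipulation of the definitions and the fact that the equivalence class of a geodesic ray in $\partial G$ is invariant under truncating or prepending a finite initial segment.
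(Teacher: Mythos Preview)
Your proof is correct and follows essentially the same approach as the paper: for $\supseteq$ you concatenate a geodesic $[e,g]$ with the ray $\alpha$ and verify the result is geodesic via $|\alpha(n)|=k+n$, and for $\subseteq$ you truncate a geodesic through $g$ at the point $g$. The paper treats the latter direction as ``obvious'' while you spell it out, but there is no substantive difference.
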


\begin{proof}
Denote $|g| = k$. Let $\alpha$ be a geodesic starting at $g$ and contained in $gT^c(g)$. From the definition of the set $T^c(g)$ it follows that for any $n > 0$ we have $|\alpha(n)| = n + k$. This shows that for any geodesic $\beta$ connecting $e$ with $g$ the curve $\beta \cup \alpha$ is geodesic, because for any $m > k$ its restriction to $[0, m]$ connects the points $e$ and $\alpha(m - k)$ which have distance exactly $m$ from each other. Therefore $\lim_{n \rightarrow \infty} \alpha(n) = \lim_{m \rightarrow \infty} \beta(m)$ belongs to $\sppan(g)$.

The opposite inclusion is obvious.
\end{proof}

Let us fix a constant $a > 1$ (depending on the group $G$) used in the definition of the visual metric on $\partial G$.

\begin{fakt}
\label{fakt-spany-male}
Let $g \in G$. If $|g| = n$, then $\diam \sppan(g) \leq C \cdot a^{-n}$, where $C$ is a constant depending only on $G$.
\end{fakt}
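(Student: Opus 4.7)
The plan is to pass through the distance function $d_a(p,q) = a^{-l(p,q)}$ which (by the estimate~(P2) quoted in Section~\ref{sec-def-hip}) is bi-Lipschitz equivalent to $d_v^{(a)}$, where $l(p,q)$ denotes the largest value of $\dist(e, \beta)$ as $\beta$ ranges over bi-infinite geodesics joining $p$ with $q$. It will thus suffice to show $l(p,q) \geq n - 24\delta$ for every pair $p, q \in \sppan(g)$, as then $d_a(p,q) \leq a^{24\delta} \cdot a^{-n}$, and multiplying by the bi-Lipschitz constant will yield the desired $C$. The case $p = q$ being trivial, I would assume $p \neq q$, pick geodesic rays $\alpha_p, \alpha_q$ from $e$ to $p, q$ both passing through $g$ at time $n$, let $\beta$ be any bi-infinite geodesic from $p$ to $q$, and let $x_0 \in \beta$ realise the minimum of $|\cdot|$ on $\beta$; it then remains to show $|x_0| \geq n - 24\delta$.

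For this, I would consider the two ideal geodesic triangles with vertices $(e, x_0, p)$ and $(e, x_0, q)$. Each has exactly one vertex in $\partial G$, so by Lemma~\ref{fakt-waskie-trojkaty} its sides lie in the $8\delta$-neighbourhoods of the unions of the other two. Applied to the point $g$, which lies on $\alpha_p$ in the first triangle and on $\alpha_q$ in the second, this gives points in $[e, x_0] \cup \beta|_{[x_0, p)}$ and in $[e, x_0] \cup \beta|_{[x_0, q)}$ within $8\delta$ of $g$. If, in either triangle, this point lies on $[e, x_0]$ (call it $z$), then $|z| \geq |g| - 8\delta = n - 8\delta$ and $|z| \leq |x_0|$, giving the bound (with room to spare). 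In the remaining case, $g$ is within $8\delta$ of points $w_p \in \beta|_{[x_0, p)}$ and $w_q \in \beta|_{[x_0, q)}$, which lie on opposite sides of $x_0$ along the geodesic $\beta$. Consequently $d(w_p, x_0) + d(x_0, w_q) = d(w_p, w_q) \leq d(w_p, g) + d(g, w_q) \leq 16\delta$, and hence $d(x_0, g) \leq d(x_0, w_p) + d(w_p, g) \leq 24\delta$, yielding $|x_0| \geq |g| - 24\delta = n - 24\delta$ as needed.

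The only mildly nontrivial point is the branching of cases after the thin-triangle invocation: the situation where $g$ lies close to $\beta$ rather than to $[e, x_0]$ does not yield the bound from either triangle alone, and must be handled by combining information from the $\alpha_p$- and $\alpha_q$-triangles, using that $x_0$ (being the minimum of $|\cdot|$ on $\beta$) separates $w_p$ from $w_q$ along $\beta$. Apart from this small twist, the argument is a straightforward application of $\delta$-thinness together with the bi-Lipschitz comparison between $d_a$ and $d_v^{(a)}$.
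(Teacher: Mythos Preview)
Your argument is correct. The case split is handled properly: in the remaining case you correctly exploit that $x_0$ lies between $w_p$ and $w_q$ on the geodesic $\beta$, so the triangle inequality forces $d(g,x_0)\le 24\delta$.

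The paper takes a slightly different route. Instead of analysing the point $x_0$ of minimal norm on~$\beta$ and splitting into cases, it first replaces~$\alpha_q$ by the concatenation of $\alpha_p|_{[0,n]}$ with $\alpha_q|_{[n,\infty)}$, invoking \cite[Lemma~12.3.1]{CDP} to certify that this concatenation is still a geodesic; now the two rays to $p$ and $q$ literally agree on $[0,n]$, and a black-box lemma from~\cite{zolta} (Lemma~5.2.1 there) directly yields $d(e,\gamma)\ge n-12\delta$ for any bi-infinite geodesic~$\gamma$ from $p$ to~$q$. So the paper's proof is shorter but relies on two external citations, while yours is fully self-contained modulo Lemma~\ref{fakt-waskie-trojkaty}, at the cost of a slightly worse constant ($24\delta$ versus $12\delta$), which is immaterial here.
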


\begin{proof}
Let $x, y \in \sppan(g)$ and $\alpha, \beta$ be geodesics following from $e$ through $g$ correspondingly to $x$ and $y$. By Lemma 12.3.1 in~\cite{CDP}, the path $\overline{\beta}$ built by joining the restrictions $\alpha \big|_{[0, n]}$ and $\beta \big|_{[n, \infty)}$ is a geodesic converging to~$y$. On the other hand, $\overline{\beta}$ coincides with~$\alpha$ on the interval~$[0, n]$. Then, if $\gamma$ is a bi-infinite geodesic connecting~$x$ with~$y$, from Lemma~5.2.1 in~\cite{zolta} we obtain $d(e, \gamma) \geq n - 12\delta$, which finishes the proof.
\end{proof}

\subsection{Ball $N$-types}

\label{sec-typy-kulowe}

\begin{ozn}
For any $x \in G$ and $r > 0$, we denote by $B_r(x)$ the set $\{ y \in G \,|\, d(x, y) \leq r \}$. 
\end{ozn}

\begin{df}[{\cite[Chapter~12]{CDP}}]
\label{def-typ-kulowy}
Let $x \in G$ and $N > 0$. We define the \textit{ball $N$-type} of an element $x$ (denoted $T^b_N(x)$) as the function $f^b_{x,\,N} : B_N(e) \rightarrow \mathbb{Z}$, given by formula
\begin{align} 
\label{eq-def-n-typu}
 f^b_{x,\,N}(y) = |xy| - |x|. 
\end{align}
\end{df}

\begin{lem}[{\cite[Lemma~12.3.3]{CDP}}]
\label{lem-kulowy-wyznacza-stozkowy}
There exists a constant $N_0$, depending only on $G$, such that for any $N \geq N_0$ and $x, y \in G$, the equality $T^b_N(x) = T^b_N(y)$ implies that $T^c(x) = T^c(y)$.
\end{lem}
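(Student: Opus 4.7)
The plan is to prove by induction on $|z|$ that if $T^b_N(x) = T^b_N(y)$ with $N \geq N_0 := 4\delta + 1$, then $z \in T^c(x) \implies z \in T^c(y)$; by symmetry this will give equality of the two cone types. For the base case $|z| \leq N$, the observation $z \in T^c(x) \iff f^b_{x,N}(z) = |z|$, combined with equality of ball $N$-types, immediately yields $z \in T^c(y)$.

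For the inductive step with $|z| > N$, I fix a geodesic $[e, z]$ in~$G$ and decompose $z = z_0 s$, where $|z_0| = |z| - 1$ and $s$ is a generator. A routine triangle-inequality argument (using $|xz| = |x| + |z|$ and $d(xz, x z_0) = 1$) shows $z_0 \in T^c(x)$; by induction, $z_0 \in T^c(y)$, so $|y z_0| = |y| + |z_0|$. It remains to check $|y z_0 s| = |y z_0| + 1$, which I prove by contradiction: assume $|y z_0 s| \leq |y z_0|$ and let $\sigma$ denote a geodesic from $e$ to $y z_0 s$ of length $\leq |y z_0|$.

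The heart of the argument is a hyperbolic thin-triangle step. Consider the geodesic triangle with vertices $e$, $y z_0$, $y z_0 s$ and sides: a geodesic $\gamma$ from $e$ to $y z_0$ passing through $y$ (such a $\gamma$ exists since $|y z_0| = |y| + |z_0|$), the one-edge segment $[y z_0, y z_0 s]$, and $\sigma$. By Lemma~\ref{fakt-waskie-trojkaty} (with $p = 0$), $\gamma$ is contained in the $4\delta$-neighbourhood of the union of the other two sides. The vertex $y$ sits on $\gamma$ at distance $|z_0| \geq N > 4\delta$ from $y z_0$, hence strictly farther than $4\delta$ from every point of the one-edge side; therefore $y$ must lie within $4\delta$ of some point $p' \in \sigma$. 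Writing $p' = y u$, we get $|u| \leq 4\delta \leq N$, and the ball-type equality yields $|xu| = |x| + |p'| - |y|$. Since $d(xu, x z_0 s) = |u^{-1} z_0 s| = d(p', y z_0 s)$ (by left-invariance) and $|p'| + d(p', y z_0 s) = |y z_0 s|$ (because $p' \in \sigma$), the triangle inequality gives
\[ |x z_0 s| \,\leq\, |xu| + d(xu, x z_0 s) \,=\, (|x| - |y|) + |y z_0 s| \,\leq\, |x| + |z_0|, \]
contradicting $|x z_0 s| = |x| + |z_0| + 1$, which follows from $z \in T^c(x)$.

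The expected main obstacle is the hyperbolic step above. The subtlety is that a geodesic $\sigma$ from $e$ to $y z_0 s$ can generically avoid the vertex $y$; what saves us is that the opposite side of our triangle has length only~$1$, so $4\delta$-thinness is forced to hug $\sigma$ to $y$, and the ball-type equality then transports the resulting ``short detour'' $u$ from the $y$-side to the $x$-side, producing the contradiction with $z \in T^c(x)$. Everything else is elementary bookkeeping via the triangle inequality and the definitions of cone type and ball $N$-type.
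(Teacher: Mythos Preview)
The paper does not supply its own proof of this proposition: it is quoted verbatim from \cite[Lemma~12.3.3]{CDP} with no argument given. Your proof is correct and is essentially the classical Cannon--CDP induction on~$|z|$, so there is nothing to compare against.

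One small tightening you should make explicit: when you argue that $y$ is farther than $4\delta$ from the one-edge side $[yz_0,\,yz_0 s]$, the sentence ``at distance $|z_0| \geq N > 4\delta$ from $yz_0$, hence strictly farther than $4\delta$ from every point of the one-edge side'' is not quite a justification by itself. What actually works is that the two vertices of that side are $yz_0$ and $yz_0 s = yz$, and by left-invariance $d(y, yz_0) = |z_0| \geq N > 4\delta$ and $d(y, yz) = |z| > N > 4\delta$. Since geodesics here are sequences of vertices (the paper defines them as isometric embeddings of $[0,n] \cap \mathbb{Z}$), these two inequalities already cover the entire side. Once that is said, the rest of your thin-triangle and ball-type transport argument goes through cleanly.
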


\begin{fakt}
\label{fakt-kulowy-duzy-wyznacza-maly}
Let $x, y \in G$, $N, k > 0$ and $|y| \leq k$. Then, $T^b_N(xy)$ depends only on $T^b_{N + k}(x)$, $y$ and~$N$. \qed
\end{fakt}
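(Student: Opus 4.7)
The plan is to derive the value of $T^b_N(xy)$ at every argument $z \in B_N(e)$ by a direct algebraic manipulation of the defining formula, expressing $|xyz| - |xy|$ as a difference of two values that can be read off from $T^b_{N+k}(x)$.

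More precisely, I would start by unwinding the definition: for any $z \in B_N(e)$ we must compute
\[ f^b_{xy,\,N}(z) \;=\; |xyz| - |xy|. \]
The idea is to insert $|x|$ artificially, writing $|xyz| - |xy| = (|xyz| - |x|) - (|xy| - |x|)$, and then recognise each parenthesised expression as a value of $f^b_{x,\,N+k}$. For the first summand, I would observe that $|yz| \leq |y| + |z| \leq k + N$, so $yz$ lies in $B_{N+k}(e)$; hence
\[ |xyz| - |x| \;=\; f^b_{x,\,N+k}(yz), \]
which is determined by $T^b_{N+k}(x)$ and by the element $yz$ (which in turn depends only on $y$ and $z$). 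For the second summand, since $|y| \leq k \leq N+k$, the element $y$ also lies in $B_{N+k}(e)$, so
\[ |xy| - |x| \;=\; f^b_{x,\,N+k}(y), \]
and this is again a quantity determined by $T^b_{N+k}(x)$ and $y$.

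Putting the two displays together yields
\[ f^b_{xy,\,N}(z) \;=\; f^b_{x,\,N+k}(yz) - f^b_{x,\,N+k}(y), \]
an expression whose right-hand side depends only on $T^b_{N+k}(x)$, $y$, $N$ and the chosen argument $z \in B_N(e)$. Since this describes the function $f^b_{xy,\,N}$ on its entire domain $B_N(e)$, the ball $N$-type $T^b_N(xy)$ is determined as claimed. I do not anticipate a serious obstacle here: the only thing to verify is that both $yz$ and $y$ fall within the larger ball $B_{N+k}(e)$, and both are immediate consequences of the triangle inequality together with the bound $|y| \leq k$.
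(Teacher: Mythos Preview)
Your proof is correct and essentially identical to the paper's own argument: the paper also inserts $|x|$ to write $f'(z) = f(yz) - f(y)$ after noting that $yz$ and $y$ both lie in $B_{N+k}(e)$.
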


\begin{proof}
Let $f$, $f'$ denote the functions of $(N+k)$-type for $x$ and $N$-type for $xy$, respectively. Let $z \in B_N(e)$. 
Then, $yz$ and $y$ both belong to $B_{N + k}(e)$, which is the domain of~$f$, and moreover
\[ f'(z) = |xyz| - |xy| = |xyz| - |x| - (|xy| - |x|) = f(yz) - f(y). \qedhere \]
\end{proof}

\begin{lem}
\label{lem-potomkowie-dla-kulowych}
Let $N_0$ be the constant from Proposition~\ref{lem-kulowy-wyznacza-stozkowy}. Let $N > N_0 + 8\delta$, $M \geq 0$, $x \in G$ and $y \in T^c(x)$, where $|y| \geq M + 4\delta$. Then, $T^b_M(xy)$ depends only on $T^b_N(x)$, $y$ and $N$, $M$.
\end{lem}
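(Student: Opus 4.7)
The plan is to establish an explicit formula expressing $|xyz|-|x|$ in terms of the allowed data, and then read off $T^b_M(xy)(z) = |xyz|-|xy|$ using the identity $|xy|=|x|+|y|$ (valid because $y \in T^c(x)$). Concretely, I will prove that for every $z \in B_M(e)$,
\[ |xyz| - |x| \;=\; \min_{w \in B_N(e)} \bigl(f^b_{x,N}(w) + |w^{-1}yz|\bigr). \]
The right-hand side visibly depends only on $T^b_N(x)$, $y$, $z$ and $N$, since each $|w^{-1}yz|$ is an intrinsic word-length in $G$. Granting the formula, the proof concludes by
\[ T^b_M(xy)(z) \;=\; \min_{w \in B_N(e)} \bigl(f^b_{x,N}(w) + |w^{-1}yz|\bigr) - |y|, \]
which depends only on $T^b_N(x)$, $y$, $z$, $N$, $M$, as required.

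The ``$\leq$'' direction of the displayed formula is the triangle inequality applied to $xyz = (xw)\cdot(w^{-1}yz)$: for any $w \in B_N(e)$, we get $|xyz| \leq |xw| + |w^{-1}yz| = |x| + f^b_{x,N}(w) + |w^{-1}yz|$. The key step is the matching ``$\geq$'' direction, for which I will produce a~single $w \in B_N(e)$ realising equality. Since $y \in T^c(x)$, there is a~geodesic $\alpha$ from $e$ to $xy$ with $\alpha(|x|) = x$ and $|\alpha| = n := |x|+|y|$. Let $\beta$ be any geodesic from $e$ to $xyz$; setting $k := d(xy, xyz) \leq |z| \leq M$ and using $|y| \geq M + 4\delta$, we have $|xyz| \geq |xy| - k \geq |x| + 4\delta$, so $\beta(|x|)$ is well defined. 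I will apply Corollary~\ref{wn-krzywe-geodezyjne-pozostaja-bliskie} to $\alpha, \beta$ at parameter $m = |x|$: the hypothesis $|y| \geq M + 4\delta$ makes $2k + 8\delta - 2(n-m) = 2k + 8\delta - 2|y| \leq 0$, so the maximum in the corollary vanishes and $d(x, \beta(|x|)) \leq 8\delta$. Writing $\beta(|x|) = xw$, we obtain $|w| \leq 8\delta < N$, hence $w \in B_N(e)$; further, since $xw$ lies on the geodesic $\beta$, we have $|xw| = |x|$ (equivalently $f^b_{x,N}(w) = 0$) and $|w^{-1}yz| = d(xw, xyz) = |xyz| - |x|$, giving the matching upper bound.

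The main obstacle is extracting a~point of $\beta$ that is simultaneously near $x$ and inside the ball $B_N(x)$ prescribed by the hypothesis, so that the relevant value $f^b_{x,N}(w)$ is actually part of the data we are given. This is exactly where the fellow-traveller condition $|y| \geq M + 4\delta$ is used in an essential way, by making the $k$-dependent error term in Corollary~\ref{wn-krzywe-geodezyjne-pozostaja-bliskie} non-positive at the critical parameter $m=|x|$; once this is secured, the rest of the argument is just two applications of the triangle inequality.
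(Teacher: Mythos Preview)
Your proof is correct. The geometric core --- locating the point $xw=\beta(|x|)$ at distance $\leq 8\delta$ from~$x$ via Corollary~\ref{wn-krzywe-geodezyjne-pozostaja-bliskie} --- is identical to the paper's argument, but you package the conclusion differently. The paper takes a second element $x'$ with $T^b_N(x')=T^b_N(x)$ and, after finding~$w$, invokes Proposition~\ref{lem-kulowy-wyznacza-stozkowy} to transfer the cone type $T^c(xw)=T^c(x'w)$; this is what allows it to conclude $|x'yz|=|x'w|+|w^{-1}yz|$ and hence equate the two sides. Your explicit minimum formula bypasses this entirely: the ``$\leq$'' direction is just the triangle inequality and the ``$\geq$'' direction is witnessed by the very~$w$ you found, so the dependence on $T^b_N(x)$ is visible without ever mentioning cone types. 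As a bonus, your argument only uses $N>8\delta$; the full hypothesis $N>N_0+8\delta$ (and indeed the constant~$N_0$) is never invoked. The paper's route has the mild advantage of keeping Proposition~\ref{lem-kulowy-wyznacza-stozkowy} in the foreground, which is thematically consistent with how ball-type arguments are organised elsewhere in the section, but your formula is self-contained and slightly sharper.
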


Note that the value $M \geq 0$ in this proposition can be chosen arbitrarily.

\begin{proof}
Let $x, x' \in G$ be such that $T^b_N(x) = T^b_N(x')$. Denote $n = |x|$.

Let $z \in B_M(e)$. We need to prove that
\begin{align} 
\label{eq-potomkowie-do-spr}
|xyz| - |xy| = |x'yz| - |x'y|. 
\end{align}
Let $\alpha, \beta$ be geodesics connecting $e$ respectively with $xy$ and $xyz$; we can assume that $\alpha$ passes through $x$. Denote $w = x^{-1}\beta(n)$. Since $n \leq |xy| - \tfrac{2M}{2} - 4\delta$, by applying Corollary \ref{wn-krzywe-geodezyjne-pozostaja-bliskie} for geodesics $\alpha$, $\beta$, we obtain
\[ |w| = d(x, xw) = d(\alpha(n), \beta(n)) \leq 8\delta. \]
Then, by the equality $T^b_N(x) = T^b_N(x')$, we deduce from Lemma~\ref{fakt-kulowy-duzy-wyznacza-maly} that $T^b_{N - 8\delta}(xw) = T^b_{N - 8\delta}(x'w)$. 
By Proposition \ref{lem-kulowy-wyznacza-stozkowy}, we obtain
\[ T^c(x) = T^c(x'), \qquad T^c(xw) = T^c(x'w), \]
where $y$ belongs to the first set and $w^{-1}yz$ to the second one. This gives \eqref{eq-potomkowie-do-spr} because
\[ |xyz| - |xy| = |xw| + |w^{-1}yz| - (|x| + |y|) = |w^{-1}yz| - |y| = |x'w| + |w^{-1}yz| - (|x'| + |y|) = |x'yz| - |x'y|. \qedhere \]
\end{proof}

\begin{fakt}
\label{fakt-kuzyni-lub-torsje}
For any $r > 0$ there is $N_r > 0$ such that for any $N \geq N_r$ and $g, h \in G$, the conditions
\[ |h| \leq r, \qquad |gh| = |g|, \qquad T^b_N(gh) = T^b_N(g) \]
imply that $h$ is a torsion element.
\end{fakt}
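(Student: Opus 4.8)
The plan is to argue by contradiction: fix $r > 0$ and suppose that for every $N$ there were elements $g_N, h_N$ with $|h_N| \leq r$, $|g_N h_N| = |g_N|$, $T^b_N(g_N h_N) = T^b_N(g_N)$, yet $h_N$ of infinite order. Since there are only finitely many elements of length $\leq r$, by passing to a subsequence we may assume $h_N = h$ is a single element of infinite order, independent of $N$. The idea is then that the hypotheses $|gh| = |g|$ together with $T^b_N(gh) = T^b_N(g)$ for large $N$ force $g$ and $gh$ to have the same cone type (via Proposition~\ref{lem-kulowy-wyznacza-stozkowy}), and iterating this should produce a geodesic ray on which $h$ acts by an unbounded translation, contradicting that $h$ is not a torsion element only if $h$ has finite order --- wait, rather, it would show the orbit $\{h^k\}$ stays within bounded distance of a geodesic, giving quasi-geodesic behaviour of $\langle h \rangle$, which for infinite-order $h$ is fine; so the contradiction must instead come from the \emph{finiteness} of ball types.

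More carefully: the key step is an iteration. Set $N_r := N_0 + 8\delta + r$ (or a slightly larger explicit constant) and take $N \geq N_r$. From $T^b_N(g) = T^b_N(gh)$ and $|h| \leq r$, Lemma~\ref{fakt-kulowy-duzy-wyznacza-maly} (applied in the form that a large ball type controls shifted smaller ball types) lets us compare $T^b_{N-r}(g)$-derived data at $gh$; combined with Proposition~\ref{lem-kulowy-wyznacza-stozkowy} we get $T^c(g) = T^c(gh)$, and moreover $h \in T^c(g)$ because $|gh| = |g| \leq |g| $ is not quite enough --- here one uses that $|g h^k|$ cannot drop, so in fact $|gh^k| = |g|$ for all $k \geq 0$ by an easy induction using $T^c(g) = T^c(gh)$ and the fact that cone types determine the lengths $|gh^k y|-|gh^k|$. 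Thus all the elements $g, gh, gh^2, \ldots$ share one cone type, hence one ball $N$-type (since, by Proposition~\ref{lem-kulowy-wyznacza-stozkowy} read together with the fact that within a fixed cone type the restriction to $B_N$ is determined --- or more directly, $T^b_N(gh^{k+1})$ depends only on $T^b_N(gh^k)$ and $h$ by Lemma~\ref{fakt-kulowy-duzy-wyznacza-maly}-type reasoning, and the recursion has a fixed point). So the sequence $(gh^k)_{k \geq 0}$ consists of elements all of the same length $|g|$ and all of the same ball $N$-type.

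The main obstacle --- and the crux of the argument --- is now extracting the contradiction. The elements $gh^k$ all lie in the sphere of radius $|g|$, which is finite, so $gh^k = gh^l$ for some $k < l$, i.e. $h^{l-k} = e$, contradicting that $h$ has infinite order. That is in fact immediate and does not even need ball types; the subtlety is justifying $|gh^k| = |g|$ for all $k$, which is where the ball-type hypothesis is genuinely used (without it, $|gh^k|$ could grow). So the real work is the lemma-chaining in the previous paragraph: showing $T^c(g) = T^c(gh)$ and $h \in T^c(g)$ from the three displayed hypotheses, for which the clean route is to invoke Lemma~\ref{lem-potomkowie-dla-kulowych} or re-run its proof --- pick geodesics $[e, g]$ and $[e, gh]$, use Corollary~\ref{wn-krzywe-geodezyjne-pozostaja-bliskie} to find $w$ with $|w| \leq 8\delta$ and $gw$ close to a point on $[e,gh]$, then transfer ball-type equality down to $T^b_{N-8\delta}(gw) = T^b_{N-8\delta}((gh)w')$ and apply Proposition~\ref{lem-kulowy-wyznacza-stozkowy}. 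Choosing $N_r$ large enough that $N - 8\delta - (\text{error terms}) \geq N_0$ throughout the iteration makes everything go through, and the finiteness of the sphere of radius $|g|$ then closes the argument.
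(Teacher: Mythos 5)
Your plan has a genuine gap at the final step. The contradiction you propose --- ``all the elements $gh^k$ lie on the finite sphere of radius $|g|$, so two of them coincide'' --- requires $|gh^k|=|g|$ for a number of iterates $k$ comparable to the cardinality of that sphere, which is unbounded as $|g|\to\infty$. But your mechanism for propagating the hypothesis only survives about $N/r$ steps: Lemma~\ref{fakt-kulowy-duzy-wyznacza-maly} gives that $T^b_N(gh^k)$ determines $T^b_{N-r}(gh^{k+1})$, not $T^b_N(gh^{k+1})$, so the ball radius degrades by $|h|\le r$ at each iteration and the claimed ``fixed point of the recursion'' does not exist. The intermediate assertion that all $gh^k$ ``share one cone type, hence one ball $N$-type'' is also backwards: Proposition~\ref{lem-kulowy-wyznacza-stozkowy} says the ball type determines the cone type, not conversely (the cone type carries no information about directions of decreasing length, which is exactly what the ball type records). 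Since $N_r$ must be fixed before $g$ is given, you only obtain $|gh^k|=|g|$ for $k\lesssim N_r/r$, and the pigeonhole on the sphere of radius $|g|$ never closes.

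The ingredient you explicitly set aside in your first paragraph is the one the paper actually uses. Its proof invokes the fact that a non-torsion element $h$ is of \emph{hyperbolic type}, i.e.\ $(h^m)_{m\in\mathbb{Z}}$ is a bi-infinite quasi-geodesic, so $|h^m|\ge\lambda m-c\to\infty$; the contradiction is then extracted by running the argument of Proposition~7.3.1 of~\cite{zolta} with $4\delta$ replaced by $r$. Roughly, the boundedly many iterations that your degradation argument \emph{does} permit, combined with the transfer of the ball type backwards along geodesics $[e,g]$ and $[e,gh^m]$ and thin-triangle estimates, force $|h^m|$ to stay bounded for $m$ of order $N/r$, which contradicts the quasi-geodesic lower bound once $N$ is chosen large enough in terms of $r$ and the quasi-geodesic constants of $h$ (and these constants can be taken uniform over the finitely many $h$ with $|h|\le r$). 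Without bringing in the growth of $|h^m|$ (or some equivalent input), the hypotheses alone cannot be bootstrapped to all powers of $h$ in the way your sketch requires.
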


\begin{proof}
If $h$ is not a torsion element, then the remark following Proposition~1.7.3 in~\cite{zolta} states that it must be of hyperbolic type (which means that the sequence $(h^n)_{n \in \mathbb{Z}}$ is a bi-infinite quasi-geodesic in $G$). In this situation, a contradiction follows from the proof of Proposition~7.3.1 in~\cite{zolta}, provided that we replace in this proof the constant $4\delta$ by $r$. (This change may increase the value of~$N_r$ obtained from the proof but the argument does not require any other modification).
\end{proof}

\section{Quasi-invariant systems}

\label{sec-konstr}

The presentation of $\partial G$ as a Markov compactum will be obtained in the following steps:
\begin{itemize}[nolistsep]
 \item[(i)] choose a suitable system $\mathcal{U}$ of open covers of~$\partial G$;
 \item[(ii)] build an inverse system of nerves of these covers (and appropriate maps between them);
 \item[(iii)] prove that $\partial G$ is the inverse limit of this system;
 \item[(iv)] verify the Markov property (see~Definition~\ref{def-kompakt-markowa}).
\end{itemize}
The steps (ii-iii) and (iv) will be discussed in Sections~\ref{sec-engelking} and \ref{sec-markow} respectively. In this section, we focus on step~(i). We begin with introducing in Section~\ref{sec-konstr-quasi-niezm} the notion of a \textit{quasi-$G$-invariant system of covers} in $\partial G$ (or, more generally, in a compact metric $G$-space), which summarises the conditions under which we will be able to execute steps (ii-iv). Section~\ref{sec-konstr-gwiazda} contains proof an additional \textit{star property} for such systems; we will need it in Section~\ref{sec-engelking}. Finally, in Section~\ref{sec-konstr-pokrycia} we construct an example quasi-$G$-invariant system in~$\partial G$, which will serve as the basis for the construction of the Markov system representing $\partial G$.

\subsection{Definitions}
\label{sec-konstr-quasi-niezm}

Let $(X, d)$ be a metric space equipped with a homeomorphic action of a hyperbolic group $G$ (recall that we assume that $G$ is equipped with a fixed set of generators).

Definitions \ref{def-quasi-niezm} and~\ref{def-quasi-niezm-pokrycia} summarise conditions which --- as we will prove in Sections \ref{sec-konstr-gwiazda} and~\ref{sec-markow} --- are sufficient to make the construction of Section \ref{sec-engelking-top} (and in particular Theorem~\ref{tw-konstr}) applicable to the sequence $(\mathcal{U}_n)_{n \geq 0}$, and to guarantee that the constructed inverse system has Markov property (in the sense of Definition~\ref{def-kompakt-markowa}). In the next subsection, we will construct, for a given hyperbolic group $G$, a~sequence of covers of $\partial G$ with all properties introduced in this subsection.

\begin{ozn}
\label{ozn-suma-rodz}
For any family $\mathcal{C} = \{ C_x \}_{x \in G}$ of subsets of a~space $X$, we denote:
\[ \mathcal{C}_n = \big\{ C_x \ \big|\ x \in G, \  |x| = n \big\}, \qquad |\mathcal{C}|_n = \bigcup_{C \in \mathcal{C}_n} C. \]
We will usually identify the family $\mathcal{C}$ with the sequence of subfamilies $(\mathcal{C}_n)_{n \geq 0}$.
\end{ozn}

\dzm{
\begin{df}
\label{def-funkcja-typu}
By a \textit{type function} on~$G$ 
% (resp. on a system $(K_n)_{n \geq 0}$ of simplicial complexes) 
we will mean any function~$T$ on~$G$ 
% (resp. on the set of simplices of all $K_n$) 
with values in a~finite set. For $x \in G$, the value $T(x)$ will be called the \textit{($T$-)type} of~$x$. 
% Similarly, for $\sigma \in K_n$, the value $T(\sigma)$ will be called the \textit{($T$-)type} of~$\sigma$.

Analogously, by a \textit{type function} on a system $(K_n)_{n \geq 0}$ of simplicial complexes we will mean any function $T$ mapping simplexes of all $K_n$ to a finite set; the value $T(\sigma)$ will be called the \textit{($T$-)type} of $\sigma$.

% Let $T_1, T_2$ be two type functions defined for elements of~$G$.
% (resp. simplexes in a simplicial complexes). 
For two type functions $T_1, T_2$ on $G$ (resp. on a system $(K_n)_{n \geq 0}$), we will call $T_1$ \textit{stronger} than $T_2$ if the $T_2$-type of any element (resp. simplex)
% (resp. simplex) 
can be determined out of its $T_1$-type.
\end{df}
}

\begin{df}
\label{def-quasi-niezm}
A family $\mathcal{C} = \{ C_x \}_{x \in G}$ of subsets of a $G$-space $X$ is a~\textit{quasi-$G$-invariant system} \dzm{(with respect to a type function $T : G \rightarrow \mathcal{T}$)} if there exists a \textit{neighbourhood constant} $D > 0$ and a~\textit{jump constant} $J > 0$ such that:
\begin{itemize}[leftmargin=1.5cm]
 \qhitem{c}{QI1}{QI1} the sequence of subfamilies $(\mathcal{C}_n)_{n \geq 0}$, where $\mathcal{C}_n = \big\{ C_x \ \big|\ x \in G, \  |x| = n \big\}$, has mesh property (in the sense of Definition \ref{def-mesh}a);
 \qhitem{d}{QI2}{QI2} for every $n$ and $x, y \in G$, the following implication holds:
 \[ |x| = |y| = n, \quad C_x \cap C_y \neq \emptyset \qquad \Rightarrow \qquad d(x, y) \leq D; \]
 \qhitem{e}{QI3}{QI3} for every $x \in G$ and $0 < k \leq \tfrac{|x|}{J}$, there exists $y \in G$ such that $|y| = |x| - kJ$ and $C_y \supseteq C_x$;
 \qhitem{f}{QI4}{QI4} whenever $T(x) = T(gx)$ for $g, x \in G$, we have:
 \begin{itemize}
  \qhitem{f1}{a}{QI4a} $C_{gx} = g \cdot C_x$;
  \qhitem{f2}{b}{QI4b} for every $y \in G$ such that $|y| = |x|$ and $C_x \cap C_y \neq \emptyset$, we have
   \[ C_{gy} = g \cdot C_y, \qquad |gy| = |gx|; \]
  \qhitem{f3}{c}{QI4c} 
  for every $y \in G$ such that $|y| = |x| + kJ$ for some $k > 0$ and $\emptyset \neq C_y \subseteq C_x$, we have
   \[   |gy| = |gx| + kJ, \qquad T(gy) = T(y), \qquad \textrm{and \ so} \qquad C_{gy} = g \cdot C_y. \]
 \end{itemize}
\end{itemize}
\end{df}

\begin{uwaga}
\label{uwaga-quasi-niezm-jeden-skok}
Let us note that if \qhlink{e} is satisfied for $k = 1$, then by induction it must hold for all $k > 0$, and that the same applies to \qhlink{f3}.
\end{uwaga}

\begin{uwaga}
\label{uwaga-quasi-niezm-rozne-poziomy}
From now on, we adopt the convention that the sets belonging to $\mathcal{C}_n$ are implicitly equipped with the value of~$n$; this would matter only if some subsets $C_1 \in \mathcal{C}_{n_1}$, $C_2 \in \mathcal{C}_{n_2}$ with $n_1 \neq n_2$ happen to consist of the same elements. In this case, we will treat $C_1$, $C_2$ as \textit{not} equal; in particular, any condition of the form $C_1 = C_g$ will implicitly imply $|g| = n_1$. This should not lead to confusion since, although we will often consider an inclusion between an element of $\mathcal{C}_{n_1}$ and an element of $\mathcal{C}_{n_2}$ with $n_1 \neq n_2$, we will be never interested whether set-equality holds between these objects.
\end{uwaga}

\begin{df}
\label{def-quasi-niezm-pokrycia}
A system $\mathcal{C} = \{ C_x \}_{x \in G}$ of subsets of $X$ will be called \textit{a system of covers} if $\mathcal{C}_n$ is an open cover of $X$ for every $n \geq 0$.
\end{df}

\dzm{
\begin{df}
\label{def-quasi-niezm-system-wpisany}
Let $\mathcal{C} = \{ C_x \}_{x \in G}$, $\mathcal{D} = \{ D_x \}_{x \in G}$ be two quasi-$G$-invariant systems of subsets of~$X$. We will say that $\mathcal{C}$ is \textit{inscribed} in~$\mathcal{D}$ if $C_x \subseteq D_x$ for every $x \in G$, and if the type function associated to~$\mathcal{C}$ is stronger than the one associated to~$\mathcal{D}$.
\end{df}
}

\subsection{The star property}

\label{sec-konstr-gwiazda}

\begin{df}
Let $\mathcal{U}$ be an open cover of~$X$, and $U \in \mathcal{U}$. Then, the \textit{star} of $U$ in~$\mathcal{U}$ is the union $\bigcup\{U_i \, | \, U_i \in \mathcal{U}, \ U_i \cap U \neq \emptyset\}$.
\end{df}

\begin{df}
\label{def-wl-gwiazdy}
Let $(\mathcal{U}_n)$ be a family of open covers of~$X$. We say that $(\mathcal{U}_n)$ has \textit{star property} if, for every $n > 0$, every star in the cover $\mathcal{U}_n$ is contained in some element of the cover $\mathcal{U}_{n-1}$; more formally:
 \[ \forall_{n > 0} \ \forall_{U \in \mathcal{U}_n} \ \exists_{V \in \mathcal{U}_{n-1}} \ \bigcup_{U' \in \mathcal{U}_n; \, U \cap U' \neq \emptyset} U' \subseteq V. \]
\end{df}

\begin{lem}
\label{lem-gwiazda}
Let $(\mathcal{U}_n)$ be a quasi-$G$-invariant system of covers of a compact metric $G$-space $X$ and let $J$ denote its jump constant. Then, there exists a constant $L_0$ such that, for any $L \geq L_0$ divisible by~$J$, the sequence of covers $(\mathcal{U}_{Ln})_{n \in \mathbb{N}}$ has star property.
\end{lem}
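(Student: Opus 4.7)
I would organise the proof into three steps, exploiting the finiteness of the type function together with a standard Lebesgue-number argument.

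\emph{Step 1 (stars shrink).} By~\qhlink{d}, every star in $\mathcal{U}_n$ is the union of at most $|B_D(e)|$ sets from $\mathcal{U}_n$, each meeting one common set; hence its diameter is bounded by $3 \cdot \max_{C \in \mathcal{U}_n} \diam C$. By the mesh property~\qhlink{c}, this maximum tends to~$0$, so the maximal diameter of a star in~$\mathcal{U}_n$ tends to~$0$ as $n \to \infty$.

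\emph{Step 2 (Lebesgue bound per type).} The type function~$T$ takes only finitely many values; for each type $t$ fix a representative $\bar x_t$ with $T(\bar x_t) = t$, and let $\lambda_t > 0$ be a Lebesgue number of the open cover $\mathcal{U}_{|\bar x_t|}$ of the compact space~$X$. By Step~1 there is a multiple $L_t$ of~$J$ such that for every $L \geq L_t$ every star in $\mathcal{U}_{|\bar x_t|+L}$ has diameter less than $\lambda_t$, and is therefore contained in some element of $\mathcal{U}_{|\bar x_t|}$. Set $L_0 := \max_{t} L_t$, which is finite.

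\emph{Step 3 (transfer by quasi-invariance).} Let $L \geq L_0$ be a multiple of~$J$, $n \geq 1$, and $x \in G$ with $|x| = Ln$. By~\qhlink{e} there is $\bar x$ with $|\bar x| = L(n-1)$ and $C_{\bar x} \supseteq C_x$; set $t := T(\bar x)$ and $g := \bar x \bar x_t^{-1}$, so that $g \bar x_t = \bar x$ and $T(g \bar x_t) = T(\bar x_t)$. Applying~\qhlink{f3} to~$\bar x$ with~$g^{-1}$ (noting $T(g^{-1} \bar x) = T(\bar x_t) = T(\bar x)$), the element $x_0 := g^{-1} x$ is a descendant of~$\bar x_t$ at level $|\bar x_t| + L$ with $T(x_0) = T(x)$. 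Applying~\qhlink{f2} to $x_0$ with~$g$ shows that multiplication by~$g$ sends neighbours of~$x_0$ bijectively onto those of~$x$ with $C_{gy_0} = g C_{y_0}$; hence $S(x) = g \cdot S(x_0)$, where $S$ denotes the star of a set in its cover. By Step~2 there is $z$ with $|z| = |\bar x_t|$ and $S(x_0) \subseteq C_z$; since $C_z \cap C_{\bar x_t} \supseteq C_{x_0} \neq \emptyset$, applying~\qhlink{f2} to~$\bar x_t$ and~$g$ with $y := z$ yields $|gz| = L(n-1)$ and $C_{gz} = g C_z$. Consequently
\[ S(x) \;=\; g \cdot S(x_0) \;\subseteq\; g C_z \;=\; C_{gz} \;\in\; \mathcal{U}_{L(n-1)}, \]
establishing the star property. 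The case $n = 1$ is trivial since $\mathcal{U}_0 = \{X\}$. The delicate point of the argument is the simultaneous, coherent use of~\qhlink{f1}--\qhlink{f3} in Step~3 to transport both the star $S(x_0)$ and the containing element $C_z$ into the correct cover; apart from this the Lebesgue argument is standard.
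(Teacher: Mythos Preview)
Your proof is correct and follows essentially the same approach as the paper's: establish a Lebesgue-number ``star fits in an element one level down'' statement at finitely many representative levels, then use the quasi-invariance axioms \qhlink{f1}--\qhlink{f3} to transport this statement to an arbitrary level. The paper indexes the finite data by levels below a bound~$S$ (so that every type has a representative of length $< S$) and treats the case $L(n-1) < S$ separately, whereas you index directly by the finitely many types and handle all~$n$ uniformly; the resulting arguments are otherwise interchangeable.
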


\begin{proof}
Let $L_{(i)}$ be constant such that, for every $j \geq i + L_{(i)}$, every element of~$\mathcal{U}_j$ together with its star is contained in some set from $\mathcal{U}_i$. Its existence is an immediate result of the existence of a Lebesgue number for $\mathcal{U}_i$, and from the mesh condition for the system~$(\mathcal{U}_n)$.

Since there exist only finite many $N$-types in~$G$, there exists $S > 0$ such that for any $g \in G$ there is $g' \in G$ such that $|g'| < S$ and $T(g) = T(g')$.
We will show the claim of the proposition is satisfied by
\[ L_0 = 1 + \max \{ L_{(i)} \,|\, i < S \}. \]

Let $|g| = L(k + 1)$ and $L \geq L_0$ be divisible by $J$; we want to prove that there exists $\tilde{f} \in G$ of length $Lk$ such that $U_{\tilde{f}}$ contains $U_g$ together with all its neighbours in $\mathcal{U}_{L(k+1)}$. If $Lk < S$, this holds by the inequality $L \geq L_0 \geq L_{(Lk)}$ and the definition of the constant $L_{(Lk)}$. 

Otherwise, by the property \qhlink{e} there exists $f$ of length $Lk$ such that $U_g \subseteq U_{f}$. Let $f' \in G$ of length $j < S$ satisfy $T(f') = T(f)$.
Denote $h = f' f^{-1}$. Then, since $J \divs L$, by~\qhlink{f3} we have 
\begin{align}
\label{eq-gwiazda-sukces-na-malej}
 U_{hg} = h \cdot U_g \subseteq h \cdot U_{f} = U_{f'}, \qquad T(hg) = T(g), \qquad |hg| = j + L.
\end{align}
Therefore, since $j < S$, there exists some $\tilde{f}'$ of length $j$ such that $U_{\tilde{f}'}$ contains $U_{hg}$ together with its whole star. Then, by \eqref{eq-gwiazda-sukces-na-malej}, we have $U_{\tilde{f}'} \cap U_{f'} \neq \emptyset$, and so from \qhlink{f2} we obtain
\[ \qquad U_{h^{-1}\tilde{f}'} = h^{-1} \cdot U_{\tilde{f}'}, \qquad |h^{-1} \widetilde{f}'| = |h^{-1} f'| = Lk. \]

Now, let $|x| = |g|$ and $U_x \cap U_g \neq \emptyset$. Then, from \eqref{eq-gwiazda-sukces-na-malej} and~\qhlink{f2} we have $U_{hx} = h \cdot U_x$; in particular, $U_{hx}$ is contained in the star of the set $U_{hg} = h \cdot U_g$, and so it is contained in $U_{\tilde{f}'}$. Then, by \qhlink{f3}:
\[ U_x = h^{-1} \cdot U_{hx} \subseteq h^{-1} \cdot U_{\tilde{f}'} = U_{h^{-1} \tilde{f}'}. \]
This means that the element $\tilde{f} := h^{-1}\tilde{f}'$ has the desired property.
\end{proof}

\subsection{The system of span-star interiors}

\label{sec-konstr-pokrycia}

\begin{df}
\label{def-konstr-towarzysze}
For every element $g \in G$ and $r > 0$, we denote
\[ P(x) = \big\{ y \in G \,\big|\, |xy| = |x| \big\}, \qquad P_r(x) = P(x) \cap B_r(e). \]
If $y \in P(x)$ (resp. $P_r(x)$), we call $xy$ a \textit{fellow} (resp. \textit{$r$-fellow}) of $x$.
\end{df}

From the definition of the ball type, we obtain the following property.

\begin{fakt}
\label{fakt-kulowy-wyznacza-towarzyszy}
If $N \geq r > 0$, then the set $P_r(x)$ depends only on $T^b_N(x)$ and $r$, $N$. \qed
\end{fakt}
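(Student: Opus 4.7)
The plan is to unwind the definitions and observe that membership in $P_r(x)$ is literally encoded in the ball $N$-type. Recall that $P_r(x) = P(x) \cap B_r(e)$, where $P(x) = \{y \in G : |xy| = |x|\}$, so an element $y \in G$ belongs to $P_r(x)$ if and only if $|y| \leq r$ and $|xy| - |x| = 0$.

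Since $r \leq N$, we have $B_r(e) \subseteq B_N(e)$, so every candidate $y$ lies in the domain of the function $f^b_{x,N} : B_N(e) \to \mathbb{Z}$ that represents $T^b_N(x)$. By Definition~\ref{def-typ-kulowy}, $f^b_{x,N}(y) = |xy| - |x|$ for every $y \in B_N(e)$. Therefore
\[ P_r(x) \;=\; \bigl\{ y \in B_N(e) \;\big|\; |y| \leq r \text{ and } f^b_{x,N}(y) = 0 \bigr\}. \]
The right-hand side is manifestly determined by the data $f^b_{x,N}$, $r$ and $N$ alone (the set $B_N(e)$ and the condition $|y| \leq r$ depend only on $N$ and $r$), which gives the claim.

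There is no real obstacle here; the statement is essentially a reformulation of the definition of ball $N$-type restricted to the zero level-set, and the hypothesis $N \geq r$ is used only to guarantee that the full relevant domain lies inside $B_N(e)$.
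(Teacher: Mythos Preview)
Your proof is correct and matches the paper's approach: the paper treats this lemma as an immediate consequence of the definition of the ball type (it carries a \qed\ with no written proof), and your argument spells out exactly that unwinding of definitions.
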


\begin{df}
\label{def-span-star}
We define the set $S_g$ as the interior of span-star in $\partial G$ around $\sppan(g)$:
\[ S_g = \innt \Big( \bigcup_{h \in I(g)} \sppan(gh) \Big), \qquad \textrm{ gdzie } \quad I(g) = \big\{ h \in P(g) \,\big|\, \sppan(gh) \cap \sppan(g) \neq \emptyset \big\}. \] 

For any $k > 0$, we define the family
\[ \mathcal{S}_k = \{ S_g \ |\  g \in G, \, |g| = k, \, S_g \neq \emptyset \}. \]
\end{df}

\begin{fakt}
\label{fakt-span-w-pokryciu}
For every $g \in G$, we have $\sppan(g) \subseteq S_g$.
\end{fakt}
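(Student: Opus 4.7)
The plan is to argue by contradiction: suppose some $p \in \sppan(g)$ is not contained in the interior of $U_g := \bigcup_{h \in I(g)} \sppan(gh)$. Then there exists a sequence $(q_n) \subset \partial G \setminus U_g$ converging to~$p$, and the goal is to produce a single fellow~$gh$ of~$g$ whose span contains~$p$ (so that $h \in I(g)$) and also contains infinitely many of the~$q_n$, yielding the desired contradiction.

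For each~$n$, I would choose a geodesic ray $\beta_n$ from~$e$ converging to~$q_n$, and apply Lemma~\ref{fakt-geodezyjne-przekatniowo} to extract a subsequence $(\beta_{n_k})_{k \geq 0}$ that increasingly coincides with some limit geodesic ray $\beta_\infty$; the lemma also guarantees that the endpoint of~$\beta_\infty$ equals $\lim_k q_{n_k} = p$. For every $k \geq |g|$ the points $\beta_{n_k}(|g|)$ all agree with $g' := \beta_\infty(|g|)$, and since $\beta_\infty$ is a geodesic from~$e$ through~$g'$ to~$p$, we have $|g'| = |g|$ and $p \in \sppan(g')$.

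Setting $h := g^{-1} g'$, the equality $|gh| = |g|$ gives $h \in P(g)$, while $p \in \sppan(g) \cap \sppan(gh)$ shows that this intersection is nonempty and hence $h \in I(g)$. On the other hand, each $\beta_{n_k}$ (for $k \geq |g|$) passes through~$g'$, which by Lemma~\ref{fakt-stozek-a-span} (or directly by definition) implies $q_{n_k} \in \sppan(g') = \sppan(gh) \subseteq U_g$, contradicting the choice of~$(q_n)$.

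The crucial observation is that the ``limit fellow'' $g'$ furnished by Lemma~\ref{fakt-geodezyjne-przekatniowo} need not equal~$g$, which is precisely why the definition of~$S_g$ must take a union over all fellows in $I(g)$ rather than just $\sppan(g)$ itself. No finer hyperbolic-geometry estimates (narrow triangles and the like) are needed --- the compactness supplied by the diagonal argument of Lemma~\ref{fakt-geodezyjne-przekatniowo} packages everything the proof requires, so I expect the only subtle point to be recognising that $g'$ lies in the $g$-coset of fellows and verifying the two defining conditions of $I(g)$ simultaneously from the single point~$p$.
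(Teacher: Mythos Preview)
Your argument is correct, but it is a genuinely different route from the one the paper takes. The paper's proof is a two-line topological observation: decompose
\[
\partial G = \Big( \bigcup_{h \in I(g)} \sppan(gh) \Big) \cup \Big( \bigcup_{h \in P(g) \setminus I(g)} \sppan(gh) \Big),
\]
note that the second union is a finite union of closed sets disjoint from $\sppan(g)$, and conclude that $\sppan(g)$ lies in the (open) complement of the second union, hence in the interior of the first. No sequences, no diagonal argument, no geodesics are needed.

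Your approach instead reconstructs a single ``bad'' fellow via Lemma~\ref{fakt-geodezyjne-przekatniowo}, which is more hands-on but also heavier: you invoke a compactness lemma where a purely set-theoretic decomposition suffices. What your route buys is a concrete identification of the element $h \in I(g)$ responsible for capturing the approximating sequence, which is perhaps more illuminating as to \emph{why} the star of spans is needed. What the paper's route buys is brevity and the avoidance of any choice of geodesics or subsequences --- the closedness of spans and finiteness of $P(g)$ do all the work.
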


\begin{proof}
Let us consider the equality
\[ \partial G = \bigcup_{h \in P(g)} \sppan(gh) = \Big( \bigcup_{h \in I(g)} \sppan(gh) \Big) \cup \Big( \bigcup_{h \in P(g) \setminus I(g)} \sppan(gh) \Big). \]
The second summand is disjoint with $\sppan(g)$, and moreover closed (as a finite union of closed sets), which means that $\sppan(g)$ must be contained in the interior of the first summand, which is exactly $S_g$.
\end{proof}

\begin{wn}
\label{wn-konstr-pokrycie}
For every $k > 0$, the family $\mathcal{S}_k$ is a cover of $\partial G$.
\end{wn}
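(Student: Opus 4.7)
The plan is to reduce the claim to Lemma~\ref{fakt-span-w-pokryciu} together with the elementary observation that $\partial G$ is covered by the spans of all elements of a given length.

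First I would verify the set-theoretic equality
\[ \partial G \ =\ \bigcup_{g \in G,\, |g| = k} \sppan(g). \]
Indeed, every point $x \in \partial G$ is represented by some infinite geodesic ray $\gamma$ starting at $e$. For the fixed~$k$, the element $g := \gamma(k)$ satisfies $|g| = k$, and the ray $\gamma$ witnesses that $x \in \sppan(g)$, so $x$ belongs to the right-hand side. The reverse inclusion is immediate from the definition of $\sppan$.

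Next, Lemma~\ref{fakt-span-w-pokryciu} gives $\sppan(g) \subseteq S_g$ for every $g \in G$. Combining this with the above equality,
\[ \partial G \ =\ \bigcup_{|g| = k} \sppan(g) \ \subseteq\ \bigcup_{|g|=k} S_g. \]
On the right-hand side, those $g$ for which $S_g = \emptyset$ contribute nothing, so we may restrict the union to the elements appearing in $\mathcal{S}_k$. This yields $\partial G = \bigcup \mathcal{S}_k$. Finally, each $S_g$ is open by construction (defined as the interior of a subset of~$\partial G$), so $\mathcal{S}_k$ is indeed an open cover of $\partial G$.

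There is no real obstacle: the only subtlety is to note that discarding the $g$'s with $S_g = \emptyset$ is harmless, which follows from Lemma~\ref{fakt-span-w-pokryciu} (if $S_g = \emptyset$, then $\sppan(g) = \emptyset$ as well, so such $g$ cover no boundary points).
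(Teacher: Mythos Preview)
Your proof is correct and follows exactly the approach of the paper: combine Lemma~\ref{fakt-span-w-pokryciu} with the equality $\partial G = \bigcup_{|g|=k}\sppan(g)$. You have simply spelled out the details that the paper leaves implicit, including the justification of that equality and the harmlessness of discarding empty $S_g$'s.
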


\begin{proof}
This is an easy application of the above lemma and of the equality $\partial G = \bigcup_{g \in G \,:\, |g| = k} \sppan(g)$.
\end{proof}

\begin{fakt}
\label{fakt-pokrycie-male}
Under the notation of Lemma \ref{fakt-spany-male}, for every $k > 0$ and $U \in \mathcal{S}_k$, we have $\diam U \leq 3 C \cdot a^{-k}$.
\end{fakt}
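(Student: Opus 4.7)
The plan is to reduce the estimate on $\diam U = \diam S_g$ to the diameter bound $\diam \sppan(h) \leq C \cdot a^{-|h|}$ supplied by Lemma~\ref{fakt-spany-male}, by exploiting $\sppan(g)$ as a common ``pivot'' set that intersects every span appearing in the union defining $S_g$.

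First, I would unwind the definition: $U = S_g$ for some $g \in G$ with $|g| = k$, and by Definition~\ref{def-span-star} we have
\[ S_g \ \subseteq \ \bigcup_{h \in I(g)} \sppan(gh), \qquad I(g) = \{ h \in P(g) \,:\, \sppan(gh) \cap \sppan(g) \neq \emptyset \}. \]
Crucially, for every $h \in I(g)$ we have $|gh| = |g| = k$ (since $h \in P(g)$), so Lemma~\ref{fakt-spany-male} yields $\diam \sppan(gh) \leq C \cdot a^{-k}$.

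Next, given two points $p, q \in S_g$, I pick $h_1, h_2 \in I(g)$ with $p \in \sppan(gh_1)$ and $q \in \sppan(gh_2)$. By definition of $I(g)$, the intersections $\sppan(gh_1) \cap \sppan(g)$ and $\sppan(gh_2) \cap \sppan(g)$ are nonempty; choose $r_1$ and $r_2$ in these intersections respectively. Then by the triangle inequality
\[ d(p, q) \ \leq\ d(p, r_1) + d(r_1, r_2) + d(r_2, q) \ \leq\ \diam \sppan(gh_1) + \diam \sppan(g) + \diam \sppan(gh_2) \ \leq\ 3C \cdot a^{-k}, \]
where in the middle term I use that both $r_1, r_2 \in \sppan(g)$. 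Taking the supremum over $p, q$ gives the claim.

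There is no real obstacle here; the only subtlety is noting that the length condition $|gh| = |g|$ built into the definition of $P(g)$ (and hence of $I(g)$) is exactly what allows Lemma~\ref{fakt-spany-male} to be applied uniformly with the same exponent $a^{-k}$ for every span in the union, and that $S_g$ itself (being the interior of the union) inherits the diameter bound from the union.
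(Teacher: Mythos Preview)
Your proof is correct and follows essentially the same approach as the paper: pick two points of $S_g$, locate them in spans $\sppan(gh_1)$ and $\sppan(gh_2)$ with $h_1,h_2\in I(g)$, and chain through $\sppan(g)$ via the triangle inequality, using Lemma~\ref{fakt-spany-male} for each of the three spans. Your version is in fact slightly more explicit, naming the intermediate points $r_1,r_2$ and pointing out that $|gh|=k$ is what makes the exponent uniform.
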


\begin{proof}
Let $U = S_g$ for some $g \in G$, where $|g|=k$, and let $x, y \in S_g$. Then, $x \in \sppan(gh_1)$ and $y \in \sppan(gh_2)$ for some $h_1, h_2 \in I(g)$. By Lemma \ref{fakt-spany-male}, we obtain
\[ d(x, y) \leq \diam \sppan(gh_1) + \diam \sppan(g) + \diam \sppan(gh_2) \leq 3 C \cdot a^{-k}. \qedhere \]
\end{proof}

\begin{fakt}
\label{fakt-sasiedzi-blisko}
Let $h \in P(g)$. Then:
\begin{itemize}
 \item[\textbf{(a)}] If $\sppan(g) \cap \sppan(gh) \neq \emptyset$, then $|h| \leq 4\delta$ (so: $I(g) \subseteq P_{4\delta}(g)$);
 \item[\textbf{(b)}] If $S_g \cap S_{gh} \neq \emptyset$, then $|h| \leq 12\delta$.
\end{itemize}
\end{fakt}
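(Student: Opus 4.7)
I would appeal directly to the description of $\partial G$ recalled in Section~\ref{sec-def-hip}: two geodesic rays from $e$ representing the same boundary point are pointwise $4\delta$-close (the equivalence relation defining $\partial G$ can be taken with constant $C = 4\delta$). Concretely, pick any $x \in \sppan(g) \cap \sppan(gh)$. By the definition of span, there are geodesic rays $\alpha, \beta$ starting at $e$, both converging to $x$, with $\alpha$ passing through $g$ and $\beta$ passing through $gh$; since $h \in P(g)$ we have $|g| = |gh| =: n$, so $\alpha(n) = g$ and $\beta(n) = gh$. Applying the $4\delta$-closeness to the index $n$ yields $|h| = d(g, gh) = d(\alpha(n), \beta(n)) \le 4\delta$.

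\textbf{Plan for part (b).} I would reduce to (a) by unwinding the definition of $S_g$. Pick $x \in S_g \cap S_{gh}$. Then $x \in \sppan(gh_1)$ for some $h_1 \in I(g)$, and $x \in \sppan(ghh_2)$ for some $h_2 \in I(gh)$. By part (a), $|h_1| \le 4\delta$ and $|h_2| \le 4\delta$ (this is the inclusion $I(\cdot) \subseteq P_{4\delta}(\cdot)$ stated in (a)). Now the elements $gh_1$ and $ghh_2$ both have length $|g|$ (using $h_1 \in P(g)$, then $h \in P(g)$ and $h_2 \in P(gh)$), and their spans share $x$, so part (a) applies to the pair $(gh_1,\, ghh_2)$: writing $ghh_2 = (gh_1)\cdot(h_1^{-1}hh_2)$, one first checks $h_1^{-1}hh_2 \in P(gh_1)$ by the length computation above, and then (a) gives $d(gh_1, ghh_2) \le 4\delta$. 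The triangle inequality
\[ |h| \;=\; d(g, gh) \;\le\; d(g, gh_1) + d(gh_1, ghh_2) + d(ghh_2, gh) \;=\; |h_1| + d(gh_1, ghh_2) + |h_2| \]
then gives $|h| \le 4\delta + 4\delta + 4\delta = 12\delta$.

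\textbf{Main obstacle.} Both parts are essentially direct; (a) is just the standard $4\delta$-fellow-traveling of equivalent geodesic rays, and (b) is a one-line triangle-inequality reduction to (a). The only point requiring genuine care is the bookkeeping in (b): one must verify that $gh_1$ and $ghh_2$ have the same length $|g|$ so that (a) is applicable to them (equivalently, that the auxiliary element $h_1^{-1}hh_2$ is a fellow of $gh_1$). Once this is checked, the estimate $|h_1| + |h_2| + d(gh_1, ghh_2) \le 12\delta$ is automatic.
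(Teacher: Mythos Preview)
Your proof is correct and essentially identical to the paper's: for (a) both invoke the $4\delta$-fellow-traveling of asymptotic geodesic rays at index $|g|$, and for (b) both pick witnesses $h_1 \in I(g)$, $h_2 \in I(gh)$ (the paper writes $u, v$), apply (a) three times, and sum via the triangle inequality. Your version is slightly more explicit in verifying that (a) applies to the middle pair $(gh_1, ghh_2)$, but the argument is the same.
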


\begin{proof}
\textbf{(a)} Let $|g| = |gh| = k$ and $x \in \sppan(g) \cap \sppan(gh)$. Then, there exist geodesics $\alpha, \beta$ stating at $e$ and converging to $x$ such that $\alpha(k) = g$, $\beta(k) = gh$.

By inequality (1.3.4.1) in~\cite{zolta}, this implies that $d(g, gh) \leq 4\delta$.

\textbf{(b)} Let $x \in S_g \cap S_{gh}$. Then, by definition, we have $x \in \sppan(gu) \cap \sppan(ghv)$ for some $u \in I(g)$, $v \in I(gh)$. Using part~\textbf{a)}, we obtain 
\[ |h| \leq |u| + |u^{-1}hv| + |v^{-1}| \leq 4\delta + 4\delta + 4\delta = 12\delta. \qedhere \]
\end{proof}

\begin{fakt}
\label{fakt-wlasnosc-gwiazdy-bez-gwiazdy}
Let $g \in G$ and $k < |g|$. Then:
\begin{itemize}
\item[\textbf{(a)}] there exists $f \in G$ of length $k$ such that $g \in fT^c(f)$;
\item[\textbf{(b)}] for any $f \in G$ with the properties from part~\textbf{(a)}, we have $\sppan(g) \subseteq \sppan(f)$;
\item[\textbf{(c)}] for any $f \in G$ with the properties from part~\textbf{(a)}, we have $S_g \subseteq S_f$.
\end{itemize}
\end{fakt}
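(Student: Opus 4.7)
The essential geometric ingredient throughout is the concatenation trick already used in Lemmas~\ref{fakt-stozek-a-span} and~\ref{fakt-spany-male}: two geodesics that agree at a common point whose distances from~$e$ match up along both pieces splice to a single geodesic. Part \textbf{(a)} is then immediate: pick any geodesic $\alpha$ from $e$ to $g$, set $f := \alpha(k)$ (well-defined since $k < |g|$), and observe that $\alpha$ itself witnesses both $|f|=k$ and $g \in fT^c(f)$.

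For \textbf{(b)}, fix $x \in \sppan(g)$ with a witnessing geodesic ray $\beta$ from $e$ converging to~$x$ and passing through~$g$ at time~$|g|$. Using $g \in fT^c(f)$, pick a geodesic $\alpha$ from $e$ to $g$ passing through~$f$. I splice them into $\gamma := \alpha \cup \beta|_{[|g|,\infty)}$. Because $|\alpha(m)| = m$ on $[0,|g|]$ and $|\beta(m)| = m$ on $[|g|,\infty)$, and $\alpha(|g|) = g = \beta(|g|)$, the map $\gamma$ is an isometric embedding of $\mathbb{N}$ into $G$, i.e. a geodesic ray from $e$ through $f$ converging to $x$; hence $x \in \sppan(f)$.

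For \textbf{(c)}, monotonicity of~$\innt$ reduces the task to showing the set-theoretic inclusion
\[
\bigcup_{h \in I(g)} \sppan(gh) \ \subseteq \ \bigcup_{h' \in I(f)} \sppan(fh').
\]
For each $h \in I(g)$, apply \textbf{(a)} to the element $gh$ to produce some $f' \in G$ with $|f'| = k$ and $gh \in f'T^c(f')$; then by \textbf{(b)}, $\sppan(gh) \subseteq \sppan(f')$. Writing $h' := f^{-1}f'$, I still must check $h' \in I(f)$: the length condition $|fh'| = |f'| = k = |f|$ is automatic, and for the non-empty intersection $\sppan(f) \cap \sppan(fh') \neq \emptyset$ I pick any $y \in \sppan(g) \cap \sppan(gh)$ (exists by $h \in I(g)$) and apply \textbf{(b)} twice --- once with the pair $(g,f)$ and once with the pair $(gh, f')$ --- to conclude $y \in \sppan(f) \cap \sppan(f')$.

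I do not expect a serious obstacle. The only place deserving a moment's care is the interior step in~\textbf{(c)}: one must notice that $S_g \subseteq S_f$ does \emph{not} require the two defining unions to coincide --- only for one to be contained in the other --- after which $\innt$ respects the inclusion. Everything else is bookkeeping around the single concatenation lemma.
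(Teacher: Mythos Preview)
Your proposal is correct and follows essentially the same route as the paper. The only cosmetic difference is in part~\textbf{(b)}: the paper cites Lemma~\ref{fakt-przechodniosc-potomkow} (transitivity of descendance, giving $gT^c(g)\subseteq fT^c(f)$) together with Lemma~\ref{fakt-stozek-a-span}, whereas you inline the concatenation argument directly; part~\textbf{(c)} is argued identically to the paper (your~$f'$ is the paper's~$f_h$).
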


\begin{proof}
\textbf{(a)} Let $\alpha$ be a geodesic from $e$ to $g$. Then, $f = \alpha(k)$ has the desired properties.

\textbf{(b)} If $f$ has the properties from part~\textbf{(a)}, then, by Lemma \ref{fakt-przechodniosc-potomkow}, we have $gT^c(g) \subseteq fT^c(f)$, so it remains to apply Lemma \ref{fakt-stozek-a-span}. 

\textbf{(c)} By the parts~\textbf{(a)} and~\textbf{(b)}, for any $h \in I(g)$ there exists some element $f_h$ of length $k$ such that $\sppan(gh) \subseteq \sppan(f_h)$; here $f_e$ can be chosen to be $f$. In particular, we have:
\[ \emptyset \neq \sppan(g) \cap \sppan(gh) \subseteq \sppan(f) \cap \sppan(f_h), \]
so $f^{-1} f_h \in I(f)$. Since $h \in I(g)$ is arbitrary, we obtain
\[ \bigcup_{h \in I(g)} \sppan(gh) \subseteq \bigcup_{h \in I(g)} \sppan(f_h) \subseteq \bigcup_{x \in I(f)} \sppan(f x). \]
By taking the interiors of both sides of this containment, we get the claim.
\end{proof}

\begin{lem}
\label{lem-typy-pokrycia-niezmiennicze}
Let $N_0$ denote the constant from Proposition~\ref{lem-kulowy-wyznacza-stozkowy}. Assume that $N, r \geq 0$ and $g, x \in G$ satisfy $T^b_N(gx) = T^b_N(x)$. Then:
\begin{itemize}
 \item[\textbf{(a)}] if $N \geq N_0$, then $\sppan(gx) = g \cdot \sppan(x)$;
 \item[\textbf{(b)}] if $N \geq N_0 + r$, then $\sppan(gxy) = g \cdot \sppan(xy)$ for $y \in P_r(x)$;
 \item[\textbf{(c)}] if $N \geq N_1 := N_0 + 4\delta$, then $S_{gx} = g \cdot S_x$;
 \item[\textbf{(d)}] if $N \geq N_1 + r$, then $S_{gxy} = g \cdot S_{xy}$ for $y \in P_r(x)$;
 \item[\textbf{(e)}] if $N \geq N_2 := N_0 + 16\delta$ and $y \in G$ satisfy $|y| = |x|$ and $S_x \cap S_y \neq \emptyset$, then  
 \[ S_{gy} = g \cdot S_y, \qquad \textrm{ and moreover } \quad |gy| = |gx|; \]
\item[\textbf{(f)}] if $N \geq N_3 := N_0 + 21\delta$, $k \geq 0$, $L > N + k + 4\delta$ and $y \in G$ satisfy $|y| = |x| + L$ and $\emptyset \neq S_y \subseteq S_x$, then:
 \[ S_{gy} = g \cdot S_y, \qquad \textrm{ and moreover } \quad |gy| = |gx| + L \quad \textrm{and} \quad T^b_{N + k}(gy) = T^b_{N + k}(y). \]
 \end{itemize}
\end{lem}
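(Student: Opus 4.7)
I will take the six parts in order; (a)--(e) are short reductions, while (f) absorbs essentially all of the geometric work.

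For (a), Proposition~\ref{lem-kulowy-wyznacza-stozkowy} gives $T^c(gx) = T^c(x)$, and Lemma~\ref{fakt-stozek-a-span} identifies $\sppan(x)$ with the set of limits of geodesic rays issuing from $x$ and contained in $x \cdot T^c(x)$. Left-multiplication by $g$ is an isometry of the Cayley graph, so it bijects such rays with the corresponding rays from $gx$ contained in $gx \cdot T^c(gx)$, whence $\sppan(gx) = g \cdot \sppan(x)$. Parts (b) and (d) are purely type-theoretic reductions: Fact~\ref{fakt-kulowy-duzy-wyznacza-maly} upgrades $T^b_N(gx) = T^b_N(x)$ to $T^b_{N - r}(gxy) = T^b_{N - r}(xy)$ whenever $|y| \leq r$, after which (a) or~(c) applied to the pair $(xy, gxy)$ delivers the conclusion, using $N - r \geq N_0$ or $N_1$ respectively.

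For (c), the key point is that the index set $I(x)$ from Definition~\ref{def-span-star} depends only on $T^b_{N_1}(x)$. By Fact~\ref{fakt-sasiedzi-blisko}(a) one has $I(x) \subseteq P_{4\delta}(x)$, and Fact~\ref{fakt-kulowy-wyznacza-towarzyszy} shows that $P_{4\delta}(x)$ itself is determined by $T^b_N(x)$ for $N \geq 4\delta$; parts (a) and (b) then transport the intersection condition $\sppan(x) \cap \sppan(xh) \neq \emptyset$ from $x$ to $gx$ faithfully, yielding $I(gx) = I(x)$. Commuting the interior with the $g$-action finishes the argument. Part (e) reduces to (d): Fact~\ref{fakt-sasiedzi-blisko}(b) produces $h \in P_{12\delta}(x)$ with $y = xh$, and the length equality $|gy| = |gx|$ is read off directly from the ball $N$-type, which needs only $N \geq 12\delta$.

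The heart of the lemma is (f). Since $S_y \neq \emptyset$ forces $I(y) \neq \emptyset$ and hence $\sppan(y) \neq \emptyset$, I fix some $z \in \sppan(y)$; the chain $z \in \sppan(y) \subseteq S_y \subseteq S_x$ then places $z$ in $\sppan(xv)$ for some $v \in I(x) \subseteq P_{4\delta}(x)$. Choose geodesic rays from $e$ to $z$: $\alpha$ passing through $y$ at time $|x| + L$, and $\beta$ passing through $xv$ at time $|x|$. The crucial geometric input is the $4\delta$-closeness of boundary-equivalent rays from~$e$ recalled in Section~\ref{sec-def-hip}: applied at $m = |x|$ it gives $d(\alpha(|x|), xv) < 4\delta$, hence $\alpha(|x|) = xt$ with $|t| \leq 8\delta$ and $t \in P_{8\delta}(x)$. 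Since $\alpha$ is geodesic and passes through both $xt$ and $y$, we obtain $y = xt \cdot t'$ with $t' \in T^c(xt)$ and $|t'| = L$.

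With this ``descendant-of-a-near-$x$-element'' representation of $y$ in hand, the three conclusions of (f) follow mechanically. Fact~\ref{fakt-kulowy-duzy-wyznacza-maly} upgrades $T^b_N(gx) = T^b_N(x)$ to $T^b_{N - 8\delta}(gxt) = T^b_{N - 8\delta}(xt)$; then Proposition~\ref{lem-potomkowie-dla-kulowych}, applied at level $N - 8\delta$ with $M = N + k$, yields $T^b_{N + k}(gy) = T^b_{N + k}(y)$, its two hypotheses $N - 8\delta > N_0 + 8\delta$ and $|t'| \geq M + 4\delta$ being covered exactly by $N \geq N_3 = N_0 + 21\delta$ and $L > N + k + 4\delta$. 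The length identity $|gy| = |gxt| + |t'| = |gx| + L$ uses ball-type equality for $|gxt| = |gx|$ and Proposition~\ref{lem-kulowy-wyznacza-stozkowy} to transfer $t' \in T^c(xt)$ to $T^c(gxt)$; finally, $S_{gy} = g \cdot S_y$ is obtained by applying (c) to the pair $(y, gy)$ at level $N + k \geq N_1$. The only real obstacle is producing the element $t$ with a good length bound; everything after that is careful bookkeeping of $\delta$-losses so that the thresholds $N_1, N_2, N_3$ are large enough to absorb them.
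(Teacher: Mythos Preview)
Your proof is correct and follows essentially the same route as the paper's: parts (a)--(e) match the paper's reductions almost verbatim, and in (f) both arguments locate an ancestor of~$y$ at level~$|x|$ close to~$x$, then push through Lemma~\ref{fakt-kulowy-duzy-wyznacza-maly} and Proposition~\ref{lem-potomkowie-dla-kulowych}. The only difference is how that ancestor is found: the paper invokes Lemma~\ref{fakt-wlasnosc-gwiazdy-bez-gwiazdy} and then Lemma~\ref{fakt-sasiedzi-blisko}(b) (yielding a $12\delta$-bound, which is why $N_3 = N_0 + 21\delta$), whereas you trace a geodesic to a chosen point of $\sppan(y)$ and compare with~$\beta$ to get an $8\delta$-bound --- slightly sharper, but otherwise the same argument.
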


\begin{proof}
\textbf{(a)} If $N \geq N_0$, by Proposition \ref{lem-kulowy-wyznacza-stozkowy} we have $T^c(gx) = T^c(x)$ and so $gxT^c(gx) = g \cdot xT^c(x)$. In particular, the left action by $g$, which is an isometry, gives a unique correspondence between geodesics in $G$ starting at $x$ and contained in $xT^c(x)$ and geodesics in $G$ starting at $gx$ and contained in~$gxT^c(gx)$. Then, the claim holds by Lemma~\ref{fakt-stozek-a-span} and by continuity of the action of $g$ on~$G \cup \partial G$.

\textbf{(b)} If $N \geq N_0 + r$, then, by Lemma~\ref{fakt-kulowy-duzy-wyznacza-maly}, we have $T^b_{N_0}(gxy) = T^b_{N_0}(xy)$ for every $y \in P_r(x)$; it remains to apply~\textbf{(a)}.

\textbf{(c)} Let $y \in I(x)$. By Lemma~\ref{fakt-sasiedzi-blisko}a, we have $y \in P_{4\delta}(x)$. Since $N \geq N_0 + 4\delta$, from~\textbf{(b)} and Lemma~\ref{fakt-kulowy-wyznacza-towarzyszy} we obtain that
\[ \sppan(gx) = g \cdot \sppan(x), \qquad \sppan(gxy) = g \cdot \sppan(xy), \qquad y \in P_{4\delta}(gx). \]
Since $\sppan(x) \cap \sppan(xy) \neq \emptyset$, by acting with~$g$ we obtain $\sppan(gx) \cap \sppan(gxy) \neq \emptyset$, so $y \in I(gx)$. Then, we have
\[ g \cdot \bigcup_{y \in I(x)} \sppan(xy) = \bigcup_{y \in I(x)} \sppan(gxy) \subseteq \bigcup_{y \in I(gx)} \sppan(gxy). \]
By an analogous reasoning for the inverse element $g^{-1}$, we prove that the above containment is in fact an equality. Moreover, since the left action of~$g$ is a homeomorphism, it must map the interior of the left-hand side sum (which is~$S_x$) exactly onto the interior of the right-hand side sum (resp.~$S_{gx}$).

\textbf{(d)} This follows from~\textbf{(c)} in the same way as \textbf{(b)} was obtained from~\textbf{(a)}.

\textbf{(e)} By Lemma~\ref{fakt-sasiedzi-blisko}b, we have $x^{-1} y \in P_{12\delta}(x)$. Then, the first part of the claim follows from~\textbf{(d)}. For the second part, note that from $N \geq 16\delta$ we obtain that $x^{-1} y \in P_N(x)$ which is contained in the domain of $T^b_N(x)$ (as a~function); hence, the assumption that $T^b_N(x) = T^b_N(gx)$ implies that $|gxy| - |gx| = |xy| - |x| = 0$, as desired.

\textbf{(f)} Let $|y| \geq |x|$ and $S_y \subseteq S_x$. By Lemma~\ref{fakt-wlasnosc-gwiazdy-bez-gwiazdy}, there exists $z \in G$ such that
\[ |xz| = |x|, \qquad y \in xzT^c(xz), \qquad S_y \subseteq S_{xz}. \]
In particular, $S_{xz} \cap S_x \neq \emptyset$, and so by Lemma~\ref{fakt-sasiedzi-blisko} we have $z \in P_{12\delta}(x)$. By Lemmas~\ref{fakt-kulowy-duzy-wyznacza-maly} and~\ref{fakt-kulowy-wyznacza-towarzyszy}, we obtain
\begin{align} 
\label{eq-niezm-z-tow-gx}
T^b_{N - 12\delta}(gxz) = T^b_{N - 12\delta}(xz), \qquad z \in P_{12\delta}(gx).
\end{align}
From the first of these properties and from Proposition \ref{lem-kulowy-wyznacza-stozkowy}, we have $T^c(gxz) = T^c(xz)$. Since $(xz)^{-1}y$ belongs to $T^c(xz)$ and is of length
\begin{align} 
\label{eq-niezm-dlugosci}
|(xz)^{-1}y| = |y| - |xz| = |y| - |x| = L > N + k + 4\delta, 
\end{align}
by applying Proposition~\ref{lem-potomkowie-dla-kulowych} for \eqref{eq-niezm-z-tow-gx} and the action of $(xz)^{-1}y$ (with parameters $N - 12\delta > N_0 + 8\delta$, $N + k$) we obtain
\[ T^b_{N + k}(gy) = T^b_{N + k}(y), \]
and then from \textbf{(c)} 
\[ S_{gy} = g \cdot S_y. \]
Moreover, the conditions $(xz)^{-1}y \in T^c(gxz)$, \eqref{eq-niezm-dlugosci} and \eqref{eq-niezm-z-tow-gx} imply that
\[ |gy| = |gxz| + |(xz)^{-1}y| = |gxz| + L = |gx| + L, \]
which finishes the proof.
\end{proof}

\begin{wn}
 \label{wn-spanstary-quasi-niezm}
 For $N \geq N_3$, the sequence of covers $(\mathcal{S}_n)$ together with the type function $T = T^b_N$ is a quasi-$G$-invariant system of covers.
\end{wn}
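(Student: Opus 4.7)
The plan is essentially to unwind Definition~\ref{def-quasi-niezm} and verify each of its clauses by invoking the preceding lemmas, which have been set up precisely for this purpose. The only genuine choice we have to make is fixing the neighbourhood constant~$D$ and the jump constant~$J$; everything else is a pure citation. I would set $D := 12\delta$ and choose $J$ to be any positive integer strictly greater than $N + 4\delta$. That $(\mathcal{S}_n)$ is a system of covers in the sense of Definition~\ref{def-quasi-niezm-pokrycia} is Corollary~\ref{wn-konstr-pokrycie}.

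For the four numbered properties, I would argue as follows. For \qhlink{c}, Lemma~\ref{fakt-pokrycie-male} yields $\diam U \leq 3Ca^{-k}$ for every $U \in \mathcal{S}_k$, so the mesh condition holds with any rate. For \qhlink{d}, if $|x|=|y|$ and $S_x \cap S_y \neq \emptyset$ then $y = x(x^{-1}y)$ with $x^{-1}y \in P(x)$, and Lemma~\ref{fakt-sasiedzi-blisko}(b) yields $|x^{-1}y| \leq 12\delta$, giving $d(x,y)\leq 12\delta = D$. For \qhlink{e}, by Remark~\ref{uwaga-quasi-niezm-jeden-skok} it suffices to check $k=1$: choose a geodesic $\alpha$ from $e$ to~$x$ and put $y := \alpha(|x|-J)$. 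Then Lemma~\ref{fakt-wlasnosc-gwiazdy-bez-gwiazdy}(a,c) gives $S_x \subseteq S_y$, as required.

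The heart of the proof is \qhlink{f}. Assume $T^b_N(gx) = T^b_N(x)$. Clause \qhlink{f1} is Lemma~\ref{lem-typy-pokrycia-niezmiennicze}(c), applicable because $N \geq N_3 \geq N_1$. Clause \qhlink{f2} is Lemma~\ref{lem-typy-pokrycia-niezmiennicze}(e), applicable because $N \geq N_3 \geq N_2$. Clause \qhlink{f3} is the one requiring care: given $|y| = |x| + kJ$ with $k>0$ and $\emptyset \neq S_y \subseteq S_x$, I would invoke Lemma~\ref{lem-typy-pokrycia-niezmiennicze}(f) with its internal parameter \emph{``$k$''} set to~$0$ and its parameter \emph{``$L$''} set to~$kJ$. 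The hypothesis $L > N + 4\delta$ of the lemma translates into $kJ > N + 4\delta$, which is guaranteed by our choice $J > N + 4\delta$ (and $k \geq 1$). The lemma then delivers simultaneously $|gy| = |gx| + kJ$, $T^b_N(gy) = T^b_N(y)$, and $S_{gy} = g \cdot S_y$, which is exactly \qhlink{f3}.

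There is no real obstacle here; the only point that requires a moment's thought is matching the inequality $L > N + 4\delta$ in Lemma~\ref{lem-typy-pokrycia-niezmiennicze}(f) with the form ``$|y| = |x| + kJ$ for some $k > 0$'' demanded by \qhlink{f3}, which forces $J$ to be taken large (depending on~$N$). With the constants $D$, $J$ chosen as above, all four clauses are verified, completing the proof.
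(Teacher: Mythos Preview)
Your proposal is correct and follows essentially the same route as the paper: the same citations (\ref{wn-konstr-pokrycie}, \ref{fakt-pokrycie-male}, \ref{fakt-sasiedzi-blisko}b, \ref{fakt-wlasnosc-gwiazdy-bez-gwiazdy}, \ref{lem-typy-pokrycia-niezmiennicze}c,e,f with the internal parameter~$k=0$) and the same neighbourhood constant $D=12\delta$. Your choice of the jump constant $J > N + 4\delta$ is in fact slightly more careful than the paper's stated $J = N_3 + 4\delta$, since the strict inequality $L > N + 4\delta$ in Proposition~\ref{lem-typy-pokrycia-niezmiennicze}(f) must hold for the given~$N$, not just for~$N_3$.
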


\begin{proof}
 We have checked in Corollary \ref{wn-konstr-pokrycie} that every $\mathcal{S}_n$ is a~cover of $\partial G$; obviously it is open. The subsequent conditions from Definition \ref{def-quasi-niezm} hold correspondingly by \ref{fakt-pokrycie-male}, \ref{fakt-sasiedzi-blisko}b and \ref{fakt-wlasnosc-gwiazdy-bez-gwiazdy} and Proposition~\ref{lem-typy-pokrycia-niezmiennicze}c,e,f (for $k = 0$). Here, we take the following constants:
 \[ D = 12\delta, \qquad J_0 = 0, \qquad J = N_3 + 4\delta. \qedhere \]
\end{proof}

\section{Inverse limit construction}

\label{sec-engelking}

\dzm{
In this section, we present a classical construction (see Theorem~\ref{tw-konstr} below) which presents --- up to a~homeomorphism --- every compact metric space~$X$ as the inverse limit of the sequence of nerves of an appropriate system of covers of~$X$ (which we will call \textit{admissible}; see Definition~\ref{def-konstr-admissible}). We will also show (in Lemma~\ref{fakt-konstr-sp-zal}) that admissible systems can be easily obtained from any quasi-$G$-invariant systems of covers~$\mathcal{U}$.

In Section~\ref{sec-bi-lip}, we investigate this construction in the particular case when~$X = \partial G$ and $\mathcal{U}$ is inscribed in the system~$\mathcal{S}$ from Section~\ref{sec-konstr-pokrycia}. As we will show in Theorem~\ref{tw-bi-lip}, in such case the construction allows as well to describe certain metric properties of $\partial G$. (See the introduction to Section~\ref{sec-bi-lip} for more details).
}

\dzm{
\subsection{A topological description by limit of nerves}

\label{sec-engelking-top}
}

Let $X$ be a~compact metric space.

\begin{df}
 Recall that the \textit{rank} of a family $\mathcal{U}$ of subspaces of a space $X$ is the maximal number of elements of $\mathcal{U}$ which have non-empty intersection.
\end{df}

\begin{df}
\label{def-konstr-admissible}
A sequence $(\mathcal{U}_i)_{i \geq 0}$ of open covers $X$ will be called an \textit{admissible system} if the following holds:
\begin{itemize}
 \item[(i)] for every $i \geq 0$, the cover $\mathcal{U}_i$ is finite and does not contain empty sets;
 \item[(ii)] there exists $n \geq 0$ such that $\rank \mathcal{U}_i \leq n$ for every $i \geq 0$;
 \item[(iii)] the sequence $(\mathcal{U}_i)_{i \geq 0}$ has mesh property (in the sense of Definition \ref{def-mesh}a);
 \item[(iv)] the sequence $(\mathcal{U}_i)_{i \geq 0}$ has star property (see Definition \ref{def-wl-gwiazdy}).
\end{itemize}
\end{df}

There is an easy connection between this notion and the contents of the previous section:

\begin{fakt}
\label{fakt-konstr-sp-zal}
Let $(\mathcal{U}_n)$ be a quasi-$G$-invariant system of covers of a $G$-space $X$. Define
\[ \widetilde{\mathcal{U}}_n = \{ U \in \mathcal{U}_n \,|\, U \neq \emptyset \}. \]
Let $L_0$ denote the constant obtained for the system $(\mathcal{U}_n)$ from Proposition~\ref{lem-gwiazda}. Then, for any $L \geq L_0$, the sequence of the covers $(\widetilde{\mathcal{U}}_{nL})_{n \geq 0}$ is admissible.
\end{fakt}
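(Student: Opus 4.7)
The plan is to verify each of the four defining conditions of Definition~\ref{def-konstr-admissible} for the sequence $\mathcal{V}_n := \widetilde{\mathcal{U}}_{nL}$, drawing on the properties of a~quasi-$G$-invariant system from Definition~\ref{def-quasi-niezm} together with the previously established Lemma~\ref{lem-gwiazda}. No deep new idea is needed; each condition reduces directly to one of the four axioms~\qhlink{c}--\qhlink{f} or to Lemma~\ref{lem-gwiazda}.

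Conditions~(i) and~(iii) are essentially bookkeeping. Each $\mathcal{U}_{nL}$ is finite because $G$ is finitely generated, so there are finitely many elements of any given length; removing empty sets preserves finiteness and by definition eliminates them from the family, which gives~(i). The mesh property~(iii) is inherited from~\qhlink{c}: passing to the subsequence of indices divisible by~$L$ preserves mesh, and discarding empty sets does not affect the maximum diameter within a~level.

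For condition~(ii), the rank bound comes from~\qhlink{d}. If $U_{g_1}, \ldots, U_{g_k} \in \mathcal{V}_n$ share a~common point, then $U_{g_i} \cap U_{g_j} \neq \emptyset$ for all $i,j$, so $d(g_i, g_j) \leq D$; hence all $g_i$ lie in the ball $B_D(g_1)$ of the Cayley graph, which has cardinality $|B_D(e)|$ by left-invariance. This bounds $\rank \mathcal{V}_n$ by $|B_D(e)|$, uniformly in~$n$, giving~(ii).

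The star property~(iv) is precisely the content of Lemma~\ref{lem-gwiazda}, which requires $L$ to be additionally divisible by~$J$; this can be ensured by, if necessary, enlarging $L_0$ to a suitable multiple of~$J$ so that the divisibility is automatic (or, equivalently, by reading the statement as implicitly restricting to such~$L$). The star of a~nonempty $U_g \in \mathcal{V}_n$ in~$\mathcal{V}_n$ coincides with its star in~$\mathcal{U}_{nL}$ (empty sets contribute nothing), and the containing $U_f \in \mathcal{U}_{(n-1)L}$ provided by Lemma~\ref{lem-gwiazda} satisfies $U_f \supseteq U_g \neq \emptyset$, so $U_f$ itself belongs to~$\mathcal{V}_{n-1}$. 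The only mildly subtle point in the whole proof is this divisibility bookkeeping between the statement and Lemma~\ref{lem-gwiazda}; everything else is an immediate unpacking of the definitions.
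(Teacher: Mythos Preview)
Your proof is correct and follows essentially the same approach as the paper's: each admissibility condition is checked directly from the corresponding axiom \qhlink{c}, \qhlink{d}, or from Proposition~\ref{lem-gwiazda}. You are in fact slightly more careful than the paper in two places --- flagging the divisibility-by-$J$ requirement inherited from Proposition~\ref{lem-gwiazda}, and verifying that the containing set $U_f$ is itself nonempty so that it survives into~$\widetilde{\mathcal{U}}_{(n-1)L}$.
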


\begin{proof}
Clearly, for every $n \geq 0$ the family $\widetilde{\mathcal{U}}_n$ is an open cover of $X$. The condition (i) follows from the definition of~$\widetilde{\mathcal{U}}_n$. The mesh and star properties result correspondingly from the property \qhlink{c} and Proposition~\ref{lem-gwiazda}.

Finally, the condition (ii) follows from \qhlink{d}: whenever $U_x \cap U_y \neq \emptyset$, we have $d(x, y) \leq D$, so $x^{-1}y$ belongs to the ball in $G$ centred in $e$ of radius $D$. This means that the rank of the cover $\mathcal{U}_n$ (and thus also of $\widetilde{\mathcal{U}}_n$) does not exceed the number of elements in this ball, which is finite and independent from $n$.
\end{proof}

\begin{ozn}
Let $\mathcal{U}$ be an open cover of~$X$. For $U \in \mathcal{U}$, we denote by~$v_U$ the vertex in the nerve of~$\mathcal{U}$ corresponding to~$U$. We also denote by~$[v_1, \ldots, v_n]$ the simplex in~this nerve spanned by vertices $v_1, \ldots, v_n$.
\end{ozn}

\begin{df}
\label{def-konstr-nerwy}
For an admissible system $(\mathcal{U}_i)_{i \geq 0}$ in~$X$, we define the \textit{associated system of nerves} $(K_i, f_i)_{i \geq 0}$, where $f_i : K_{i+1} \rightarrow K_i$ for $i \geq 0$, as follows:
\begin{itemize}
 \item[(i)] for $i \geq 0$, $K_i$ is the nerve of the cover~$\mathcal{U}_i$;
 \item[(ii)] for $U \in K_{i+1}$, $f_i(v_U)$ is the barycentre of the simplex spanned by $\{ v_V \,|\, V \in K_i, \, V \supseteq U \}$;
 \item[(iii)] for other elements of $K_{i+1}$, we extend $f_i$ so that it is affine on every simplex.
\end{itemize}
For any $j \geq 0$, we denote by $\pi_j$ the natural projection from the inverse limit $\liminv K_i$ to $K_j$.
\end{df}

\begin{uwaga}
If $v_{U_1}, \ldots, v_{U_n}$ span a~simplex in $K_{i+1}$, then $U_1 \cap \ldots \cap U_n \neq \emptyset$; this implies that the family $\mathcal{A} = \{ V \in \mathcal{U}_i \,|\, V \supseteq U_1 \cap \ldots \cap U_n \}$ has a non-empty intersection and therefore the vertices $\{ v_V \,|\, V \in \mathcal{A} \}$ span a simplex in~$K_i$ which contains all the images $f_i(v_{U_j})$ for $1 \leq j \leq n$. This ensures that the affine extension described in condition (iii) of Definition~\ref{def-konstr-nerwy} is indeed possible.
\end{uwaga}

The following theorem is essentially an adjustment of Theorem~1.13.2 in~\cite{E} to our needs (see the discussion below).

\begin{tw}
\label{tw-konstr}
Let $(\mathcal{U}_i)_{i \geq 0}$ be an admissible system in~$X$, and $(K_i, f_i)$ be its associated nerve system. For any $x \in X$ and $i \geq 0$, denote by $K_i(x)$ the simplex in $K_i$ spanned by the set $\{ v_U \,|\, U \in \mathcal{U}_i \, x \in U \}$. Then:
\begin{itemize}
 \item[\textbf{(a)}] The system $(K_i, f_i)$ has mesh property;
 \item[\textbf{(b)}] For every $x \in X$, the space $\liminv K_i(x) \subseteq \liminv K_i$ has a~unique element, which we will denote by~$\varphi(x)$;
 \item[\textbf{(c)}] The map $\varphi : X \rightarrow \liminv K_i$ defined above is a~homeomorphism.
\end{itemize}

\end{tw}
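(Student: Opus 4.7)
The plan is to adapt the classical proof of Engelking's Theorem~1.13.2 (cited in the paper) to our combinatorial setting. The pivotal preliminary observation is that, for each $x \in X$, the family $\{K_i(x)\}_i$ is a sub-inverse-system of $(K_i, f_i)$: if $v_U \in K_{i+1}(x)$, meaning $x \in U$, then every $V \in \mathcal{U}_i$ with $V \supseteq U$ also contains $x$, so the barycentre $f_i(v_U)$ -- a centroid of such $v_V$'s -- lies in $K_i(x)$; by affinity, $f_i(K_{i+1}(x)) \subseteq K_i(x)$.

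For~(a) I would use the topological reformulation from Remark~\ref{uwaga-mesh-bez-metryki}: given an open cover $\mathcal{V}$ of $K_i$, it suffices to show that for $n$ large every image $f^n_i(\sigma)$ (with $\sigma \in K_n$) fits into some element of $\mathcal{V}$. The core local observation is that for any simplex $\sigma = [v_{U_1},\ldots,v_{U_k}] \in K_n$ and any $y \in \bigcap_j U_j$, the sub-system property yields $f^n_i(\sigma) \subseteq K_i(y)$. Combining this with mesh of $(\mathcal{U}_n)$ and a Lebesgue number for $\mathcal{U}_i$ gives, for $n$ large, a single $W \in \mathcal{U}_i$ containing $\bigcup_j U_j$, so that $K_i(y)$ sits inside the closed star of $v_W$ in $K_i$. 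To refine this containment into an element of an arbitrary open cover $\mathcal{V}$ -- which may correspond to a star cover of a deep iterated barycentric subdivision of $K_i$ -- one iterates the argument through an intermediate level $k \in (i, n)$, using the star property at level~$k$ to bound the spread introduced by the barycentric averaging in $f_k$.

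Granted~(a), part~(b) follows readily: the sequence $(f^n_i(K_n(x)))_{n \geq i}$ is decreasing (since $f^{n+1}_i(K_{n+1}(x)) = f^n_i(f_n(K_{n+1}(x))) \subseteq f^n_i(K_n(x))$) and its diameters tend to zero by~(a), so the intersection is a single point $\varphi_i(x) \in K_i(x)$; the compatibility $f_i(\varphi_{i+1}(x)) = \varphi_i(x)$ follows from $f_i(\varphi_{i+1}(x)) \in f^n_i(K_n(x))$ for all $n \geq i+1$, so $\varphi(x) := (\varphi_i(x))_i$ is the unique element of $\liminv K_i(x)$.

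For~(c): continuity holds because for $y$ sufficiently close to $x$, the simplices $K_i(y)$ and $K_i(x)$ share enough vertices that $\pi_i(\varphi(y))$ and $\pi_i(\varphi(x))$ cluster near a common face; injectivity follows from mesh, since $x \neq y$ forces $K_n(x)$ and $K_n(y)$ to be disjoint simplices for $n$ large. For surjectivity, given $(y_i) \in \liminv K_i$, I would lift each $y_i$ to a point $x_i \in X$ in the intersection of the sets indexing the carrier simplex of $y_i$, and take a cluster point $x$ of $(x_i)$ in compact $X$; a direct check using~(a) yields $\varphi(x) = (y_i)$. Since $\varphi$ is then a continuous bijection from a compact space to a Hausdorff one, it is a homeomorphism. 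The main obstacle is the mesh argument in~(a): because $f_k$ does not, in general, send simplices of $K_{k+1}$ into simplices of the barycentric subdivision of $K_k$, the inductive refinement requires careful bookkeeping of how each barycentric averaging step contracts the image, combining metric mesh with the star-property containment across all intermediate levels.
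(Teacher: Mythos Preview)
Your treatment of parts~\textbf{(b)} and~\textbf{(c)} is fine and essentially matches the paper, which simply defers to Engelking's proof of Theorem~1.13.2. The gap is in part~\textbf{(a)}. Your combinatorial observation $f^n_i(\sigma) \subseteq K_i(y)$ is correct but only places the image inside a fixed simplex of~$K_i$, whose diameter does not shrink. The proposed ``iteration through an intermediate level~$k$'' cannot help on its own: passing through level~$k$ gives $f^n_k(\sigma) \subseteq K_k(y)$, and then $f^k_i(K_k(y)) \subseteq K_i(y)$ again --- no refinement. As you yourself note, $f_k$ need not carry simplices of~$K_{k+1}$ into simplices of the barycentric subdivision of~$K_k$, so there is no purely combinatorial mechanism forcing the image to shrink.

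What the paper does (following Engelking) is extract the one-step metric estimate directly: in the $l^1$ metric on the nerves, for every simplex $\sigma \in K_{j+1}$ one has $\diam f_j(\sigma) \leq \tfrac{n}{n+1}\cdot 2$, and inductively $\diam f^j_i(\sigma) \leq \big(\tfrac{n}{n+1}\big)^{j-i}$, where $n$ is the uniform rank bound. This is exactly where the star property is used: for $\sigma = [v_{U_1},\ldots,v_{U_k}]$, the star property produces a single $W \in \mathcal{U}_j$ containing every~$U_l$, so each set $\{V : V \supseteq U_l\}$ contains~$W$; a direct $l^1$ computation then shows that any two barycentres of faces of an $m$-vertex simplex sharing a common vertex are at distance at most $2 - \tfrac{2}{m} \leq 2 \cdot \tfrac{n}{n+1}$. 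You should invoke this contraction estimate at \emph{every} step rather than trying to localise it at a few intermediate levels; the topological reformulation of mesh then follows immediately, but the metric argument cannot be bypassed.
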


A proof of Theorem~\ref{tw-konstr} can be obtained from the proof of Theorem~1.13.2 given in~\cite{E} as follows:

\begin{itemize}

 \item Although our assumptions are different than those in~\cite{E}, they still imply all statements in the proof given there, except for the condition labelled as~(2). However, this condition is used there only to ensure the mesh and star properties of~$(\mathcal{U}_i)$ which we have assumed anyway.

 \item The theorem from~\cite{E} does not state the mesh property for the nerve system.
 However, an inductive application of the inequality labelled as~(6) in its proof gives (in our notation) that:
 \begin{align} 
 \label{eq-konstr-szacowanie-obrazow}
 \diam f^j_i(\sigma) \leq \left( \tfrac{n}{n + 1} \right)^{j - i} \qquad \textrm{ for every simplex $\sigma$ in~$K_j$}, 
 \end{align}

 where $n$ denotes the upper bound for the rank of covers required by Definition~\ref{def-konstr-admissible}. The right-hand side of~\eqref{eq-konstr-szacowanie-obrazow} does not depend on~$\sigma$, but only on~$i$, and tends to zero as $i \rightarrow \infty$, which proves mesh property for the nerve system. (Although the above estimate holds only for the particular metric on~$K_i$ used in~\cite{E}, this suffices to deduce the mesh property in view of Remark~\ref{uwaga-mesh-bez-metryki}).
\end{itemize}

\subsection{A metric description for systems inscribed in~$\mathcal{S}$}
\label{sec-bi-lip}

\dzm{
Let~$G$ by a hyperbolic group, and let~$\mathcal{U}$ be a quasi-$G$-invariant system of covers of~$\partial G$, inscribed in the system~$\mathcal{S}$ defined in Section~\ref{sec-konstr-pokrycia}. We will now prove that, under such assumptions, the homeomorphism $\varphi : \partial G \rightarrow \liminv K_i$ obtained from Theorem~\ref{tw-konstr} on the basis of~$\mathcal{U}$ (through Lemma~\ref{fakt-konstr-sp-zal}) is a bi-Lipschitz equivalence --- when $\partial G$ is considered with the visual metric $d_v^{(a)}$ for sufficiently small value of~$a$, and $\liminv K_i$ with the natural \textit{simplicial metric} (see Definition~\ref{def-metryka-komp} below) for the same value of~$a$.
}

To put this in a context, let us recall the known properties of visual metrics on~$\partial G$. The definition of the visual metric given in~\cite{zolta} depends not only on the choice of~$a$, but also on the choice of a basepoint in the group (in this paper, we always set it to be~$e$) and a set of its generators. It is known that the visual metrics obtained for different choices of these parameters do not have to be bi-Lipschitz equivalent, however, they all determine the same quasi-conformal structure (\cite[Theorems~2.18 and~3.2]{Kap}). In this situation, Theorem~\ref{tw-bi-lip} shows that this natural quasi-conformal structure on~$\partial G$ can be as well described by means of \dzm{the inverse limit of polyhedra which we have built so far. This will enable us, in view of Theorems~\ref{tw-kompakt-ogolnie} and~\ref{tw-sk-opis} (to be shown in the next sections), to} give (indirectly) a description of quasi-conformal structures on the boundaries of hyperbolic groups in terms of appropriate Markov systems.

\subsubsection{The simplicial metric}

Let us recall the definition of the metric on simplicial complexes used in the proof of Theorem 1.13.2 in~\cite{E} (which serves as the base for Theorem \ref{tw-konstr}). For any $n \geq 0$, we denote
\[ e_i = (\underbrace{0, \ldots, 0}_{i-1}, 1, 0, \ldots, 0) \in \mathbb{R}^n. \]
\begin{df}
\label{def-metryka-l1}
Let $K$ be a simplicial complex with $n$ vertices. Let $m \geq n$ and $f : K \rightarrow \mathbb{R}^m$ be an injective affine map sending vertices of $k$ to points of the form $e_i$ (for $1 \leq i \leq m$). We define the \textit{$l^1$ metric} on $K$ by the formula:
\[ d_K(x, y) = \| f(x) - f(y) \|_1 \qquad \textrm{ for } x, y \in K. \]
\end{df}

\begin{uwaga}
\label{uwaga-metryka-l1-sens}
The metric given by Definition \ref{def-metryka-l1} does not depend on the choice of $m$ and~$f$ because any other affine inclusion $f' : K \rightarrow \mathbb{R}^{m'}$ must be (after restriction to $K$) a composition of $f$ with a linear coordinate change which is an isometry with respect to the norm $\| \cdot \|_1$.
\end{uwaga}

\begin{uwaga}
\label{uwaga-metryka-l1-ogr}
Since in Definition~\ref{def-metryka-l1} we have $f(K) \subseteq \{ (x_i) \,|\, x_i \geq 0 \textrm{ for } 1 \leq i \leq m, \ \sum_{i=1}^m x_i = 1 \}$, it can be easily deduced that any complex $K$ has diameter at most $2$ in the $l^1$ metric.
\end{uwaga}

\begin{df}
\label{def-metryka-komp}
Let $(K_i, f_i)_{i \geq 0}$ be an inverse system of simplicial complexes. For any real $a > 1$, we define \dzm{the \textit{simplicial metric} (with parameter~$a$)} $d^M_a$ on~$\liminv K_i$ by the formula
\[ d^M_a \big( (x_i)_{i \geq 0}, (y_i)_{i \geq 0} \big) = \sum_{i = 0}^\infty a^{-i} \cdot d_{K_i}(x_i, y_i). \]
\end{df}

\begin{uwaga}
In the case when $a = 2$, Definition \ref{def-metryka-komp} gives the classical metric used in countable products of metric spaces (and hence also in the limits of inverse systems); in particular, it is known that the metric~$d^M_2$ is compatible with the natural topology on the inverse limit (i.e. the restricted Tichonov's product topology). However, this fact holds, with an analogous proof, for any other value of $a>1$ (see~\cite[the remark following Theorem~4.2.2]{ET})
\end{uwaga}

\subsubsection{Bi-Lipschitz equivalence of both metrics}

In the following theorem, we use the notions \textit{quasi-$G$-invariant}, \textit{system of covers}, \textit{inscribed} defined respectively in Definitions~\ref{def-quasi-niezm}, \ref{def-quasi-niezm-pokrycia} and \ref{def-quasi-niezm-system-wpisany}, as well as the system $\mathcal{S}$ defined in Definition~\ref{def-span-star}.

\begin{tw}
\label{tw-bi-lip}
Let $G$ be a hyperbolic group. \dzm{Let~$\mathcal{U}$ be a quasi-$G$-invariant system of covers of~$\partial G$, inscribed in the system~$\mathcal{S}$ (see Section~\ref{sec-konstr-pokrycia}), and let $\varphi : \partial G \rightarrow \liminv K_i$ be the homeomorphism obtained for~$\mathcal{U}$ from Theorem~\ref{tw-konstr}.}

Then, there exists a constant $a_1 > 1$ (depending only on~$G$) such that, for any $a \in (1, a_1)$, 
$\varphi$ is a bi-Lipschitz equivalence between the visual metric on~$G$ with parameter~$a$ (see Section~\ref{sec-def-hip}) and the simplicial metric $d^M_a$ on~$\liminv K_i$.
\end{tw}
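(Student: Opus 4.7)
Since the visual metric $d_v^{(a)}$ is bi-Lipschitz equivalent (for $a$ in the admissible range) to the distance function $d_a(p,q) = a^{-l(p,q)}$ recalled in Section~\ref{sec-def-hip}, the plan is to show that $d_a^M$ is bi-Lipschitz equivalent to $d_a$ under $\varphi$. Write $(x_i)_{i \ge 0} = \varphi(p)$ and $(y_i)_{i \ge 0} = \varphi(q)$, so that $x_i \in K_i(p)$ and $y_i \in K_i(q)$ in the notation of Theorem~\ref{tw-konstr}. Three ingredients carry the proof: (i) the diameter bound $\diam U \le 3Ca^{-n}$ for any $U \in \mathcal{U}_n$, which follows from the inscription $\mathcal{U} \subseteq \mathcal{S}$ combined with Lemma~\ref{fakt-pokrycie-male}; (ii) the ancestor property~\qhlink{e} of quasi-$G$-invariant systems, which allows a common cover element of $p$ and $q$ to be lifted to a common ancestor at every earlier level; and (iii) the mesh contraction $\diam f^{j}_i(\sigma) \le \bigl(\tfrac{n_0}{n_0+1}\bigr)^{j-i}$ in the $l^{1}$ metric on $K_i$, established inside the proof of Theorem~\ref{tw-konstr}, with $n_0$ a uniform rank bound on the covers.

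For the upper bound $d_a^M(\varphi(p),\varphi(q)) \le C \cdot d_a(p,q)$, let $j^{*}$ denote the largest index in the admissible subsystem $(\widetilde{\mathcal{U}}_{nL})_{n \ge 0}$ from Lemma~\ref{fakt-konstr-sp-zal} at which some cover element contains both $p$ and $q$; by~(i) this forces $d_a(p,q) \lesssim a^{-j^{*} L}$. For each $i \le j^{*}$, the ancestor property~(ii) supplies a common vertex $v \in K_i(p) \cap K_i(q)$, and then~(iii) combined with the triangle inequality
\[ d_{K_i}(x_i,y_i) \le d_{K_i}\bigl(x_i,\,f^{j^{*}}_i(v)\bigr) + d_{K_i}\bigl(f^{j^{*}}_i(v),\,y_i\bigr) \]
gives $d_{K_i}(x_i,y_i) \le 2\bigl(\tfrac{n_0}{n_0+1}\bigr)^{j^{*}-i}$. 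For $i > j^{*}$ the trivial bound $d_{K_i}(x_i,y_i) \le 2$ from Remark~\ref{uwaga-metryka-l1-ogr} suffices. Splitting the series $\sum_i a^{-i} d_{K_i}(x_i,y_i)$ at $i = j^{*}$ and evaluating the two resulting geometric series then produces the desired estimate.

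For the lower bound, let $i^{*}$ be the smallest index at which $K_{i^{*}}(p)$ and $K_{i^{*}}(q)$ share no vertex, i.e., no element of the admissible cover contains both $p$ and $q$. A direct computation with barycentric coordinates in the defining embedding of the $l^{1}$ metric shows that any two points lying in vertex-disjoint subsimplices of $K_{i^{*}}$ are at $l^{1}$-distance exactly~$2$, so $d_{K_{i^{*}}}(x_{i^{*}},y_{i^{*}}) = 2$ and therefore $d_a^M(\varphi(p),\varphi(q)) \ge 2 a^{-i^{*}}$. An upper bound on $i^{*}$ in terms of $l(p,q)$ will follow from a Lebesgue-number estimate for the covers, using Lemma~\ref{fakt-wlasnosc-gwiazdy-bez-gwiazdy} and the span-star structure to ensure that sufficiently close $p,q$ must belong to a common element of $\mathcal{S}$ and hence of $\mathcal{U}$. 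The main obstacle is calibrating the constant $a_1$ so that the weight $a^{-i}$ in $d_a^M$ and the geometric scale $a^{-iL}$ of the covers (where $L$ is the star-refinement constant from Lemma~\ref{lem-gwiazda}) combine into a genuine bi-Lipschitz equivalence with a \emph{common} parameter $a$ on both sides; in particular, $a$ must be close enough to~$1$ for $a \cdot \tfrac{n_0}{n_0+1} < 1$ to hold, which is precisely what makes the geometric series in the upper bound sum to an expression of the same order of magnitude as the lower bound.
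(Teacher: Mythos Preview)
Your overall architecture matches the paper's: compare $d_a^M$ with $d_a$ rather than the visual metric directly, locate a critical level at which a common cover element for $p,q$ appears or disappears, use the mesh contraction $(n_0/(n_0+1))^{j-i}$ to control the tail below that level, and use the ``vertex-disjoint simplices are at $l^1$-distance $2$'' observation (this is Lemma~\ref{fakt-rozlaczne-symp-daleko}) to control the tail above it. So far so good.

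The gap is that you have the two comparison ingredients assigned to the wrong halves of the proof. In your upper-bound paragraph you invoke (i), the diameter bound, at level $j^*$ to get $d_a(p,q)\lesssim a^{-j^*L}$. But what you actually need in order to conclude $d_a^M\lesssim a^{-j^*}\lesssim d_a$ is the \emph{reverse} inequality $d_a(p,q)\gtrsim a^{-j^*}$, and that comes from a Lebesgue-number estimate (no common element at level $j^*+1$ forces $d_a$ to exceed the Lebesgue number there). This is precisely Lemma~\ref{fakt-duze-gwiazdy} in the paper, and it is the non-trivial step: its proof uses the quasi-$G$-invariance to transport the problem to a bounded set of model levels via Proposition~\ref{lem-bi-lip-geodezyjne}, which is considerably more than what Lemma~\ref{fakt-wlasnosc-gwiazdy-bez-gwiazdy} alone gives you. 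Symmetrically, your lower-bound paragraph asks for an ``upper bound on $i^*$ in terms of $l(p,q)$'' and attributes it to a Lebesgue estimate; but the Lebesgue number yields a \emph{lower} bound on $i^*$, while the upper bound you need (so that $a^{-i^*}\gtrsim d_a$) comes from the diameter estimate~(i) applied at level $i^*-1$.

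Two smaller points. First, in the triangle-inequality display you write $v\in K_i(p)\cap K_i(q)$ and then apply $f^{j^*}_i$ to it; for the mesh-contraction bound to bite you need $v$ to be the common vertex at level $j^*$, not at level $i$ (the ancestor property is then unnecessary). Second, you correctly flag the mismatch between the weight $a^{-i}$ in $d_a^M$ and the cover scale $a^{-iL}$; the paper handles this by running the whole argument with the admissible system already re-indexed, so that diameter and Lebesgue estimates are both at scale $a^{-i}$ for the same $i$ that appears in $d_a^M$.
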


\begin{uwaga}
Theorem~\ref{tw-bi-lip} re-states the second claim of Theorem~\ref{tw-bi-lip-0}, which is sufficient to deduce the first claim in view of the introduction to Section~\ref{sec-bi-lip}.
\end{uwaga}

\begin{uwaga}
\label{uwaga-bi-lip-wystarczy-d}
To prove the above theorem, it is clearly sufficient to check a bi-Lipschitz equivalence between the simplicial metric $d_a^M$ and the \textit{distance function}~$d_a$ which has been introduced in Section~\ref{sec-def-hip} as a bi-Lipschitz approximation of the visual metric.
\end{uwaga}

\begin{fakt}
\label{fakt-rozlaczne-symp-daleko}
If $s_1,s_2$ are two disjoint simplexes in a complex $K$, then for any $z_1 \in s_1, z_2 \in s_2$, we have $d_K(z_1, z_2) = 2$.
\end{fakt}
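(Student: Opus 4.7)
The plan is to simply unfold the definition of the $l^1$ metric. Let $K$ have $n$ vertices $v_1,\dots,v_n$ and let $f : K \to \mathbb{R}^m$ be an affine injection sending $v_j$ to $e_{i_j}$ as in Definition~\ref{def-metryka-l1}. Any point $z$ of a simplex $s$ admits a unique expression as a convex combination of the vertices of~$s$, so first I would write
\[ f(z_1) \;=\; \sum_{v \in V(s_1)} \lambda_v \, e_{i_v}, \qquad f(z_2) \;=\; \sum_{w \in V(s_2)} \mu_w \, e_{i_w}, \]
where $\lambda_v,\mu_w \geq 0$, $\sum_v \lambda_v = \sum_w \mu_w = 1$, and $V(s_j)$ denotes the vertex set of~$s_j$.

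Next I would invoke the assumption that $s_1$ and $s_2$ are disjoint. In a simplicial complex, two simplices either intersect in a common face or not at all; so $s_1 \cap s_2 = \emptyset$ forces $V(s_1) \cap V(s_2) = \emptyset$. Since $f$ sends different vertices to different basis vectors, the index sets $\{i_v : v \in V(s_1)\}$ and $\{i_w : w \in V(s_2)\}$ are disjoint subsets of $\{1,\dots,m\}$.

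Finally, reading off coordinates of $f(z_1) - f(z_2)$, the nonzero entries are $\lambda_v$ (at indices $i_v$ for $v \in V(s_1)$) and $-\mu_w$ (at indices $i_w$ for $w \in V(s_2)$), sitting at pairwise distinct positions. Hence
\[ d_K(z_1,z_2) \;=\; \|f(z_1)-f(z_2)\|_1 \;=\; \sum_{v \in V(s_1)} \lambda_v \,+\, \sum_{w \in V(s_2)} \mu_w \;=\; 1+1 \;=\; 2. \]
There is no real obstacle here; the only subtlety worth stating explicitly is the combinatorial fact that disjointness of simplices in a simplicial complex implies disjointness of their vertex sets, which is what lets the $l^1$ sums add without cancellation.
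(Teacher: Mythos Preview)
Your proof is correct and follows essentially the same approach as the paper: both arguments observe that disjointness of the simplices forces disjointness of the supporting index sets in~$\mathbb{R}^m$, so the $l^1$ norm of the difference is the sum of the coordinate sums, giving $1+1=2$. Your version spells out the barycentric coordinates slightly more explicitly, but the idea is identical.
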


\begin{proof}
Let $f : K \rightarrow \mathbb{R}^m$ satisfy the conditions from Definition \ref{def-metryka-l1}. For $j = 1, 2$, let~$A_j$ denote the set of indexes $1 \leq i \leq m$ for which $e_i = f(v)$ for some vertex~$v \in s_j$. Then we have
\[ f(s_j) = \Big\{ (x_i) \in \mathbb{R}^m \ \Big|\ \ x_i \geq 0 \textrm{ for } 1 \leq i \leq m, \ \ x_i = 0 \textrm{ for } i \in A_j, \ \ \sum_{i \in A_j} x_i = 1 \Big\}. \]
However, since $f$ is an inclusion, the sets $A_1$, $A_2$ are disjoint, from which it results that, for any $p_j \in f(s_j)$ (for $j = 1, 2$), we have $\| p_1 - p_2 \|_1 = 2$.
\end{proof}

\begin{lem}
\label{lem-bi-lip-geodezyjne}
There exist constants $E_1, N_4$ (depending only on~$G$) such that if $k, l \geq 0$, $N > N_4$, $g, x \in G$ and $p, q \in \partial G$ satisfy the conditions:
\[ |x| = k, \qquad |gx| = l, \qquad T^b_N(x) = T^b_N(gx), \qquad p \in \sppan(x), \qquad d(p, q) \leq a^{-(k+E_1)}, \]
then in $\partial G$ we have
\begin{align} 
\label{eq-bi-lip-geodezyjne-teza}
d(g \cdot p, \, g \cdot q) \leq a^{-(l-k)} \cdot d(p, q). 
\end{align}
\end{lem}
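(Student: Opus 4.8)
The statement says: if $p \in \sppan(x)$ with $|x| = k$, $|gx| = l$, and $g$ preserves the $N$-ball type of $x$, then $g$ contracts (by roughly the factor $a^{-(l-k)}$) the distance between $p$ and any $q$ that is already close to $p$ at scale $a^{-(k+E_1)}$. Recalling that $d(p,q) = a^{-\ell}$ where $\ell$ is the maximal distance from $e$ to a bi-infinite geodesic $\gamma$ joining $p$ and $q$ (Remark~\ref{uwaga-bi-lip-wystarczy-d} and Section~\ref{sec-def-hip}), the task is essentially to control how the position of such a geodesic transforms under the action of $g$. First I would fix a bi-infinite geodesic $\gamma$ realizing (up to the bi-Lipschitz slack implicit in $d_a$ versus $d_v^{(a)}$, which we may absorb into $E_1$) the distance $d(p,q)$, so $d(e,\gamma) = \ell \geq k + E_1$.

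Next, the key geometric point: because $p \in \sppan(x)$, there is a geodesic ray from $e$ through $x$ to $p$; and since $d(p,q) = a^{-\ell}$ is small (i.e.\ $\ell$ large), $\gamma$ must pass $4\delta$-close (or some uniform multiple of $\delta$) to this ray up to distance roughly $\ell$ from $e$ — in particular $\gamma$ comes uniformly close to $x$, provided $E_1$ is chosen so that $\ell \geq k + E_1$ forces this. More precisely, using Lemma~\ref{fakt-waskie-trojkaty} and the estimates of Lemma~\ref{fakt-geodezyjne-pozostaja-bliskie}/Corollary~\ref{wn-krzywe-geodezyjne-pozostaja-bliskie} applied to the ray through $x$ and the two geodesic rays from $e$ to the endpoints $p,q$ of $\gamma$, I would find a point $x' $ on $\gamma$ with $d(x, x') \leq c\delta$ for a universal constant $c$; write $w = x^{-1} x'$, so $|w| \le c\delta$. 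Then I can re-anchor: replace $x$ by $xw = x'$, which lies on $\gamma$, at the cost of changing the ball type to $T^b_{N - c\delta}$; by Lemma~\ref{fakt-kulowy-duzy-wyznacza-maly}, $T^b_N(x) = T^b_N(gx)$ implies $T^b_{N-c\delta}(xw) = T^b_{N-c\delta}(gxw)$, and by Proposition~\ref{lem-kulowy-wyznacza-stozkowy} (if $N - c\delta \ge N_0$, which fixes part of $N_4$) also $T^c(xw) = T^c(gxw)$.

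Now I translate $\gamma$ by $g$. The image $g\gamma$ is a bi-infinite geodesic joining $g\cdot p$ and $g\cdot q$, so $d(g\cdot p, g\cdot q) \le a^{-d(e, g\gamma)}$ (again up to absorbing the $d_a$-vs-$d_v^{(a)}$ constant into the final estimate, or more cleanly working with $d_a$ throughout per Remark~\ref{uwaga-bi-lip-wystarczy-d}). So it suffices to show $d(e, g\gamma) \ge \ell - (l - k) - O(\delta)$, i.e.\ that translating by $g$ moves $\gamma$ away from $e$ by an amount comparable to $l - k = |gx| - |x|$, up to a uniformly bounded error. The point $x' = xw$ lies on $\gamma$ and satisfies $|x'| = k + O(\delta)$; the half of $\gamma$ from $x'$ towards $p$ lies in $x'T^c(x')$ (this is where $p \in \sppan(x)$, hence $p \in \sppan(x')$ up to the small correction, is used — the descendant description of spans, Lemma~\ref{fakt-stozek-a-span}). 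Since $T^c(gx') = T^c(x')$, the same half-geodesic maps under $g$ to a geodesic ray from $gx'$ staying in $gx'T^c(gx')$, so it only goes further from $e$: every point on $g\gamma$ beyond $gx'$ has length $\ge |gx'| = l + O(\delta)$. For points on the other half of $\gamma$ (towards $q$), I would use that $d(e,\gamma) = \ell$ is attained somewhere and that $g$ is an isometry together with the cone-type control to bound the displacement; concretely, if $z \in \gamma$ realizes $d(e,\gamma) = \ell$, then $z$ is within $O(\delta)$ of the portion of $\gamma$ covered by cone types of $x'$, so $|gz| \ge |z| - (|x'| - |gx'|) - O(\delta) = \ell - (k - l) - O(\delta) = \ell + (l-k) - O(\delta)$ — wait, the sign: since we want contraction we need $|gz| \ge \ell - (l-k) - O(\delta)$ only when $l < k$...

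Let me restate the target cleanly: we want $d(e, g\gamma) \ge \ell - (l-k) - C\delta$ for a universal $C$, which after exponentiating and choosing $a$ close enough to $1$ (so that $a^{C\delta}$ is absorbed, or rather incorporating $C\delta$ into $E_1$ via the hypothesis $d(p,q) \le a^{-(k+E_1)}$ and a matching slack) yields $d(g\cdot p, g\cdot q) \le a^{-(\ell - (l-k))} = a^{-(l-k)} a^{-\ell} = a^{-(l-k)} d(p,q)$. The inequality $d(e,g\gamma) \ge \ell - (l - k) - C\delta$ is proved pointwise: for each $z \in \gamma$, I produce a point $\bar z$ on $\gamma$ within $O(\delta)$ of $z$ that is a descendant of $x' = xw$ (possible because $\gamma$ fellow-travels the ray $e \to x \to p$ up to length $\ge \ell \ge$ the relevant scale, and $\ell$ large forces all of $\gamma$'s ``$e$-facing'' part to be near $x'$ or its span), transport the descendant relation by $g$ using $T^c(gx') = T^c(x')$ to get $|g\bar z| - |gx'| = |\bar z| - |x'|$, hence $|gz| \ge |g\bar z| - O(\delta) = |gx'| + (|\bar z| - |x'|) - O(\delta) \ge l + (|z| - k) - O(\delta) \ge \ell + (l-k) - O(\delta) \ge \ell - (l-k) - O(\delta)$ when $l \ge k$; and when $l < k$ the same computation gives the bound directly. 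Finally collect all the $\delta$-dependent constants incurred (from Lemma~\ref{fakt-waskie-trojkaty}, the fellow-traveling estimates, the ball-type radius losses, and the need $N - c\delta \ge N_0$) into $E_1$ and $N_4$, and choose $a_1 > 1$ small enough that the exponential of the bounded error is controlled.

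\textbf{Main obstacle.} The delicate step is the second one: showing that $d(p,q) \le a^{-(k+E_1)}$ — equivalently, that a bi-infinite geodesic $\gamma$ joining $p$ and $q$ stays far from $e$ — actually forces $\gamma$ to fellow-travel the ray $e \to x \to p$ closely enough, and on a long enough initial portion, that (i) a point of $\gamma$ lies $O(\delta)$-close to $x$, and (ii) \emph{every} point of $\gamma$ whose length is $\le \ell$ is $O(\delta)$-close to a descendant of $x$. Point (ii) requires care because the $q$-facing half of $\gamma$ need not follow the ray through $x$ at all near $q$; the saving grace is that $d(e,\gamma) = \ell$ is \emph{large}, so the point of $\gamma$ closest to $e$ — and everything between it and $x'$ — is pinned near the common ray by thin-triangle/Lemma~\ref{fakt-geodezyjne-pozostaja-bliskie}-type estimates, and it is precisely $d(e, g\gamma)$, governed by that closest-point region, that we must bound below. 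Getting the bookkeeping of all the additive $\delta$-constants to line up so that they can be honestly absorbed into $E_1$ (rather than multiplying $d(p,q)$ by an uncontrolled constant, which Remark~\ref{uwaga-bi-lip-wystarczy-d} permits but which one still wants to keep tidy) is the technical heart of the argument.
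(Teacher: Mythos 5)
There is a genuine gap, and it sits exactly at the step you flag as the ``key geometric point.'' You propose to find a point $x'$ \emph{on the bi-infinite geodesic} $\gamma$ with $d(x,x')\leq c\delta$ and to re-anchor the cone-type argument there. But under the hypotheses this is impossible: since $d(e,\gamma)=-\log_a d(p,q)\geq k+E_1$ while $|x|=k$, every point of $\gamma$ satisfies $d(x,\gamma(i))\geq d(e,\gamma)-|x|\geq E_1$, and if $d(p,q)$ is much smaller than $a^{-(k+E_1)}$ then $\gamma$ is arbitrarily far from $x$. The hypothesis $d(p,q)\leq a^{-(k+E_1)}$ pushes $\gamma$ \emph{away} from $x$, not towards it. The paper's proof makes exactly this observation: applying Lemma~\ref{fakt-waskie-trojkaty} to the triangle with sides $\alpha=[e,p]$, $\beta=[e,q]$, $\gamma$, the point $s$ within $12\delta$ of $x=\alpha(k)$ must lie on $\beta$ and \emph{cannot} lie on $\gamma$ (its length would be $\leq k+12\delta < k+13\delta \leq d(e,\gamma)$). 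Consequently the entire re-anchoring step, and with it your treatment of the two ``halves'' of $\gamma$ as descendants of a single $x'\in\gamma$, collapses. The correct device --- which your pointwise sketch gropes towards but does not reach --- is to anchor not on $\gamma$ but on geodesics from $e$ \emph{to the points of} $\gamma$: for each $i$ choose $\eta_i=[e,\gamma(i)]$ and set $z_i=\eta_i(k)$; thin-triangle and fellow-travelling estimates (Corollary~\ref{wn-krzywe-geodezyjne-pozostaja-bliskie}) give $d(x,z_i)\leq 64\delta$, so $z_i$ is a $64\delta$-fellow of $x$, Lemmas~\ref{fakt-kulowy-duzy-wyznacza-maly} and~\ref{fakt-kulowy-wyznacza-towarzyszy} transfer the ball-type equality from $x$ to $z_i$ (this is what fixes $N_4=N_0+64\delta$), and $\gamma(i)\in z_iT^c(z_i)$ holds automatically because $\eta_i$ passes through $z_i$. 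This yields the \emph{exact} identity $|g\gamma(i)|=|\gamma(i)|+(l-k)$ for every $i$, hence $d(e,g\cdot\gamma)=d(e,\gamma)+(l-k)$.

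A secondary but real problem: your plan accumulates $O(\delta)$ additive errors in the lower bound for $d(e,g\cdot\gamma)$ and proposes to ``absorb'' them into $E_1$ or into the choice of $a$. That cannot work as stated, because $E_1$ appears only in the \emph{hypothesis} (restricting which pairs $p,q$ are considered), whereas the \emph{conclusion} \eqref{eq-bi-lip-geodezyjne-teza} is a clean inequality with no multiplicative slack; an error of $C\delta$ in the exponent would give $d(g\cdot p,g\cdot q)\leq a^{C\delta}\,a^{-(l-k)}d(p,q)$, which is a strictly weaker statement. The exactness of the paper's computation (no $O(\delta)$ loss at all, thanks to the exact transport of the descendant relation by equal cone types) is what makes the stated inequality, and the equality noted in the remark after the lemma, come out.
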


\begin{uwaga}
As soon as we prove the inequality \eqref{eq-bi-lip-geodezyjne-teza} in general, it will follow that it can be strengthened to an equality. This is because if the elements $g, x, p, q$ satisfy the assumptions of the proposition, then its claim implies that the elements~$g^{-1}, gx, g \cdot p, g \cdot q$ also satisfy these assumptions. By using the proposition to these elements, we will then obtain that $d(p, q) \leq a^{-(k-l)} \cdot d(g \cdot p, g \cdot q)$, ensuring that an equality in \eqref{eq-bi-lip-geodezyjne-teza} holds. We do not include this result it in the claim of the proposition because it is not used in this article.
\end{uwaga}

\begin{proof}[Proof of Proposition~\ref{lem-bi-lip-geodezyjne}]
We set
\[ E_1 = 13\delta, \qquad N_4 = N_0 + 64\delta, \]
where $N_0$ denotes the constant from Proposition \ref{lem-kulowy-wyznacza-stozkowy}.

\textbf{1. }Let $\gamma$ be some geodesic connecting $p$ with~$q$ for which the distance $d(e, \gamma)$ is maximal. Note that then
\[ d(e, \gamma) = - \log_a d(p, q) \geq k + 13\delta. \]
Since the left shift by $g$ is an isometry in $G$, the sequence $g \cdot \gamma$ determines a bi-infinite geodesic which, by definition, connects the points $\gamma \cdot p, \gamma \cdot q$ in~$\partial G$. Then, to finish the proof it suffices to estimate from below the distance $d(e, g \cdot \gamma)$.

\textbf{2. }Let $\alpha, \beta$ be some geodesics connecting $e$ correspondingly with~$p$ and~$q$; we can require in addition that $\alpha(k) = x$. Denote $y = \beta(k)$.
Since $\alpha, \beta, \gamma$ form a geodesic triangle (with two vertices in infinity), by Lemma~\ref{fakt-waskie-trojkaty}, there exists an element $s \in \beta \cup \gamma$ in a distance $\leq 12\delta$ from~$x$. 
Then, $\big| |s| - k \big| \leq 12\delta$, so in particular $s \notin \gamma$, and so $s \in \beta$. In this situation, we have
\[ d(x, y) \leq d(x, s) + d(s, y) \leq 12\delta + \big| |s| - k \big| \leq 24\delta. \]

\textbf{3. }For any $i \in \mathbb{Z}$, we choose a geodesic $\eta_i$ connecting $e$ with~$\gamma(i)$. Since $\gamma(i)$ must lie in a distance $\leq 12\delta$ from some element of~$\alpha$ or~$\beta$, by using Corollary \ref{wn-krzywe-geodezyjne-pozostaja-bliskie} for the geodesic $\eta_i$ and correspondingly $\alpha$ or~$\beta$, we obtain that the point $z_i = \eta(k)$ lies in a distance $\leq 40\delta$ correspondingly from $x$ or~$y$. Therefore, in any case we have 
\[ d(x, z_i) \leq 64 \delta. \]

\textbf{4. }We still consider any value of $i \in \mathbb{Z}$. Since $N > N_4$ and $T^b_N(x) = T^b_N(gx)$, as well as $|x| = |z_i| = k$, from Lemmas \ref{fakt-kulowy-duzy-wyznacza-maly} and~\ref{fakt-kulowy-wyznacza-towarzyszy} we obtain that
\[ T^b_{N_0}(z_i) = T^b_{N_0}(gz_i), \qquad |gx| = |gz_i| = l. \]
Then, $z_i$ and~$gz_i$ have the same cone types by Proposition \ref{lem-kulowy-wyznacza-stozkowy}, so from $\gamma(i) \in z_iT^c(z_i)$ we deduce that $g\gamma(i) \in gz_iT^c(gz_i)$, and then
\[ |g\gamma(i)| = |gz_i| + |z_i^{-1}\gamma(i)| = |gz_i| + |\gamma(i)| - |z_i| = |\gamma(i)| + (l - k). \]
By taking the minimum over all $i \in \mathbb{Z}$, we obtain that
\[ d(e, g \cdot \gamma) = d(e, \gamma) + (l - k), \]
and then
\[ d(g \cdot p, g \cdot q) \leq d(p, q) \cdot a^{-(l-k)}. \qedhere \]
\end{proof}

\begin{fakt}
\label{fakt-duze-gwiazdy}
\dzm{Under the assumptions of Theorem~\ref{tw-bi-lip}, }there exists a constant $E$ (depending only on $G$ \dzm{and~$\mathcal{U}$}) such that, for any $k \geq 0$, the Lebesgue number of the cover $\mathcal{U}_k$ is at least $E \cdot a^{-k}$.
\end{fakt}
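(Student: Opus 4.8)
The plan is to exploit the finiteness of ball $N$-types together with the quasi-$G$-invariance structure to reduce the statement to a finite set of ``model'' covers, each of which has a positive Lebesgue number, and then propagate these numbers outward by the group action with the multiplicative correction from Lemma~\ref{lem-bi-lip-geodezyjne}. Concretely, first I would fix $N$ large enough for all the statements of Section~\ref{sec-konstr} to apply (in particular $N \ge N_3$, so that $\mathcal{U}$ being inscribed in~$\mathcal{S}$ makes sense together with the type function $T^b_N$), and also $N > N_4$ so that Lemma~\ref{lem-bi-lip-geodezyjne} is available. Since there are only finitely many ball $N$-types in~$G$, there is a constant $S$ such that every $g \in G$ has some $g'$ with $|g'| < S$ and $T^b_N(g') = T^b_N(g)$. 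For the finitely many elements $f$ with $|f| < S$, each cover $\mathcal{U}_{|f|}$ is a finite open cover of the compact space $\partial G$, hence has a Lebesgue number $\lambda_{|f|} > 0$; set $E_0 = \min\{ \lambda_j \mid j < S \} > 0$.

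Next I would handle an arbitrary $g$ with $|g| = k \ge S$. Choose $g'$ with $|g'| = j < S$ and $T^b_N(g') = T^b_N(g)$, and put $h = g (g')^{-1}$, so that $h \cdot g' = g$ and, by property~\qhlink{f} (specifically \qhlink{f1}, using $T(g') = T(g)$ — note the action $h$ sends $g'$ to $g = hg'$), we have $U_{g} = h \cdot U_{g'}$ for every $U_{g'} \in \mathcal{U}_j$, and more generally $h$ carries the whole cover $\mathcal{U}_j$ onto $\mathcal{U}_k$ isometrically for the group metric — but crucially \emph{not} isometrically for the visual metric $d_a$. To control the distortion of $d_a$ under $h$, I would invoke Lemma~\ref{lem-bi-lip-geodezyjne}: with $x = g'$, $gx$ replaced by $hg' = g$, the hypothesis $T^b_N(g') = T^b_N(hg')$ holds by construction, and for any $p \in \sppan(g')$ and $q$ with $d(p,q) \le a^{-(j+E_1)}$ one gets $d(h\cdot p, h \cdot q) \le a^{-(k-j)} d(p,q)$ — and, as the remark after that lemma notes, this is in fact an equality, so also $d(p,q) \le a^{-(j-k)} d(h\cdot p, h\cdot q)$, i.e.\ $d(h\cdot p, h\cdot q) \ge a^{-(k-j)} d(p,q)$ for such $p,q$. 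Since $\sppan(g')$ together with a small neighbourhood of it has diameter $O(a^{-j})$ (Lemmas~\ref{fakt-spany-male} and~\ref{fakt-pokrycie-male}, using that $\mathcal{U}$ is inscribed in~$\mathcal{S}$, so every $U \in \mathcal{U}_j$ is contained in some $S_{g'}$ with that diameter bound), every set $U_{g'} \in \mathcal{U}_j$ lies inside such a small-neighbourhood region, and hence the estimate applies to all pairs of points in $U_{g'}$ (after possibly shrinking $E_0$ to absorb the $a^{-E_1}$ factor).

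Now I would assemble the Lebesgue number for $\mathcal{U}_k$. Given $p \in \partial G$, the point $h^{-1} \cdot p$ lies in some ball $B(h^{-1}p, E_0 a^{-j})$ contained in a single $U_{g'} \in \mathcal{U}_j$; applying $h$ and the lower Lipschitz estimate above, the ball $B(p, c\, E_0\, a^{-(k-j)} a^{-j}) = B(p, c\,E_0\, a^{-k})$ (for a uniform constant $c$ coming from the $a^{-E_1}$ adjustment and the bi-Lipschitz passage between $d_a$ and the visual metric) is contained in $h \cdot U_{g'} = U_g \in \mathcal{U}_k$. Thus $E := c\, E_0$ is a Lebesgue number exponent that works for all $k \ge S$; for $k < S$ the bound $E_0 a^{-k}$ already holds by choice of $E_0$, so shrinking $E$ if necessary covers all $k \ge 0$. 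The main obstacle — and the place where the hypotheses of Theorem~\ref{tw-bi-lip} are genuinely used — is ensuring that Lemma~\ref{lem-bi-lip-geodezyjne} actually applies to \emph{every} pair of points inside an arbitrary $U_{g'} \in \mathcal{U}_j$: this requires both that $U_{g'}$ sits inside a span-star region (so the ``$p \in \sppan(x)$'' hypothesis can be arranged for one endpoint) and that $U_{g'}$ is small enough that the other endpoint is within $a^{-(j+E_1)}$ of it, which is exactly where the inscription in~$\mathcal{S}$ and the diameter bounds of Lemma~\ref{fakt-pokrycie-male} enter; the remaining bookkeeping (tracking constants through the $d_a$-vs-visual-metric equivalence and the choice of $E_1$) is routine.
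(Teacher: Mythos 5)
Your overall strategy is the same as the paper's: reduce to finitely many bounded-length representatives via finiteness of types, use the Lebesgue numbers of the finitely many covers $\mathcal{U}_j$ with $j$ bounded, transfer metric estimates between levels with Lemma~\ref{lem-bi-lip-geodezyjne}, and transfer cover elements with property \qhlink{f}. The paper transports the small set down to the bounded level, finds a cover element containing it there, and translates that element back up; you transport a ball around the point instead — the same argument up to reorganization.

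Two steps as written do not go through, though both are repairable. First, you choose the representative $g'$ so that $T^b_N(g') = T^b_N(g)$ and then invoke \qhlink{f1} ``using $T(g') = T(g)$''; but the type function $T$ attached to $\mathcal{U}$ is \emph{stronger} than $T^b_N$ (Definition~\ref{def-quasi-niezm-system-wpisany}), so equality of ball types does not imply equality of $T$-types, and \qhlink{f} is stated only for $T$. The paper chooses the representative by $T$-type (also finitely many values) and then \emph{derives} the $T^b_N$-equality needed for Lemma~\ref{lem-bi-lip-geodezyjne} from the inscription hypothesis; you need to do the same. Second, the element $g$ with $p \in \sppan(g)$ (required by the hypothesis of Lemma~\ref{lem-bi-lip-geodezyjne}) and the element $\widetilde{g}$ with $p \in U_{\widetilde{g}}$ (required to move cover elements via \qhlink{f}) are in general different: $U_{\widetilde g} \subseteq S_{\widetilde g}$ is only a span-\emph{star}, so its points lie in spans of elements within distance $D$ of $\widetilde g$, not in $\sppan(\widetilde g)$ itself. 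You acknowledge this but defer it to ``routine bookkeeping''; in the paper it is exactly the reason for tracking both $g$ and $\widetilde g$, for the estimate $d(g,\widetilde g)\le D$ via \qhlink{d}, and for taking $N > N_4 + D$ so that Lemma~\ref{fakt-kulowy-duzy-wyznacza-maly} transfers the type equality from $g$ to $\widetilde g$. With these two corrections your argument closes along the same lines as the paper's.
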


\begin{proof}
Let $N > N_4 \dzm{+ D}$, where $N_4$ is the constant from Proposition~\ref{lem-bi-lip-geodezyjne} \dzm{and $D$ is the neighbourhood constant of the system~$\mathcal{S}$. Denote by~$T$ the type function associated with~$\mathcal{U}$.}
Let $M>0$ be chosen so that for any $g \in G$ there exists $h \in G$ such that \dzm{$T(g) = T(h)$} and $|h| < M$. Let $L_j$ denote the Lebesgue constant for the cover $\mathcal{U}_j$ for $j < M$. We will prove that the claim of the lemma is satisfied by the number
\[ E = a^{-E_1} \cdot \min_{j<M} \, (a^j L_j), \]
where $E_1$ is the constant from Proposition~\ref{lem-bi-lip-geodezyjne}.

Let $k \geq 0$ and~$B \subset \partial G$ be a non-empty subset with diameter at most $E \cdot a^{-k}$. Let $x$ be any element of $B$, then 
\dzm{
there exist elements $g, \widetilde{g} \in G$ of length~$k$ such that
\[ x \in \sppan(g), \qquad x \in U_{\widetilde{g}}. \]
Then, we have $x \in \sppan(g) \cap U_{\widetilde{g}} \subseteq S_g \cap S_{\widetilde{g}}$, so by~\qhlink{d} it follows that $d(g, \widetilde{g}) \leq D$. 
}
By the definition of $M$, there exists $h \in G$ such that 
\[ |h| < M, \qquad \dzm{T(g) = T(h)}. \]

Denote $\gamma = hg^{-1}$ 
\dzm{
and $\widetilde{h} = \gamma \widetilde{g}$.
By Definition~\ref{def-quasi-niezm-system-wpisany}, the type function~$T$ is stronger than the ball type~$T^b_N$ (in the sense of Definition~\ref{def-funkcja-typu}). Therefore, $T^b_N(g) = T^b_N(h)$, which together with Lemma~\ref{fakt-kulowy-duzy-wyznacza-maly} and $d(g, \widetilde{g}) \leq D$ implies that $T^b_{N-D}(\widetilde{g}) = T^b_{N-D}(\widetilde{h})$. 
Since $N - D > N_4$, we obtain
}
from Proposition \ref{lem-bi-lip-geodezyjne} that
\[ \diam (\gamma \cdot B)  \leq E \cdot a^{-k} \cdot a^{-(|h| - k)} \leq a^{-|h|} \cdot \min_{j < M} (a^j L_j) \leq L_{|h|}, \]
which means that there exists $h' \in G$ such that $|h'| = |h|$ and
$\gamma \cdot B \subseteq U_{h'}$.

Let us note that it follows from \qhlink{f1} that
\[ \gamma \cdot x \in \dzm{\gamma \cdot U_{\widetilde{g}} = U_{\widetilde{h}}}, \]
so $\gamma \cdot x$ is a common element of \dzm{$U_{\widetilde{h}}$ and~$U_{h'}$}. Then, from \qhlink{f2} we obtain that
\[ U_{\gamma^{-1} h'} = \gamma^{-1} \cdot U_{h'} \supseteq B. \qedhere \]
\end{proof}

\begin{proof}[{\normalfont \textbf{Proof of Theorem~\ref{tw-bi-lip}}}]
Denote by $n$ the \dzm{maximal rank of all the covers~$\mathcal{S}_t$, for $t \geq 0$. Then, for every $t \geq 0$ we have $\rank \, \mathcal{U}_t \leq n$ and hence $\dim K_t \leq n$.} Denote also by $a_0$ a (constant) number such that the visual metric, considered for values $1 < a < a_0$, has all the properties described in Section~\ref{sec-def}. We define
\[ a_1 = \min \big( a_0, \tfrac{n+1}{n} \big). \]
Let $1 < a < a_1$. Denote by $M$ the diameter of $\partial G$ with respect to the visual metric (which is finite due to compactness of $\partial G$), and by $C_1$ --- the multiplier of bi-Lipschitz equivalence between the distance function $d$ and the visual metric.

Let $p, q$ be two distinct elements~of $\partial G$ and let $k \geq 0$ be the minimal natural number such that $d(p, q) > a^{-k}$. Observe that $d(p, q) \leq a^{-(k-1)}$ if $k > 0$, while $d(p, q) \leq MC_1 \leq MC_1 \cdot a^{-(k-1)}$ in the other case, so in general we have:
\begin{align}
\label{eq-bi-lip-lapanie-ujemnych}
 d(p, q) \leq M' \cdot a^{-(k-1)}, \qquad \textrm{ where } \qquad M' = \max(MC_1, 1).
\end{align}

\dzm{As in Definition~\ref{def-konstr-nerwy}, we let~$\pi_n$ denote the projection from~$\liminv K_i$ to $K_n$. Our goal is to estimate $d^M_a(\overline{p}, \overline{q})$, where $\overline{p}$, $\overline{q}$ denote correspondingly the images of $p, q$ under $\varphi$.}

First, we will estimate $d^M_a(\overline{p}, \overline{q})$ from above. Let $l$ be the maximal number not exceeding $k - \log_a E$. We consider two cases:
\begin{itemize}
\item If $l < 0$, then $k < \log_a E$, and then, by Remark~\ref{uwaga-metryka-l1-ogr},
\[ d^M_a(\overline{p}, \overline{q}) = \sum_{t = 0}^\infty a^{-t} \cdot d_{K_t} \big( \pi_t(\overline{p}), \pi_t(\overline{q}) \big) \leq \sum_{t = 0}^\infty a^{-t} \cdot 2 \leq \frac{2a}{a-1} \leq \frac{2a}{(a-1)MC_1} \cdot d(p, q). \]
\item If $l \geq 0$, then by Lemma~\ref{fakt-duze-gwiazdy} there exists $U \in \mathcal{U}_l$ containing both $p$ and $q$.
Then, in the complex $K_l$, the points $\pi_l(\overline{p})$ and~$\pi_l(\overline{q})$ must lie in some (possibly different) simplexes containing the vertex $v_U$. Then, we have
\[ d_{K_l} \big( \pi_l(\overline{p}),v_U \big) \leq 2, \qquad d_{K_l} \big( \pi_l(\overline{q}), v_U \big) \leq 2 \]
by Remark \ref{uwaga-metryka-l1-ogr}, and moreover
% Zatem z ODNOSNIK wiemy, że zachodzi również
\[ d_{K_t} \big( \pi_t(\overline{p}), \pi_t(\overline{q}) \big) \leq 2 \cdot 2 \cdot \big( \tfrac{n}{n+1} \big)^{l-t} \qquad \textrm{ for } \quad 0 \leq t \leq l. \]
% Wówczas z~ograniczenia $\diam K_t \leq 2$ dla $t \geq 0$ (wynikającego z~uwagi~\ref{uwaga-metryka-l1-ogr}) oraz nierówności $\tfrac{an}{n+1} < 1$ otrzymujemy:
by the condition \eqref{eq-konstr-szacowanie-obrazow} from the proof of theorem~\ref{tw-konstr} (which we may use here because we are now working with the same metric in $K_i$ which was used in~\cite{E}). Then, since $\diam K_t \leq 2$ for $t \geq 0$ (by Remark \ref{uwaga-metryka-l1-ogr}) and $\tfrac{an}{n+1} < 1$, we have
\begin{align*} 
d^M_a(\overline{p}, \overline{q}) & = \sum_{t = 0}^\infty a^{-t} \cdot d_{K_t} \big( \pi_t(\overline{p}), \pi_t(\overline{q}) \big) \leq \sum_{t = 0}^l a^{-t} \cdot 4 \cdot \big( \tfrac{n}{n+1} \big)^{l-t} + \sum_{t = l+1}^\infty a^{-t} \cdot 2 \leq \\
 & \leq 4 a^{-l} \cdot \sum_{t = 0}^l \big( \tfrac{an}{n+1} \big)^{l-t} + 2 \cdot \sum_{t = l+1}^\infty a^{-t} \leq C_2 \cdot a^{-l} \leq (C_2Ea) \cdot a^{-k} \leq (C_2Ea) \cdot d(p, q),
\end{align*}
where $C_2$ is some constant depending only on $a$ and~$n$ (and so independent of~$p, q$).
\end{itemize}

The opposite bound will be obtained by Lemma \ref{fakt-pokrycie-male}. Let $C$ denote the constant from that lemma and let $l'$ be the smallest integer greater than $k + \log_a(3C)$. Then, Lemma \ref{fakt-pokrycie-male} ensures that, for any $t \geq l'$ and \dzm{$x \in G$ of length~$t$}, we have
\[ \dzm{ \diam_d U_x \leq \diam_d S_x } \leq 3C \cdot a^{-t} \leq a^{-k} < d(p, q), \]
so the points $p, q$ cannot belong simultaneously to any element of the cover $\mathcal{U}_t$. Then, \dzm{by the definition of $\varphi: \partial G \simeq \liminv K_i$,} the points $\pi_t(\overline{p})$, $\pi_t(\overline{q})$ lie in some two disjoint simplexes in $K_t$, and so, by Lemma \ref{fakt-rozlaczne-symp-daleko}, their distance is equal to $2$. Then, by \eqref{eq-bi-lip-lapanie-ujemnych}, we have:
\[ d^M_a(\overline{p}, \overline{q}) = \sum_{t = 0}^\infty a^{-t} \cdot d_{K_t} \big( \pi_t(\overline{p}), \pi_t(\overline{q}) \big) \geq \sum_{t = l'}^\infty a^{-t} \cdot 2 \geq \frac{2a^{-l'} \cdot a}{a-1} \geq \tfrac{2}{3C(a-1)} \cdot a^{-k} \geq \tfrac{2}{3CM'a(a-1)} \cdot d(p, q). \]
In view of Remark \ref{uwaga-bi-lip-wystarczy-d}, this finishes the proof.
\end{proof}

\section{Markov property}

\label{sec-markow}

The main goal of this section is to prove the following theorem:

\begin{tw}
\label{tw-kompakt-ogolnie}
Let $(\mathcal{U}_n)_{n \geq 0}$ be a quasi-$G$-invariant system of covers of a compact, metric $G$-space-$X$. Let $L_0$ denote the constant given by Proposition \ref{lem-gwiazda} for this system, $L \geq L_0$ and let $(K_n, f_n)$ be the associated inverse system of nerves obtained for the sequence of the covers $(\widetilde{\mathcal{U}}_{nL})_{n \geq 0}$ (see Definition~\ref{def-konstr-nerwy}). 

Then, the system $(K_n, f_n)$ is barycentric, Markov and has the mesh property. 
\end{tw}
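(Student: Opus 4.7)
My plan is to verify the three asserted properties separately, with the bulk of the work lying in the Markov condition. Barycentricity is immediate from Definition~\ref{def-konstr-nerwy}(ii), since each vertex of $K_{n+1}$ maps to the barycentre of a simplex of $K_n$, i.e.\ a vertex of the first barycentric subdivision. The mesh property follows from Theorem~\ref{tw-konstr}(a) applied to the sequence $(\widetilde{\mathcal{U}}_{nL})_{n \geq 0}$, which is admissible by Lemma~\ref{fakt-konstr-sp-zal} (using $L \geq L_0$ and implicitly $J \mid L$). Among the Markov clauses of Definition~\ref{def-kompakt-markowa}, (i) is quick: each $K_n$ is finite since $\widetilde{\mathcal{U}}_{nL}$ has at most $|\{x \in G : |x| = nL\}|$ elements, while the dimensions of all $K_n$ are uniformly bounded via~\qhlink{d}, which forces the rank of every $\mathcal{U}_{nL}$ to be at most the number of elements of the $D$-ball in $G$. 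Clause (ii) is built into Definition~\ref{def-konstr-nerwy}(iii) together with the remark following it.

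For the type assignment required by clause (iii), I would attach to each simplex $s = [v_{U_{x_1}}, \ldots, v_{U_{x_k}}]$ of $K_n$ (with $|x_p| = nL$) the following data: fix a canonical labelling of its vertices, say via a lexicographic order on $G$, and record the pair $\bigl(T(x_1),\, (x_1^{-1}x_2, \ldots, x_1^{-1}x_k)\bigr)$. Property~\qhlink{d} forces each $|x_1^{-1}x_p|$ to be at most the neighbourhood constant $D$, so there are only finitely many possible shape tuples; combined with the finiteness of the range of $T$, this yields finitely many simplex-types.

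The heart of the proof will be constructing, for two simplices $s \in K_i$ and $s' \in K_j$ of the same type, the tower of isomorphisms $i_m : (f^{i+m}_i)^{-1}(s) \to (f^{j+m}_j)^{-1}(s')$. Matching canonical labellings gives base vertices $v_{U_x}$, $v_{U_{x'}}$ with $T(x) = T(x')$ and matching shapes; setting $g = x' x^{-1}$, properties~\qhlink{f1} and~\qhlink{f2} identify the vertices of $s'$ with the $g$-images of those of $s$, producing $i_0 : s \to s'$. For $m \geq 1$, I would send a vertex $v_{U_y}$ in $(f^{i+m}_i)^{-1}(s)$ to $v_{U_{gy}}$; to justify this, one argues inductively that such a $y$ admits an ancestor chain $U_y \subseteq U_{z_{m-1}} \subseteq \ldots \subseteq U_{z_1} \subseteq U_{x_p}$ for some vertex $x_p$ of $s$ (via~\qhlink{e} applied at each intermediate level), and then invokes~\qhlink{f3} step by step (using $J \mid L$) to conclude that $U_{gy} = g \cdot U_y$, $T(gy) = T(y)$, that $|gy|$ sits at the correct level for $K_{j+m+1}$, and that the whole ancestor chain is preserved. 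Bijectivity of $i_m$ follows by running the same construction for $g^{-1}$.

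The main obstacle is precisely this inductive extension: one must track simultaneously that (a)~every $v_{U_{gy}}$ is a legitimate (non-empty) vertex of $K_{j+m+1}$, (b)~it lies over $s'$ in the preimage, (c)~intersection patterns among the $U_y$'s survive so that the vertex-level bijection extends to a simplicial isomorphism of the whole subcomplex, and (d)~the diagram~\eqref{eq-markow-drabinka} commutes. Point~(d) ultimately reduces to the fact that each $f_n$ depends only on the inclusion relation $V \supseteq U$ between cover elements at consecutive levels, and~\qhlink{f3} says exactly that this relation is $g$-equivariant; points~(a)--(c) follow from~\qhlink{f1}--\qhlink{f3} once the ancestor-chain bookkeeping is carried out consistently at every intermediate level.
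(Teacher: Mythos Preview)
Your overall architecture matches the paper's, but the type you attach to a simplex is too coarse to drive the inductive step. You record only $T(x_1)$ together with the relative positions $x_1^{-1}x_p$. When you then take a vertex $v_{U_y}$ in $(f^{i+m}_i)^{-1}(s)$ and find an ancestor $U_y \subseteq U_{x_p}$, invoking~\qhlink{f3} requires $T(x_p) = T(gx_p)$ for \emph{that particular} $p$; nothing in your type guarantees this for $p \neq 1$, and~\qhlink{f2} does not propagate type equality sideways. So the step ``invokes~\qhlink{f3} step by step'' does not start, because the hypothesis of~\qhlink{f3} fails at the top of the chain.

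The paper's fix is exactly here: its simplex type (Definition~\ref{def-typ-sympleksu}) records, via the labelled graph $G_s$, the $T$-value of \emph{every} $g \in G$ with $v_{U_g}$ a vertex of $s$. Equality of types then gives a shift $\gamma$ with $T(g) = T(\gamma g)$ for all such $g$ (Lemma~\ref{fakt-przesuwanie-symp-zb}), which is precisely what Proposition~\ref{lem-przesuwanie-dzieci-sympleksow} uses to apply~\qhlink{f3} at whichever vertex $g_i$ contains $U_h$, and~\qhlink{f2} to establish the full equivariance~\eqref{eq-markov-zgodnosc-rodzicow-ogolnie} of the inclusion relation needed for commutativity. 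Your argument becomes correct if you enrich your type to include $T(x_p)$ for every $p$ (one canonical representative per vertex suffices here); the rest of your outline --- barycentricity, mesh, clauses (i) and (ii), and the bijectivity/simplicial-isomorphism checks via the $g$-action --- is sound and essentially coincides with the paper's route.
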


The proof of this theorem appears --- after a number of auxiliary definitions and facts --- in Section~\ref{sec-markow-podsum}.

\subsection{Simplex types and translations}

\label{sec-markow-typy}

Below (in Definition \ref{def-typ-sympleksu}) we define simplex \textit{types} which we will use to prove the Markov property of the system $(K_n, f_n)$. Intuitively, we would like the type of a simplex $s = [v_{U_{g_1}}, \ldots, v_{U_{g_k}}]$ to contain the information about types of elements $g_i$ (which seems to be natural), but also about their relative position in $G$ (which, as we will see in Section \ref{sec-markow-synowie}, will significantly help us in controlling the pre-images of the maps $f_n$).

However, this general picture becomes more complicated because we are not guaranteed a unique choice of an element $g$ corresponding to a given set $U_g \in \widetilde{\mathcal{U}}_n$. 
Therefore, in the type of a simplex, we will store information about relative positions of \textit{all} elements of~$G$ representing its vertices.

As an effect of the above considerations, we will obtain a quite complicated definition of type (which will be only rarely directly referred to). An equality of such types for given two simplexes can be conveniently described by existence of a \textit{shift} between them, preserving the simplex structure described above (see Definition~\ref{def-przesuniecie-sympleksu}). This property will be used in a number of proofs in the following sections.

We denote by $Q_n$ the nerve of the cover $\widetilde{\mathcal{U}}_n)$. (Then, $K_n = Q_{nL}$).

\begin{df}
\label{def-graf-typu-sympleksu}
For a simplex $s$ in $Q_n$, we define a directed graph $G_s = (V_s, E_s)$ in the following way:
\begin{itemize}
 \item the vertices in $G_s$ are \textit{all} the elements $g \in G$ for which $v_{U_g}$ is a vertex in~$s$ (and so $|g| = n$ by Remark~\ref{uwaga-quasi-niezm-rozne-poziomy}); thus, $G_s$ may possibly have more vertices than $s$ does;
 \item every vertex $g \in V_s$ is labelled with its type $T(g)$;
 \item the edges in~$G_s$ are all pairs $(g, g')$ for $g, g' \in V_s$, $g \neq g'$;
 \item every edge $(g, g')$ is labelled with the element $g^{-1} g' \in G$.
\end{itemize}
\end{df}

\begin{df}
\label{def-typ-sympleksu}
We call two simplexes $s \in Q_n$ and $s' \in Q_{n'}$ \textit{similar} if there exists an isomorphism of graphs $\varphi: G_s \rightarrow G_{s'}$ preserving all labels of vertices and edges.

The \textit{type} of a simplex $s \in Q_n$ (denoted by $T^\Delta(s)$) is its similarity class. \\
(Hence: two simplexes are similar if and only if they have the same type).
\end{df}

\begin{df}
\label{def-przesuniecie-sympleksu}
A simplex $s' \in Q_{n'}$ will be called the \textit{shift} of a simplex $s \in Q_n$ by an element~$\gamma$ (notation: $s' = \gamma \cdot s$) if the formula $\varphi(g) = \gamma \cdot g$ defines an isomorphism $\varphi$ which satisfies the conditions of Definition \ref{def-typ-sympleksu}.
\end{df}

\begin{fakt}
\label{fakt-przesuniecie-skladane}
Shifting simplexes satisfies the natural properties of a (partial) action of $G$ on a set:
\[ \textrm{ if } \qquad s' = \gamma \cdot s \quad \textrm{ and } \quad s'' = \gamma' \cdot s', \qquad \textrm{ then } \qquad s = \gamma^{-1} \cdot s' \quad \textrm{ and } \quad s'' = (\gamma' \, \gamma) \cdot s. \qedthm \]
\end{fakt}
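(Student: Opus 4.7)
The plan is to unwind Definition~\ref{def-przesuniecie-sympleksu} directly and check that each of the two claims follows by verifying the bijection and label-preservation conditions of Definition~\ref{def-typ-sympleksu}. There is essentially no obstacle: the statement is a bookkeeping consequence of the fact that the formulas $g \mapsto \gamma \cdot g$ define a genuine action of $G$ on itself by left translations, and that the labels involved (the type function $T$ and the edge labels $g^{-1}g'$) are invariant under left translation in the natural way.

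For the first assertion, I would start from the hypothesis $s' = \gamma \cdot s$, i.e. the map $\varphi : g \mapsto \gamma g$ is a graph isomorphism $G_s \to G_{s'}$ preserving all labels. The set-theoretic inverse of $\varphi$ is $\psi : h \mapsto \gamma^{-1} h$, which is automatically a bijection $V_{s'} \to V_s$. To check the vertex labels, observe that if $h = \gamma g \in V_{s'}$ with $g \in V_s$, then $T(\psi(h)) = T(g) = T(h)$, where the second equality holds because $\varphi$ preserves labels. For the edge labels, for any edge $(h_1, h_2) \in E_{s'}$, the label of $(\psi(h_1), \psi(h_2))$ in~$G_s$ is $(\gamma^{-1}h_1)^{-1}(\gamma^{-1}h_2) = h_1^{-1}h_2$, matching the label of $(h_1, h_2)$ in $G_{s'}$. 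Hence $\psi$ witnesses $s = \gamma^{-1} \cdot s'$.

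For the second assertion, suppose additionally that $s'' = \gamma' \cdot s'$ is witnessed by the isomorphism $\varphi' : h \mapsto \gamma' h$ from $G_{s'}$ to $G_{s''}$. Then the composition $\varphi' \circ \varphi$ sends $g \in V_s$ to $\gamma' (\gamma g) = (\gamma' \gamma) g$, so it is precisely the map $g \mapsto (\gamma'\gamma) \cdot g$. Since both $\varphi$ and $\varphi'$ are graph isomorphisms preserving vertex and edge labels, so is their composition; therefore this composition witnesses $s'' = (\gamma' \gamma) \cdot s$, as claimed. No further estimates or case analysis are needed, so the proof should be very short.
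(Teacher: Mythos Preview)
Your proof is correct. The paper itself provides no proof for this lemma (note the \verb|\qedthm| placed directly at the end of the statement), treating it as an immediate consequence of the definitions; your argument spells out exactly the verification the paper leaves implicit.
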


\begin{fakt}
\label{fakt-przesuniecie-istnieje}
Two simplexes $s \in Q_n$, $s' \in Q_{n'}$ have equal types  $\ \Longleftrightarrow\ $ $s' = \gamma \cdot s$ for some $\gamma \in G$.
\end{fakt}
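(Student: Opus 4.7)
The plan is to prove both implications by directly unpacking the definitions of $G_s$, similarity, and shifting.

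The direction ($\Leftarrow$) is essentially tautological: if $s' = \gamma \cdot s$, then by Definition~\ref{def-przesuniecie-sympleksu} the formula $\varphi(g) = \gamma g$ defines an isomorphism $G_s \to G_{s'}$ that preserves vertex and edge labels, which is exactly the data required by Definition~\ref{def-typ-sympleksu} for $s$ and $s'$ to be similar.

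For the direction ($\Rightarrow$), I would assume a label-preserving graph isomorphism $\varphi : G_s \to G_{s'}$ is given, and show that it must in fact be left multiplication by a fixed element of~$G$. First, $V_s$ is non-empty, since $s$ has at least one vertex of the form $v_{U_{g_0}}$. Pick such a $g_0 \in V_s$, let $g_0' := \varphi(g_0) \in V_{s'}$, and set $\gamma := g_0' g_0^{-1}$. For any other $g \in V_s$, the edge $(g_0, g) \in E_s$ carries the label $g_0^{-1} g$; applying label preservation to the image edge $(g_0', \varphi(g)) \in E_{s'}$ yields $(g_0')^{-1} \varphi(g) = g_0^{-1} g$, so $\varphi(g) = \gamma g$. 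Thus $\varphi$ agrees globally with left translation by~$\gamma$.

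Finally, I would verify that this $\gamma$ indeed satisfies the requirements of Definition~\ref{def-przesuniecie-sympleksu}: the map $g \mapsto \gamma g$ is a bijection $V_s \to V_{s'}$ (since so is $\varphi$), vertex labels agree because $T(\gamma g) = T(\varphi(g)) = T(g)$ by assumption, and edge labels are preserved automatically, as $(\gamma g)^{-1}(\gamma g') = g^{-1} g'$. I do not expect any real obstacle here; the lemma is essentially the observation that an automorphism of ``relative-position'' data on a finite subset of~$G$ is forced to be a left translation, and the only minor technical point to be careful about is that the map $g \mapsto v_{U_g}$ from $V_s$ to the vertex set of $s$ need not be injective (per Remark~\ref{uwaga-quasi-niezm-rozne-poziomy}), but this causes no trouble since the argument works at the level of $V_s$ rather than of the vertices of~$s$.
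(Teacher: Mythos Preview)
Your proof is correct and follows essentially the same approach as the paper: both arguments pick an arbitrary $g_0 \in V_s$, set $\gamma = \varphi(g_0)\,g_0^{-1}$, and use edge-label preservation to conclude that $\varphi$ coincides with left multiplication by~$\gamma$. Your additional verification paragraph is harmless but unnecessary, since once $\varphi$ is identified with $g \mapsto \gamma g$, the conditions of Definition~\ref{def-przesuniecie-sympleksu} are exactly those already assumed of~$\varphi$.
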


\begin{proof}
The implication  $(\Leftarrow)$ is obvious. On the other hand, let $\varphi : G_s \rightarrow G_{s'}$ be an isomorphism satisfying the conditions from Definition \ref{def-typ-sympleksu}. We choose arbitrary $g_0 \in V_s$ and define $\gamma = \varphi(g_0) \, g_0^{-1}$. Since $\varphi$ preserves the labels of edges, for any $g \in V_s \setminus \{ g_0 \}$ we have
\[ \varphi(g_0)^{-1} \, \varphi(g) = g_0^{-1} \, g \quad \Rightarrow \quad \varphi(g) \, g^{-1} = \varphi(g_0) \, g_0^{-1} = \gamma \quad \Rightarrow \quad \varphi(g) = \gamma \cdot g. \qedhere \]
\end{proof}

\begin{fakt}
\label{fakt-przesuwanie-symp-zb}
If $s' = \gamma \cdot s$ and $v_{U_x}$ is a vertex in $s$, then $v_{U_{\gamma x}}$ is a vertex in~$s'$ and moreover
\[ U_{\gamma x} = \gamma \cdot U_x, \qquad T(\gamma x) = T(x). \]
In particular, shifting the sets from $\widetilde{\mathcal{U}}$ by~$\gamma$ gives a bijection between the vertices of $s$ and~$s'$.
\end{fakt}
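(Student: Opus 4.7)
The plan is to unpack Definition~\ref{def-przesuniecie-sympleksu} directly and to invoke property~\qhlink{f1} of the quasi-$G$-invariant system once. By hypothesis, $s' = \gamma \cdot s$ means that the formula $\varphi(g) = \gamma \cdot g$ defines an isomorphism $G_s \to G_{s'}$ of labelled graphs in the sense of Definition~\ref{def-typ-sympleksu}, so in particular $\varphi$ is a bijection on vertex sets that preserves the vertex labels (which by Definition~\ref{def-graf-typu-sympleksu} are exactly the values of the type function~$T$).

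With this in hand, I would argue as follows. Since $v_{U_x}$ is a vertex of~$s$, the element~$x$ belongs to~$V_s$ by Definition~\ref{def-graf-typu-sympleksu}. Applying the bijection $\varphi$, the element $\gamma x = \varphi(x)$ lies in~$V_{s'}$, which, again by Definition~\ref{def-graf-typu-sympleksu}, means precisely that $v_{U_{\gamma x}}$ is a vertex of~$s'$. Since $\varphi$ preserves vertex labels, I get $T(\gamma x) = T(\varphi(x)) = T(x)$ for free. Armed with this type equality, property~\qhlink{f1} of the quasi-$G$-invariant system (applied to the pair $(\gamma, x)$) now yields $U_{\gamma x} = \gamma \cdot U_x$.

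For the ``in particular'' statement, I would observe that two elements $g_1, g_2 \in V_s$ give rise to the same vertex $v_{U_{g_1}} = v_{U_{g_2}}$ of~$s$ precisely when $U_{g_1} = U_{g_2}$; by the part of the lemma just proved (applied to $g_1$ and~$g_2$ in place of~$x$), this is equivalent to $U_{\gamma g_1} = U_{\gamma g_2}$, i.e.\ to $\varphi(g_1)$ and $\varphi(g_2)$ giving the same vertex of~$s'$. Combined with the fact that $\varphi$ is a bijection on the level of the graph vertex sets, this shows that the prescription $v_{U_g} \mapsto v_{U_{\gamma g}} = v_{\gamma \cdot U_g}$ is a well-defined bijection between the vertex sets of~$s$ and~$s'$, which is exactly the desired bijection induced by shifting sets by~$\gamma$.

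Since the whole argument is essentially a careful unpacking of definitions together with a single application of~\qhlink{f1}, I do not anticipate a real obstacle. The only mildly delicate point is the distinction between the graph~$G_s$ (whose vertex set $V_s$ may be strictly larger than the vertex set of~$s$, as noted in Definition~\ref{def-graf-typu-sympleksu}) and the simplex~$s$ itself; this is exactly where one must invoke the already established equality $U_{\gamma x} = \gamma \cdot U_x$ to check that the bijection $\varphi$ on graph vertex sets descends to a bijection on simplex vertex sets.
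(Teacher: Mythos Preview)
Your proof is correct and follows essentially the same approach as the paper, which simply states that the claim follows from Definitions~\ref{def-graf-typu-sympleksu} and~\ref{def-przesuniecie-sympleksu} together with property~\qhlink{f1}. You have faithfully unpacked this one-line argument, and your additional care in showing that the bijection on $V_s$ descends to one on simplex vertices is a nice touch that the paper leaves implicit.
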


\begin{proof}
This follows from Definitions \ref{def-graf-typu-sympleksu} and~\ref{def-przesuniecie-sympleksu}, and from property~\qhlink{f1}.
\end{proof}

\begin{lem}
The total number of simplex types in all of the complexes $Q_n$ is finite.
\end{lem}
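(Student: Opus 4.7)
The plan is to show that the graph $G_s$ associated to any simplex $s$ (see Definition~\ref{def-graf-typu-sympleksu}) has both a bounded number of vertices and only finitely many possibilities for each vertex and edge label, so that only finitely many isomorphism classes can occur in total.

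First I would bound the number of vertices of $G_s$. Fix a simplex $s \in Q_n$ and pick any $g_0 \in V_s$. For every other $g \in V_s$, the nerve vertices $v_{U_{g_0}}$ and $v_{U_g}$ both lie in $s$, so $U_{g_0} \cap U_g \neq \emptyset$ (this remains true if $v_{U_{g_0}} = v_{U_g}$, since then $U_{g_0} = U_g$ is non-empty by definition of $\widetilde{\mathcal{U}}_n$). Moreover $|g_0| = |g| = n$ by Remark~\ref{uwaga-quasi-niezm-rozne-poziomy}, so property~\qhlink{d} yields $d(g_0, g) \leq D$, i.e.\ $g_0^{-1} g \in B_D(e)$. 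Hence
\[ |V_s| \leq |B_D(e)|, \]
and this bound is uniform in $n$ and in $s$.

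Next I would observe that the labels appearing in $G_s$ range over finite sets. Vertex labels lie in the target of the type function $T$, which is finite by Definition~\ref{def-funkcja-typu}. Edge labels are of the form $g^{-1} g'$ with $g, g' \in V_s$; by the same argument as above we have $g^{-1} g' \in B_D(e)$, another finite set. Combining these observations, a simplex type is determined by a labelled directed graph on at most $|B_D(e)|$ vertices, with vertex labels in a fixed finite set and edge labels in $B_D(e)$; the number of such labelled graphs up to label-preserving isomorphism is manifestly finite.

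There is no serious obstacle here: the only point requiring care is checking that condition~\qhlink{d} applies to \emph{any} pair $g, g' \in V_s$, including pairs that represent the same nerve vertex (for which one uses that elements of $\widetilde{\mathcal{U}}_n$ are non-empty). Once this is in place, the finiteness of $B_D(e)$ (as a bounded subset of a finitely generated group) and of the set of type values immediately yield the claim.
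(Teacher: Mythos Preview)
Your proof is correct and follows essentially the same approach as the paper's: both bound $|V_s|$ and the set of edge labels by the cardinality of $B_D(e)$ via property~\qhlink{d}, and note that vertex labels come from the finite range of~$T$. You are slightly more explicit about the degenerate case where two distinct $g,g' \in V_s$ yield the same nerve vertex, which the paper glosses over.
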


\begin{proof}
Let us consider a simplex $s \in K_n$. If $g, g' \in V_s$, then the vertices $v_{U_g}, v_{U_{g'}}$ belong to $s$, which means by definition that $U_g \cap U_{g'} \neq \emptyset$, and then, by \qhlink{d} and the definition of $V_s$, we have $|g^{-1} g'| \leq D$.

Then, the numbers of vertices in the graphs $G_s$, as well as the number of possible edge labels appearing in all such graphs, are not greater than the cardinality of the ball $B(e, D)$ in the group~$G$. This finishes the proof because the labels of vertices are taken by definition from the finite set of types of elements in~$G$.
\end{proof}

\subsection{The main proposition}

\label{sec-markow-synowie}

\begin{lem}
\label{lem-przesuwanie-dzieci-sympleksow}
Let $s \in K_n$, $s' \in K_{n'}$  be simplexes of the same type and $s' = \gamma \cdot s$ for some $\gamma \in G$.
Then, the maps $I : s \rightarrow s'$ and $J : f_n^{-1}(s) \rightarrow f_n^{-1}(s')$, defined on the vertices of the corresponding subcomplexes by the formulas
 \[ I(v_U) = v_{\gamma \cdot U} \quad \textrm{ for } v_U \in s, \qquad J(v_U) = v_{\gamma \cdot U} \quad \textrm{ for } v_U \in f_n^{-1}(s), \]
and extended affinely to the simplexes in these subcomplexes, have the following properties:
 \begin{itemize}
  \item they are well defined (in particular, $\gamma \cdot U$ is an element of the appropriate cover);
  \item they are isomorphisms of subcomplexes;
  \item they map simplexes to their shifts by $\gamma$ (in particular, they preserve simplex types).
 \end{itemize}
Moreover, the following diagram commutes:
\begin{align}
\label{eq-diagram-do-spr}
\xymatrix@+3ex{
 s \ar[d]_{I} & \ar[l]_{f_n} f_n^{-1}(s) \ar[d]_{J} \\
 s' & \ar[l]_{f_{n'}} f_{n'}^{-1}(s').
}
\end{align}
\end{lem}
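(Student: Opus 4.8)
The plan is to verify the three bulleted claims about $I$ and $J$ and then the commutativity of \eqref{eq-diagram-do-spr}. Throughout I use Lemma~\ref{fakt-przesuwanie-symp-zb}, which already tells us that shifting sets from $\widetilde{\mathcal{U}}$ by~$\gamma$ carries vertices of $s$ bijectively to vertices of $s' = \gamma \cdot s$, together with the definition of $f_n$ from Definition~\ref{def-konstr-nerwy}.

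First I would treat $I$. Well-definedness and the fact that it is a simplicial isomorphism $s \to s'$ follow directly from Lemma~\ref{fakt-przesuwanie-symp-zb} (a bijection of vertex sets preserving the simplex structure, since by property~\qhlink{f1} intersections are preserved under the action of~$\gamma$); the claim that it sends each face to its $\gamma$-shift is essentially Definition~\ref{def-przesuniecie-sympleksu} applied to subsimplexes of~$s$. Next, the substantial point: to handle $J$ I need to know that if $v_U \in f_n^{-1}(s)$, i.e.\ $U \in \widetilde{\mathcal{U}}_{(n+1)L}$ and $f_n(v_U)$ lies in~$s$, then $\gamma \cdot U$ is an element of $\widetilde{\mathcal{U}}_{(n'+1)L}$ and $v_{\gamma U} \in f_{n'}^{-1}(s')$. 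The key is that $U$ is a descendant-type set below the vertices of~$s$: more precisely, there is a vertex $v_V$ of~$s$ with $V \supseteq U$, so writing $V = U_x$ with $v_{U_x} \in s$ and $U = U_y$, we get $\emptyset \neq U_y \subseteq U_x$ with $|y| = |x| + L$. Since the type function~$T$ controls the configuration recorded in $T^\Delta(s)$, and since $T(\gamma x) = T(x)$ by Lemma~\ref{fakt-przesuwanie-symp-zb}, property~\qhlink{f3} of the quasi-$G$-invariant system applies (note $J \mid L$ as $L \geq L_0$ is taken divisible by~$J$ in the relevant statements, or one absorbs this into the hypotheses): it yields $|\gamma y| = |\gamma x| + L$, $T(\gamma y) = T(y)$, and $U_{\gamma y} = \gamma \cdot U_y$. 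Hence $\gamma \cdot U = U_{\gamma y}$ is genuinely a member of $\widetilde{\mathcal{U}}_{(n'+1)L}$, and one more application of \qhlink{f3} (or \qhlink{f1}) across all vertices $v_V \supseteq U$ of~$s$ shows the containments $V \supseteq U$ are preserved, so $f_{n'}(v_{\gamma U})$ lands in $s'$; thus $J$ is well defined on vertices. Its inverse is constructed identically using $\gamma^{-1}$ and $s = \gamma^{-1} s'$ (Lemma~\ref{fakt-przesuniecie-skladane}), so $J$ is a bijection of vertex sets; that it is a simplicial isomorphism sending faces to $\gamma$-shifts follows because a collection $U_{y_1}, \ldots, U_{y_m}$ has non-empty common intersection iff their $\gamma$-images do (again \qhlink{f1}, after checking $T(\gamma y_i) = T(y_i)$), which also re-establishes that the $y_i$ and $\gamma y_i$ configurations agree so the resulting simplex is the shift.

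Finally, for the commutativity of \eqref{eq-diagram-do-spr} it suffices to check equality on vertices of $f_n^{-1}(s)$, since all four maps are affine on simplexes. Fix $v_U \in f_n^{-1}(s)$. By Definition~\ref{def-konstr-nerwy}(ii), $f_n(v_U)$ is the barycentre of the simplex spanned by $\{ v_V : V \in K_n,\ V \supseteq U \}$, and likewise $f_{n'}(v_{\gamma U})$ is the barycentre of the simplex spanned by $\{ v_{V'} : V' \in K_{n'},\ V' \supseteq \gamma U \}$. The argument of the previous paragraph shows that $V \mapsto \gamma \cdot V$ is a bijection between $\{ V \in K_n : V \supseteq U\}$ and $\{ V' \in K_{n'} : V' \supseteq \gamma U\}$ — here I again invoke \qhlink{f3}/\qhlink{f1} to transport the inclusions $V \supseteq U$ and their reverses — so $I$ carries the first spanning set onto the second, hence carries its barycentre to the barycentre, giving $I(f_n(v_U)) = f_{n'}(v_{\gamma U}) = f_{n'}(J(v_U))$.

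The main obstacle is the middle step: showing $J$ is well defined, i.e.\ that $\gamma$ really does act on the preimage simplices and that inclusions $V \supseteq U$ between covers at consecutive levels $nL$ and $(n+1)L$ are preserved. This is exactly where one must combine the ``depth-$L$ descendant'' structure of the sets $f_n^{-1}(s)$ with property~\qhlink{f3} (and the hypothesis that $L$ is a multiple of the jump constant), and where the bookkeeping over all elements $g \in V_s$ representing a vertex — not just one chosen representative — genuinely matters. Everything else is a routine unwinding of the definitions of nerve, barycentre, shift, and affine extension.
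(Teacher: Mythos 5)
Your proposal is correct and follows essentially the same route as the paper's proof: use \qhlink{f3} (with $L$ a multiple of the jump constant) to see that $\gamma\cdot U$ is a legitimate member of the cover at level $(n'+1)L$ with the right type, then show that the family of level-$n'L$ sets containing $\gamma\cdot U$ is exactly the $\gamma$-image of the family of level-$nL$ sets containing $U$, which yields both well-definedness of $J$ and commutativity of the diagram at once, with the inverse of $J$ obtained by running the argument for $\gamma^{-1}$. The only cosmetic difference is that where you invoke \qhlink{f1}/\qhlink{f3} to transport the inclusions $V\supseteq U$, the paper applies \qhlink{f2} to the pair $(g_i, g)$ with $U_g\cap U_{g_i}\supseteq U_h\neq\emptyset$; both work, since every $V\supseteq U$ at level $nL$ is in fact a vertex of $s$ and so its representatives lie in $V_s$ and have their types preserved by the shift.
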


\begin{proof}
Let
\begin{align} 
\label{eq-markow-wstep-1}
s = [v_{U_1}, \ldots, v_{U_k}], \qquad U_i = U_{g_i}, \qquad g_i' = \gamma \, g_i, \qquad U_i' = U_{g_i'}. 
\end{align}
Then, from the assumptions (using the definitions and Lemma \ref{fakt-przesuwanie-symp-zb}) we obtain that
\[ s' = [v_{U'_1}, \ldots, v_{U'_k}], \qquad U_i' = \gamma \cdot U_i, \qquad T(g_i) = T(g_i'), \qquad |g_i| = nL, \qquad |g'_i| = n'L. \]
In particular, for every $v_U \in s$ the value $I(v_U)$ is correctly defined and belongs to $s'$; also, $I$ gives a bijection between the vertices of $s$ and $s'$, so it is an isomorphism. Moreover, for any subsimplex $\sigma = [v_{U_{i_1}}, \ldots, v_{U_{i_l}}] \subseteq s$ and $g \in G_s$, by Lemma \ref{fakt-przesuwanie-symp-zb} we have an equivalence
\[ v_{U_g} \in \sigma \quad \Leftrightarrow \quad U_g \in \{ U_{i_j} \,|\, 1 \leq j \leq l \} \quad \Leftrightarrow \quad U_{\gamma g} \in \{ \gamma \cdot U_{i_j} \,|\, 1 \leq j \leq l \} \quad \Leftrightarrow \quad v_{U_{\gamma g}} \in I(\sigma), \]
so the isomorphism $G_s \simeq G_{s'}$ given by $\gamma$ restricts to an isomorphism $G_\sigma \simeq G_{I(\sigma)}$, so $I(\sigma) = \gamma \cdot \sigma$.

It remains to check the desired properties of the map $J$, and commutativity of the diagram \eqref{eq-diagram-do-spr}.

First, we will check that $J$ is correctly defined. Let $v_U$ be a vertex in ~$f_n^{-1}(s)$ and let $U = U_h$ for some $h \in G$ of length $(n + 1)L$. From the definition of $f_n$ we obtain that $U_h \subseteq U_{g_i}$ for some $1 \leq i \leq k$. Then, denoting $h' = \gamma h$ and using~\qhlink{f3}, we have
\begin{align} 
\label{eq-markov-wlasnosci-h'}
U_{h'} = \gamma \cdot U_h \subseteq \gamma \cdot U_{g_i} = U_{g'_i}, \qquad T(h') = T(h), \qquad |h'| = (n' + 1)L, 
\end{align}
so in particular $\gamma \cdot U_h \in \widetilde{\mathcal{U}}_{(n'+1)L}$, and then $J(v_U) = v_{\gamma \cdot U_h}$ is a vertex in~$K_{n' + 1}$.

Now, we will prove that the vertex $J(v_U)$ belongs to $f_{n'}^{-1}(s')$ and that the diagram~\eqref{eq-diagram-do-spr} commutes.
From the definition of maps $f_n$, $f_{n'}$ it follows that, for both these purposes, it is sufficient to prove that
\begin{align} 
\label{eq-markov-zgodnosc-rodzicow-ogolnie}
\big\{ U' \,\big|\, U' \in \mathcal{U}_{n'L}, \, U' \supseteq U_{h'} \big\} = \big\{ \gamma \cdot U \,\big|\, U \in \mathcal{U}_{nL}, \, U \supseteq U_h \big\}. 
\end{align}
Let us check the inclusion $(\supseteq)$. Let $U_g = U \supseteq U_h$ for some $g \in G$ of length~$nL$. Then in particular $U_g \cap U_{g_i} \supseteq U_h \neq \emptyset$, so from property \qhlink{f2} we have
\[ U_{\gamma g} = \gamma \cdot U_g \supseteq \gamma \cdot U_h = U_{h'}, \qquad |\gamma g| = n' L. \]

This proves the inclusion $(\supseteq)$ in~\eqref{eq-markov-zgodnosc-rodzicow-ogolnie}. Since $U_{g_i'} = \gamma \cdot U_{g_i} \supseteq \gamma \cdot U_h = U_{h'}$, the opposite inclusion can be proved in an analogous way, by considering the shift by $\gamma^{-1}$. Hence, we have verified \eqref{eq-markov-zgodnosc-rodzicow-ogolnie}. In particular, we obtain that $J(v_U) \in f_{n'}^{-1}(s')$ for every $v_U \in f_n^{-1}(s)$, and so $J$ is correctly defined on the vertices of the complex $f_n^{-1}(s)$. From~\eqref{eq-markov-zgodnosc-rodzicow-ogolnie} we also deduce the commutativity of the diagram \eqref{eq-diagram-do-spr} when restricted to the vertices of the complexes considered.

Now, let $\sigma = [v_{U_{h_1}}, \ldots, v_{U_{h_l}}]$ be a simplex in~$f_n^{-1}(s)$. Then, we have $\bigcap_{i = 1}^l U_{h_i} \neq \emptyset$, so also $\bigcap_{i=1}^l U_{\gamma h_i} = \gamma \cdot \bigcap_{i=1}^l U_{h_i} \neq \emptyset$. This implies that in $f_{n'}^{-1}(s')$ there is a simplex $[J(v_{U_{h_1}}), \ldots, J(v_{U_{h_l}})]$. This means that $J$ can be affinely extended from vertices to simplexes, leading to a correctly defined map of complexes. The commutativity of the diagram \eqref{eq-diagram-do-spr} is then a result from the (already checked) commutativity for vertices.
By exchanging the roles of $s$ and $s'$, and correspondingly of $g_i$ and $g_i'$, we obtain an exchange of roles between $\gamma$ and $\gamma^{-1}$. Therefore, the map $\widetilde{J} : f_{n'}^{-1}(s') \rightarrow f_n^{-1}(s)$, obtained analogously for such situation, must be inverse to $J$. Hence, $J$ is an isomorphism.

It remains to check the equality $J(\sigma) = \gamma \cdot \sigma$ for any simplex $\sigma$ in~$f_n^{-1}(s)$. For this, we choose any element $h \in V_\sigma$ (i.e. a vertex from the graph $G_\sigma$ from Definition~\ref{def-graf-typu-sympleksu}) and take $\varphi(h) = \gamma h$; this element was previously denoted by $h'$. Then, from \eqref{eq-markov-wlasnosci-h'} and the already checked properties of $J$, we deduce that
\[ v_{U_{\gamma h}} = v_{\gamma \cdot U_h} = J(v_{U_h}) \, \textrm{ is a vertex in } J(\sigma), \qquad T(\gamma h) = T(h). \]
 The first of these facts means that $\varphi(h)$ indeed belongs to $V_{J(\sigma)}$; the second one ensures that $\varphi$ preserves the labels of vertices in the graphs. 
Preserving the labels of edges follows easily from the definition of $\varphi$. Hence, it remains only to check that $\varphi$ gives a bijection between $V_\sigma$ and ~$V_{J(\sigma)}$, which we obtain by repeating the above reasoning for the inverse map $J^{-1}$ (with $s'$, $J(\sigma)$ playing now the roles of $s$, $\sigma$).
\end{proof}

\subsection{Conclusion: $\partial G$ is a Markov compactum}

\label{sec-markow-podsum}

Although we will be able to prove Theorem~\ref{tw-kompakt} in its full strength only at the end of Section~\ref{sec-wymd}, we note that the results already obtained imply the main claim of this theorem, namely that Gromov boundaries of hyperbolic groups are always Markov compacta (up to homeomorphism). Before arguing for that, we will finish the proof of Theorem~\ref{tw-kompakt-ogolnie}.

\begin{proof}[{\normalfont \textbf{Proof of Theorem~\ref{tw-kompakt-ogolnie}}}]
Barycentricity and the mesh property for the system $(K_n, f_n)$ follow from Theorem \ref{tw-konstr}; it remains to check the Markov property for this system. The condition (ii) from Definition \ref{def-kompakt-markowa} follows from the way in which the maps $f_n$ are defined in the claim of Theorem~\ref{tw-konstr}, while (i) is a result of the assumption (ii) in Definition \ref{def-konstr-admissible} (admissibility of $\mathcal{U}$). It remains to check (iii).

The type which we assign to simplexes is the $T^\Delta$-type from Definition \ref{def-typ-sympleksu}. If two simplexes $s \in K_i$, $s' \in K_j$ have the same type, then by Lemma \ref{fakt-przesuniecie-istnieje} we know that $s' = \gamma \cdot s$ for some $\gamma \in G$. Then, by an inductive application of Proposition \ref{lem-przesuwanie-dzieci-sympleksow}, we deduce that for every $k \geq 0$ the simplexes in the pre-image $(f^{j+k}_j)^{-1}(s')$ coincide with the shifts (by $\gamma$) of simplexes in the pre-image $(f^{i+k}_i)^{-1}(s)$, and moreover, by letting
\[ i_k (v_U) = v_{\gamma \cdot U} \qquad \textrm{ for } \quad k \geq 0, \ v_U \in (f^{i+k}_i)^{-1}(s), \]
we obtain correctly defined isomorphisms of subcomplexes which preserve simplex types and make the diagram from Definition \ref{def-kompakt-markowa} commute. This finishes the proof.
\end{proof}

\begin{proof}[{\normalfont \textbf{Proof of the main claim of Theorem~\ref{tw-kompakt}}}]
Let $G$ by a hyperbolic group and let~$\mathcal{S}$ be the system of covers of~$\partial G$ defined in Section~\ref{sec-konstr-pokrycia}. By~Corollary~\ref{wn-spanstary-quasi-niezm}, $\mathcal{S}$ is quasi-$G$-invariant; hence, by Lemma~\ref{fakt-konstr-sp-zal}, there is $L \geq 0$ such that the system $(\widetilde{S}_{nL})_{n \geq 0}$ is admissible. By applying Theorem~\ref{tw-konstr}, we obtain an inverse system $(K_n, f_n)$ whose inverse limit is homeomorphic to~$\partial G$; on the other hand, Theorem~\ref{tw-kompakt-ogolnie} ensures that this system is Markov. This finishes the proof.
\end{proof}

\begin{uwaga}
Theorem~\ref{tw-kompakt-ogolnie} ensures also barycentricity and mesh property for the obtained Markov system.
\end{uwaga}

\section{Strengthenings of type}
\label{sec-abc}

In this section, we will construct new \textit{type functions} in the group $G$ (in the sense of Definition~\ref{def-funkcja-typu}), or in the inverse system $(K_n)$ constructed in Section~\ref{sec-engelking} (in an analogous sense, i.e. we assign to every simplex its \textit{type} taken from a finite set), with the aim of ensuring 
certain regularity conditions of these functions which will be needed in the next sections.

The basic condition of our interest, which will be considered in several flavours, is ``children determinism'': the type of an element (resp. a simplex) should determine the type of its ``children'' (in an appropriate sense), analogously to the properties of the ball type $T^b_N$ described in Proposition \ref{lem-potomkowie-dla-kulowych}. The most important result of this section is the construction of a new type (which we call \textit{$B$-type} and denote by $T^B$) which, apart from being children-deterministic in such sense, returns different values for any pair of \textit{$r$-fellows} in $G$ (see Definition~\ref{def-konstr-towarzysze}) for some fixed value of $r$. (To achieve the goals of this article, we take $r = 16\delta$). This property will be crucial in three places of the remaining part of the paper:

\begin{itemize}
 \item In Section \ref{sec-sk-opis}, we will show that, by including the $B$-type in the input data for Theorem \ref{tw-kompakt-ogolnie}, we can ensure the resulting Markov system has distinct types property (see Definition~\ref{def-kompakt-wlasciwy}), which will in turn guarantee its finite describability (see Remark \ref{uwaga-sk-opis}).
 
 \item In Section \ref{sec-wymd}, this property will allow us a~kind of ``quasi-$G$-invariant control'' of simplex dimensions in the system $(K_n)$. 
 
 \item In Section \ref{sec-sm}, the $B$-type's property of distinguishing fellows will be used to present the boundary $\partial G$ as a semi-Markovian space (see Definition~\ref{def-sm-ps}).
\end{itemize}
Let us note that this property of $B$-type is significantly easier to be achieved in the case of torsion-free groups (see the introduction to Section \ref{sec-sm-abc-b}).

A natural continuation of the topic of this section will also appear in Section \ref{sec-sm-abc-c}, in which we will enrich the $B$-type to obtain a new \textit{$C$-type}, which will serve directly as a basis for the presentation of $\partial G$ as a semi-Markovian space. (We postpone discussing the $C$-type to Section \ref{sec-sm} because it is needed only there, and also because we will be able to list its desired properties only as late as in Section~\ref{sec-sm-zyczenia}).

\subsubsection*{Technical assumptions}

In Sections \ref{sec-abc}--\ref{sec-wymd}, we assume that $N$ and~$L$ are fixed and sufficiently large constants; explicit bounds from below will be chosen while proving consecutive facts. (More precisely, we assume that $N$ satisfies the assumptions of Corollary \ref{wn-sm-kulowy-wyznacza-potomkow} and Proposition \ref{lem-sm-kuzyni}, and that $L \geq \max(N + 4\delta, 14\delta)$ satisfies the assumptions of Lemma \ref{fakt-sm-rodzic-kuzyna}; some of these bounds will be important only in Section \ref{sec-wymd}). Let us note that, under such assumptions, Proposition \ref{lem-potomkowie-dla-kulowych} ensures that the ball type $T^b_N(x)$ determines the values of $T^b_N$ for all descendants of $x$ of length $\geq |x| + L$.

The types which we will construct --- similarly as the ball type $T^b$ --- will depend on the value of a parameter $N$ (discussed in the above paragraph) which, for simplicity, will be omitted in the notation.

Also, we assume that the fixed generating set~$S$ of the group~$G$ (which we are implicitly working with throughout this paper) is closed under taking inverse, and we fix some enumeration $s_1, \ldots, s_Q$ of all its elements (This will be used in Section \ref{sec-abc-pp}).

\subsection{Prioritised ancestors}
\label{sec-abc-pp}

\begin{df}
Let $x, y \in G$. We call $y$ a \textit{descendant} of $x$ if $|y| = |x| + d(x, y)$. (Equivalently: if $y \in xT^c(x)$). In such situation, we say that $x$ is an \textit{ancestor} of~$y$.

If in addition $d(x, y) = 1$, we say that $y$ is a \textit{child} of~$x$ and $x$ is a \textit{parent} of~$y$.
\end{df}

\begin{df}
\label{def-sm-nic-sympleksow}
The \textit{prioritised parent} (or \textit{p-parent}) of an element $y \in G \setminus \{ e \}$ is the element $x \in G$ such that $x$ is a parent of~$y$ and $x = y s_i$ with $i$ least possible. The p-parent of $y$ will be denoted by $y^\uparrow$.
An element $g' \in G$ is a \textit{priority child} (or \textit{p-child}) of $g$ if $g$ is its p-parent.

As already suggested, a given element of $G \setminus \{ e \}$ must have exactly one p-parent but may have many p-children.

The relation of \textit{p-ancestry} (resp. \textit{p-descendance}) is defined as the reflexive-transitive closure of p-parenthood (resp. p-childhood); in particular, for any $g \in G$ and $k \leq |g|$, $g$ has exactly one p-ancestor $g'$ such that~$|g'| = |g| - k$, which we denote by $g^{\uparrow k}$.
\end{df}

\begin{df}
\label{def-sm-p-wnuk}
Let $x, y \in G$. We call $x$ a \textit{p-grandchild} (resp. \textit{p-grandparent}) of $y$ if it is a p-descendant (resp. p-ancestor) of $y$ and $\big| |x| - |y| \big| = L$. For simplicity of notation, we denote $x^\Uparrow = x^{\uparrow L}$ (for $|x| \geq L$) and analogously $x^{\Uparrow k} = x^{\uparrow Lk}$ (for $|x| \geq Lk$).
\end{df}

Let $N_0$ denote the constant coming from Proposition \ref{lem-kulowy-wyznacza-stozkowy}.

\begin{fakt}
\label{fakt-sm-kulowy-wyznacza-dzieci}
Let $x, y, s \in G$ satisfy $T^b_{N_0+2}(x) = T^b_{N_0+2}(y)$ and $|s| = 1$. Then, $(xs)^\uparrow = x$ if and only if $(ys)^\uparrow = y$.
\end{fakt}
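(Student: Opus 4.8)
The plan is to unwind the definition of the prioritised parent (Definition~\ref{def-sm-nic-sympleksow}) and to observe that the finitely many conditions characterising ``$(xs)^\uparrow = x$'' are all (in)equalities between values of the function $w \mapsto |xw| - |x|$ at group elements $w$ of length at most~$2$, hence are preserved by the hypothesis $T^b_{N_0+2}(x) = T^b_{N_0+2}(y)$.

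First I would set up notation. Since $|s| = 1$ we have $s \in S$, and since $S$ is closed under inverses there is a unique index $i_0$ with $s_{i_0} = s^{-1}$; note $(xs)s_{i_0} = x$. By Definition~\ref{def-sm-nic-sympleksow}, $x = (xs)s_{i_0}$ is the p-parent of $xs$ precisely when: (a)~$(xs)s_{i_0}$ is a parent of $xs$, i.e.\ $|xs| = |x| + 1$; and (b)~for every $j < i_0$ the element $(xs)s_j$ is \emph{not} a parent of $xs$, i.e.\ $|(xs)s_j| \ne |xs| - 1$. (When $xs = e$ condition~(a) fails, since then $|xs| = |x| - 1$; so the degenerate case where the p-parent is undefined causes no trouble, as ``$(xs)^\uparrow = x$'' is then simply false — and likewise for $y$, by the first point in the next paragraph.)

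Next I would translate (a) and (b) through the ball type. The hypothesis says exactly that $|xw| - |x| = |yw| - |y|$ for every $w \in B_{N_0+2}(e)$. Since $s \in B_1(e)$, condition~(a) — which reads $|xs| - |x| = 1$ — holds for $x$ if and only if it holds for $y$. For condition~(b), fix $j < i_0$; then $(xs)s_j = x(ss_j)$ with $|ss_j| \le |s| + |s_j| = 2 \le N_0 + 2$, so $ss_j \in B_{N_0+2}(e)$, and the requirement $|(xs)s_j| \ne |xs| - 1$ rewrites as $\big(|x(ss_j)| - |x|\big) \ne \big(|xs| - |x|\big) - 1$, i.e.\ as an inequality between the values of $w \mapsto |xw| - |x|$ at $ss_j$ and at $s$; it therefore transfers verbatim to $y$. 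Conjoining over all $j < i_0$, the full characterisation of ``$(xs)^\uparrow = x$'' involves only the values $|xw| - |x|$ for $w \in B_{N_0+2}(e)$, which the hypothesis forces to coincide with the corresponding values for $y$; hence $(xs)^\uparrow = x$ if and only if $(ys)^\uparrow = y$.

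I do not expect a genuine obstacle. The only points demanding a little care are the bookkeeping around the degenerate case $xs = e$, and — more to the point — noticing that radius~$2$ suffices because the products $ss_j$ have length at most~$2$, which is precisely what dictates the ``$N_0 + 2$'' in the statement. One could alternatively invoke Lemma~\ref{fakt-kulowy-duzy-wyznacza-maly} to pass first from $T^b_{N_0+2}(x) = T^b_{N_0+2}(y)$ to $T^b_{N_0+1}(xs) = T^b_{N_0+1}(ys)$ and then read condition~(b) off the latter equality, but the direct argument above seems cleaner.
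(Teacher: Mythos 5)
Your proof is correct. The characterisation you extract from Definition~\ref{def-sm-nic-sympleksow} is the right one: $(xs)^\uparrow = x$ holds iff $|xs|-|x|=1$ and, for each $j$ below the index of $s^{-1}$, $|x(ss_j)|-|x| \neq (|xs|-|x|)-1$; since $s$ and the products $ss_j$ all lie in $B_2(e) \subseteq B_{N_0+2}(e)$, every one of these finitely many (in)equalities is read off from the function $w \mapsto |xw|-|x|$ on $B_{N_0+2}(e)$, which is exactly what the hypothesis equates for $x$ and $y$. The paper reaches the same conclusion by a different route: it argues by contradiction, supposing $(ys)^\uparrow = z = ys s_j \neq y$ with $j$ smaller than the index of $s^{-1}$, transfers the ball type from $z$ to $z' = xss_j$ via Lemma~\ref{fakt-kulowy-duzy-wyznacza-maly}, and then invokes Proposition~\ref{lem-kulowy-wyznacza-stozkowy} to identify the cone types $T^c(z)=T^c(z')$, deducing that $z'$ is a parent of $xs$ of smaller priority. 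Your argument replaces that detour through cone types by a direct length computation, which is more elementary and in fact shows that radius~$2$ (rather than $N_0+2$) already suffices; the paper's choice of $N_0+2$ is an artefact of routing the argument through $T^c$. Both proofs are sound; yours is the leaner of the two, and your handling of the degenerate case $xs=e$ and of the symmetry of the biconditional is careful where the paper leaves these points implicit.
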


\begin{proof}
Since the set $S$ is closed under taking inverse, we have $s = s_i^{-1}$ for some~$i$. Assume that $(xs_i^{-1})^\uparrow = x$ but $(ys_i^{-1})^\uparrow = z \neq y$. Then, $z = ys_i^{-1}s_j$ for some $j < i$. Note that $d(y, z) \leq 2$. Define $z' = xs_i^{-1}s_j$; then by Lemma \ref{fakt-kulowy-duzy-wyznacza-maly} and Proposition \ref{lem-kulowy-wyznacza-stozkowy} we have
\[ T^b_{N_0}(z') = T^b_{N_0}(z), \qquad s_j^{-1} \in T^c(z) = T^c(z'). \]
This means that the element $xs_i^{-1} = z's_j^{-1}$ is a child of $z'$; then, since $j < i$, it cannot be a p-child of $x$.
\end{proof}

\begin{wn}
\label{wn-sm-kulowy-wyznacza-potomkow}
Let $N \geq N_5 := 2N_0 + 8\delta + 2$. Then, if $x, y \in G$ satisfy $T^b_N(x) = T^b_N(y)$, the left translation by $\gamma = yx^{-1}$ gives a bijection between the p-descendants of $x$ and the p-descendants of $y$.
\end{wn}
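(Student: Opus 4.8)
The plan is to reduce the statement about p-descendants to the local statement of Lemma~\ref{fakt-sm-kulowy-wyznacza-dzieci} (the one-step case), and then propagate it along the p-descendance tree using the children-determinism of the ball type. First I would observe that for $N \geq N_5 = 2N_0 + 8\delta + 2$ and any descendant $z$ of $x$ with $|z| - |x| \geq$ (something comparable to $N_0 + 4\delta$), Proposition~\ref{lem-potomkowie-dla-kulowych} (applied with $M = N_0 + 2$, say) guarantees that $T^b_{N_0+2}(\gamma z) = T^b_{N_0+2}(z)$, where $\gamma = yx^{-1}$; in fact since $N \geq N_5$ we have enough room (the $+8\delta$ and $+2$ in the definition of $N_5$ are precisely tuned so that $N - 8\delta \geq N_0 + 2 + \text{slack}$ after the reduction in the proof of \ref{lem-potomkowie-dla-kulowych}) to ensure this for \emph{all} descendants $z$ of $x$, including those very close to $x$. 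I should double-check the short-range case separately: for $z$ with $|z|-|x|$ small, I can use Lemma~\ref{fakt-kulowy-duzy-wyznacza-maly} directly to pass from $T^b_N(x) = T^b_N(y)$ to $T^b_{N-k}(xz_0) = T^b_{N-k}(yz_0)$ for the relevant short prefixes $z_0$, and $N - k$ will still exceed $N_0+2$.

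Granting that, the argument is an induction on $|z| - |x|$ over p-descendants $z$ of $x$. The base case $z = x$ is trivial: $\gamma x = y$. For the inductive step, suppose $z$ is a p-descendant of $x$ with p-parent $z^\uparrow$, and that by induction $\gamma z^\uparrow$ is the corresponding p-descendant of $y$ with $T^b_{N_0+2}(\gamma z^\uparrow) = T^b_{N_0+2}(z^\uparrow)$ (this refined inductive hypothesis, carrying the ball-type equality along, is what makes the step go through). Write $z = z^\uparrow s$ with $|s| = 1$; since $z$ is a p-child of $z^\uparrow$ we have $(z^\uparrow s)^\uparrow = z^\uparrow$. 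Applying Lemma~\ref{fakt-sm-kulowy-wyznacza-dzieci} to the pair $(z^\uparrow, \gamma z^\uparrow)$ — which have equal $(N_0+2)$-ball types — we conclude that $((\gamma z^\uparrow) s)^\uparrow = \gamma z^\uparrow$, i.e. $\gamma z = (\gamma z^\uparrow) s$ is a p-child of $\gamma z^\uparrow$, hence a p-descendant of $y$. Moreover $\gamma z$ is again a descendant of $y$ at the right distance, so by the first paragraph $T^b_{N_0+2}(\gamma z) = T^b_{N_0+2}(z)$, restoring the inductive hypothesis. This shows $\gamma$ maps p-descendants of $x$ into p-descendants of $y$.

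To get a bijection, I would run the same argument for $\gamma^{-1} = x y^{-1}$, noting that $T^b_N(y) = T^b_N(x)$ is symmetric, so $\gamma^{-1}$ maps p-descendants of $y$ into p-descendants of $x$; since $\gamma^{-1}$ and $\gamma$ are mutually inverse left translations of $G$, the two maps are inverse to each other on the respective sets of p-descendants, giving the desired bijection. The length-shifting claim $|\gamma z| = |y| + (|z| - |x|)$ is automatic here because $\gamma z$ was shown to be a \emph{descendant} of $y$ (each p-child step increases length by exactly $1$), matching the shift between $|x|$ and $|y|$.

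The main obstacle I anticipate is the bookkeeping in the first paragraph: making sure the ball-type equality $T^b_{N_0+2}(\gamma z) = T^b_{N_0+2}(z)$ genuinely holds for \emph{every} descendant $z$ of $x$, uniformly, including short-range ones, using only $T^b_N(x) = T^b_N(y)$ with $N \geq N_5$. This is where the specific value $N_5 = 2N_0 + 8\delta + 2$ must be reconciled with the hypotheses $N > N_0 + 8\delta$, $M \geq 0$ of Proposition~\ref{lem-potomkowie-dla-kulowych} and with the $8\delta$-shift incurred there; one has to track that after the reduction $N \mapsto N - 8\delta$ (coming from Corollary~\ref{wn-krzywe-geodezyjne-pozostaja-bliskie}) one still has at least $N_0 + 2$ to spare, plus handle the descendants too close to apply \ref{lem-potomkowie-dla-kulowych} by a direct appeal to \ref{fakt-kulowy-duzy-wyznacza-maly}. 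Everything else is a routine tree induction.
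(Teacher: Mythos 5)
Your proposal is correct and follows essentially the same route as the paper: induction on $|z|-|x|$, reduction of each step to Lemma~\ref{fakt-sm-kulowy-wyznacza-dzieci} via the ball-type equality $T^b_{N_0+2}(z^\uparrow)=T^b_{N_0+2}(\gamma z^\uparrow)$, established by the same two-case split (Lemma~\ref{fakt-kulowy-duzy-wyznacza-maly} for $d(z,x)\leq N-(N_0+2)$, Proposition~\ref{lem-potomkowie-dla-kulowych} for $|z|-|x|\geq N_0+4\delta+2$), with bijectivity obtained by symmetry in $\gamma^{-1}$. The only item you flag as an anticipated obstacle — checking that the two ranges overlap — is exactly the one-line verification the paper performs using $N\geq 2N_0+4\delta+2$ and $d(z,x)=|z|-|x|$.
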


\begin{proof}
Let $z$ be a p-descendant of $x$; we want to prove that $\gamma z$ is a p-descendant of~$y$. We will do this by induction on the difference $|z| - |x|$.

If $|z| = |x|$, the claim is obvious. Assume that $|z| > |x|$ and denote $w = z^\uparrow$; then $w$ is a p-descendant of $x$ for which we may apply the induction assumption, obtaining that $\gamma w$ is a p-descendant of $y$. To finish the proof, it suffices to check that
\begin{align} 
\label{eq-sm-typy-potomkow}
T^b_{N_0 + 2}(w) = T^b_{N_0 + 2}(\gamma w), 
\end{align}
since then from Lemma \ref{fakt-sm-kulowy-wyznacza-dzieci} it will follow that $\gamma z$ is a p-child of $\gamma w$, and then also a p-descendant of $y$. We consider two cases:
\begin{itemize}
 \item If $d(z, x) \leq N - (N_0 + 2)$, then \eqref{eq-sm-typy-potomkow} holds by equality $T^b_N(x) = T^b_N(y)$ and Lemma \ref{fakt-kulowy-duzy-wyznacza-maly}.
%  z~lematu~\ref{lem-potomkowie-dla-kulowych}.
 \item If $|z| - |x| \geq N_0 + 4\delta + 2$, then \eqref{eq-sm-typy-potomkow} follows (in view of the equality $T^b_N(x) = T^b_N(y)$ and the inequality $N \geq N_0 + 8\delta$) from Proposition \ref{lem-potomkowie-dla-kulowych}.
\end{itemize}
Since $d(z, x) = |z| - |x|$ (because $z$ is a descendant of $x$) and $N \geq 2N_0 + 4\delta + 2$, at least one of the two cases must hold. This finishes the proof.
\end{proof}

\subsection{The $A$-type}
\label{sec-sm-abc-a}

Let $\mathcal{T}^b_N$ be the set of values of the type $T^b_N$. As an introductory step, we define the \textit{$Z$-type} $T^Z$ in $G$ so that:
\begin{itemize}
 \item $T^Z$ is compatible with $T^b_N$ for all elements $g \in G$ with length $\geq L$;
 \item $T^Z$ assigns pairwise distinct values, not belonging to $\mathcal{T}^b_N$, to all elements of length $< L$.
\end{itemize}
Let us note that such a strengthening preserves most of the properties of $T^b_N$, in particular those described in Proposition \ref{lem-potomkowie-dla-kulowych} and Corollary \ref{wn-sm-kulowy-wyznacza-potomkow}.

\begin{uwaga}
\label{uwaga-pomijanie-N}
 Although the values of $T^Z$ depend on the value of $N$, for simplicity we omit this fact in notation, assuming $N$ to be fixed. (We are yet not ready to state our assumptions on $N$; this will be done below in Proposition \ref{lem-sm-kuzyni}).
\end{uwaga}

Let $\mathcal{T}^Z$ be the set of all possible values of $T^Z$. For every $\tau \in \mathcal{T}^Z$, choose some \textit{representative} $g_\tau \in G$ of this type. For convenience, we denote by $\gamma_g$ the element of~$G$ which (left-) translates $g$ to the representative (chosen above) of its $Z$-type:
\[ \gamma_g = g_{T^Z(g)} \, g^{-1} \qquad \textrm{ for } \quad g \in G. \]
Let us recall that, by Corollary \ref{wn-sm-kulowy-wyznacza-potomkow}, the left translation by $\gamma_g$ gives a bijection between p-descendants of $g$ and p-descendants of $g_{T^Z(g)}$, and thus also a bijection between the p-grandchildren of these elements.

For every $\tau \in \mathcal{T}$, let us fix an (arbitrary) enumeration of all p-grandchildren of $g_\tau$.

\begin{df}
\label{def-sm-numer-dzieciecy}
The \textit{descendant number} of an element $g \in G$ with $|g| \geq L$ (denoted by~$n_g$) is the number given (in the above enumeration) to the element $\gamma_{g^\Uparrow} \cdot g$ as a p-grandchild of $g_{T^Z(g^\Uparrow)}$. If $|g| < L$, we set $n_g = 0$.
\end{df}

\begin{df}
\label{def-sm-typ-A}
We define the \textit{$A$-type} of an element $g \in G$ as the pair $T^A(g) = (T^Z(g), n_g)$. 
\end{df}
We note that the set $\mathcal{T}^A$ of all possible $A$-types is finite because the number of p-grandchildren of $g$ depends only on $T^Z(g)$.

\begin{fakt}
\label{fakt-sm-istnieje-nss}
For any two distinct elements $g, g' \in G$ of equal length, there is $k \geq 0$ such that $g^{\Uparrow k}$ and $g'^{\Uparrow k}$ exist and have different $A$-types.
\end{fakt}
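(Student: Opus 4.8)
The plan is to argue by induction on the common length $n := |g| = |g'|$, using the tree structure of prioritised ancestry and splitting according to whether the two p-grandparents $g^{\Uparrow}$ and $g'^{\Uparrow}$ coincide.

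First I would dispose of the short case $|g| < L$: here $g^{\Uparrow}$ need not exist, but $k = 0$ already works, because $T^Z$ --- and therefore $T^A$ --- was set up to take pairwise distinct values on elements of length $< L$, so $T^A(g) \neq T^A(g')$ as soon as $g \neq g'$. Then I would treat the case $g^{\Uparrow} = g'^{\Uparrow} =: h$, with $|g| \geq L$. Now $g$ and $g'$ are two distinct p-grandchildren of $h$. Recall (from the remark preceding Definition~\ref{def-sm-numer-dzieciecy}, which rests on Corollary~\ref{wn-sm-kulowy-wyznacza-potomkow}) that left translation by $\gamma_h$ carries the p-grandchildren of $h$ bijectively onto the p-grandchildren of the representative $g_{T^Z(h)}$; hence $\gamma_h \cdot g$ and $\gamma_h \cdot g'$ are distinct p-grandchildren of $g_{T^Z(h)}$ and thus receive distinct indices in the fixed enumeration. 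That is, $n_g \neq n_{g'}$, so $T^A(g) = (T^Z(g), n_g) \neq (T^Z(g'), n_{g'}) = T^A(g')$, and $k = 0$ works again.

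Finally, for the case $g^{\Uparrow} \neq g'^{\Uparrow}$ (again with $|g| \geq L$), I would invoke the induction hypothesis on the distinct elements $g^{\Uparrow}, g'^{\Uparrow}$, whose common length is $n - L < n$: it produces some $k \geq 0$ for which $(g^{\Uparrow})^{\Uparrow k}$ and $(g'^{\Uparrow})^{\Uparrow k}$ exist and have different $A$-types. Since p-ancestry composes, $(g^{\Uparrow})^{\Uparrow k} = g^{\Uparrow(k+1)}$ and likewise for $g'$, so $k+1$ witnesses the claim for $g, g'$. This recursion is well founded, since the last case strictly decreases $n$ by $L \geq 1$ and is therefore eventually reduced to one of the two preceding cases.

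The hard part --- really the only step beyond bookkeeping --- is the sibling case: one must be certain that two distinct p-grandchildren of a common p-grandparent cannot be assigned the same descendant number. This is precisely where Corollary~\ref{wn-sm-kulowy-wyznacza-potomkow} enters, transporting the sibling pair bijectively to a sibling pair over the canonical representative, where the chosen enumeration separates them; in the degenerate subcase $|h| < L$ the representative is $h$ itself and $\gamma_h = e$, so nothing needs to be transported and the conclusion is immediate.
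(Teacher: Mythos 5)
Your proof is correct and takes essentially the same route as the paper's: the two key ingredients --- that $T^Z$ separates all elements of length $< L$, and that distinct p-grandchildren of a common p-grandparent receive distinct descendant numbers via the translation $\gamma_h$ --- are exactly those the paper uses. The only difference is presentational: the paper directly picks the last level $k$ at which $g^{\Uparrow k}$ and $g'^{\Uparrow k}$ are still distinct and splits into the same two terminal cases, whereas you reach that level by induction on $|g|$.
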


\begin{proof}
Let $n = |g| = |g'|$ and let $k \geq 0$ be the least number for which $g^{\Uparrow k} \neq g'^{\Uparrow k}$. If $n - kL < L$, then by definition $g^{\Uparrow k}$ and $g'^{\Uparrow k}$ have different $Z$-types. Otherwise, we have $g^{\Uparrow k+1} = g'^{\Uparrow k+1}$, which means that $g^{\Uparrow k}$ and~$g'^{\Uparrow k}$ have different descendant numbers.
\end{proof}

\begin{fakt}
\label{fakt-sm-przen-a}
If $g, h \in G$ have the same $Z$-type, then the left translation by $\gamma = hg^{-1}$ maps p-grandchildren of~$g$ to p-grandchildren of~$h$ and preserves their $A$-types.
\end{fakt}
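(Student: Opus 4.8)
The plan is to reduce everything to Corollary~\ref{wn-sm-kulowy-wyznacza-potomkow}, together with the elementary observation that when $g$ and $h$ have the same $Z$-type, the ``normalised'' elements $\gamma_{g}\cdot z$ and $\gamma_{h}\cdot(\gamma z)$ literally coincide for every p-descendant $z$ of $g$ (here $\gamma = hg^{-1}$).

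First I would dispose of the degenerate case $|g| < L$. Since $T^Z$ takes pairwise distinct values, all lying outside $\mathcal{T}^b_N$, on the elements of length $<L$, the equality $T^Z(g) = T^Z(h)$ forces $|h| < L$ as well, and hence $g = h$; then $\gamma = e$ and there is nothing to prove. So from now on assume $|g| \geq L$; then $|h| \geq L$ too, and $T^Z(g) = T^Z(h)$ yields $T^b_N(g) = T^b_N(h)$, since $T^Z$ is compatible with $T^b_N$ on elements of length $\geq L$.

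Next I would verify that $\gamma$ sends p-grandchildren of $g$ to p-grandchildren of $h$. Let $g'$ be a p-grandchild of $g$, i.e.\ a p-descendant of $g$ with $|g'| = |g| + L$. By Corollary~\ref{wn-sm-kulowy-wyznacza-potomkow}, left translation by $\gamma$ maps p-descendants of $g$ bijectively onto p-descendants of $h$, so $\gamma g'$ is a p-descendant of $h$. Since left translation is an isometry of the Cayley graph, $d(h, \gamma g') = d(g, g') = L$, and being a p-descendant gives $|\gamma g'| = |h| + d(h, \gamma g') = |h| + L$; hence $\gamma g'$ is a p-grandchild of $h$. This is the first assertion.

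It remains to check $T^A(\gamma g') = T^A(g')$, i.e.\ $T^Z(\gamma g') = T^Z(g')$ and $n_{\gamma g'} = n_{g'}$. For the $Z$-types: $g^{-1}g' \in T^c(g)$ and $|g^{-1}g'| = L \geq N + 4\delta$ by the standing assumption $L \geq \max(N+4\delta,\,14\delta)$, so Proposition~\ref{lem-potomkowie-dla-kulowych} (applied with the parameter $N$ and with $M = N$, which is legitimate as $N > N_0 + 8\delta$) gives $T^b_N\big(g\cdot g^{-1}g'\big) = T^b_N\big(h \cdot g^{-1}g'\big)$, that is $T^b_N(g') = T^b_N(\gamma g')$; as both elements have length $\geq L$, this yields $T^Z(g') = T^Z(\gamma g')$. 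For the descendant numbers, I would first note that $g$ is a p-ancestor of $g'$ of length $|g'| - L$, and by uniqueness of p-ancestors of a prescribed length this gives $g'^{\Uparrow} = g$; likewise $(\gamma g')^{\Uparrow} = h$. Writing $\tau = T^Z(g) = T^Z(h)$, we get $g_{T^Z(g'^{\Uparrow})} = g_{T^Z((\gamma g')^{\Uparrow})} = g_\tau$, and
\[ \gamma_{g'^{\Uparrow}}\cdot g' = g_\tau\, g^{-1} g', \qquad \gamma_{(\gamma g')^{\Uparrow}}\cdot(\gamma g') = g_\tau\, h^{-1}\cdot\big(h g^{-1} g'\big) = g_\tau\, g^{-1} g'. \]
These are the same element of $G$, and the enumeration of the p-grandchildren of $g_\tau$ is fixed once and for all, so $g'$ and $\gamma g'$ receive the same descendant number, i.e.\ $n_{g'} = n_{\gamma g'}$. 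This completes the argument.

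The proof is mostly bookkeeping; the only points requiring care are the identification $g'^{\Uparrow} = g$ (so that the representative and the enumeration used to compute $n_{g'}$ are the ones attached to $\tau$) and the verification that the hypotheses of Proposition~\ref{lem-potomkowie-dla-kulowych} hold, for which the standing assumption $L \geq N + 4\delta$ is precisely what is needed.
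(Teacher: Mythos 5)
Your proof is correct and follows essentially the same route as the paper's: Corollary~\ref{wn-sm-kulowy-wyznacza-potomkow} for the p-grandchild claim, Proposition~\ref{lem-potomkowie-dla-kulowych} for equality of $Z$-types, and the computation $\gamma_{(\gamma g')^\Uparrow}\cdot\gamma g' = g_\tau g^{-1}g' = \gamma_{g'^\Uparrow}\cdot g'$ for the descendant numbers. The only additions are the explicit treatment of the case $|g|<L$ and the explicit verification of the hypotheses of Proposition~\ref{lem-potomkowie-dla-kulowych}, both of which the paper leaves implicit.
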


\begin{proof}
Denote $\tau = T^Z(g) = T^Z(h)$. Let $g'$ be a p-grandchild of $g$; then $\gamma g'$ is a p-grandchild of $h$ by Corollary \ref{wn-sm-kulowy-wyznacza-potomkow}. Moreover, from Proposition \ref{lem-potomkowie-dla-kulowych} we know that $g'$ and $\gamma g'$ have equal ball types $T^b_N$, and since they both have the same length $\geq L$, it follows that they have equal $Z$-types. They also have equal descendant numbers because
\[ \gamma_{(\gamma g')^\Uparrow} \, \gamma g' = g_\tau \, h^{-1} \, \gamma \, g' = g_\tau \, g^{-1} \, g' = \gamma_{g'^\Uparrow} \, g'. \qedhere \]
\end{proof}

\subsection{Cousins and the $B$-type}
\label{sec-sm-abc-b}

The aim of this subsection is to strengthen the type function to distinguish any pair of neighbouring elements in $G$ of the same length. In the case of a torsion-free group, there is nothing new to achieve as the desired property is already satisfied by the ball type $T^b_N$ (by Lemma \ref{fakt-kuzyni-lub-torsje}). In general, the main idea is to remember within the type of $g$ the ``crucial genealogical difference'' between $g$ and every it \textit{cousin} (i.e. a neighbouring element of the same length --- see Definition \ref{def-sm-kuzyni}) --- where, to be more precise, the ``genealogical difference'' between $g$ and $g'$ consists of the $A$-types of their p-ancestors in the oldest generation in which these p-ancestors are still distinct. However, it turns out that preserving all the desired regularity properties (in particular, children determinism) requires remembering not only the genealogical differences between $g$ and its cousins, but also similar differences between any pair of its cousins.

\begin{df}
\label{def-sm-sasiedzi}
Two elements $x, y \in G$ will be called \textit{neighbours} (denotation: $x \leftrightarrow y$) if $|x| = |y|$ and $d(x, y) \leq 8\delta$.
\end{df}

Denote
\begin{align} 
\label{eq-sm-zbiory-torsji}
\begin{split}
 Tor & = \big\{ g \in G \,\big|\, |g| \leq 16\delta, \, g \textrm{ is a torsion element} \big\},  \\
 R & = \max \Big( \big\{ |g^n| \,\big|\, g \in Tor, \, n \in \mathbb{Z} \big\} \cup \{ 16 \delta \} \Big).
\end{split}
\end{align}
Since~$|Tor| < \infty$, it follows that $R < \infty$. 

\begin{df}
\label{def-sm-kuzyni}
 Two elements $g, g' \in G$ will be called \textit{cousins} if $|g| = |g'|$ and $d(g, g') \leq R$. The set of cousins of $g$ will be denoted by $C_g$.  (It is exactly the set $g P_R(g)$ using the notation of Section \ref{sec-konstr-pokrycia}).
\end{df}

Let us note that if $g, h \in G$ are neighbours, then they are cousins too.

\begin{fakt}
\label{fakt-sm-rodzic-kuzyna}
If $L \geq \tfrac{R}{2} + 4\delta$, then for any cousins $g, g' \in G$ their p-grandparents are neighbours (and hence also cousins).
\end{fakt}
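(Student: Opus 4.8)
The plan is to estimate how far apart the p-grandparents of two cousins can be, using the "geodesics stay close" lemmas from Section 2. Let $g, g' \in G$ be cousins, so $|g| = |g'| =: n$ and $k := d(g, g') \leq R$. Write $f = g^\Uparrow$ and $f' = g'^\Uparrow$; these are the p-ancestors of $g$ and $g'$ respectively at length $n - L$. First I would fix p-geodesics $\alpha$ from $e$ through $f$ to $g$, and $\beta$ from $e$ through $f'$ to $g'$ — more precisely, since the p-parent relation determines a distinguished ancestor at each level, $\alpha$ restricted to $[0, n-L]$ ends at $f$ and $\beta$ restricted to $[0, n-L]$ ends at $f'$ (any choice of geodesic segments realising these p-ancestor chains will do, as the only thing that matters is $\alpha(n-L) = f$, $\beta(n-L) = f'$, and that both are geodesics from $e$).

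The key step is then a direct application of Corollary~\ref{wn-krzywe-geodezyjne-pozostaja-bliskie} (or Lemma~\ref{fakt-geodezyjne-pozostaja-bliskie}) to $\alpha$ and $\beta$ at the level $m = n - L$. Since $|g| = |g'| = n$ and $d(g, g') = k \leq R$, the corollary gives
\[ d(f, f') = d\big(\alpha(n-L), \beta(n-L)\big) \leq 8\delta + \max\big(2k + 8\delta - 2L,\, 0\big). \]
Now I use the hypothesis $L \geq \tfrac{R}{2} + 4\delta$: this gives $2L \geq R + 8\delta \geq k + 8\delta$, so $2k + 8\delta - 2L \leq k \leq R$, hence $d(f, f') \leq 8\delta + R$. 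But I want the sharper bound $d(f, f') \leq 8\delta$ so that $f \leftrightarrow f'$ (neighbours, in the sense of Definition~\ref{def-sm-sasiedzi}). For that I should instead invoke the "in particular" clause: Corollary~\ref{wn-krzywe-geodezyjne-pozostaja-bliskie} with $m \leq n - k - 4\delta$ — wait, the relevant threshold for the doubled-$k$ version is $m \leq n - \tfrac{2k}{2} - 4\delta = n - k - 4\delta$. So I need $n - L \leq n - k - 4\delta$, i.e. $L \geq k + 4\delta$. Since $k \leq R$, it suffices that $L \geq R + 4\delta$; but the stated hypothesis is only $L \geq \tfrac{R}{2} + 4\delta$, so I must use Lemma~\ref{fakt-geodezyjne-pozostaja-bliskie} directly for the case $|g| = |g'|$ (no doubling of $k$ needed), whose threshold is $m \leq n - \tfrac{k}{2} - 4\delta$; this requires $L \geq \tfrac{k}{2} + 4\delta$, and since $k \leq R$ this is exactly guaranteed by $L \geq \tfrac{R}{2} + 4\delta$. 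Hence $d(f, f') \leq 8\delta$, so $f \leftrightarrow f'$; and since neighbours are cousins (remarked after Definition~\ref{def-sm-kuzyni}), the p-grandparents are cousins too.

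The main (very minor) obstacle is bookkeeping: making sure the correct variant of the geodesic-closeness estimate is applied — namely Lemma~\ref{fakt-geodezyjne-pozostaja-bliskie} for two geodesics with common endpoint-lengths $n$ and endpoint-distance $k$, evaluated at parameter $m = n - L$, and checking that $m$ lies below the threshold $n - \tfrac{k}{2} - 4\delta$. The inequality $n - L \leq n - \tfrac{k}{2} - 4\delta$ reduces to $L \geq \tfrac{k}{2} + 4\delta$, which holds because $k = d(g,g') \leq R$ and $L \geq \tfrac{R}{2} + 4\delta$ by assumption. This gives $d(g^\Uparrow, g'^\Uparrow) \leq 8\delta$, i.e. $g^\Uparrow \leftrightarrow g'^\Uparrow$, completing the proof. (One should also note $n \geq L$ is implicitly required for the p-grandparents to be defined, but this is part of the ambient convention when we speak of $g^\Uparrow$.)
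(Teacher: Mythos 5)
Your argument is correct and is exactly the paper's (the paper's proof is just ``This is clear by Lemma~\ref{fakt-geodezyjne-pozostaja-bliskie}''): the p-ancestor chains give geodesics $\alpha,\beta$ from $e$ to $g,g'$ with $\alpha(n-L)=g^\Uparrow$, $\beta(n-L)=g'^\Uparrow$, and since $|g|=|g'|$ the undoubled estimate applies at $m=n-L\leq n-\tfrac{k}{2}-4\delta$, yielding $d(g^\Uparrow,g'^\Uparrow)\leq 8\delta$. The detour through Corollary~\ref{wn-krzywe-geodezyjne-pozostaja-bliskie} was unnecessary but you correctly discarded it.
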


\begin{proof}
This is clear by Lemma \ref{fakt-geodezyjne-pozostaja-bliskie}.
\end{proof}

For $g \in G$ of length~$n$ and any two distinct $g', g'' \in C_g$, let $k_{g', g''}$ be the least value $k \geq 0$ for which $g'^{\Uparrow k}$ and $g''^{\Uparrow k}$ have different $A$-types (which is a correct definition by Lemma \ref{fakt-sm-istnieje-nss}). Let the sequence $k^{(g)} = (k^{(g)}_i)_{i = 1}^{M_g}$ come from arranging the elements of the set $\{ k_{g', g''} \,|\, g', g'' \in C_g \} \cup \{ 0 \}$ in a decreasing order.

\begin{df}
\label{def-sm-typ-B}
The \textit{$B$-type} of an element $g$ is the set
\[ T^B(g) = \Big\{ \big( g^{-1} g', \, W_{g, \, g'} \big) \ \Big|\ g' \in C_g \Big\}, \qquad \textrm{ where } \qquad W_{g, \, g'} = \Big( T^A  \big( {g'}^{\Uparrow k^{(g)}_i} \big) \Big)_{i=1}^{M_g}. \]
(We recall that the notation hides the dependence on a fixed parameter $N$, whose value will be chosen in Proposition \ref{lem-sm-kuzyni}).
\end{df}

\begin{uwaga}
\label{uwaga-sm-B-wyznacza-A}
Since $g$ is a cousin of itself and $0$ is the last value in the sequence $(k^{(g)}_i)$, the set $T^B(g)$ contains in particular a pair of the form $\big( e, \, (\ldots, T^A(g)) \big)$, which means that the $B$-type determines the $A$-type (of the same element).
\end{uwaga}

\begin{uwaga}
\label{uwaga-sm-indeksy-z-tabelki}
For any two distinct $g', g'' \in C_g$, let $i^{(g)}_{g', g''}$ be the position on which the value $k_{g', g''}$ appears in sequence~$k^{(g)}$. Then from definition it follows that this index depends only on the sequences $W_{g, g'}$ and~$W_{g, g''}$, and more precisely it is equal to the greatest $i$ for which the $i$-th coordinates of these sequences differ. (At the same time, we observe that the whole sequences $W_{g, g'}$, $W_{g, g''}$ must be different). This fact will be used in the proofs of Proposition \ref{lem-sm-B-dzieci} and~\ref{lem-sm-kuzyni}.
\end{uwaga}

\begin{fakt}
\label{fakt-sm-B-skonczony}
There exist only finitely many possible $B$-types in~$G$.
\end{fakt}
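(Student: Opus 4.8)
The plan is to show that the $B$-type of an element $g$ is determined by finitely many pieces of finitely-valued data, so that only finitely many values can occur. Recall from Definition~\ref{def-sm-typ-B} that $T^B(g)$ is a set of pairs $\big(g^{-1}g',\, W_{g,g'}\big)$ indexed by the cousins $g' \in C_g$. I would bound each ingredient separately: the number of cousins, the first coordinates $g^{-1}g'$, and the sequences $W_{g,g'}$.

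First I would observe that $C_g = gP_R(g)$ by Definition~\ref{def-sm-kuzyni}, so the first coordinates $g^{-1}g'$ all lie in the ball $B_R(e)$, which is finite (as $G$ is finitely generated); consequently $|C_g| \leq |B_R(e)|$ is bounded by a constant depending only on $G$, and there are only finitely many possible sets of such first coordinates. Next, each sequence $W_{g,g'} = \big(T^A({g'}^{\Uparrow k^{(g)}_i})\big)_{i=1}^{M_g}$ has entries taken from the finite set $\mathcal{T}^A$ of all $A$-types (finiteness of $\mathcal{T}^A$ was noted right after Definition~\ref{def-sm-typ-A}). So it remains only to bound the length $M_g$ of these sequences uniformly in $g$. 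Since $M_g$ is the number of distinct values in $\{k_{g',g''} \mid g',g'' \in C_g\} \cup \{0\}$, we certainly have $M_g \leq \binom{|C_g|}{2} + 1 \leq \binom{|B_R(e)|}{2} + 1$, a constant depending only on $G$. (I do not even need a bound on the individual $k_{g',g''}$, only on how many distinct ones there can be, which is controlled by the number of pairs of cousins.)

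Putting these together: $T^B(g)$ is a subset of the finite set $B_R(e) \times \big(\mathcal{T}^A\big)^{\leq M}$, where $M = \binom{|B_R(e)|}{2}+1$ and $\big(\mathcal{T}^A\big)^{\leq M}$ denotes sequences over $\mathcal{T}^A$ of length at most $M$; since that ambient set is finite, it has only finitely many subsets, hence only finitely many values of $T^B$ are possible. I expect no real obstacle here — the statement is essentially a bookkeeping consequence of three finiteness facts already in place (finiteness of balls in $G$, finiteness of $\mathcal{T}^A$, and the combinatorial bound on the number of distinct $k$-values via pairs of cousins); the only mild subtlety is to make sure the length $M_g$ is bounded by something depending only on $G$ rather than on $g$, which is handled by the crude pair-counting estimate above rather than by trying to bound the depths $k_{g',g''}$ themselves.

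\begin{proof}
By Definition~\ref{def-sm-kuzyni}, the set of cousins of~$g$ equals $C_g = g P_R(g)$, so every element of the form $g^{-1}g'$ with $g' \in C_g$ lies in the ball $B_R(e)$ of the (finitely generated) group~$G$, which is a finite set. In particular $|C_g| \le |B_R(e)| =: b$, a constant depending only on~$G$.

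Recall that $M_g$ is, by definition, the number of elements of the set $\{ k_{g',g''} \mid g', g'' \in C_g,\ g' \neq g'' \} \cup \{0\}$. This set has at most $\binom{|C_g|}{2} + 1 \le \binom{b}{2} + 1 =: M$ elements, so $M_g \le M$ for every $g \in G$. Hence each sequence $W_{g,g'}$ has length at most~$M$ and all its entries belong to the finite set $\mathcal{T}^A$ of $A$-types (whose finiteness was noted after Definition~\ref{def-sm-typ-A}).

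Therefore $T^B(g)$ is a subset of the fixed finite set
\[
B_R(e) \times \bigcup_{m=0}^{M} \big(\mathcal{T}^A\big)^m,
\]
and this finite set has only finitely many subsets. Consequently only finitely many values of $T^B$ are possible in~$G$.
\end{proof}
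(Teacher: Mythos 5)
Your proof is correct and follows essentially the same route as the paper's: bound the first coordinates $g^{-1}g'$ by the finite ball $B_R(e)$, bound the number of cousins globally, deduce a uniform bound on the length of $(k^{(g)}_i)$ (you make the pair-counting estimate $M_g \le \binom{b}{2}+1$ explicit, which the paper leaves implicit), and conclude from finiteness of $\mathcal{T}^A$. No issues.
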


\begin{proof}
The finiteness of $A$-type results from its definition. If $g' \in C_g$, then the element $g^{-1} g'$ belongs to the ball $B(e, R)$ in~$G$, whose size is finite and independent of $g$. Since the number of cousins of $g$ is also globally limited, we obtain a limit on the length of the sequence $(k^{(g)}_i)$, which ensures a finite number of possible sequences $W_{g, g'}$.
\end{proof}

\begin{lem}
\label{lem-sm-B-dzieci}
Let $g_1, h_1 \in G$ have the same $B$-types and $\gamma = h_1g_1^{-1}$. Then, the left translation by $\gamma$ maps p-grandchildren of $g_1$ to p-grandchildren of $h_1$ and preserves their $B$-type.
\end{lem}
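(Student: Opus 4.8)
The strategy is to take a p-grandchild $g_2$ of $g_1$, set $h_2 = \gamma g_2$, and verify directly that $h_2$ is a p-grandchild of $h_1$ and that $T^B(g_2) = T^B(h_2)$. The first assertion — that $\gamma$ maps p-grandchildren of $g_1$ to p-grandchildren of $h_1$ — is already available: by Remark~\ref{uwaga-sm-B-wyznacza-A} the equality of $B$-types gives $T^A(g_1) = T^A(h_1)$, hence $T^Z(g_1) = T^Z(h_1)$, hence $T^b_N(g_1) = T^b_N(h_1)$ (for $N \ge L$), so Corollary~\ref{wn-sm-kulowy-wyznacza-potomkow} applies and $\gamma$ bijects p-descendants, in particular p-grandchildren. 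So the real content is the preservation of the $B$-type.

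To prove $T^B(g_2) = T^B(h_2)$, I would first establish a bijection $C_{g_2} \to C_{h_2}$, $g_2' \mapsto \gamma g_2'$, preserving the labels $g_2^{-1}g_2'$. The point is that cousins $g_2'$ of $g_2$ have p-grandparents $(g_2')^\Uparrow$ that are cousins of $g_1 = g_2^\Uparrow$ (by Lemma~\ref{fakt-sm-rodzic-kuzyna}, using $L \ge \tfrac{R}{2}+4\delta$), hence neighbours, hence the pair $\big(g_1^{-1}(g_2')^\Uparrow,\, W_{g_1,(g_2')^\Uparrow}\big)$ lies in $T^B(g_1) = T^B(h_1)$; this pins down the image of $(g_2')^\Uparrow$ under $\gamma$ as an actual cousin-p-grandparent on the $h$ side, and then $\gamma g_2'$ is the corresponding p-grandchild, which is a cousin of $h_2$ by the converse direction of Lemma~\ref{fakt-sm-rodzic-kuzyna} together with the fact that $\gamma$ preserves distances. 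The labels $g_2^{-1}g_2' = h_2^{-1}(\gamma g_2')$ are obviously preserved since $\gamma$ is a left translation.

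Next I would check that for each cousin $g_2'$ of $g_2$ the associated sequence $W_{g_2,g_2'}$ equals $W_{h_2,\gamma g_2'}$. This requires two things: that the ordering sequence $k^{(g_2)}$ coincides with $k^{(h_2)}$ (so that one compares coordinates at the same heights), and that for each relevant height $k$ one has $T^A\big((g_2')^{\uparrow Lk}\big) = T^A\big((\gamma g_2')^{\uparrow Lk}\big)$. For the $A$-type equality: for small $k$, i.e. as long as $Lk \le d(g_2,g_1) = L$ roughly — more precisely once $(g_2')^{\uparrow Lk}$ is a p-descendant of $g_1$ — the element $(g_2')^{\uparrow Lk}$ is obtained from $g_1$'s p-descendants and $\gamma$ maps it to $(\gamma g_2')^{\uparrow Lk}$ with the same $A$-type by Lemma~\ref{fakt-sm-przen-a} (applied along the chain, using $T^Z(g_1)=T^Z(h_1)$); for larger $k$, $(g_2')^{\uparrow Lk}$ is a p-ancestor of $g_1$ and a cousin of it, so its $A$-type is recorded inside $W_{g_1,\cdot}$, which by $T^B(g_1) = T^B(h_1)$ matches the corresponding entry of $W_{h_1,\cdot}$, identifying it with $T^A\big((\gamma g_2')^{\uparrow Lk}\big)$. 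For the equality $k_{g_2',g_2''} = k_{\gamma g_2',\gamma g_2''}$ and hence $k^{(g_2)} = k^{(h_2)}$: the values $k_{g_2',g_2''}$ are, by Remark~\ref{uwaga-sm-indeksy-z-tabelki}, determined by the sequences $W_{g_2,g_2'}$ and $W_{g_2,g_2''}$; but here there is a potential circularity, so instead I would argue the $A$-type agreement above uniformly in $k$ (for all heights at which both p-ancestors are defined, regardless of whether they've yet merged), which immediately yields that the least merging height is the same on both sides.

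The main obstacle I anticipate is exactly this bookkeeping of heights: one must carefully split the ``genealogical path'' from $g_2'$ up into the portion below $g_1$ (handled by the p-descendant bijection plus Lemma~\ref{fakt-sm-przen-a}) and the portion at or above $g_1$ (handled by the hypothesis $T^B(g_1) = T^B(h_1)$), and check that the transition point and the recorded data glue consistently — in particular that the finitely many $A$-types appearing in $W_{g_1,(g_2')^\Uparrow}$ really do include all the data needed to reconstruct the high part of $W_{g_2,g_2'}$. Once the $A$-types of all p-ancestors of all cousins are shown to be transported correctly by $\gamma$, the equality of the sequences $k^{(g_2)}$, of the sequences $W$, and finally of the sets $T^B(g_2)$ and $T^B(h_2)$ follows formally.
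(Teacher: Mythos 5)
Your overall skeleton matches the paper's: the p-grandchild bijection via Remark~\ref{uwaga-sm-B-wyznacza-A} together with Corollary~\ref{wn-sm-kulowy-wyznacza-potomkow}/Lemma~\ref{fakt-sm-przen-a}, the cousin bijection $g_2'\mapsto\gamma g_2'$ via Lemma~\ref{fakt-sm-rodzic-kuzyna}, and the bottom entry of $W$ via $T^A(g_2')=T^A(h_2')$. But the step you yourself flag as the main obstacle is resolved incorrectly. Both sub-goals you set --- that $k^{(g_2)}$ and $k^{(h_2)}$ coincide as integer sequences, and (your fallback) that $T^A\big(g_2'^{\Uparrow k}\big)=T^A\big(h_2'^{\Uparrow k}\big)$ for \emph{all} heights $k$ --- are false in general. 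The hypothesis $T^B(g_1)=T^B(h_1)$ records the $A$-types of $g_1'^{\Uparrow k}$ only for $k$ in the finite sorted sequence $k^{(g_1)}$, and it records them indexed by \emph{position} in that sequence, not by the integer $k$ itself; since $g_1$ and $h_1$ will typically have different lengths, the actual merging heights on the two sides can be different integers, and at heights not occurring in $k^{(g_1)}$ nothing constrains $T^A(h_1'^{\Uparrow k})$ at all (ball types control descendants, never ancestors). So the ``uniform in $k$'' transport of $A$-types is not available, and with it your derivation of $k_{h_2',h_2''}=k_{g_2',g_2''}$ collapses.

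The paper's proof avoids this by working purely positionally. From the recursion $k_{g_2',g_2''}=k_{g_1',g_1''}+1$ when $T^A(g_2')=T^A(g_2'')$ and $k_{g_2',g_2''}=0$ otherwise, one sees that $k^{(g_2)}$ arises from $k^{(g_1)}$ by deleting some entries, incrementing the survivors and appending $0$; correspondingly $W_{g_2,g_2'}$ arises from $W_{g_1,g_1'}$ by deleting the entries at those positions and appending $T^A(g_2')$. The only thing left to check is that the deleted \emph{positions} agree on the two sides, and this is where Remark~\ref{uwaga-sm-indeksy-z-tabelki} enters without the circularity you worried about: it is applied one level up, at $g_1,h_1$, where $W_{g_1,g_1'}=W_{h_1,h_1'}$ is already known from the hypothesis, and it recovers the index $i^{(g_1)}_{g_1',g_1''}$ (the position of $k_{g_1',g_1''}$ inside $k^{(g_1)}$, not its value) as the largest coordinate at which $W_{g_1,g_1'}$ and $W_{g_1,g_1''}$ differ. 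Equality of the integer heights is never needed and never claimed. Your proposal needs to be repaired along these lines before the equality $W_{g_2,g_2'}=W_{h_2,h_2'}$ follows.
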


\begin{proof}
\textbf{1. }Let $g_2$ be a p-grandchild of $g_1$ and $h_2 = \gamma g_2$. Then, $h_1 = h_2^\Uparrow$ by Remark~\ref{uwaga-sm-B-wyznacza-A} and Lemma~\ref{fakt-sm-przen-a}; it remains to check that $T^B(g_2) = T^B(h_2)$.

Since the left multiplication by $\gamma$ clearly gives a bijection between cousins of $g_2$ and cousins of~$h_2$, it suffices to show that for any $g_2' \in C_{g_2}$ we have
\begin{align} 
\label{eq-sm-B-dzieci-cel}
W_{g_2, \, g_2'} = W_{h_2, \, h_2'}, \qquad \textrm{ where } \quad h_2' = \gamma g_2'. 
\end{align}
 
\textbf{2. }Let $g_2'$, $h_2'$ be as above. Denote $g_1' = {g_2'}^\Uparrow$; by Lemma~\ref{fakt-sm-rodzic-kuzyna}, $g_1'$ is a cousin of $g_1$. Then, the element $h_1' = \gamma g_1'$ is a cousin of $h_1$ and from the equalities $T^B(g_1) = T^B(h_1)$ and $g_1^{-1} g_1' = h_1^{-1} h_1'$ it follows that
\begin{align} 
\label{eq-sm-B-dzieci-znane-war-t1}
W_{g_1, \, g_1'} = W_{h_1, \, h_1'}. 
\end{align}
Since $0$ is the last element in the sequence $(k^{(g_1)}_i)$, as well as in $(k^{(h_1)}_i)$, we obtain in particular that $T^A(g_1') = T^A(h_1')$. By Lemma \ref{fakt-sm-przen-a}, we have:
\begin{align} 
\label{eq-sm-B-dzieci-zachowane-A-t2}
h_1' = h_2'^\Uparrow, \qquad T^A(g_2') = T^A(h_2'). 
\end{align}

\textbf{3. }For arbitrarily chosen $g_2', g_2'' \in C_{g_2}$, we denote:
\[ h_2' = \gamma \, g_2', \qquad h_2'' = \gamma \, g_2'', \qquad \tau' = T^A(g_2') = T^A(h_2'), \qquad \tau'' = T^A(g_2'') = T^A(h_2''). \]
Moreover, we denote by $g_1', g_1'', h_1', h_1''$ the p-grandparents correspondingly of $g_2', g_2'', h_2', h_2''$.
Then, by definition:
\begin{align} 
\label{eq-sm-B-dzieci-nss}
k_{g_2', g_2''} = \begin{cases}
                   k_{g_1', g_1''} + 1, & \textrm{ when } \tau' = \tau'', \\
                   0, & \textrm{ when } \tau' \neq \tau''
                  \end{cases}
\qquad \textrm{ and } \qquad                    
k_{h_2', h_2''} = \begin{cases}
                   k_{h_1', h_1''} + 1, & \textrm{ when } \tau' = \tau'', \\
                   0, & \textrm{ when } \tau' \neq \tau''.
                  \end{cases}
\end{align}

This implies that the sequence $(k^{(g_2)}_j)$ (resp. $(k^{(h_2)}_j)$) is obtained from $(k^{(g_1)}_i)$ (resp. $(k^{(h_1)}_i)$) by removing some elements, increasing the remaining elements by $1$, and appending the value $0$ to its end. Then, for any $g_2' \in C_{g_2}$, the sequences $W_{g_2, g_2'}$, $W_{h_2, h_2'}$ are obtained respectively from $W_{g_1, g_1'}$, $W_{h_1, h_1'}$ by removing the corresponding elements and appending respectively the values $T^A(g_2')$, $T^A(h_2')$; these two appended values must be the same by \eqref{eq-sm-B-dzieci-zachowane-A-t2}. (Note that increasing the values in the sequence $(k^{(g_2)}_j)$ by~$1$ translates to making no change in the sequence $W_{g_2, g_2'}$ due to the equality $g_2'^{\Uparrow k+1} = g_1'^{\Uparrow k}$)

Therefore, to finish the proof (i.e. to show \eqref{eq-sm-B-dzieci-cel}) it is sufficient, by \eqref{eq-sm-B-dzieci-znane-war-t1}, to check that both sequences $(k^{(g_1)}_i)$ and $(k^{(h_1)}_i)$ are subject to removing elements exactly at the same positions.

\textbf{4. }From \eqref{eq-sm-B-dzieci-nss} we know that the value $k^{(g_1)}_i + 1$ appears in the sequence $(k^{(g_2)}_j)$ if and only if there exist $g_2', g_2'' \in C_{g_2}$ such that (with the above notations ):
\begin{align} 
\label{eq-sm-B-dzieci-ii}
T^A(g_2') = T^A(g_2''), \qquad i = i^{(g_1)}_{g_1', g_1''}.
\end{align}
Then, from Remark \ref{uwaga-sm-indeksy-z-tabelki} and \eqref{eq-sm-B-dzieci-znane-war-t1} used for the pairs $(g_1', h_1')$ and~$(g_1'', h_1'')$, we obtain that $i^{(h_1)}_{h_1', h_1''} = i^{(g_1)}_{g_1', g_1''} = i$, while from \eqref{eq-sm-B-dzieci-ii} and~\eqref{eq-sm-B-dzieci-zachowane-A-t2} we have $T^A(h_2') = T^A(h_2'')$, so analogously we prove that the value $k^{(h_1)}_i + 1$ appears in the sequence $(k^{(h_2)}_j)$. The proof of the opposite implication --- if $k^{(h_1)}_i + 1$ appears in~$(k^{(h_2)}_j)$, then $k^{(g_1)}_i + 1$ must appear in~$(k^{(g_2)}_j)$ --- is analogous.

The obtained equivalence finishes the proof.
\end{proof}

\begin{lem}
\label{lem-sm-kuzyni}
If $N$ is sufficiently large, then, for any $g, h \in G$ satisfying $|g| = |h|$ and $d(g, h) \leq 16\delta$, the condition $T^B(g) = T^B(h)$ holds only if $g=h$.
\end{lem}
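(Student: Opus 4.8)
The argument I would give goes by induction on the common length $n=|g|=|h|$.

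\emph{Base case $n<L$.} Here $T^A(x)=(T^Z(x),0)$ for every $x$ with $|x|<L$ (Definitions~\ref{def-sm-numer-dzieciecy} and~\ref{def-sm-typ-A}), the $B$-type determines the $A$-type (Remark~\ref{uwaga-sm-B-wyznacza-A}), and $T^Z$ is injective on elements of length $<L$; so $T^B(g)=T^B(h)$ forces $T^Z(g)=T^Z(h)$, whence $g=h$.

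\emph{Inductive step $n\ge L$.} Suppose towards a contradiction that $g\ne h$. Put $p=g^{\Uparrow}$ and $p'=h^{\Uparrow}$; since $d(g,h)\le 16\delta\le R$, the elements $g,h$ are cousins, so by Lemma~\ref{fakt-sm-rodzic-kuzyna} (which uses $L\ge\tfrac{R}{2}+4\delta$) the elements $p,p'$ are neighbours, and in particular $|p|=|p'|=n-L<n$. The crux of the proof is the claim
\[ T^B(p)=T^B(p'). \]
Granting it, the inductive hypothesis gives $p=p'$, so $g$ and $h$ are two p-grandchildren of the single element $p$. As $g\ne h$ and the left translation by $\gamma_p$ is a bijection between p-descendants of $p$ and p-descendants of the representative $g_{T^Z(p)}$ (Corollary~\ref{wn-sm-kulowy-wyznacza-potomkow}), the elements $\gamma_p g$ and $\gamma_p h$ are two distinct p-grandchildren of $g_{T^Z(p)}$; hence they receive distinct indices in the fixed enumeration of the p-grandchildren of $g_{T^Z(p)}$, so $n_g\ne n_h$ and therefore $T^A(g)\ne T^A(h)$. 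By Remark~\ref{uwaga-sm-B-wyznacza-A} this contradicts $T^B(g)=T^B(h)$, completing the induction.

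\emph{The crux $T^B(p)=T^B(p')$ --- the main obstacle.} I would first reduce it, as in Lemma~\ref{fakt-kuzyni-lub-torsje}, to a torsion phenomenon: the equality $T^B(g)=T^B(h)$ yields $T^A(g)=T^A(h)$, hence $T^Z(g)=T^Z(h)$, hence $T^b_N(g)=T^b_N(h)$ (using $n\ge L$); so, taking $N$ at least the constant $N_{16\delta}$ of Lemma~\ref{fakt-kuzyni-lub-torsje} --- one of the ``$N$ sufficiently large'' hypotheses --- the conditions $|gu|=|h|=|g|$ and $T^b_N(gu)=T^b_N(g)$ with $u:=g^{-1}h$, $|u|\le16\delta$, force $u\in Tor$; consequently $|u^{j}|\le R$ for all $j$ and the whole orbit $\{gu^{j}\}_{j\in\mathbb{Z}}$ consists of cousins of $g$. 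Then the equality $T^B(g)=T^B(h)$ identifies $g^{-1}C_g$ with $h^{-1}C_h$ and gives $W_{g,gv}=W_{h,hv}$ for every $v$ in this common set; taking p-grandparents sends cousins of $g$ (resp.\ $h$) to cousins of $p$ (resp.\ $p'$) by Lemma~\ref{fakt-sm-rodzic-kuzyna}, Lemma~\ref{fakt-sm-przen-a} controls the transport of $A$-types along $x\mapsto(hg^{-1})x$, and Remark~\ref{uwaga-sm-indeksy-z-tabelki} recovers the positions of the critical levels $k^{(g)}_i$ from the $W$-data alone. Combining these --- in the spirit of the computation in the proof of Proposition~\ref{lem-sm-B-dzieci}, but run ``upwards'' --- one matches the critical-level sequences $k^{(g)}$, $k^{(h)}$ and then the sequences $W_{p,\cdot}$, $W_{p',\cdot}$, which is exactly the desired equality. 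The two delicate points, which I expect to be where the real work lies, are: (i) showing that the critical-level sequences for $p$ and $p'$ really coincide --- this is where having the full torsion orbit $\{gu^{j}\}$ as cousins is used, to supply enough cousins of $p$ to witness every critical level; and (ii) excluding the borderline case in which $p$ and $p'$ are already distinguished in the top generation (so $T^A(p)\ne T^A(p')$ and the inductive hypothesis cannot be applied to $p,p'$), which must be ruled out directly by tracing $T^A(p)$ through the equality $W_{g,g}=W_{h,h}$ and invoking the minimality in the definition of the numbers $k_{g',g''}$.
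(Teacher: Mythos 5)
Your reduction to the torsion case (via Lemma~\ref{fakt-kuzyni-lub-torsje}) and your endgame (two distinct p-grandchildren of the same element have distinct descendant numbers, hence distinct $A$-types) are both sound and match ingredients of the paper's argument. But the inductive step rests entirely on the claim $T^B(g^\Uparrow)=T^B(h^\Uparrow)$, and this is precisely where your proposal has a genuine gap: you do not prove it, and the sketch you offer (``run the computation of Proposition~\ref{lem-sm-B-dzieci} upwards'') does not go through. The $B$-type is built by a \emph{downward} recursion --- Proposition~\ref{lem-sm-B-dzieci} is a children-determinism statement --- and there is no parent-determinism to appeal to. Concretely, $T^B(g)$ records the sequences $W_{g,g'}$ only for cousins $g'$ of $g$ and only at the critical levels $k^{(g)}_i$ of $g$; a cousin $q$ of $p=g^\Uparrow$ need not be the p-grandparent of any cousin of $g$, and a critical level of $p$ need not be $k-1$ for any critical level $k$ of $g$ (by~\eqref{eq-sm-B-dzieci-nss} the passage from $k^{(p)}$ to $k^{(g)}$ \emph{deletes} entries). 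So the data of $T^B(g)=T^B(h)$ simply does not contain enough to compare $W_{p,\cdot}$ with $W_{p',\cdot}$ over all cousins and all critical levels of $p$. You flag your points (i) and (ii) as ``where the real work lies'', which is an accurate self-assessment: the proposal as written is an unproven reduction, not a proof.

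For contrast, the paper's proof is direct, not inductive, and avoids ever comparing the $B$-types of the p-grandparents. After the same torsion reduction it forms the orbit $A=\{g\gamma^i\}$ of cousins and exploits the self-referential structure of Definition~\ref{def-sm-typ-B}: the pair $(\gamma, W_{g,h})$ occurs in $T^B(g)$, so by $T^B(g)=T^B(h)$ it equals $(h^{-1}h', W_{h,h'})$ for some cousin $h'=h\gamma\in A$; Remark~\ref{uwaga-sm-indeksy-z-tabelki} shows the critical levels coming from pairs in $A$ occupy the same positions in $k^{(g)}$ and $k^{(h)}$, so $W_{g,h}=W_{h,h'}$ forces $T^A(h^{\Uparrow k})=T^A(h'^{\Uparrow k})$ for every such level $k$ --- contradicting the fact that $k_{h,h'}$ is itself one of these levels. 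If you want to salvage your induction, you would need to either prove a genuine upward-determinism statement for $T^B$ (which the definitions do not support) or replace the crux by an argument of this self-referential kind, at which point the induction becomes unnecessary.
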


\begin{proof}
Suppose that $g \neq h$ while $T^B(g) = T^B(h)$. Then, by Remark \ref{uwaga-sm-B-wyznacza-A}, $T^A(g) = T^A(h)$, and so $T^b_N(g) = T^b_N(h)$. Denote $\gamma = g^{-1}h$ and
assume that $N$ is greater than the constant $N_r$ given by Lemma \ref{fakt-kuzyni-lub-torsje} for $r = 16\delta$. Then, the lemma implies that  $\gamma$ is a torsion element, i.e. (using the notation of \eqref{eq-sm-zbiory-torsji}) $\gamma \in Tor$, and so $|\gamma^i| \leq R$ for all $i \in \mathbb{Z}$. Then, the set $A = \{ g \gamma^i \,|\, i \in \mathbb{Z} \}$ has diameter not greater then $R$, so any two of its elements are cousins. Moreover, $A$ contains $g$ and $h$.

We obtain that all elements of the set $K = \{ k_{g', g''} \,|\, g', g'' \in A, \, g' \neq g'' \}$ appear in the sequence $k^{(g)}$ as well as in $k^{(h)}$; moreover, from Remark \ref{uwaga-sm-indeksy-z-tabelki} we obtain that they appear in $k^{(g)}$ exactly at the following set of positions:
\[ I_1 = \Big\{ \max \big\{ j \,\big|\, (W)_j \neq (W')_j \big\} \ \Big| \ (\gamma^i, W), \, (\gamma^{i'}, W') \in T^B(g), \ i, i' \in \mathbb{Z}, \, \gamma^i \neq \gamma^{i'} \Big\}, \]
where by $(W)_j$ we denoted the $j$-th element of the sequence $W$. Similarly, the elements of $K$ appear in $k^{(h)}$ exactly on the positions from the set $I_2$ defined analogously (with $g$ replaced by $h$). However, by assumption we have $T^B(g) = T^B(h)$, and so $I_1 = I_2$. 

Now, consider the pair $(\gamma, W_{g, h})$ which appears in $T^B(g)$; from the equality $T^B(g) = T^B(h)$ we obtain that
\[ (\gamma, W_{g, h}) = (h^{-1} h', W_{h, h'}) \qquad \textrm{ for some } h' \in C_h. \]
Since the sets of indexes $I_1, I_2$ are equal, from $W_{g, h} = W_{h, h'}$ we obtain in particular that
\[ T^A(h^{\Uparrow k}) = T^A({h'}^{\Uparrow k}) \qquad \textrm{ for every } k \in K. \]
However, from $h^{-1} h' = \gamma$ it follows that $h' \in A \setminus \{ h \}$ and then $k_{h, h'} \in K$, which contradicts the above equality. 
\end{proof}

\subsection{Stronger simplex types}
\label{sec-sk-opis}

\begin{tw}
\label{tw-sk-opis}
Let $(\mathcal{U}_n)$ be a quasi-$G$-invariant system of covers of a~space $X$, equipped with a type function stronger than $T^B$ and a neighbourhood constant $D$ not greater than $16\delta$. Let $(K_n, f_n)$ be the inverse system obtained from applying Theorem \ref{tw-kompakt-ogolnie} for $(\mathcal{U}_n)$. Then, the simplex types in the system $(K_n, f_n)$ can be strengthened to ensure the distinct types property for this system, without losing Markov property.
\end{tw}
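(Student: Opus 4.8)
The plan is to leave the inverse system $(K_n,f_n)$ untouched and only replace its simplex type function $T^\Delta$ (Definition~\ref{def-typ-sympleksu}) by a finite refinement $\widehat T^\Delta$ which, besides the data of the labelled graph $G_{\tilde s}$, records for every vertex the $B$-type of its p-grandparent. Concretely, for a simplex $\tilde s\in Q_n$ I would take the graph $G_{\tilde s}$ of Definition~\ref{def-graf-typu-sympleksu}, keep its vertex set $V_{\tilde s}$ (all $g\in G$ with $v_{U_g}$ a vertex of $\tilde s$) and the edge labels $g_1^{-1}g_2$, but relabel each vertex $g$ by the triple $\bigl(T(g),\,T^B(g^\Uparrow),\,(g^\Uparrow)^{-1}g\bigr)$ when $n\ge L$ (and by $(T(g),\bot,\bot)$ when $n<L$, a case irrelevant since the complexes $K_{i+1}=Q_{(i+1)L}$ always have vertices of length $\ge L$); let $\widehat G_{\tilde s}$ be this enriched graph and $\widehat T^\Delta(\tilde s)$ its isomorphism class. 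Since $T$ and $T^B$ are finite-valued, $g_1^{-1}g_2\in B(e,D)$, $(g^\Uparrow)^{-1}g\in B(e,L)$, and $|V_{\tilde s}|\le|B(e,D)|$ (as in Section~\ref{sec-markow-typy}), there are only finitely many values of $\widehat T^\Delta$. Moreover $\widehat T^\Delta$ is stronger than $T^\Delta$, and this alone keeps the Markov property: if $\widehat T^\Delta(s)=\widehat T^\Delta(s')$ then $T^\Delta(s)=T^\Delta(s')$, so $s'=\gamma\cdot s$ by Lemma~\ref{fakt-przesuniecie-istnieje}, and the commuting ladder~\eqref{eq-markow-drabinka} obtained by iterating Proposition~\ref{lem-przesuwanie-dzieci-sympleksow} exactly as in the proof of Theorem~\ref{tw-kompakt-ogolnie} applies verbatim (one can moreover verify, using Lemma~\ref{lem-sm-B-dzieci} and the claim $(\ast)$ below, that these isomorphisms also preserve $\widehat T^\Delta$, although Definition~\ref{def-kompakt-markowa} does not demand it).

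The heart of the argument is the claim $(\ast)$: \emph{if $\tilde s$ is a simplex of $f_i^{-1}(s)$ and $g$ is a vertex of $G_{\tilde s}$, then $v_{U_{g^\Uparrow}}$ is a vertex of $s$}. By Definition~\ref{def-konstr-nerwy}, $f_i(v_{U_g})$ is the barycentre of the simplex of $K_i$ spanned by $\{v_V:V\in K_i,\ V\supseteq U_g\}$; as $\tilde s\in f_i^{-1}(s)$, this simplex has its barycentre in $s$ and hence is a face of $s$, so every $V\in\widetilde{\mathcal U}_{iL}$ with $V\supseteq U_g$ has $v_V$ a vertex of $s$. Thus $(\ast)$ reduces to the statement that the p-grandparent of $g$ is one of its cover-ancestors at level $iL$, i.e. $U_{g^\Uparrow}\supseteq U_g$. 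For the system $\mathcal S$ of span-star interiors this holds because $g^\Uparrow$, being a p-ancestor of $g$, is in particular an ancestor, so $g\in g^\Uparrow T^c(g^\Uparrow)$ and hence $S_g\subseteq S_{g^\Uparrow}$ by Lemma~\ref{fakt-wlasnosc-gwiazdy-bez-gwiazdy}; for a system $\mathcal U$ as in the hypothesis one reduces to this case through its relation to $\mathcal S$. Making $(\ast)$ rigorous in the stated generality is the step I expect to need the most care.

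Granting $(\ast)$, the distinct types property follows quickly. Fix $s\in K_i$ and suppose $\tilde s_1,\tilde s_2\in f_i^{-1}(s)$ satisfy $\widehat T^\Delta(\tilde s_1)=\widehat T^\Delta(\tilde s_2)$. Any two distinct vertices of $G_{\tilde s_1}$ are represented by elements of equal length whose cover sets meet, hence lie at distance $\le D\le16\delta$ by~\qhlink{d}; by Proposition~\ref{lem-sm-kuzyni} these elements have distinct $B$-types, hence distinct $T$-values, so the enriched vertices of $\widehat G_{\tilde s_1}$ carry pairwise distinct labels, and likewise for $\tilde s_2$. Therefore a label-preserving isomorphism $\widehat G_{\tilde s_1}\cong\widehat G_{\tilde s_2}$ matches each vertex $g_1\in V_{\tilde s_1}$ with the unique $g_2\in V_{\tilde s_2}$ of the same label, so that $T^B(g_1^\Uparrow)=T^B(g_2^\Uparrow)$ and $(g_1^\Uparrow)^{-1}g_1=(g_2^\Uparrow)^{-1}g_2$. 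By $(\ast)$ both $g_1^\Uparrow$ and $g_2^\Uparrow$ are vertices of $s$, hence lie at distance $\le D\le16\delta$ (again by~\qhlink{d}), so Proposition~\ref{lem-sm-kuzyni} forces $g_1^\Uparrow=g_2^\Uparrow$, and then the common offset yields $g_1=g_1^\Uparrow\cdot\bigl((g_1^\Uparrow)^{-1}g_1\bigr)=g_2^\Uparrow\cdot\bigl((g_2^\Uparrow)^{-1}g_2\bigr)=g_2$. Thus $V_{\tilde s_1}=V_{\tilde s_2}$, so $\tilde s_1$ and $\tilde s_2$ have the same vertex set in $K_{i+1}$ and coincide, which is precisely what the distinct types property asserts.
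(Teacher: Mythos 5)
Your reduction of the distinct types property to the claim $(\ast)$ is correct, but $(\ast)$ itself is a genuine gap, and I do not believe it can be closed in the stated generality. As you observe, $(\ast)$ amounts to $U_{g^\Uparrow}\supseteq U_g$, i.e.\ that the \emph{group-theoretic} p-grandparent of a vertex representative is always among its cover-ancestors. Nothing in Definition~\ref{def-quasi-niezm} forces this: condition \qhlink{e} only asserts the \emph{existence} of some $y$ at the lower level with $U_y\supseteq U_x$, with no control over which $y$. Your fallback via Lemma~\ref{fakt-wlasnosc-gwiazdy-bez-gwiazdy} does establish $S_g\subseteq S_{g^\Uparrow}$, but this gives nothing for a subsystem: even when $U_x\subseteq S_x$ for all $x$, the sets $U_g$ and $U_{g^\Uparrow}$ need not be nested. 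This matters for the intended application, since Theorem~\ref{tw-kompakt} applies the present theorem not to $\mathcal{S}$ but to the dimension-reduced system $\mathcal{E}$ of Proposition~\ref{lem-wym}, whose members are produced by cut-and-paste operations with no relation between $E_g$ and $E_{g^\Uparrow}$ (indeed, condition \qhlink{e} is verified there, in Proposition~\ref{lem-wym-slabe-sa-qi}, by locating an essentially unrelated element $z$). Without $(\ast)$ you also lose the step where $g_1^\Uparrow$ and $g_2^\Uparrow$ are forced to be $16\delta$-close, which is what lets Proposition~\ref{lem-sm-kuzyni} identify them.

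The rest of your argument is sound and close in spirit to the paper's, but the paper sidesteps the problem above by defining the ``parent'' intrinsically at the level of the nerve system rather than through p-ancestry in $G$: the prioritised parent $s^\uparrow$ of a simplex $s\in K_{n}$ is the \emph{minimal simplex of $K_{n-1}$ containing $f_{n}(s)$}, which always exists and is automatically a face of $s'$ whenever $s\in f_{n-1}^{-1}(s')$. The new type is $T^{\Delta+A}(s)=(T^\Delta(s),\,T^\Delta(s^\uparrow),\,\gamma_{s^\uparrow}\cdot s)$ (Definition~\ref{def-sk-opis-typ-delta-a}), where $\gamma_{s^\uparrow}$ is the unique shift to a fixed representative of $T^\Delta(s^\uparrow)$; uniqueness of the shift (Lemma~\ref{fakt-sk-opis-przesuniecie-jedyne}) and the fact that distinct faces of a simplex have distinct $T^\Delta$-types (Lemma~\ref{fakt-sk-opis-podsympleksy-rozne-typy} --- this is where $D\le 16\delta$ and Proposition~\ref{lem-sm-kuzyni} enter, exactly as in your label-distinctness argument) then yield the distinct types property as in Lemma~\ref{fakt-sk-opis-bracia}. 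If you want to save your vertex-level formulation, you would have to replace $g^\Uparrow$ by a cover-ancestor chosen canonically and quasi-equivariantly, which is essentially what the paper's construction accomplishes.
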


\begin{proof}[Organisation of the proof]
We will consider the inverse system with a new simplex type $T^{\Delta + A}$, defined in Definition \ref{def-sk-opis-typ-delta-a}. Then, Lemmas \ref{fakt-sk-opis-markow} and~\ref{fakt-sk-opis-bracia} will ensure that this system satisfies the conditions from Definitions \ref{def-kompakt-markowa} and~\ref{def-kompakt-wlasciwy}, respectively. 
\end{proof}

We now start the proof.

Let $T$ denote the type function associated with the system $(\mathcal{U}_n)$, and $T^\Delta$ --- the corresponding simplex type function in the system $(K_n)$, as defined in Section \ref{sec-markow-typy}. 

\begin{fakt}
\label{fakt-sk-opis-podsympleksy-rozne-typy}
For any simplex $s \in K_n$, all subsimplexes $s' \subseteq s$ have pairwise distinct $T^\Delta$-types.
\end{fakt}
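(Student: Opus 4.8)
The plan is to show that two distinct subsimplexes $s', s'' \subseteq s$ cannot be similar in the sense of Definition~\ref{def-typ-sympleksu}. Recall that the $T^\Delta$-type of a simplex records the isomorphism class of the labelled graph $G_{s'}$ whose vertices are \emph{all} the group elements $g$ with $|g| = nL$ and $v_{U_g} \in s'$, with vertex labels $T(g)$ and edge labels $g^{-1}g'$. The key point is that $s'$ and $s''$ live in the \emph{same} complex $K_n$, hence at the same level $nL$, so a similarity between them is (by Lemma~\ref{fakt-przesuniecie-istnieje}) realised by a genuine left translation $\gamma \in G$ with $s'' = \gamma \cdot s'$; and since $s', s'' \subseteq s$, all the relevant group elements are vertices of the single graph $G_s$.

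First I would fix the setup: write $V_{s'}, V_{s''} \subseteq V_s$ for the vertex sets of the two subgraphs, pick any $g_0 \in V_{s'}$, and let $\gamma = \varphi(g_0) g_0^{-1}$ be the translation produced by a hypothetical similarity $\varphi : G_{s'} \to G_{s''}$ (as in the proof of Lemma~\ref{fakt-przesuniecie-istnieje}, $\varphi$ preserving edge labels forces $\varphi(g) = \gamma g$ for all $g \in V_{s'}$). Now fix any two elements $g, g' \in V_{s'}$: since $v_{U_g}, v_{U_{g'}}$ both lie in $s$, we have $U_g \cap U_{g'} \neq \emptyset$, so by property~\qhlink{d} and the hypothesis $D \leq 16\delta$ we get $d(g, g') \leq 16\delta$. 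The same applies to $\gamma g, \gamma g' \in V_{s''}$. Moreover $T(g) = T(\gamma g)$ (vertex labels are preserved), and since $T$ is stronger than $T^B$, this gives $T^B(g) = T^B(\gamma g)$.

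The crux is then to conclude $\gamma = e$. Apply the translation $\gamma$ to a single vertex: take $g \in V_{s'}$, so $h := \gamma g \in V_{s''} \subseteq V_s$. Both $g$ and $h$ are vertices of $G_s$, so as above $d(g, h) \leq 16\delta$, and $|g| = |h| = nL$. We also have $T^B(g) = T^B(h)$ from the previous paragraph (with $g' = g$, noting $T$ stronger than $T^B$ and $T(g) = T(h)$). Now Proposition~\ref{lem-sm-kuzyni} — valid since $N$ is assumed sufficiently large (the technical assumptions at the start of Section~\ref{sec-abc}) — forces $g = h$, i.e.\ $\gamma g = g$, hence $\gamma = e$. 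But then $\varphi$ is the identity on $V_{s'}$, and since a simplex's vertex set in the nerve is determined by which $v_{U_g}$ it contains (Lemma~\ref{fakt-przesuwanie-symp-zb} with $\gamma = e$ is trivial here), we get $V_{s'} = V_{s''}$, whence $s' = s''$ as simplexes of $K_n$. This contradicts distinctness, completing the proof.

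The main obstacle — really the only nontrivial input — is making sure the hypotheses of Proposition~\ref{lem-sm-kuzyni} are in force: we need $d(g,h) \le 16\delta$, which is exactly where the assumption $D \le 16\delta$ on the neighbourhood constant is used, and we need $T^B(g) = T^B(h)$, which is where we use that the system's type function is stronger than $T^B$. Everything else is bookkeeping about the graph-theoretic definition of $T^\Delta$ and the already-established correspondence between equal simplex types and left translations (Lemmas~\ref{fakt-przesuniecie-istnieje} and~\ref{fakt-przesuwanie-symp-zb}).
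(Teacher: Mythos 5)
Your proof is correct, and it hinges on exactly the same crux as the paper's: Proposition~\ref{lem-sm-kuzyni} applied to two intersecting elements of the same length, which is precisely where the hypotheses $D \leq 16\delta$ and ``$T$ stronger than $T^B$'' are consumed. The packaging, however, is different. The paper does not pass through Lemma~\ref{fakt-przesuniecie-istnieje} at all: it first records that any two distinct vertices $v_{U_x}, v_{U_{x'}}$ of $s$ satisfy $T(x) \neq T(x')$, and then observes that the set of vertex labels $A_{s'} = \{\, T(x) \mid v_{U_x} \in s' \,\}$ is an invariant of $T^\Delta(s')$; two distinct subsimplexes of $s$ differ by some vertex, and (by the pairwise distinctness of labels) its label lies in one label set but not the other. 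You instead realise a hypothetical similarity $s'' = \gamma \cdot s'$ as a left translation and apply Proposition~\ref{lem-sm-kuzyni} once, to a vertex $g$ and its image $\gamma g$ --- both of which lie in $V_s$, so that $U_g \cap U_{\gamma g} \neq \emptyset$ and \qhlink{d} gives $d(g, \gamma g) \leq D \leq 16\delta$ --- forcing $\gamma = e$ and hence $s' = s''$. Both arguments are valid and of comparable length; the paper's version has the mild advantage of isolating the reusable statement that all vertices of a simplex carry pairwise distinct $T$-types, which it then reuses verbatim to prove the uniqueness of the realising translation in Lemma~\ref{fakt-sk-opis-przesuniecie-jedyne}, whereas your argument is in effect that uniqueness proof run in reverse, so you obtain Lemma~\ref{fakt-sk-opis-przesuniecie-jedyne} as a by-product.
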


\begin{proof}
Let $v = v_{U_x}$, $v' = v_{U_{x'}}$ be two distinct vertices joined by an edge in $K_n$. Then, by using the definition of the complex $K_n$, then the property \qhlink{d} and Proposition \ref{lem-sm-kuzyni}, we have:
\begin{align} 
\label{eq-sm-sk-opis-rozne-typy-wierzch}
U_x \cap U_{x'} \neq \emptyset \quad \Rightarrow \quad d(x, x') \leq D \leq 16\delta \quad \Rightarrow \quad T^B(x) \neq T^B(x') \quad \Rightarrow \quad T(x) \neq T(x'). 
\end{align}
Recall that, by Definition \ref{def-typ-sympleksu}, for any $s = [v_1, \ldots, v_k] \in K_n$ the value $T^\Delta(s)$ determines in particular the set of labels in the graph $G_s$ (defined in Definition \ref{def-graf-typu-sympleksu}). This set can be described by the formula:
\[ A_s = \big\{ T(x) \ \big|\ x \in G, \, |x| = n, \, v_{U_x} = v_i \textrm{ for some } 1 \leq i \leq k \big\} \]
However, it follows from \eqref{eq-sm-sk-opis-rozne-typy-wierzch} that if two simplexes ~$s', s'' \subseteq s$ differ in that some vertex~$v_{U_x}$ belongs only to~$s'$, then the value $T(x)$ belongs to $A_{s'} \setminus A_{s''}$, which means that $T^\Delta(s') \neq T^\Delta(s'')$.
\end{proof}

\begin{fakt}
\label{fakt-sk-opis-przesuniecie-jedyne}
Let $s \in K_n$, $s' \in K_{n'}$ have the same types. Then, there exists a unique $\gamma \in G$ such that $s' = \gamma \cdot s$.
\end{fakt}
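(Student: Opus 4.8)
The plan is to combine Lemma~\ref{fakt-przesuniecie-istnieje} (which gives existence of \emph{some} shift $\gamma$ with $s' = \gamma \cdot s$) with the rigidity coming from the distinct-types statement of Lemma~\ref{fakt-sk-opis-podsympleksy-rozne-typy} applied at the level of vertices. The point is that the graph isomorphism $\varphi : G_s \to G_{s'}$ underlying a shift is forced to be the ``multiply by $\gamma$'' map once we fix the image of a single vertex; and the assumption that the relevant type function is stronger than $T^B$ (together with $D \leq 16\delta$ and Proposition~\ref{lem-sm-kuzyni}) forces $\varphi$ to respect which vertex of $G_{s'}$ goes where, leaving no freedom. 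So uniqueness of $\gamma$ reduces to: any two shift-elements $\gamma_1, \gamma_2$ with $\gamma_1 \cdot s = \gamma_2 \cdot s = s'$ must coincide.

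First I would take $\gamma_1, \gamma_2 \in G$ with $s' = \gamma_1 \cdot s = \gamma_2 \cdot s$, and set $\eta = \gamma_2 \gamma_1^{-1}$, so that $\eta$ fixes $s'$ (i.e. $s' = \eta \cdot s'$) by Lemma~\ref{fakt-przesuniecie-skladane}. It then suffices to show $\eta = e$. Pick any vertex $g \in V_{s'}$; then $\eta g \in V_{s'}$ as well, and since $v_{U_g}, v_{U_{\eta g}}$ are both vertices of $s'$ (possibly the same one), they are either equal or joined by an edge in $K_{n'}$, hence $U_g \cap U_{\eta g} \neq \emptyset$, whence $d(g, \eta g) = |g^{-1}(\eta g)| \leq D \leq 16\delta$ by \qhlink{d}. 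Also $|g| = |\eta g| = n'L$ and, since $\eta$ acts as a shift, $T(g) = T(\eta g)$ by Lemma~\ref{fakt-przesuwanie-symp-zb}; as the type function is stronger than $T^B$, this gives $T^B(g) = T^B(\eta g)$. By Proposition~\ref{lem-sm-kuzyni} (for $N$ large enough, which is part of our standing assumptions on $N$), it follows that $\eta g = g$, i.e.\ $\eta = e$. Therefore $\gamma_1 = \gamma_2$, proving uniqueness.

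For completeness I would also recall why \emph{existence} holds: since $s, s'$ have equal $T^\Delta$-types, Lemma~\ref{fakt-przesuniecie-istnieje} already provides a $\gamma$ with $s' = \gamma \cdot s$. (If one is working with the strengthened type $T^{\Delta+A}$ instead, equality of $T^{\Delta+A}$-types implies in particular equality of the underlying $T^\Delta$-types, so the same $\gamma$ works.) Combining the two parts gives the statement.

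The only subtle point — and the thing to watch rather than a genuine obstacle — is that $G_{s'}$ may have \emph{more} vertices than $s'$ has (several elements of $G$ of length $n'L$ can correspond to one set of the cover), so a priori $\eta$ need not fix each $v_{U_g}$ but could permute a block of group elements mapping to the same cover element; the argument above handles this uniformly because it works directly with the group elements $g \in V_{s'}$ and only uses that $\eta g$ and $g$ land in (possibly equal) vertices of the \emph{simplex} $s'$, which is enough to invoke \qhlink{d} and then Proposition~\ref{lem-sm-kuzyni}. No separate case analysis is needed.
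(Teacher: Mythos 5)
Your proof is correct and follows essentially the same route as the paper's: existence via Lemma~\ref{fakt-przesuniecie-istnieje}, then uniqueness by composing the two shifts into a self-shift (the paper uses $\gamma^{-1}\gamma'$ fixing $s$ where you use $\eta$ fixing $s'$, an immaterial difference) and applying \qhlink{d} together with Proposition~\ref{lem-sm-kuzyni} and the hypothesis that $T$ is stronger than $T^B$ to force the self-shift to be trivial. Your closing remark about $G_{s'}$ possibly having more vertices than $s'$ is exactly the right point to flag, and your argument handles it the same way the paper does, by working directly with the group elements.
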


\begin{uwaga}
The claim of Lemma~\ref{fakt-sk-opis-przesuniecie-jedyne} is stronger than what was stated in Section \ref{sec-markow-typy} in that $\gamma$ should be unique. The stronger claim follows, as we will show below, from the additional assumption that the type function~$T$ is stronger than~$T^B$.
\end{uwaga}

\begin{proof}[Proof of Lemma~\ref{fakt-sk-opis-przesuniecie-jedyne}]
By Lemma \ref{fakt-przesuniecie-istnieje}, a desired element $\gamma$ exists, it remains to check its uniqueness. Let $s' = \gamma \cdot s = \gamma' \cdot s$; then, by Lemma \ref{fakt-przesuniecie-skladane}, we have~$s = (\gamma^{-1} \gamma') \cdot s$. By Definition \ref{def-przesuniecie-sympleksu}, this means that if $v_{U_x}$ is a vertex in $s$, then setting $x' = \gamma^{-1} \gamma' x$ we have $T(x') = T(x)$ and moreover $v_{U_{x'}}$ is also a vertex in $s$. By reusing the argument from \eqref{eq-sm-sk-opis-rozne-typy-wierzch}, we conclude that $x = x'$ holds, and so $\gamma = \gamma'$.
\end{proof}

We will now define a strengthening of the $T^\Delta$-type, in a way quite analogous to the definition of the $T^A$-type for simplexes. (Some differences will occur in the proofs, and also in Definition ~\ref{def-sk-opis-typ-delta-a}).

\begin{df}[cf.~Definition~\ref{def-sm-nic-sympleksow}]
For $n > 0$, we call the \textit{prioritised parent} of a simplex $s \in K_n$ the minimal simplex in $K_{n-1}$ containing~$f_n(s)$ (which will be denoted by $s^\uparrow$). A Simplex $s$ is a \textit{prioritised child} of a simplex $s'$ if $s' = s^\uparrow$.
\end{df}

\begin{fakt}[cf.~Lemma~\ref{fakt-sm-kulowy-wyznacza-dzieci}, and also Lemma~\ref{fakt-przesuniecie-istnieje}]
\label{fakt-sk-opis-przesuwanie-dzieci}
Let $s \in K_n$, $s' \in K_{n'}$ and $\gamma \in G$ be such that $s' = \gamma \cdot s$ (in the sense of Definition \ref{def-przesuniecie-sympleksu}). Then, the translation by $\gamma$ gives a bijection between prioritised children of $s$ and prioritised children of $s'$.
\end{fakt}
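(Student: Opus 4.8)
The plan is to unwind the definition of a prioritised child of a simplex: $\sigma$ is a prioritised child of $s$ precisely when $\sigma \in K_{n+1}$ and $s$ is the minimal simplex of $K_n$ containing $f_n(\sigma)$, equivalently $s = \sigma^\uparrow$. So I must show that if $s' = \gamma\cdot s$ and $\sigma$ is a prioritised child of $s$, then $\gamma\cdot\sigma$ is well-defined, lies in $K_{n'+1}$, is a prioritised child of $s'$, and that this assignment is a bijection (with inverse given by shifting back by $\gamma^{-1}$). The essential input is Proposition~\ref{lem-przesuwanie-dzieci-sympleksow}: since $s' = \gamma\cdot s$, that proposition produces an isomorphism $J : f_n^{-1}(s)\to f_{n'}^{-1}(s')$ sending each simplex to its shift by $\gamma$ and making the square \eqref{eq-diagram-do-spr} commute. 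A prioritised child of $s$ is in particular a simplex of $K_{n+1}$ whose image under $f_n$ lies in $s$ — i.e. a simplex of $f_n^{-1}(s)$ — so $J$ already gives a bijection between the simplices of $f_n^{-1}(s)$ and those of $f_{n'}^{-1}(s')$ compatible with shifting by $\gamma$. What remains is to check that this bijection respects the finer property of \emph{being a prioritised child}, i.e. that it matches up the condition ``$s$ is the \emph{minimal} simplex containing $f_n(\sigma)$''.

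First I would record the following reformulation: for $\sigma\in f_n^{-1}(s)$, one has $\sigma^\uparrow = s$ iff $f_n(\sigma)$ is not contained in any proper face of $s$. Now take $\sigma$ a prioritised child of $s$ and set $\sigma' = \gamma\cdot\sigma = J(\sigma)$. By commutativity of \eqref{eq-diagram-do-spr}, $f_{n'}(\sigma') = I(f_n(\sigma))$, where $I : s\to s'$ is the affine isomorphism sending each face $\tau\subseteq s$ to $\gamma\cdot\tau\subseteq s'$; in particular $I$ is a bijection between the faces of $s$ and the faces of $s'$, order-preserving for inclusion. Hence $f_n(\sigma)$ lies in a proper face $\tau\subsetneq s$ iff $f_{n'}(\sigma') = I(f_n(\sigma))$ lies in the proper face $I(\tau)\subsetneq s'$. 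Since $\sigma^\uparrow = s$ means $f_n(\sigma)$ avoids all proper faces of $s$, we conclude $f_{n'}(\sigma')$ avoids all proper faces of $s'$, i.e. $\sigma'^\uparrow = s'$, so $\sigma'$ is a prioritised child of $s'$. Running the same argument with $\gamma^{-1}$, $s'$, $s$ in place of $\gamma$, $s$, $s'$ (legitimate by Lemma~\ref{fakt-przesuniecie-skladane} and Proposition~\ref{lem-przesuwanie-dzieci-sympleksow}) shows the map $\sigma\mapsto\gamma\cdot\sigma$ on prioritised children has a two-sided inverse, hence is a bijection.

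I expect the only mild subtlety — not really an obstacle — to be bookkeeping around Remark~\ref{uwaga-quasi-niezm-rozne-poziomy}: a simplex of $K_{n'}$ is recorded together with its level, so when I say $I(\tau)\subsetneq s'$ I should note that $I$ preserves the property of being a proper (as opposed to improper) face, which is immediate since $I$ is an isomorphism $s\to s'$. Everything else is a direct transport of structure along the commuting square \eqref{eq-diagram-do-spr} of Proposition~\ref{lem-przesuwanie-dzieci-sympleksow}, so the proof is short once that proposition is in hand.
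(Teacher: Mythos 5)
Your proposal is correct and follows essentially the same route as the paper: both proofs transport the minimality condition defining a prioritised parent across the commuting square of Proposition~\ref{lem-przesuwanie-dzieci-sympleksow} (the paper phrases it contrapositively --- if $\gamma\cdot\sigma$ lands in $f_{n'}^{-1}(s'')$ for a proper face $s''\subsetneq s'$, then $\sigma$ lands in $f_n^{-1}(\gamma^{-1}\cdot s'')$ --- which is exactly your equivalence ``$f_n(\sigma)\subseteq\tau$ iff $f_{n'}(\sigma')\subseteq I(\tau)$''), and both obtain the inverse by applying the same argument to $\gamma^{-1}$ via Lemma~\ref{fakt-przesuniecie-skladane}. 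No gaps.
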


\begin{proof}
This is an easy corollary of Proposition \ref{lem-przesuwanie-dzieci-sympleksow} (and Lemma \ref{fakt-przesuniecie-skladane}). The proposition ensures that the translation by $\gamma$ maps simplexes contained in $f_n^{-1}(s)$ to simplexes contained in $f_{n'}^{-1}(s')$. Moreover, if $\sigma \subseteq s$ and $\gamma \cdot \sigma$ is not a prioritised child of $s'$, then we have $\gamma \cdot \sigma \subseteq f_{n'}^{-1}(s'')$ for some $s'' \subsetneq s'$ and then $\sigma \subseteq f_n^{-1}(\gamma^{-1} \cdot s'')$, which means that $\sigma$ is not a prioritised child of $s$. The reasoning in the opposite direction is analogous because $s = \gamma^{-1} \cdot s'$.
\end{proof}

Let $\mathcal{T}$ be the set of values of $T^\Delta$. For every $\tau \in \mathcal{T}$, choose some \textit{representative} of this type $s_\tau \in K_{n_\tau}$. For any simplex $s \in K_n$ of type~$\tau$, let $\gamma_\tau \in G$ be the unique element such that $\gamma_s \cdot s = s_\tau$ (the uniqueness follows from Lemma \ref{fakt-sk-opis-przesuniecie-jedyne}).

\begin{df}[cf. Definitions~\ref{def-sm-numer-dzieciecy} and~\ref{def-sm-typ-A}]
\label{def-sk-opis-typ-delta-a}
The $T^{\Delta + A}$-type of a simplex $s \in K_n$ is defined by the formula
\begin{align} 
\label{eq-sk-opis-typ-delta-a}
T^{\Delta + A}(s) = \big( T^\Delta(s), \ T^\Delta(s^\uparrow), \ \gamma_{s^\uparrow} \cdot s \big).  
\end{align}
\end{df}

Note that, by Proposition \ref{lem-przesuwanie-dzieci-sympleksow}, the translation $\gamma_{s^\uparrow} \cdot s$ (which plays an analogous role to the descendant number) exists and it is one of the simplexes in the pre-image of $f_{n_\tau}^{-1}(s_\tau)$, where $\tau = T^\Delta(s^\uparrow)$.  This ensures the correctness of the above definition, as well as finiteness of the resulting type $T^{\Delta + A}$.

Also, note that the component $T^\Delta(s^\uparrow)$ in the formula~\eqref{eq-sk-opis-typ-delta-a} has no equivalent in Definition \ref{def-sm-typ-A}. It will be used in the proof of Lemma \ref{fakt-sk-opis-bracia}.

\begin{fakt}
\label{fakt-sk-opis-markow}
The inverse system $(K_n, f_n)$, equipped with the simplex type function $T^{\Delta + A}$, satisfies the conditions from Definition \ref{def-kompakt-markowa}.
\end{fakt}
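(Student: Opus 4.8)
The plan is to verify the three conditions of Definition~\ref{def-kompakt-markowa} for the system $(K_n, f_n)$ with the new simplex type $T^{\Delta+A}$. Conditions (i) and (ii) are immediate: they depend only on the underlying complexes $K_n$ and maps $f_n$, which have not changed, so they hold exactly as in the proof of Theorem~\ref{tw-kompakt-ogolnie} (condition (i) from admissibility of $(\mathcal{U}_n)$, condition (ii) from the way $f_n$ is defined in Theorem~\ref{tw-konstr}). Finiteness of the set of $T^{\Delta+A}$-types was already observed right after Definition~\ref{def-sk-opis-typ-delta-a}. So the entire content of the lemma is condition (iii): producing, for any two simplexes $s \in K_i$, $s' \in K_j$ with $T^{\Delta+A}(s) = T^{\Delta+A}(s')$, a commuting ladder of isomorphisms $i_k : (f^{i+k}_i)^{-1}(s) \to (f^{j+k}_j)^{-1}(s')$.

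The key point is that $T^{\Delta+A}(s) = T^{\Delta+A}(s')$ implies in particular $T^\Delta(s) = T^\Delta(s')$, so by Lemma~\ref{fakt-przesuniecie-istnieje} there is $\gamma \in G$ with $s' = \gamma \cdot s$, and by Lemma~\ref{fakt-sk-opis-przesuniecie-jedyne} this $\gamma$ is \emph{unique}. Then I would simply reuse the construction from the proof of Theorem~\ref{tw-kompakt-ogolnie}: set $i_k(v_U) = v_{\gamma \cdot U}$ for $v_U \in (f^{i+k}_i)^{-1}(s)$, extended affinely. By iterating Proposition~\ref{lem-przesuwanie-dzieci-sympleksow} these are well-defined isomorphisms of subcomplexes making the ladder~\eqref{eq-markow-drabinka} commute, and they map each simplex to its shift by $\gamma$. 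The only thing that still needs checking, beyond what was done for $T^\Delta$, is that these isomorphisms preserve the \emph{new} type $T^{\Delta+A}$, i.e. that for every simplex $\sigma$ in $(f^{i+k}_i)^{-1}(s)$ we have $T^{\Delta+A}(\gamma \cdot \sigma) = T^{\Delta+A}(\sigma)$.

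For that I would argue as follows. Write $\sigma' = \gamma \cdot \sigma$. Its first component $T^\Delta(\sigma') = T^\Delta(\sigma)$ since shifting preserves $T^\Delta$ (Proposition~\ref{lem-przesuwanie-dzieci-sympleksow}). For the second component, note that $\sigma^\uparrow$ is the minimal simplex of the previous complex containing $f(\sigma)$; since $f \circ (\text{shift by }\gamma) = (\text{shift by }\gamma) \circ f$ on the relevant subcomplexes (commutativity of~\eqref{eq-diagram-do-spr}) and shifting is a simplicial isomorphism preserving inclusions, we get $(\gamma \cdot \sigma)^\uparrow = \gamma \cdot (\sigma^\uparrow)$, hence $T^\Delta(\sigma'^\uparrow) = T^\Delta(\sigma^\uparrow)$. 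For the third component, let $\tau = T^\Delta(\sigma^\uparrow) = T^\Delta(\sigma'^\uparrow)$ with representative $s_\tau$; then $\gamma_{\sigma^\uparrow}$ is the unique element sending $\sigma^\uparrow$ to $s_\tau$ and $\gamma_{\sigma'^\uparrow}$ the unique one sending $\sigma'^\uparrow = \gamma \cdot \sigma^\uparrow$ to $s_\tau$. By uniqueness (Lemma~\ref{fakt-sk-opis-przesuniecie-jedyne}) and Lemma~\ref{fakt-przesuniecie-skladane}, $\gamma_{\sigma'^\uparrow} = \gamma_{\sigma^\uparrow} \gamma^{-1}$, so $\gamma_{\sigma'^\uparrow} \cdot \sigma' = (\gamma_{\sigma^\uparrow}\gamma^{-1}) \cdot (\gamma \cdot \sigma) = \gamma_{\sigma^\uparrow} \cdot \sigma$, i.e. the third components agree. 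Thus $T^{\Delta+A}(\sigma') = T^{\Delta+A}(\sigma)$, so the maps $i_k$ preserve the new type, and condition (iii) holds.

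The main obstacle — or rather the essential new ingredient compared to Theorem~\ref{tw-kompakt-ogolnie} — is the uniqueness of $\gamma$, which is exactly Lemma~\ref{fakt-sk-opis-przesuniecie-jedyne} and ultimately rests on Proposition~\ref{lem-sm-kuzyni} (the $B$-type separating $16\delta$-cousins) together with the hypothesis $D \le 16\delta$. Without uniqueness, the third coordinate $\gamma_{s^\uparrow}\cdot s$ would not be well-defined as a function of the type, and the whole scheme of Definition~\ref{def-sk-opis-typ-delta-a} would collapse. Once uniqueness is in hand, everything else is a routine transport-of-structure verification along the lines already carried out in Section~\ref{sec-markow}. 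I would also remark that the argument gives, as before, an \emph{increasing} indexing when $i \le j$, which is all that the commuting-ladder formulation in Definition~\ref{def-kompakt-markowa} requires.
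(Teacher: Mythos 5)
Your proposal is correct and follows essentially the same route as the paper: reduce to condition (iii), reuse the shift maps from Proposition~\ref{lem-przesuwanie-dzieci-sympleksow}, and check preservation of the three coordinates of $T^{\Delta+A}$, with the third coordinate handled via the uniqueness of the shift (Lemma~\ref{fakt-sk-opis-przesuniecie-jedyne}) and the composition rule (Lemma~\ref{fakt-przesuniecie-skladane}), exactly as in the paper's computation $\gamma_{\sigma^\uparrow} = \gamma_{(\gamma\cdot\sigma)^\uparrow}\,\gamma$. No gaps.
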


\begin{proof}
Since the system $(K_n)$ has the Markov property when equipped a type function $T^\Delta$, weaker than $T^{\Delta + A}$, it suffices to check the condition (iii). To achieve this --- by Proposition \ref{lem-przesuwanie-dzieci-sympleksow} --- we need only to check that, if for some $s \in K_n$, $s' \in K_{n'}$, $\gamma \in G$ the equality $s' = \gamma \cdot s$ holds and $s$, $s'$ have the same $T^{\Delta + A}$-type, then the translation by $\gamma$ preserves the values of $T^{\Delta + A}$ for all simplexes contained in $f_n^{-1}(s)$.

Let then $\sigma$ be any such simplex. Then, $\sigma^\uparrow \subseteq s$, so from the claim of Proposition \ref{lem-przesuwanie-dzieci-sympleksow} (more precisely: from the fact that the translation by $\gamma$ preserves $T^\Delta$ and that it commutes with $f_n$ and~$f_{n'}$) we deduce that:
\begin{align} 
\label{eq-sk-opis-delta-typy-zgodne}
T^\Delta \big( (\gamma \cdot \sigma)^\uparrow \big) = T^\Delta \big( \gamma \cdot (\sigma^\uparrow) \big) = T^\Delta( \sigma^\uparrow ). 
\end{align}
Moreover, it is clear that $T^\Delta(\sigma) = T^\Delta(\gamma \cdot \sigma)$, so it only remains to verify the equality of the last coordinates in the types $T^{\Delta + A}(\sigma)$, $T^{\Delta + A}(\gamma \cdot \sigma)$.

Denote by $\tau$ the formula~\eqref{eq-sk-opis-delta-typy-zgodne}. Then, using Lemma \ref{fakt-przesuniecie-skladane}, we have
\[ \gamma_{\sigma^\uparrow} \cdot \sigma^\uparrow = s_\tau = \gamma_{(\gamma \cdot \sigma)^\uparrow} \cdot (\gamma \cdot \sigma)^\uparrow = \gamma_{(\gamma \cdot \sigma)^\uparrow} \cdot \big( \gamma \cdot (\sigma)^\uparrow \big) = ( \gamma_{(\gamma \cdot \sigma)^\uparrow} \, \gamma ) \cdot \sigma^\uparrow, \]
so, by Lemma \ref{fakt-sk-opis-przesuniecie-jedyne}, we obtain $\gamma_{\sigma^\uparrow} = \gamma_{(\gamma \cdot \sigma)^\uparrow} \, \gamma$. This in turn implies that:
\[ \gamma_{\sigma^\uparrow} \cdot \sigma = ( \gamma_{(\gamma \cdot \sigma)^\uparrow} \, \gamma ) \cdot \sigma = \gamma_{(\gamma \cdot \sigma)^\uparrow} \cdot (\gamma \cdot \sigma), \]
which finishes the proof.
\end{proof}

\begin{fakt}
\label{fakt-sk-opis-bracia}
For any simplex $s \in K_n$, all simplexes in the pre-image $f_n^{-1}(s)$ have pairwise distinct $T^{\Delta + A}$-types.
\end{fakt}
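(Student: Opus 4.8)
The plan is to peel off the three components of the $T^{\Delta+A}$-type one at a time. Suppose $\sigma_1,\sigma_2\in f_n^{-1}(s)$ satisfy $T^{\Delta+A}(\sigma_1)=T^{\Delta+A}(\sigma_2)$; I want to deduce $\sigma_1=\sigma_2$.

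First I would use the middle coordinate $T^\Delta(\sigma^\uparrow)$ together with Lemma~\ref{fakt-sk-opis-podsympleksy-rozne-typy}. Since $\sigma_i\in f_n^{-1}(s)$ we have $f_n(\sigma_i)\subseteq s$, so the prioritised parent $\sigma_i^\uparrow$ --- being the minimal simplex in $K_n$ containing $f_n(\sigma_i)$ --- is a subsimplex of $s$. The hypothesis gives $T^\Delta(\sigma_1^\uparrow)=T^\Delta(\sigma_2^\uparrow)$; as $\sigma_1^\uparrow$ and $\sigma_2^\uparrow$ are both faces of the single simplex $s$, the distinctness of $T^\Delta$-types of subsimplexes of $s$ (Lemma~\ref{fakt-sk-opis-podsympleksy-rozne-typy}) forces $\sigma_1^\uparrow=\sigma_2^\uparrow=:s'$. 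This is precisely the role of the component $T^\Delta(s^\uparrow)$ in Definition~\ref{def-sk-opis-typ-delta-a}, as anticipated there.

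Next I would exploit the last coordinate. Writing $\tau=T^\Delta(s')$ and $s_\tau$ for its chosen representative, the element $\gamma_{s'}\in G$ with $\gamma_{s'}\cdot s'=s_\tau$ is one and the same for $\sigma_1$ and $\sigma_2$ (they share the p-parent $s'$), so the equality of third components reads $\gamma_{s'}\cdot\sigma_1=\gamma_{s'}\cdot\sigma_2$ --- an equality of simplexes in the subcomplex $f_{n_\tau}^{-1}(s_\tau)$; both shifts exist because $\sigma_i$ lies in $f_n^{-1}(s')$ and we may apply Proposition~\ref{lem-przesuwanie-dzieci-sympleksow} to $s'$ and $s_\tau=\gamma_{s'}\cdot s'$ (as already observed after Definition~\ref{def-sk-opis-typ-delta-a}). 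Shifting both sides back by $\gamma_{s'}^{-1}$ and using the partial-action identity of Lemma~\ref{fakt-przesuniecie-skladane}, I get $\sigma_1=\gamma_{s'}^{-1}\cdot(\gamma_{s'}\cdot\sigma_1)=\gamma_{s'}^{-1}\cdot(\gamma_{s'}\cdot\sigma_2)=\sigma_2$, which is the claim. (The first coordinate $T^\Delta(\sigma)$ is preserved but is not actually needed in this argument.)

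I do not expect a genuine obstacle here: the entire argument is formal manipulation of shifts once Lemma~\ref{fakt-sk-opis-podsympleksy-rozne-typy} is available, and that lemma is exactly where the real content sits --- it rests on Proposition~\ref{lem-sm-kuzyni} (the $B$-type separates elements at distance $\le 16\delta$) and on the hypothesis that the neighbourhood constant is $\le 16\delta$. The only small point to state carefully is that $\sigma_i^\uparrow$ is a face of $s$, which is immediate from $f_n(\sigma_i)\subseteq s$ and the minimality defining $\sigma_i^\uparrow$.
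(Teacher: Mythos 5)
Your proposal is correct and follows essentially the same route as the paper: first the middle coordinate plus Lemma~\ref{fakt-sk-opis-podsympleksy-rozne-typy} forces $\sigma_1^\uparrow=\sigma_2^\uparrow$, and then the third coordinate (with the common shift $\gamma_{s'}$, inverted via Lemma~\ref{fakt-przesuniecie-skladane}) forces $\sigma_1=\sigma_2$. Your write-up merely makes explicit a few details the paper leaves implicit, such as why $\sigma_i^\uparrow\subseteq s$ and why the shift can be undone.
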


\begin{proof}
Let $\sigma, \sigma' \in f_n^{-1}(s)$ satisfy $T^{\Delta + A}$. Then in particular $T^\Delta(\sigma^\uparrow) = T^\Delta({\sigma'}^\uparrow)$, and since $\sigma^\uparrow$, ${\sigma'}^\uparrow$ are subsimplexes of $s$, from Lemma \ref{fakt-sk-opis-podsympleksy-rozne-typy} we obtain that they are equal. Then, the equality of the third coordinates in types $T^{\Delta + A}(\sigma)$, $T^{\Delta + A}(\sigma')$ implies that $\sigma = \sigma'$.
\end{proof}

\section{Markov systems with limited dimension}

\label{sec-wymd}

In this section, we assume that $\dim \partial G \leq k < \infty$, and we discuss how to adjust the construction of a Markov system to ensure that all the complexes in the inverse system also have dimension $\leq k$. Since $\partial G$ is a compact metric space, its dimension can be understood as the covering dimension, or equivalently as the small inductive dimension (cf.~\cite[Theorem~1.7.7]{E}). In the sequel, we denote the space $\partial G$ by~$X$, and the symbol $\partial$ will always mean the topological frontier taken in~$X$ or in some its subset.

The main result of this section is given below. 

\begin{lem}
\label{lem-wym}
Let $k \geq 0$ and let $G$ by a hyperbolic group such that $\dim \partial G \leq k$. Then, there exists a quasi-$G$-invariant system of covers of~$\partial G$ of rank~$\leq k + 1$.
\end{lem}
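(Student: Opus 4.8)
The plan is to obtain the required system $\mathcal{V}=\{V_g\}$ as a suitable \emph{shrinking} of the span--star system $\mathcal{S}$ from Section~\ref{sec-konstr-pokrycia}, keeping $V_g\subseteq S_g$. Equip $\mathcal{S}$ with the type function $T^B$ (for $N$ large): since $T^B$ is stronger than $T^b_N$, Corollary~\ref{wn-spanstary-quasi-niezm} together with Proposition~\ref{lem-typy-pokrycia-niezmiennicze} shows that $(\mathcal{S},T^B)$ is quasi-$G$-invariant with neighbourhood constant $D=12\delta\le 16\delta$. Writing $V_g\subseteq S_g$ will make conditions \qhlink{c} (mesh) and \qhlink{d} (with the same $D$) automatic, while \qhlink{f1}, \qhlink{f2} and \qhlink{f3} will follow from the corresponding statements for $\mathcal{S}$ provided the shrinking is performed "in the same way" on elements of equal $B$-type. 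So the two genuine tasks are (A) making each family $\{V_g:|g|=n\}$ of order $\le k+1$ while keeping it a cover, using only $\dim\partial G\le k$, and (B) keeping the construction compatible both with translations (so that \qhlink{f} holds) and with the passage between consecutive word-lengths (so that \qhlink{e} holds).

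The finiteness of the set of $B$-types is the main leverage. By Proposition~\ref{lem-typy-pokrycia-niezmiennicze}, for $N$ large and $T^B(x)=T^B(gx)$ the left translation by $g$ carries the whole local configuration of $\mathcal{S}$ around $x$ — the sets $S_h$ with $|h|=|x|$ meeting $S_x$, their mutual intersections, and the way the spans of descendants sit inside them — isomorphically onto the corresponding configuration around $gx$; by Lemma~\ref{lem-sm-B-dzieci} the p-grandchildren of $x$ are matched, with $B$-type preserved, to those of $gx$. Fixing a representative $g_\tau$ for each $B$-type $\tau$, it therefore suffices to prescribe the shrinking once, on the (finitely many) model configurations around the $g_\tau$'s, and to transport it by translations; coherence of the transported data is exactly what the invariance statements just quoted guarantee. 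I would carry out the prescription by induction on the level $m$, working at word-lengths $mL$: at level $0$ shrink the finite cover $\mathcal{S}_0$ to order $\le k+1$ (possible since $\dim\partial G\le k$); given $\mathcal{V}_m$, for each cell $V_f$ ($|f|=mL$) take the cover of $V_f$ by the traces $S_g\cap V_f$ with $|g|=(m+1)L$, shrink it \emph{inside} $V_f$ to order $\le k+1$ — again possible, any open subset of $\partial G$ having covering dimension $\le k$ — and let $V_g$ be the union, over the cells $V_f$ met by $S_g$, of these local shrinkings. One must organise the local shrinkings coherently on the overlaps $V_f\cap V_{f'}$ (this is precisely where one uses that the whole configuration, not just individual sets, is $B$-type--determined, so a single choice on each model propagates consistently), and must choose them so as not to destroy a uniform lower bound $\gtrsim a^{-n}$ on the Lebesgue numbers of the $\mathcal{V}_n$'s; combined with the estimate $\diam S_x\le 3Ca^{-|x|}$ of Lemma~\ref{fakt-pokrycie-male}, this last feature yields \qhlink{e}: for $J$ a sufficiently large multiple of $L$, the small set $S_x\supseteq V_x$ is contained in some $V_f$ with $|f|=|x|-J$, so $V_x\subseteq V_f$.

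Once the construction is in place, verifying Definition~\ref{def-quasi-niezm} is routine: \qhlink{c} and \qhlink{d} are inherited from $\mathcal{S}$ because $V_g\subseteq S_g$; \qhlink{e} was just discussed; for \qhlink{f}, if $T^B(x)=T^B(gx)$ then $g$ is an isomorphism of the local model around $x$ onto that around $gx$ preserving the prescribed (model--determined) shrinking, whence $V_{gx}=g\cdot V_x$, and \qhlink{f2}, \qhlink{f3} follow the same way using Proposition~\ref{lem-typy-pokrycia-niezmiennicze} and Lemma~\ref{lem-sm-B-dzieci}. The order bound $\le k+1$ holds for every $\mathcal{V}_n$ by construction, and refining $T^B$ by the finite datum recording which model and which local shrinking each $V_g$ arose from gives a type function for $\mathcal{V}$ stronger than $T^B$, as needed for the later application of Theorem~\ref{tw-sk-opis}. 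I expect step~(B) — reconciling the globally imposed order reduction with the strictly local, type--driven nature of a quasi-$G$-invariant shrinking, in particular matching the local shrinkings on cell overlaps while keeping Lebesgue numbers from collapsing — to be the main obstacle; the order reduction itself is a standard consequence of $\dim\partial G\le k$, and equivariance, once the construction is genuinely local, follows mechanically from the invariance lemmas of Sections~\ref{sec-konstr-pokrycia} and~\ref{sec-sm-abc-b}.
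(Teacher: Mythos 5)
Your overall framing (shrink $\mathcal{S}$ while keeping $V_g\subseteq S_g$, do all constructions on finitely many $B$-type models and transport by translations, let \qhlink{c} and \qhlink{d} be inherited) matches the paper's strategy, but the core mechanism you propose for achieving rank $\le k+1$ does not work as stated, and the difficulty you defer to "step (B)" is in fact the whole content of the proof. Concretely: you define $V_g$ as the union, over the cells $V_f$ meeting $S_g$, of local shrinkings $W_{g,f}$ of the traces $S_g\cap V_f$ performed independently inside each $V_f$. A point $p$ lies in up to $k+1$ cells $V_f$, and inside each of them in up to $k+1$ of the sets $W_{g,f}$; since the index sets of $g$'s need not coincide across cells, $p$ can lie in up to $(k+1)^2$ of the resulting $V_g$'s. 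So the rank is not bounded by $k+1$ at the next level — it can grow geometrically with $m$ — unless the local shrinkings are made to agree on all overlaps $V_f\cap V_{f'}$. That coherence requirement is not a technical afterthought: the shrinkings in $V_f$ and $V_{f'}$ are each dictated by the model of a different type, the overlap patterns couple all the finitely many model choices simultaneously, and there is no evident way to solve this gluing problem. Your proposal names this obstacle but offers no mechanism to overcome it, and without one the claimed order bound "by construction" is false.

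The paper circumvents this by inducting on the \emph{dimension} rather than on the level, so that no order-$(k+1)$ shrinking of a single cover is ever attempted directly. Each cover is split (Proposition~\ref{lem-wym-bzdziagwy}) into a \emph{disjoint} open family $\mathcal{G}$ plus a closed remainder $\mathcal{F}$ of dimension $\le k-1$, using the separation theorem (Theorem~\ref{tw-wym-przedzialek}) applied once per type-model and a priority ordering of types (subtracting $D_y$ for all $y$ of smaller type) — a device that produces global disjointness without any coherence condition on overlaps. The induction hypothesis handles $\mathcal{F}$, yielding a closed system of rank $\le k$, which is then thickened back to an open system with closures of rank $\le k$ (Proposition~\ref{lem-wym-kolnierzyki}); the union of a rank-$1$ family and a rank-$k$ family has rank $\le k+1$ (Lemma~\ref{fakt-wym-suma-pokryc}). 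Equivariance is tracked through the ladder of $\theta$-weakly invariant systems, with $\theta$ increasing at each step to absorb the loss of invariance radius, and quasi-$G$-invariance of the final open cover is recovered wholesale by Proposition~\ref{lem-wym-slabe-sa-qi}. If you want to salvage your level-by-level scheme you would need to prove the gluing statement you flagged, which I do not believe is easier than the paper's dimension induction.
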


Since the rank of a cover determines the dimension of its nerve, this result will indeed allow to limit the dimension of the complexes in the Markov system for~$\partial G$ (see Section~\ref{sec-wymd-podsum}).

\begin{uwaga}
Although the proof of Proposition~\ref{lem-wym} given below will involve many technical details, let us underline that --- in its basic sketch --- it resembles an elementary result from dimension theory stating that every open cover $\mathcal{U} = \{ U_i \}_{i = 1}^n$ of a compact metric space~$X$ of dimension~$k$ contains an open subcover of rank~$\leq k + 1$. Below, we present the main steps of a proof of this fact, and pointing out the analogies between these steps and the contents of the rest of this section.
\begin{itemize}

\item[(i)] We proceed by induction on $k$. For convenience, we work with a slightly stronger inductive claim: the cover~$\{ U_i \}$ contains an open subcover~$\{ V_j \}$ such that the closures $\overline{V_j}$ form a family of rank~$\leq k + 1$.

(For the proof of Proposition~\ref{lem-wym}, the inductive reasoning is sketched in more detail in Proposition~\ref{lem-wym-cala-historia}).

\item[(ii)] Using the auxiliary Theorem~\ref{tw-wym-przedzialek} stated below, we choose in each $U_i \in \mathcal{U}$ an open subset $U_i'$ with frontier of dimension $\leq k-1$ so that the sets~$U_i'$ still form a cover of~$X$.

(Similarly we will define the sets~$D_x$ in the proof of Proposition~\ref{lem-wym-bzdziagwy}).

\item[(iii)] We define the sets $U_i''$ by the condition:
\[ x \in U_i'' \qquad \qquad \Longleftrightarrow \qquad \qquad x \in U_i' \quad \textrm{ and } \quad x \notin U'_j \quad \textrm{ for } j < i. \]

(Analogously we will define the sets~$E_x$ in the proof of Proposition~\ref{lem-wym-bzdziagwy})

\item[(iv)] The space~$X$ is now covered by the interiors of the sets $U_i''$ (which are pairwise disjoint) together with the set~$\widetilde{X} = \bigcup_i \partial U_i''$, which is a closed subset of~$X$ of dimension~$\leq k - 1$. In~$\widetilde{X}$, we consider an open cover formed by the sets $\widetilde{U}_i = U_i \cap \widetilde{X}$. By the inductive hypothesis, this cover must contain an open subcover formed by some sets~$\widetilde{V}_j$ ($1 \leq j \leq m$) whose closures form a family of rank~$\leq k$. Also, we may require that $\widetilde{V}_j$ is open in~$\widetilde{X}$ --- but not necessarily in~$X$.

(The sets $\partial U_i''$, $\innt U_i''$ correspond to the sets $F_x$, $G_x$ appearing in the formulation of Proposition~\ref{lem-wym-bzdziagwy}).

\item[(v)] Let $\varepsilon > 0$ be the least distance between any \textit{disjoint pair} of closures $\overline{\widetilde{V}_{j_1}}$, $\overline{\widetilde{V}_{j_2}}$. For $1 \leq j \leq m$, we define $V_j$ as the $\tfrac{\varepsilon}{4}$-neighbourhood (in~$X$) of~$\widetilde{V}_j$. Then, it is easy to verify that the sets $V_j$ are open in~$X$ and cover~$\widetilde{X}$, and moreover the rank of the family $\{\overline{ V_j }\}$ does not exceed the rank of $\{\overline{ \widetilde{V}_j }\}$ which is $\leq k$.

Let $U_i'''$ denote $\innt U_i''$ minus the closed $\tfrac{\varepsilon}{8}$-neighbourhood of $\widetilde{X}$.
Then, the family
\[ \mathcal{V} = \{ V_j \}_{j = 1}^m \cup \{ U_i''' \}_{i = 1}^n \]
is an open cover of~$X$. Moreover, the rank of the family of closures of all elements of $\mathcal{V}$ is at most the sum of ranks of the families $\{ \overline{ V_j } \}$ and $\{ \overline{ U_i''' } \}$, which are respectively $k$ and $1$ (the latter because for every $i \neq i'$ we have $\overline{U_i'''} \cap \overline{U_{i'}'''} \subseteq \overline{U_i''} \cap \overline{U_{i'}''} \subseteq \widetilde{X}$ which is disjoint from both $\overline{U_i'''}$ and~$\overline{U_{i'}'''}$). Hence, $\mathcal{V}$ satisfies all the desired conditions.

(In our proof of Proposition~\ref{lem-wym}, the construction of appropriate neighbourhoods takes place in Proposition~\ref{lem-wym-kolnierzyki}, and the other of the above steps have their counterparts in the proof of Proposition~\ref{lem-wym-cala-historia}).
\end{itemize}
In comparison to the above reasoning, the main difficulty in proving Proposition~\ref{lem-wym} lies in ensuring quasi-$G$-invariance of the adjusted covers, which we need for preserving the Markov property for the system of their nerves (using Theorem~\ref{tw-kompakt-ogolnie}). For this, instead of defining each of the sets $U_i'$ independently, we will first choose a finite number of \textit{model sets}, one for each possible value of type in~$G$, and translate these model sets using Proposition~\ref{lem-potomkowie-dla-kulowych}. The inductive argument will now require special care for preserving quasi-$G$-invariance; nevertheless; the main idea remains unchanged.
\end{uwaga}

We will use the following auxiliary result from dimension theory:

\begin{tw}[{\cite[Theorem~1.5.12]{E}}]
\label{tw-wym-przedzialek}
Let~$Y$ be a separable metric space of dimension~$k$ and $A, B$ be disjoint closed subsets of~$Y$. Then, there exist open subsets $\widetilde{A}, \widetilde{B} \subseteq Y$ such that 
\[ A \subseteq \widetilde{A}, \qquad B \subseteq \widetilde{B}, \qquad \widetilde{A} \cap \widetilde{B} = \emptyset \qquad \textrm{ and } \qquad \dim \big(Y \setminus (\widetilde{A} \cup \widetilde{B})\big) \leq k - 1. \]
\end{tw}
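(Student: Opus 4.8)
The plan is to recognise Theorem~\ref{tw-wym-przedzialek} as the classical \emph{partition theorem} of dimension theory and to deduce it from the inductive description of dimension. Note first that the asserted conclusion is equivalent to the existence of an open set $W \subseteq Y$ with $A \subseteq W$, $\overline{W} \cap B = \emptyset$ and $\dim(\overline{W} \setminus W) \leq k-1$: given such a $W$ one simply puts $\widetilde{A} = W$ and $\widetilde{B} = Y \setminus \overline{W}$, and then $Y \setminus (\widetilde{A} \cup \widetilde{B}) = \overline{W} \setminus W$. So the whole task reduces to producing a \emph{partition} between $A$ and $B$ whose frontier (taken in~$Y$) has dimension at most $k-1$.

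The quickest route is to invoke the coincidence of the covering dimension with the large inductive dimension $\operatorname{Ind}$ on separable metric spaces (the same circle of facts as the equality $\dim = \operatorname{ind}$ already used at the start of Section~\ref{sec-wymd}; see~\cite{E}), so that $\dim Y \leq k$ may be rewritten as $\operatorname{Ind} Y \leq k$. Applying the definition of $\operatorname{Ind}$ to the closed set $A$ and the open set $U = Y \setminus B \supseteq A$ yields an open $W$ with $A \subseteq W \subseteq \overline{W} \subseteq U$ and $\operatorname{Ind}(\overline{W} \setminus W) \leq k-1$; since $\operatorname{Ind}$ and $\dim$ again agree on the separable metric space $\overline{W} \setminus W$, this is exactly the $W$ required above.

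If one prefers to stay within the language of the small inductive dimension $\operatorname{ind}$ (used elsewhere in the paper), the argument costs one extra routine step. From $\operatorname{ind} Y \leq k$ one gets, around each point $a \in A$, a basic open neighbourhood $U_a$ with $\overline{U_a} \subseteq Y \setminus B$ and $\operatorname{ind}(\partial U_a) \leq k-1$; the separability (hence the Lindel\"of property) of $Y$ lets one extract a countable subfamily $(U_i)_{i \geq 1}$ still covering $A$, and a standard shrinking procedure then produces an open $W \supseteq A$ with $\overline{W} \cap B = \emptyset$ and $\partial W \subseteq \bigcup_i \partial U_i$. The countable sum theorem --- a countable union of closed subspaces of dimension $\leq k-1$ has dimension $\leq k-1$ --- together with the subspace theorem bounds $\dim(\partial W) \leq k-1$, and one concludes as before.

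The only non-elementary ingredients are thus the dimension-theoretic infrastructure: the coincidence theorem $\dim = \operatorname{Ind} = \operatorname{ind}$ for separable metric spaces and, in the $\operatorname{ind}$ variant, the countable sum and subspace theorems, all of which are standard and recorded in~\cite{E}. I expect the only genuine delicacy in a self-contained write-up to be the closure control in the $\operatorname{ind}$ variant --- keeping $\overline{W}$ off~$B$ while simultaneously keeping $\partial W$ inside a countable union of low-dimensional boundaries --- and it is precisely this point that the passage to $\operatorname{Ind}$ sidesteps; for that reason the statement is best left, as here, as a citation of \cite[Theorem~1.5.12]{E}.
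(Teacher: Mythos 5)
The paper offers no proof of this statement at all: it is imported verbatim from Engelking as \cite[Theorem~1.5.12]{E} (it is the classical first separation theorem of dimension theory), so there is nothing internal to compare your argument against. Your write-up is nevertheless a correct account of the standard proof. The opening reduction is exactly right: with $\widetilde{A} = W$ and $\widetilde{B} = Y \setminus \overline{W}$ one gets $Y \setminus (\widetilde{A} \cup \widetilde{B}) = \overline{W} \setminus W = \partial W$, so everything hinges on producing a partition with low-dimensional frontier. The $\operatorname{Ind}$ route then closes the argument immediately, since the defining property of $\operatorname{Ind} Y \leq k$ applied to $A$ and $U = Y \setminus B$ hands you precisely such a $W$, and $\dim = \operatorname{Ind}$ on (separable) metric spaces transfers the bound on $\partial W$ back to covering dimension. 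Your $\operatorname{ind}$ variant is the one place where the sketch is thinner than a full proof: for an infinite union $W = \bigcup_i U_i$ one does \emph{not} in general have $\partial W \subseteq \bigcup_i \partial U_i$, and the standard repair (as in Engelking) interleaves countable covers of \emph{both} $A$ and $B$, setting $U = \bigcup_i \bigl( U_i \setminus \bigcup_{j \leq i} \overline{V_j} \bigr)$ and $V = \bigcup_j \bigl( V_j \setminus \bigcup_{i \leq j} \overline{U_i} \bigr)$ before invoking the countable sum theorem. You correctly flag this as the delicate point and note that passing to $\operatorname{Ind}$ sidesteps it, so the proposal as a whole is sound; leaving the result as a citation, as the paper does, is entirely appropriate.
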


\subsection{$\theta$-weakly invariant subsystems}

\begin{ozn}
For any two families $\mathcal{C} = \{ C_x \}_{x \in G}$, $\mathcal{D} = \{ D_x \}_{x \in G}$, we denote:
\[ \mathcal{C} \sqcup \mathcal{D} = \{ C_x \cup D_x \}_{x \in G}. \]
\end{ozn}

\begin{df}
A system $\mathcal{C} = \{ C_x \}_{x \in G}$ of subsets of~$X$ will be called:
\begin{itemize}
 \item \textit{of dimension $\leq k$} if $|\mathcal{C}|_n$ is of dimension $\leq k$ for all $n \geq 0$;
 \item \textit{of rank $\leq k$} if, for every $n \geq 0$, the family $\mathcal{C}_n$ is of rank $\leq k$ (i.e. if the intersection of any $k+1$ pairwise distinct members of $\mathcal{C}_n$ must be empty); in particular, \textit{disjoint} if it is of rank~$\leq 1$.
\end{itemize}
\end{df}

\begin{fakt}
\label{fakt-wym-suma-pokryc}
If two systems of subsets $\mathcal{C}$, $\mathcal{D}$ are correspondingly of rank~$\leq a$ and~$\leq b$, then $\mathcal{C} \sqcup \mathcal{D}$ is of rank $\leq a+b$. \qed
\end{fakt}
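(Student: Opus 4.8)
The plan is to reduce everything to a one-line counting argument applied separately at each level. Since the rank of a system of subsets is defined level by level, I would fix $n \ge 0$ and work only with the families $\mathcal{C}_n$, $\mathcal{D}_n$ and $(\mathcal{C} \sqcup \mathcal{D})_n = \{\, C_x \cup D_x \mid x \in G,\ |x| = n \,\}$ (notation as in Denotation~\ref{ozn-suma-rodz}). What has to be shown is then that no point of $X$ can belong to $C_x \cup D_x$ for more than $a+b$ pairwise distinct indices $x$ of length $n$.

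First I would pick such a point $p \in X$ together with pairwise distinct $x_1, \dots, x_m \in G$, each of length $n$, with $p \in C_{x_i} \cup D_{x_i}$ for every $i$, and split the index set according to which summand contains $p$: set $I = \{\, i : p \in C_{x_i} \,\}$ and $J = \{\, i : p \in D_{x_i} \,\}$, so that $I \cup J = \{1, \dots, m\}$. If $|I|$ were at least $a+1$, the corresponding sets $C_{x_i}$ with $i \in I$ would exhibit $a+1$ distinct members of $\mathcal{C}_n$ sharing the point $p$, contradicting $\rank \mathcal{C}_n \le a$; hence $|I| \le a$, and by the same token $|J| \le b$. Consequently $m \le |I| + |J| \le a+b$, which is exactly the desired bound on $\rank (\mathcal{C} \sqcup \mathcal{D})_n$, and hence on the rank of the system $\mathcal{C} \sqcup \mathcal{D}$.

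I do not anticipate any real difficulty here; the statement is elementary, and the only point worth keeping in mind is that, consistently with the convention used throughout Section~\ref{sec-konstr}, the rank of these indexed families is counted by distinct indices $x$, so that the step ``$a+1$ distinct indices in $I$ yield $a+1$ members of $\mathcal{C}_n$ meeting at $p$'' is legitimate even if some of those indices happen to carry the same subset.
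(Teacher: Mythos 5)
Your argument is correct and is exactly the elementary counting the paper has in mind: the lemma is stated there with no written proof, being regarded as immediate. Your closing remark about counting members of $\mathcal{C}_n$ by the index $x$ rather than by the underlying set is the right reading of the convention, and is in fact needed -- with set-counting the bound could fail when distinct indices share the same $C$-set but carry different $D$-sets.
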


\begin{df}
Let $X$ be a topological space and $A, B \subseteq X$. We will say that~$A$ is a \textit{separated} subset of~$B$ (denotation: $A \Subset B$) if $\overline{A} \subseteq \innt B$.
\end{df}

\begin{df}
Let $\mathcal{C} = \{ C_x \}$, $\mathcal{D} = \{ D_x \}$ be two quasi-$G$-invariant systems of subsets of~$X$. We will say that:
\begin{itemize}
 \item $\mathcal{C}$ is an \textit{(open, closed) subsystem} in~$\mathcal{D}$ if, for every $n \geq 0$ and $C_x \in \mathcal{C}_n$, $C_x$ is an (open, closed) subset in~$|\mathcal{D}|_n$ \\ (recall from \ref{ozn-suma-rodz} that $|\mathcal{D}|_n$ denotes $\bigcup_{C \in \{C_x \,|\, x \in G, \, |x|=n\}} C$ )
 \item $\mathcal{C}$ is a \textit{semi-closed subsystem} in~$\mathcal{D}$ if $|\mathcal{C}|_n$ is a closed subset in~$|\mathcal{D}|_n$ for $n \geq 0$;
 \item $\mathcal{C}$ \textit{covers} $\mathcal{D}$ if $|\mathcal{C}|_n \supseteq |\mathcal{D}|_n$ for $n \geq 0$.
\end{itemize}
A system $\mathcal{C}$ will be called \textit{semi-closed} if $|\mathcal{C}|_n$ is a closed subset in~$X$ for $n \geq 0$.
\end{df}

\begin{df}
For any integer $\theta \geq 0$, we define the type function $T^B_\theta$ in~$G$ as the extension (in the sense of Definition~\ref{def-sm-typ-plus}) of the type function $T^B$ (defined in Definition~\ref{def-sm-typ-B}) by~$r = \theta \cdot 12\delta$:
\[ T^B_\theta = (T^B)^{+ \theta \cdot 12\delta} \]
\end{df}

\begin{fakt}
\label{fakt-wym-poglebianie-B+}
Let $g, x, y \in G$ and~$\theta \geq 0$, $k > 0$ satisfy
\[ y \in xT^c(x), \qquad T^B_\theta(x) = T^B_\theta(gx), \qquad |y| = |x| + kL, \]
where $L$ denotes the constant from Section~\ref{sec-sm} (defined in Section~\ref{sec-sm-nici}).
Then:
\[ T^B_{\theta + 1}(y) = T^B_{\theta + 1}(gy), \qquad |gy| = |gx| + kL. \]
\end{fakt}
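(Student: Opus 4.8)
The aim is to show that left translation by $g$ transports the whole $T^B_{\theta+1}$-structure of $y$ onto that of $gy$. I would split the argument into three stages.

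\textbf{Stage 1 (cone type and length).} Since $T^B_\theta$ is stronger than $T^B$, the hypothesis yields $T^B(x)=T^B(gx)$, hence $T^A(x)=T^A(gx)$ by Remark~\ref{uwaga-sm-B-wyznacza-A}, hence $T^b_N(x)=T^b_N(gx)$, and finally $T^c(x)=T^c(gx)$ by Proposition~\ref{lem-kulowy-wyznacza-stozkowy} (using $N\ge N_0$). As $x^{-1}y\in T^c(x)=T^c(gx)$, the element $gy=(gx)(x^{-1}y)$ is a descendant of $gx$, so $|gy|=|gx|+|x^{-1}y|=|gx|+kL$. This takes care of the length claim, and the same computation will be re-used below for fellows of $y$.

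\textbf{Stage 2 (fellows of $y$ trace back to cousins of $x$).} Unwinding Definition~\ref{def-sm-typ-plus}, it suffices to prove that for every $h\in P_{12\delta(\theta+1)}(y)$ one has $h\in P_{12\delta(\theta+1)}(gy)$, with displacement preserved (indeed $(gy)^{-1}(gyh)=h$), and $T^B(yh)=T^B(gyh)$; the reverse inclusion follows by applying the statement to $g^{-1}$, $gx$, $gy$. Fix such a fellow $yh$, so $|yh|=|y|$ and $d(y,yh)\le 12\delta(\theta+1)$, and put $z:=(yh)^{\Uparrow k}=(yh)^{\uparrow kL}$, the p-ancestor of $yh$ at level $|x|$; then $yh$ is a $k$-fold iterated p-grandchild of $z$. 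Comparing a priority geodesic $[e,yh]$ (which passes through $z$) with a geodesic $[e,y]$ through $x$ and applying Lemma~\ref{fakt-geodezyjne-pozostaja-bliskie} at level $|x|=|y|-kL$, we obtain $d(x,z)\le 8\delta$ as soon as $kL$ — hence $L$ — is large compared with $12\delta(\theta+1)$ (this is the place where the ``$L$ sufficiently large'' assumption is used, and where the gap between $\theta$ and $\theta+1$ is consumed). Thus $z$ is an $8\delta$-fellow, and in particular a cousin, of $x$; writing $h'=x^{-1}z$ with $|h'|\le 8\delta$ and $gz:=(gx)h'$, Lemma~\ref{fakt-kulowy-duzy-wyznacza-maly} gives agreement of the ball types of $z$ and $gz$ of radius still large enough for what follows, together with $|gz|=|gx|$. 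Finally — and this is where Definition~\ref{def-sm-typ-plus} is essential — the equality $T^B_\theta(x)=T^B_\theta(gx)$ yields $T^B(z)=T^B(gz)$, since $z$ lies within the radius recorded by the extension together with the cousin-genealogy data already built into $T^B$.

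\textbf{Stage 3 (descent from $z$ to $yh$).} From the ball-type agreement of $z$, $gz$, Corollary~\ref{wn-sm-kulowy-wyznacza-potomkow} shows that left translation by $g$ is a p-ancestry-preserving bijection between p-descendants of $z$ and p-descendants of $gz$; in particular $gyh=g\cdot(yh)$ is the $k$-fold iterated p-grandchild of $gz$ corresponding to $yh$, and $|gyh|=|gz|+kL=|gx|+kL=|gy|$, so indeed $h\in P_{12\delta(\theta+1)}(gy)$. Now $T^B(z)=T^B(gz)$ lets us invoke Proposition~\ref{lem-sm-B-dzieci} (with $\gamma=g$): translation by $g$ sends p-grandchildren of $z$ to p-grandchildren of $gz$ and preserves $T^B$. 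Iterating this $k$ times down the chain $z,(yh)^{\Uparrow(k-1)},\dots,(yh)^{\Uparrow 1},yh$ — each step legitimate because the $T^B$-equality is propagated — we conclude $T^B(yh)=T^B(gyh)$, which is the remaining point; combined with Stage~1 this completes the proof. The main obstacle is the bookkeeping in Stage~2: matching the fellow radius $12\delta(\theta+1)$ prescribed at $y$ against the smaller radius $12\delta\theta$ recorded at $x$ through the trace-back of fellows to the level of $x$, choosing $L$ large enough for this (harmless, as the relevant values of $\theta$ are bounded in the applications), and in particular treating the boundary value $\theta=0$, where the traced-back element is an $8\delta$-fellow of $x$ and one must rely on the cousin-genealogy information intrinsic to $T^B$ rather than on the extension.
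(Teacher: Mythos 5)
Your overall strategy is the same as the paper's: trace each $(\theta+1)\cdot 12\delta$-fellow of $y$ back along the p-ancestry to an element near $x$, use the extended-type hypothesis $T^B_\theta(x)=T^B_\theta(gx)$ to transfer the $B$-type there, and then push the $B$-type equality back down with Proposition~\ref{lem-sm-B-dzieci}. (Your Stage~1 derivation of $|gy|=|gx|+kL$ via cone types is a clean, valid variant of how the paper gets the length claim.) The paper differs only in that it reduces to $k=1$ and steps down one p-grandparent level at a time. That difference, however, is exactly what makes your Stage~2 problematic.

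The concrete gap is in Stage~2. You insist on $d(x,z)\le 8\delta$ for $z=(yh)^{\Uparrow k}$, which by Lemma~\ref{fakt-geodezyjne-pozostaja-bliskie} forces $2kL\ge 12\delta(\theta+1)+8\delta$, i.e.\ a lower bound on $L$ that grows with $\theta$. But $L$ is a constant fixed once and for all in the technical assumptions (where only $L\ge 14\delta$ and similar $\theta$-independent bounds are imposed), while the lemma is quantified over all $\theta\ge 0$; so as written your argument proves a weaker statement. The fix is to aim only for $d(x,z)\le\theta\cdot 12\delta$, which is all you need to invoke $T^B_\theta(x)=T^B_\theta(gx)$ via Definition~\ref{def-sm-typ-plus}, and which does follow from $kL\ge 14\delta$ for every $\theta\ge 1$ (in the estimate $8\delta+\max\bigl((\theta+1)\cdot 12\delta+8\delta-2kL,\,0\bigr)$, either the max vanishes and $8\delta\le\theta\cdot 12\delta$, or it does not and the $-2kL$ term absorbs the surplus $28\delta$). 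This is precisely the paper's computation, done one level at a time. Relatedly, your escape route for $\theta=0$ does not work: $T^B(x)=T^B(gx)$ does \emph{not} determine the $B$-types of the $8\delta$-fellows of $x$, since by Definition~\ref{def-sm-typ-B} the $B$-type records only the relative positions of the cousins and the $A$-types of selected p-ancestors, not their full $B$-types; so "the cousin-genealogy information intrinsic to $T^B$" cannot replace the extension when $\theta=0$. (The paper's own proof is also silently restricted to $\theta\ge 1$ at this point, so you are in good company, but the claim as you state it is false.)
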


\begin{proof}
It suffices to prove the claim for $k = 1$; for greater values of~$k$, it will then easily follow by induction.

Let $z \in yP_{(\theta + 1) \cdot 12\delta}(y)$. Denote $w = z^\Uparrow$ (see Definition~\ref{def-sm-p-wnuk}). Since $L \geq 14\delta$, Lemma~\ref{fakt-geodezyjne-pozostaja-bliskie} implies that
\[ d(x, w) \leq \max \big( (\theta+1) \cdot 12\delta + 16\delta - 2L, \ 8\delta \big) \leq \theta \cdot 12\delta, \]
so $w \in xP_{12\delta}(x)$, and then $T^B(w) = T^B(gw)$ and $|gx| = |gw|$. Therefore, by Proposition~\ref{lem-sm-B-dzieci} we know that $gw = (gz)^\Uparrow$ and $T^B(z) = T^B(gz)$. The first of these equalities implies in particular that $gz$ is a descendant of $gw$, that is,
\[ |gz| = |gw| + d(gz, gw) = |gx| + d(z, w) = |gx| + L. \]
in particular, setting $z = y$ we obtain that $|gy| = |y| + L$. Considering again an arbitrary~$z$, we deduce that $|gz| = |gy|$, so $P_{(\theta+1) \cdot 12\delta}(y) \subseteq P_{(\theta+1) \cdot 12\delta}(gy)$; the opposite inclusion can be proved analogously (by exchanging the roles between $g$ and $g^{-1}$). In this situation, the equality $T^B(z) = T^B(gz)$ for an arbitrary~$z$ implies that $T^B_{\theta+1}(y) = T^B_{\theta+1}(gy)$.
\end{proof}

\begin{df}
A family of subsets $\mathcal{C} = \{ C_x \}$ will be called a \textit{$\theta$-weakly invariant system} if:
\begin{itemize}
 \item for every $x \in G$, $C_x$ is a subset of the set $S_x$ defined in Section~\ref{sec-konstr-pokrycia};
 \item the family $\mathcal{C}$, together with the type function $T^B_\theta$, satisfies the condition~\qhlink{f1} of Definition~\ref{def-quasi-niezm}.
\end{itemize}
\end{df}

Although we only require the condition~\qhlink{f1} to hold, we will show in Proposition~\ref{lem-wym-slabe-sa-qi} that this suffices to force the other conditions of Definition~\ref{def-quasi-niezm} to hold under some appropriate assumptions.

Let us observe that the system $\mathcal{S}$ described in Section~\ref{sec-konstr-pokrycia} is $0$-weakly (and then also $\theta$-weakly for $\theta \geq 0$) invariant. This is because the values of $T^B_\theta$ determine uniquely the values of $T^b_N$ (by Remark~\ref{uwaga-sm-B-wyznacza-A} and Definition~\ref{def-sm-typ-A}), while the system $\mathcal{S}$ equipped with the latter type function is quasi-$G$-invariant by Corollary~\ref{wn-spanstary-quasi-niezm}. 

\begin{fakt}
\label{fakt-wym-typ-wyznacza-sasiadow}
Let $\mathcal{C} = \{ C_x \}_{x \in G}$ be a $\Theta$-weakly invariant system for some $\Theta \geq 0$. Let $n \geq 0$, $\theta \geq 1$ and $x, y \in G$ be of length~$n$, and suppose that $C_x \cap C_y \neq \emptyset$. Then:
\begin{itemize}
 \item[\textbf{(a)}] $T^B_\theta(x) \neq T^B_\theta(y)$;
 \item[\textbf{(b)}] For every $g \in G$, the equality $T^B_\theta(x) = T^B_\theta(gx)$ implies that $T^B_{\theta-1}(y) = T^B_{\theta-1}(gy)$ and $|gx| = |gy|$.
\end{itemize}
\end{fakt}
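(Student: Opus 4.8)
The plan is to reduce both parts to two ingredients that are already in hand. First, since $\mathcal{C}$ is $\Theta$-weakly invariant we have $C_x \subseteq S_x$ and $C_y \subseteq S_y$, so $C_x \cap C_y \neq \emptyset$ forces $S_x \cap S_y \neq \emptyset$, whence $d(x,y) \leq 12\delta$ by Lemma~\ref{fakt-sasiedzi-blisko}(b) (applicable because $|x| = |y|$, so $x^{-1}y \in P(x)$). Second, I will use that $T^B_\theta$, being the extension of $T^B$ by $\theta\cdot 12\delta$, records for every $h \in P_{\theta\cdot 12\delta}(x)$ the pair consisting of $h$ and $T^B(xh)$; in particular $T^B_\theta$ is stronger than $T^B$, and an equality $T^B_\theta(x) = T^B_\theta(x')$ amounts exactly to $P_{\theta\cdot 12\delta}(x) = P_{\theta\cdot 12\delta}(x')$ together with $T^B(xk) = T^B(x'k)$ for every $k$ in this common set.

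For part \textbf{(a)} I may assume $x \neq y$ (if $x = y$ there is nothing to prove, since $C_x$ and $C_y$ are then the same member of $\mathcal{C}_n$). Suppose $T^B_\theta(x) = T^B_\theta(y)$. As $T^B_\theta$ is stronger than $T^B$, this gives $T^B(x) = T^B(y)$; combined with $d(x,y) \leq 12\delta \leq 16\delta$, Proposition~\ref{lem-sm-kuzyni} (whose requirement that $N$ be large enough is one of the standing assumptions of this section) forces $x = y$, a contradiction. Hence $T^B_\theta(x) \neq T^B_\theta(y)$.

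For part \textbf{(b)}, set $h = x^{-1}y$, so $|h| \leq 12\delta$, $y = xh$, $gy = gxh$, and write $r = \theta\cdot 12\delta$. By the description of $T^B_\theta$ above, the hypothesis $T^B_\theta(x) = T^B_\theta(gx)$ says $P_r(x) = P_r(gx)$ and $T^B(xk) = T^B(gxk)$ for all $k \in P_r(x)$. Since $\theta \geq 1$ we have $h \in P_{12\delta}(x) \subseteq P_r(x) = P_r(gx)$, which immediately yields $|gy| = |gxh| = |gx|$, the first assertion. It remains to prove $T^B_{\theta-1}(y) = T^B_{\theta-1}(gy)$, i.e. that $P_{(\theta-1)\cdot 12\delta}(y) = P_{(\theta-1)\cdot 12\delta}(gy)$ and that $T^B$ agrees along this common set. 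For $h' \in P_{(\theta-1)\cdot 12\delta}(y)$ the product $hh'$ satisfies $|hh'| \leq 12\delta + (\theta-1)\cdot 12\delta = r$, and the element $yh' = xhh'$ is a fellow of $y$, so $|xhh'| = |yh'| = |y| = |x|$ and therefore $hh' \in P_r(x) = P_r(gx)$; consequently $|gyh'| = |gxhh'| = |gx| = |gy|$, so $h' \in P_{(\theta-1)\cdot 12\delta}(gy)$, and moreover $T^B(yh') = T^B(xhh') = T^B(gxhh') = T^B(gyh')$. The reverse inclusion follows from the same computation with the roles of $x$ and $gx$ interchanged (using $hh' \in P_r(gx) = P_r(x)$). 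The only delicate point in the whole argument is this bookkeeping with the fellow-sets $P_r(\cdot)$ — keeping track of which inclusions use $\theta \geq 1$ and noting that the boundary case $\theta = 1$, where $P_{(\theta-1)\cdot 12\delta}(y) = P_0(y) = \{e\}$, is harmless — but there is no genuine obstacle once the precise form of Definition~\ref{def-sm-typ-plus} is pinned down.
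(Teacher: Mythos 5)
Your proof is correct and follows essentially the same route as the paper's: both derive $d(x,y)\leq 12\delta$ from $C_x\cap C_y\subseteq S_x\cap S_y$ via Lemma~\ref{fakt-sasiedzi-blisko}, both settle \textbf{(a)} by Proposition~\ref{lem-sm-kuzyni}, and both prove \textbf{(b)} by the same triangle-inequality bookkeeping showing $P_{(\theta-1)\cdot 12\delta}(y)=h^{-1}P_{\theta\cdot 12\delta}(x)\cap B(e,(\theta-1)\cdot 12\delta)$ and transporting $T^B$ along the common fellow-set. The only cosmetic difference is that you unpack the set identity element by element rather than stating it once, and you explicitly flag the degenerate cases ($x=y$, $\theta=1$), which the paper leaves implicit.
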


\begin{proof}
Since $\emptyset \neq C_x \cap C_y \subseteq S_x \cap S_y$, it follows from Lemma~\ref{fakt-sasiedzi-blisko} that $d(x, y) \leq 12\delta$. Then, Proposition~\ref{lem-sm-kuzyni} implies that $T^B(x) \neq T^B(y)$, so $T^B_\theta(x) \neq T^B_\theta(y)$. Moreover, since $|x| = |y|$, we have $x^{-1} y \in P_{\theta \cdot 12\delta}(x) = P_{\theta \cdot 12\delta}(gx)$, and so $|gx| = |gy|$. 
Note that the triangle inequality gives:
\[ P_{(\theta-1) \cdot 12\delta}(y) = \big( y^{-1} x \, P_{\theta \cdot 12\delta}(x) \big) \cap B \big( e, (\theta - 1) \cdot 12\delta \big) \]
and analogously for~$gx, gy$, which shows that $P_{(\theta-1) \cdot 12\delta}(y) = P_{(\theta-1)\cdot 12\delta}(gy)$. Moreover, for any $z \in yP_{(\theta-1) \cdot 12\delta}(y)$ the above equality implies that $x^{-1}z \in P_{\theta \cdot 12\delta}(x) = P_{\theta \cdot 12\delta}(gx)$, and so $T^B(z) = T^B(gz)$. Since~$z$ was arbitrary, we deduce that $T^B_{\theta-1}(y) = T^B_{\theta-1}(gy)$.
\end{proof}

\begin{lem}
\label{lem-wym-slabe-sa-qi}
Let $\theta \geq 0$ and $\mathcal{C}$ be a $\theta$-weakly invariant system of open covers of~$X$. Then, $\mathcal{C}$ is quasi-$G$-invariant when equipped with the type function~$T^B_{\theta+1}$.
\end{lem}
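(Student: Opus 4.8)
The plan is to check directly that the system $\mathcal{C}$, equipped with the type function $T:=T^B_{\theta+1}$, satisfies all four conditions of Definition~\ref{def-quasi-niezm}, with neighbourhood constant $D=12\delta$ and jump constant $J$ a sufficiently large multiple of~$L$ (the size of~$J$ being forced only by condition~\qhlink{e}). Throughout I would use that $T^B_{\theta+1}$ is stronger than $T^B_\theta$ (immediate from the definition of the operation $(\cdot)^{+r}$) and that both are stronger than the ball type $T^b_N$ (since $T^B$ determines $T^A$, hence $T^Z$, which coincides with $T^b_N$ on elements of length $\geq L$ and is injective on the finitely many shorter ones). Conditions \qhlink{c} and \qhlink{d} are then immediate from the inclusion $C_x\subseteq S_x$: Lemmas~\ref{fakt-spany-male} and~\ref{fakt-pokrycie-male} give $\diam C_x\leq 3C\,a^{-|x|}$, which yields mesh property, while $C_x\cap C_y\neq\emptyset$ forces $S_x\cap S_y\neq\emptyset$, hence $d(x,y)\leq 12\delta$ by Lemma~\ref{fakt-sasiedzi-blisko}b. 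Condition \qhlink{f1} holds because $\mathcal{C}$ is $\theta$-weakly invariant and $T$ is stronger than $T^B_\theta$.

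For \qhlink{f2}, given $T(x)=T(gx)$ and an admissible~$y$, the above yields $d(x,y)\leq 12\delta$, so I would apply Lemma~\ref{fakt-wym-typ-wyznacza-sasiadow}b (with its parameter~$\theta$ taken to be our $\theta+1$, using that $\mathcal{C}$ is itself $\theta$-weakly invariant) to get $T^B_\theta(y)=T^B_\theta(gy)$ and $|gy|=|gx|$, and then conclude $C_{gy}=g\cdot C_y$ from \qhlink{f1} for $T^B_\theta$. For \qhlink{f3}, suppose $T(x)=T(gx)$, $|y|=|x|+kJ$ and $\emptyset\neq C_y\subseteq C_x$. Since $C_y$ is a non-empty subset of~$S_x$, Lemma~\ref{fakt-wlasnosc-gwiazdy-bez-gwiazdy}(a),(c) provides $f\in G$ with $|f|=|x|$, $y\in fT^c(f)$ and $S_y\subseteq S_f$; combining $C_y\subseteq S_y\subseteq S_f$ with $C_y\subseteq C_x\subseteq S_x$ gives $S_f\cap S_x\neq\emptyset$, whence $d(f,x)\leq 12\delta$ by Lemma~\ref{fakt-sasiedzi-blisko}b. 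Re-running the argument in the proof of Lemma~\ref{fakt-wym-typ-wyznacza-sasiadow}b for the pair $(x,f)$ — which is legitimate because that argument uses only the estimate $d(x,f)\leq 12\delta$ (the hypothesis $C_x\cap C_f\neq\emptyset$ there serving only to produce it) — yields $T^B_\theta(f)=T^B_\theta(gf)$ and $|gf|=|gx|$. Since $y$ is now an honest descendant of~$f$ at depth $kJ$, a multiple of~$L$, Lemma~\ref{fakt-wym-poglebianie-B+} applies and gives $T^B_{\theta+1}(y)=T^B_{\theta+1}(gy)$ and $|gy|=|gf|+kJ=|gx|+kJ$; the type equality then gives $T(gy)=T(y)$, and $C_{gy}=g\cdot C_y$ follows from it via \qhlink{f1} for $T^B_\theta$.

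The remaining condition \qhlink{e} is the heart of the matter. By Remark~\ref{uwaga-quasi-niezm-jeden-skok} it suffices to treat $k=1$, i.e. to show that for every $n\geq J$ each set $C_x$ with $|x|=n$ is contained in some $C_y$ with $|y|=n-J$. The plan is to first establish a uniform lower bound for the Lebesgue numbers of the covers $\mathcal{C}_m$: there should be a constant $E'>0$ such that the Lebesgue number of $\mathcal{C}_m$ is at least $E'\,a^{-m}$ for all $m\geq 0$. This is proved exactly as Lemma~\ref{fakt-duze-gwiazdy} — pick a bound~$M$ on the lengths of type-representatives, translate a small set to a representative by Proposition~\ref{lem-bi-lip-geodezyjne}, and transport the containing element back — and the essential point to verify is that this argument invokes only $C_x\subseteq S_x$ together with the conditions \qhlink{c}, \qhlink{d}, \qhlink{f1}, \qhlink{f2} (besides Proposition~\ref{lem-bi-lip-geodezyjne} and the finiteness of $T^b_N$), so there is no circularity with \qhlink{e} or \qhlink{f3}. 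Granting this, for $|x|=n\geq J$ one has $\diam C_x\leq 3C\,a^{-n}$, and choosing the multiple~$J$ of~$L$ large enough that $a^{J}>3C/E'$ makes $\diam C_x<E'\,a^{-(n-J)}$, which does not exceed the Lebesgue number of $\mathcal{C}_{n-J}$; hence $C_x$ lies in some member of $\mathcal{C}_{n-J}$, as required.

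The main obstacle I expect is exactly \qhlink{e}: unlike the span-star system of Section~\ref{sec-konstr-pokrycia}, a $\theta$-weakly invariant system carries no built-in nesting between consecutive levels, so the required ancestor must be extracted from the uniform Lebesgue estimate, and care is needed to ensure that estimate is derived purely from the properties already verified. A secondary difficulty is \qhlink{f3}, where the hypothesis $C_y\subseteq C_x$ is only a topological containment and has to be upgraded, via Lemma~\ref{fakt-wlasnosc-gwiazdy-bez-gwiazdy}, to a genuine geodesic descendance before the deepening Lemma~\ref{fakt-wym-poglebianie-B+} can be invoked.
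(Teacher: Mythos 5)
Your handling of \qhlink{c}, \qhlink{d}, \qhlink{f1}, \qhlink{f2} and \qhlink{f3} matches the paper's proof essentially step for step: the paper also deduces \qhlink{f2} from Lemma~\ref{fakt-wym-typ-wyznacza-sasiadow}b, and for \qhlink{f3} it also produces a geodesic ancestor $z$ at level $|x|$ via Lemma~\ref{fakt-wlasnosc-gwiazdy-bez-gwiazdy}, applies the neighbour lemma to $S_x\cap S_z\neq\emptyset$ (your remark that its proof only uses $d\leq 12\delta$ is exactly the point one needs, since $C_x\cap C_z$ may be empty), and then invokes Lemma~\ref{fakt-wym-poglebianie-B+}.

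The gap is in \qhlink{e}, at precisely the spot you flag as ``the essential point to verify''. The transplanted Lemma~\ref{fakt-duze-gwiazdy} argument does \emph{not} run on \qhlink{f1} and \qhlink{f2} alone. Trace it: $x_0\in\sppan(g)\cap C_{\tilde g}$ with $|g|=|\tilde g|=m$, a representative $h$ with $|h|<M$ and $T^B_{\theta+1}(h)=T^B_{\theta+1}(g)$, $\gamma=hg^{-1}$, and $\gamma\cdot B\subseteq C_{h'}$ with $|h'|=|h|$. To transport $C_{h'}$ back you need $C_{\gamma^{-1}h'}=\gamma^{-1}\cdot C_{h'}$. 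Applying \qhlink{f2} at the anchor pair $(h,g)$ requires $C_h\cap C_{h'}\neq\emptyset$, which is not known ($\gamma\cdot x_0$ is only located in $C_{\tilde h}\cap C_{h'}$, and $C_h$ could even be empty); applying it at the anchor pair $(\tilde h,\tilde g)$ requires $T^B_{\theta+1}(\tilde h)=T^B_{\theta+1}(\tilde g)$, whereas the degradation in Lemma~\ref{fakt-wym-typ-wyznacza-sasiadow}b only delivers the $\theta$-level equality. Routing through $\tilde h$ gives only $d(h,h')\leq d(h,\tilde h)+d(\tilde h,h')\leq 24\delta$, so $h'$ is a $2\cdot 12\delta$-fellow of $h$, and concluding $T^B_\theta(h')=T^B_\theta(\gamma^{-1}h')$ from that would consume \emph{two} levels of the extension, i.e.\ would need $T^B_{\theta+2}(g)=T^B_{\theta+2}(h)$ — one more than the hypothesis supplies. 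The fix is to anchor the final step at the span rather than at $\tilde g$: since $T^B_{\theta+1}$ determines $T^b_{N_0}$, Proposition~\ref{lem-typy-pokrycia-niezmiennicze}a gives $\gamma\cdot x_0\in\sppan(h)\subseteq S_h$, hence $S_h\cap S_{h'}\neq\emptyset$ and $d(h,h')\leq 12\delta$; now the fellows argument at $(g,h)$, where the full $T^B_{\theta+1}$-equality holds, yields $T^B_\theta(h')=T^B_\theta(\gamma^{-1}h')$, and $\theta$-weak invariance (not \qhlink{f2}) finishes. This single-degradation bookkeeping is exactly how the paper's own proof of \qhlink{e} is organised: it first passes to the span-star ancestor $S_y\supseteq S_x$ at level $|x|-J'$, translates $y\mapsto y'$ with $|y'|<N$, engulfs the \emph{whole} translated span-star $S_{x'}$ in one $C_{z'}$, and only then degrades once, from $x'$ to $z'$. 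So your strategy is the right one, but the verification you defer is where the actual work (and the only subtlety of the lemma) lives, and as sketched it would fail.
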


\begin{proof}
Let $\mathcal{C} = \{ C_x \}_{x \in G}$. The conditions~\qhlink{c}, \qhlink{d} for $\mathcal{C}$ follow directly from the same conditions for~$\mathcal{S}$. Also, \qhlink{f2} can be easily translated: whenever we have
\[ T^B_{\theta+1}(x) = T^B_{\theta+1}(gx), \qquad |x| = |y|, \qquad C_x \cap C_y \neq \emptyset, \]
then by Lemma~\ref{fakt-wym-typ-wyznacza-sasiadow}b it follows that $|gx| = |gy|$ and $T^B_{\theta}(y) = T^B_{\theta}(gy)$, so --- by the property~\qhlink{f1} for~$\mathcal{C}$ --- we have $C_{gy} = g \cdot C_y$.

We will now verify the property \qhlink{f3}. 
Let $L$ denote the constant coming from Lemma~\ref{fakt-wym-poglebianie-B+}.
Suppose that
\[ T^B_{\theta+1}(x) = T^B_{\theta+1}(gx), \qquad |y| = |x| + L, \qquad \emptyset \neq C_y \subseteq C_x. \]
Let $\alpha$ be a geodesic joining~$e$ with~$y$ and let $z = \alpha(|x|)$. Then, by Lemma~\ref{fakt-wlasnosc-gwiazdy-bez-gwiazdy}c we have $C_y \subseteq S_y \subseteq S_z$; on the other hand, $C_y \subseteq S_x$, so $S_x \cap S_z \neq \emptyset$, and then by Lemma~\ref{fakt-wym-typ-wyznacza-sasiadow} we have
\[ T^B_\theta(z) = T^B_\theta(gz), \qquad |gx| = |gz|. \]
Since $y \in zT^c(z)$ and $|y| = |x| + L = |z| + L$, using Lemma~\ref{fakt-wym-poglebianie-B+} and then the property~\qhlink{f1} for~$\mathcal{C}$ we obtain that
\[ T^B_{\theta+1}(y) = T^B_{\theta+1}(gy), \qquad |gy| = |gz| + L = |gx| + L, \qquad C_{gy} = g \cdot C_y. \]
By Remark~\ref{uwaga-quasi-niezm-jeden-skok}, this means that $\mathcal{C}$ satisfies~\qhlink{f3} for the jump constant~$L$.

It remains to verify the property \qhlink{e}. Let $\mathcal{T}$ be the set of all possible values of~$T^B_{\theta + 1}$. Choose $N > 0$ such that, for every $\tau \in \mathcal{T}$, there is $x \in G$ of length less than~$N$ such that $T^B_{\theta + 1}(x) = \tau$. Let $\varepsilon > 0$ be the minimum of all Lebesgue numbers for the covers $\mathcal{C}_1, \ldots, \mathcal{C}_N$, and choose $J' > N$ so that $\max_{S \in \mathcal{S}_n} \diam S_n < \varepsilon$ for every $n \geq J'$. We will show that the system~$\mathcal{C}$ together with $J'$ satisfies~\qhlink{e}. By Remark~\ref{uwaga-quasi-niezm-jeden-skok}, it suffices to verify this in the case when $k = 1$.

Let $x \in G$ satisfy $|x| \geq J'$. By Lemma~\ref{fakt-wlasnosc-gwiazdy-bez-gwiazdy}, we have $S_x \subseteq S_y$ for some $y \in G$ of length~$|x| - J'$. Let $y' \in G$ be an element such that $T^B_{\theta+1}(y') = T^B_{\theta+1}(y)$ and $|y'| < N$. Denote $\gamma = y'y^{-1}$ and $x' = \gamma x$; then, by~\qhlink{f3}, we have
\[ S_{x'} = \gamma \cdot S_x, \qquad T^B_{\theta+1}(x') = T^B_{\theta+1}(x), \qquad |x'| \, = \, |y'| + |x| - |y| \, = \, |y'| + J'. \]
The last equality implies that $\diam S_{x'} < \varepsilon$, which means (since $|y'| < N$) that $S_{x'}$ must be contained in~$C_{z'}$ for some $z' \in G$ such that $|z'| = |y'|$. Denote $z = \gamma^{-1} z'$. Then, using the property~\qhlink{f2} and then Lemma~\ref{fakt-wym-typ-wyznacza-sasiadow}b, we obtain:
\[ S_z = \gamma^{-1} \cdot S_{z'}, \qquad T^B_\theta(z) = T^B_\theta(z'), \qquad |z| = |y| = |x| - J', \]
and so, since $\mathcal{C}$ is $\theta$-weakly invariant, it follows that:
\[ C_x \subseteq S_x = \gamma^{-1} \cdot S_{x'} \subseteq \gamma^{-1} \cdot C_{z'} = C_z. \]

Altogether, we obtain that $\mathcal{C}$ is quasi-$G$-invariant with $\gcd(L, J')$ as the jump constant.
\end{proof}

\begin{fakt}
\label{fakt-wym-homeo-na-calej-kuli}
Let $\theta \geq 0$, $\mathcal{C}$ be a $\theta$-weakly invariant system, and $x, y \in G$ with $T^B_{\theta + 1}(x) = T^B_{\theta + 1}(y)$. Then, we have
\[ |\mathcal{C}|_{|y|} \cap S_y = yx^{-1} \cdot \big( |\mathcal{C}|_{|x|} \cap S_x \big). \]
In particular, the left translation by $yx^{-1}$, when applied to separated subsets of~$S_x$, preserves the interiors and closures taken in $|\mathcal{C}|_{|x|}$ and respectively $|\mathcal{C}|_{|y|}$.
\end{fakt}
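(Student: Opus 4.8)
The plan is to establish the displayed set equality by proving one inclusion and invoking symmetry, then to read off the ``in particular'' clause. Put $\gamma = yx^{-1}$, so $\gamma x = y$. Since $T^B_{\theta+1}$ determines $T^b_N$ (Remark~\ref{uwaga-sm-B-wyznacza-A} and Definition~\ref{def-sm-typ-A}), the hypothesis $T^B_{\theta+1}(x)=T^B_{\theta+1}(y)$ gives $T^b_N(\gamma x)=T^b_N(x)$, hence $S_y = S_{\gamma x}=\gamma\cdot S_x$ by Proposition~\ref{lem-typy-pokrycia-niezmiennicze}c; in particular $\gamma$ carries $S_x$ homeomorphically onto $S_y$, so everything reduces to showing that $\gamma$ also carries $|\mathcal{C}|_{|x|}\cap S_x$ onto $|\mathcal{C}|_{|y|}\cap S_y$.

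First I would prove $\gamma\cdot\big(|\mathcal{C}|_{|x|}\cap S_x\big)\subseteq |\mathcal{C}|_{|y|}\cap S_y$. Let $p$ lie in the left-hand set; then $p\in C_z\cap S_x$ for some $z\in G$ with $|z|=|x|$, and $\gamma p\in\gamma\cdot S_x = S_y$ by the previous paragraph. Because $\mathcal{C}$ is $\theta$-weakly invariant we have $C_z\subseteq S_z$, so $p\in S_x\cap S_z$ with $|x|=|z|$; this is exactly the configuration handled by Lemma~\ref{fakt-wym-typ-wyznacza-sasiadow}, whose proof invokes its hypothesis only through the inclusion $C_x\cap C_z\subseteq S_x\cap S_z$ and therefore applies to any $x,z$ of equal length with $S_x\cap S_z\neq\emptyset$. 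Applying part~(b) with parameter $\theta+1$, the element $\gamma$, and the equality $T^B_{\theta+1}(\gamma x)=T^B_{\theta+1}(x)$, we obtain
\[ T^B_\theta(\gamma z)=T^B_\theta(z),\qquad |\gamma z| = |\gamma x| = |y|. \]
The defining property~\qhlink{f1} of a $\theta$-weakly invariant system (applied to the type function $T^B_\theta$) then yields $C_{\gamma z}=\gamma\cdot C_z$, so $\gamma p\in\gamma\cdot C_z = C_{\gamma z}\in\mathcal{C}_{|y|}$, i.e. $\gamma p\in|\mathcal{C}|_{|y|}\cap S_y$. For the reverse inclusion I would apply what has just been proved to the swapped pair $(y,x)$ — legitimate since $T^B_{\theta+1}(y)=T^B_{\theta+1}(x)$ — and then translate by $\gamma^{-1}$, obtaining $|\mathcal{C}|_{|y|}\cap S_y\subseteq \gamma\cdot\big(|\mathcal{C}|_{|x|}\cap S_x\big)$; combining the two inclusions gives the asserted equality.

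For the last sentence, note that $\gamma$ acts on $X=\partial G$ by a homeomorphism taking $S_x$ to $S_y$, which by the equality just proved restricts to a homeomorphism $|\mathcal{C}|_{|x|}\cap S_x \to |\mathcal{C}|_{|y|}\cap S_y$; since $S_x$ is open, $|\mathcal{C}|_{|x|}\cap S_x$ is open in $|\mathcal{C}|_{|x|}$. Hence for $A\Subset S_x$ one has $\overline{A}\subseteq S_x$, so the interior and closure of $A$ taken in $|\mathcal{C}|_{|x|}$ agree with those taken in the open subspace $|\mathcal{C}|_{|x|}\cap S_x$; as moreover $\gamma\overline{A}=\overline{\gamma A}\subseteq\gamma S_x = S_y$, i.e. $\gamma A\Subset S_y$, the restricted homeomorphism transports these to the interior and closure of $\gamma A$ taken in $|\mathcal{C}|_{|y|}$, which is precisely the assertion.

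I do not expect a genuine obstacle: the content is carried by Lemma~\ref{fakt-wym-typ-wyznacza-sasiadow}b together with the definition of $\theta$-weak invariance. The one point needing care is the observation that the proof of Lemma~\ref{fakt-wym-typ-wyznacza-sasiadow} never uses more than non-emptiness of the relevant intersection of $S$'s, which is what permits applying it to $z$ even though the chosen point $p$ need not lie in $C_x$. A secondary, purely topological nuisance is keeping ambient versus relative interiors and closures straight in the final clause — which is exactly why the hypothesis there is $A\Subset S_x$ rather than merely $A\subseteq S_x$.
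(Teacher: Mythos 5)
Your proof is correct and follows essentially the same route as the paper: one inclusion via Lemma~\ref{fakt-wym-typ-wyznacza-sasiadow}b applied to the translate of an arbitrary $C_z$ meeting $S_x$, then \qhlink{f1}, then symmetry. Your explicit observation that Lemma~\ref{fakt-wym-typ-wyznacza-sasiadow} only ever uses $S_x\cap S_z\neq\emptyset$ (not $C_x\cap C_z\neq\emptyset$) is a point the paper glosses over, and your spelled-out argument for the ``in particular'' clause is a welcome addition the paper omits entirely.
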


\begin{proof}
Denote $n = |x|$, $m = |y|$ and $\gamma = yx^{-1}$.
By symmetry, it suffices to show one inclusion. Let~$p \in |\mathcal{C}|_n \cap S_x$; then~$p$ belongs to some $C_{x'} \in \mathcal{C}_n$. In particular, we have:
\[ p \ \in \ S_x \cap C_{x'} \ \subseteq \ S_x \cap S_{x'}. \]
Denote $y' = \gamma x'$.
Then, we have $T^B_\theta(y') = T^B_\theta(x')$ by Lemma~\ref{fakt-wym-typ-wyznacza-sasiadow}b, so~\qhlink{f1} implies that $C_{y'} = \gamma \cdot C_{x'}$, as well as $S_y = \gamma \cdot S_x$. Therefore,
\[ \gamma \cdot p \ \in \ \gamma \cdot (C_{x'} \cap S_x) \ = \ C_{y'} \cap S_y \ \subseteq \ |\mathcal{C}|_m \cap S_y. \qedhere \]
\end{proof}

\subsection{Disjoint, weakly invariant nearly-covers}
\label{sec-wym-wyscig}

Before proceeding with the construction, we will introduce notations and conventions used below.

In the proofs of Propositions~\ref{lem-wym-bzdziagwy} and \ref{lem-wym-kolnierzyki}, we will use the following notations, dependent on the value of a~parameter~$\theta$ (which is a~part of the input data in both propositions). Let $\tau_1, \ldots, \tau_K$ be an enumeration of all possible $T^B_{\theta+1}$-types. For simplicity, we identify the value $\tau_i$ with the natural number~$i$. For every~$1 \leq i \leq K$, we fix an arbitrary $x_i \in G$ such that $T^B_{\theta+1}(x_i) = i$, and we set $S_i = _S{x_i}$, $n_i = |x_i|$.
Similarly, let $1, \ldots, \widetilde{K}$ be an enumeration of all possible~$T^B_{\theta+2}$-types, and for every $1 \leq \widetilde{\imath} \leq \widetilde{K}$ let $\widetilde{x}_{\widetilde{\imath}} \in G$ be a~fixed element such that $T^B_{\theta+2}(\widetilde{x}_{\widetilde{\imath}}) = \widetilde{\imath}$. We denote also $M = \max_{{\widetilde{\imath}}=1}^{\widetilde{K}} |\widetilde{x}_{\widetilde{\imath}}|$.

In the remaining part of Section~\ref{sec-wymd}, we will usually consider sub-systems of a given semi-closed system in~$X$ (which is given the name~$\mathcal{C}$ in Propositions~\ref{lem-wym-bzdziagwy} and~\ref{lem-wym-kolnierzyki}), and more generally --- subsets of~$X$ known to be contained in~$|\mathcal{C}|_n$ for some~$n$ (known from the context). Unless explicitly stated otherwise, the basic topological operators for such sets (closure, interior, frontier) will be performed within the space~$|\mathcal{C}|_n$ (for the appropriate value of~$n$). This will not influence closures, since~$|\mathcal{C}|_n$ is a closed subset of~$X$, but will matter for interiors and frontiers.

This subsection contains the proof of the following result.

\begin{lem}
\label{lem-wym-bzdziagwy}
Let $k \geq 0$, $\theta \geq 1$ and $\mathcal{C} = \{ C_x \}$ be a semi-closed, $\theta$-weakly invariant system of dimension $\leq k$ in~$X$. Then, there exist:
\begin{itemize}
 \item a disjoint, open, $(\theta + 2)$-weakly invariant subsystem $\mathcal{G} = \{ G_x \}$ in $\mathcal{C}$;
 \item a closed, $(\theta + 2)$-weakly invariant subsystem $\mathcal{F} = \{ F_x \}$ in $\mathcal{C}$ of dimension $\leq k - 1$
\end{itemize}
such that $\mathcal{F} \sqcup \mathcal{G}$ covers $\mathcal{C}$ and $\partial G_x \subseteq |\mathcal{F}|_n$ for every $n \geq 0$ and~$G_x \in \mathcal{G}_n$.
\end{lem}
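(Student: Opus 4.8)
The overall strategy is a finite, type-indexed version of the elementary dimension-theory argument sketched before Theorem~\ref{tw-wym-przedzialek}: instead of shrinking each $C_x$ independently, I will construct a bounded number of \emph{model} shrinkings, one for each $T^B_{\theta+1}$-type, and then transport them by left translation using the invariance machinery of Section~\ref{sec-wymd} (Proposition~\ref{lem-sm-B-dzieci}, Lemmas~\ref{fakt-wym-typ-wyznacza-sasiadow} and~\ref{fakt-wym-homeo-na-calej-kuli}). Here is the sequence of steps I would carry out.

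\textbf{Step 1: Model separators.} Recall the notations fixed at the beginning of Section~\ref{sec-wym-wyscig}: $\tau_1,\dots,\tau_K$ are all $T^B_{\theta+1}$-types, $x_i$ a representative of $\tau_i$, $S_i = S_{x_i}$, $n_i = |x_i|$. For each $i$, working inside the closed, $k$-dimensional separable metric space $|\mathcal{C}|_{n_i}$, I apply Theorem~\ref{tw-wym-przedzialek} to the disjoint closed pair $A = |\mathcal{C}|_{n_i}\setminus S_i$ (closed since $S_i$ is open and $|\mathcal{C}|_{n_i}$ closed in $X$, hence in $X$) wait --- more carefully, I separate $\overline{C_{x_i}}$ from $|\mathcal{C}|_{n_i}\setminus S_i$: these are disjoint closed subsets of $|\mathcal{C}|_{n_i}$ because $C_{x_i}\subseteq S_{x_i}$ is separated (for a $\theta$-weakly invariant system the sets are open in $|\mathcal{C}|_n$, but one needs $\overline{C_{x_i}}\Subset S_i$; if that is not automatic one first shrinks slightly). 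Theorem~\ref{tw-wym-przedzialek} yields open $\widetilde A_i, \widetilde B_i$ with $\widetilde A_i \cap \widetilde B_i = \emptyset$ and $\dim\!\big(|\mathcal{C}|_{n_i}\setminus(\widetilde A_i\cup \widetilde B_i)\big)\le k-1$. Set the \emph{model open piece} $D_i := \widetilde A_i \cap S_i$ (open in $|\mathcal{C}|_{n_i}$, contained in $S_i$, containing $C_{x_i}$ or its shrinking), with $\partial D_i \subseteq |\mathcal{C}|_{n_i}\setminus(\widetilde A_i\cup\widetilde B_i)$ of dimension $\le k-1$.

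\textbf{Step 2: Transport by translation and disjointify.} For an arbitrary $x\in G$, let $i = T^B_{\theta+1}(x)$ and $\gamma_x = x\,x_i^{-1}$; define
\[
D_x := \gamma_x \cdot D_i \ \subseteq\ \gamma_x\cdot S_i = S_x .
\]
By Lemma~\ref{fakt-wym-homeo-na-calej-kuli} (with the roles of $x,y$ played by $x_i, x$) the translation $\gamma_x$ maps $|\mathcal{C}|_{n_i}\cap S_i$ homeomorphically onto $|\mathcal{C}|_{|x|}\cap S_x$ and preserves interiors, closures and frontiers relative to these ambient sets; hence each $D_x$ is open in $|\mathcal{C}|_{|x|}$, $\partial D_x$ (taken in $|\mathcal{C}|_{|x|}$) is the translate of $\partial D_i$, so $\dim\partial D_x \le k-1$, and $\overline{D_x}\subseteq S_x$. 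The family $\{D_x\}$ satisfies~\qhlink{f1} with type function $T^B_{\theta+1}$ almost by construction: if $T^B_{\theta+1}(x) = T^B_{\theta+1}(gx)$ then $x, gx$ have the same index $i$, and $\gamma_{gx} = (gx)x_i^{-1} = g\gamma_x$, whence $D_{gx} = g\gamma_x\cdot D_i = g\cdot D_x$. Next I ``disjointify'' exactly as in step (iii) of the sketch, but along the type order: enumerate types $1,\dots,K$ and for $x$ of type $i$ put
\[
E_x := D_x \setminus \bigcup\Big\{\, \overline{D_y} \ \Big|\ |y| = |x|,\ T^B_{\theta+1}(y) < i \,\Big\}.
\]
Because $D_x\cap D_y\neq\emptyset$ forces $d(x,y)\le 12\delta$ (both sit in $S_x\cap S_y$, apply Lemma~\ref{fakt-sasiedzi-blisko}b) and then $T^B_{\theta+1}(x)\ne T^B_{\theta+1}(y)$ by Lemma~\ref{fakt-wym-typ-wyznacza-sasiadow}a, the union is locally finite and over finitely many types; the sets $\innt E_x$ are then pairwise disjoint among $x$ of a fixed length. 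Finally set
\[
G_x := \innt E_x,\qquad F_x := \partial E_x \ \cup\ \big(C_x \setminus {\textstyle\bigcup_{|y|=|x|}} G_y\big).
\]
Here I intersect everything with $C_x$ where needed so that $G_x\subseteq C_x$ and $F_x\subseteq C_x$, making $\mathcal{G},\mathcal{F}$ genuine subsystems of $\mathcal{C}$.

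\textbf{Step 3: Verify the asserted properties.} (a) $\mathcal{G}$ is disjoint and open in $\mathcal{C}$ by construction. (b) $\mathcal{F}$ is closed in $|\mathcal{C}|_n$: $\partial E_x$ is closed, and the ``uncovered remainder'' is closed since it is $C_x$ (closed in $|\mathcal{C}|_n$) minus an open set. (c) $\mathcal{F}\sqcup\mathcal{G}$ covers $\mathcal{C}$: a point $p\in |\mathcal{C}|_n$ lies in some $C_x$; if it lies in some $G_y$ we are done via $\mathcal{G}$; otherwise it lies in the remainder part of $F_x$. And $\partial G_x = \partial E_x \subseteq F_x\subseteq |\mathcal{F}|_n$ as required. (d) $\dim |\mathcal{F}|_n \le k-1$: $|\mathcal{F}|_n$ is a finite union (finitely many lengths? no — but at a fixed level $n$, finitely many $x$ with $|x|=n$... actually infinitely many, but all of $|\mathcal{C}|_n$ is covered by finitely many types and each $|\mathcal{C}|_n$ is compact, so by the sum theorem for dimension the level set $|\mathcal{F}|_n$, being a countable closed union of the $(k-1)$-dimensional pieces $\partial E_x$ together with the uncovered remainders --- the latter also have dimension $\le k-1$ because off the $G_y$'s one is inside $\bigcup_x \partial D_x$) has dimension $\le k-1$ by the countable sum theorem \cite[Theorem~1.5.3]{E}. (e) \emph{$(\theta+2)$-weak invariance of $\mathcal{G}$ and $\mathcal{F}$}: one must upgrade the type from $T^B_{\theta+1}$ to $T^B_{\theta+2}$. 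This is needed because the \emph{disjointification} step involves the translates $D_y$ of \emph{neighbours} $y$ of $x$; to conclude $G_{gx}=g\cdot G_x$ when $T^B_{\theta+2}(x)=T^B_{\theta+2}(gx)$ I use Lemma~\ref{fakt-wym-typ-wyznacza-sasiadow}b, which tells me that this stronger equality forces $T^B_{\theta+1}(y)=T^B_{\theta+1}(gy)$ for every neighbour $y$, hence $D_{gy}=g\cdot D_y$, $\overline{D_{gy}}=g\cdot\overline{D_y}$ (translation is a homeomorphism of $X$), and therefore $E_{gx} = g\cdot E_x$, so $G_{gx}=\innt E_{gx}=g\cdot\innt E_x = g\cdot G_x$ and likewise for $F$. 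That one extra ``unit'' of strengthening is exactly why the statement demands a $(\theta+2)$-weakly invariant output from a $\theta$-weakly invariant input.

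\textbf{The main obstacle.} The genuinely delicate part is Step~3(e) combined with the interaction between frontiers taken in $|\mathcal{C}|_n$ and the translation action: I must be sure that $\partial E_x$ (a frontier relative to the \emph{moving} ambient set $|\mathcal{C}|_{|x|}$) transports correctly, which is precisely the content of the ``in particular'' clause of Lemma~\ref{fakt-wym-homeo-na-calej-kuli}, but one has to apply it to the \emph{union} of translates $D_y$ over neighbours $y$, not to a single set — this requires knowing that all the relevant $D_y$ lie inside a common $S_z$ or at least that their closures behave well under $g$, which follows from $D_y\subseteq S_y$ and the fact that left translation by $g$ is a global homeomorphism of $X$ (so commutes with closure unconditionally), with Lemma~\ref{fakt-wym-homeo-na-calej-kuli} only needed to handle interiors/frontiers relative to $|\mathcal{C}|_n$. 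Keeping the bookkeeping of which ambient space each topological operator refers to — as flagged in the paragraph preceding Proposition~\ref{lem-wym-bzdziagwy} — is where the proof must be written with care.
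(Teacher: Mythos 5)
Your overall architecture (model sets per $T^B_{\theta+1}$-type, the separation theorem \ref{tw-wym-przedzialek}, transport by translation via Lemma~\ref{fakt-wym-homeo-na-calej-kuli}, disjointification along the type order, and the $\theta+1\to\theta+2$ upgrade via Lemma~\ref{fakt-wym-typ-wyznacza-sasiadow}b) matches the paper's proof. But there is a genuine gap at the very first step, and it propagates fatally. You apply Theorem~\ref{tw-wym-przedzialek} to separate $\overline{C_{x_i}}$ from $|\mathcal{C}|_{n_i}\setminus S_i$, and you yourself note these closed sets need not be disjoint (indeed for $\mathcal{C}=\mathcal{S}$ one has $C_{x_i}=S_{x_i}$, whose closure is generally not contained in the open set $S_{x_i}$), proposing to ``first shrink slightly''. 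Once you shrink the model set, the translated family $\{D_x\}$ no longer covers $|\mathcal{C}|_n$: the union $\bigcup_x D_x$ only contains the translates of the shrunken sets, and its complement in $|\mathcal{C}|_n$ can be a fat ($k$-dimensional) set. Your patch --- adding $C_x\setminus\bigcup_y G_y$ to $F_x$ --- restores the covering trivially but destroys the dimension bound: the assertion in your Step~3(d) that ``off the $G_y$'s one is inside $\bigcup_x\partial D_x$'' is exactly the covering statement you have not proved, and it is false in general. It holds only if $\bigcup_x\overline{D_x}\supseteq|\mathcal{C}|_n$.

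The missing idea is how to shrink \emph{coherently across all of $G$} so that the shrunken sets still cover. The paper does this with a Lebesgue-number device: it fixes $\varepsilon$ a common Lebesgue number for the covers $\mathcal{S}_1,\dots,\mathcal{S}_M$, sets $I_x=\{p\in S_x \mid B(p,\varepsilon)\subseteq S_x\}$ (a separated subset of $S_x$ with $d(I_x, X\setminus S_x)\ge\varepsilon$, so the disjointness needed for Theorem~\ref{tw-wym-przedzialek} is automatic), and crucially defines the model set $G_i$ as the union of the translates $x_ix^{-1}\cdot I_x$ over \emph{all} $x$ of length $\le M$ and type $i$ --- not just the single representative $x_i$, and built from the $S_x$'s rather than the $C_x$'s. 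The covering of $\mathcal{C}$ by $\mathcal{D}=\{D_x\}$ is then a separate, nontrivial argument (step~(a) of the paper's proof): given $p\in|\mathcal{C}|_n$, one translates $p$ down to the representative of the $T^B_{\theta+2}$-type of some $x$ with $p\in S_x$, uses the Lebesgue number to place the image in some $I_y$, and translates again to the $T^B_{\theta+1}$-representative to land in $G_j\subseteq\widetilde{G}_j$. With that covering in hand, $F_x=\partial E_x$ alone suffices (no remainder term), and $|\mathcal{F}|_n\subseteq\bigcup_{|x|=n}\partial D_x$ gives the dimension bound. You need to supply this coherent-shrinking-plus-covering argument; the rest of your proposal then goes through essentially as in the paper.
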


\begin{proof}
Let $\varepsilon > 0$ be the minimum of all Lebesgue numbers for the covers $\mathcal{S}_1, \ldots, \mathcal{S}_M$. Define:
\[ I_x = \{ p \in S_x \,|\, B(p, \varepsilon) \subseteq S_x \} \qquad \qquad \textrm{ for } x \in G, \, |x| \leq M \] 
and
\begin{align*} 
G_i = \bigcup_{|x| \leq M, \, T^B_{\theta+1}(x) = i} x_i x^{-1} \cdot I_x, \qquad H_i = X \setminus S_i \qquad \qquad \textrm{ for } 1 \leq i \leq K.
\end{align*}

Fix some $1 \leq i \leq K$. We observe that, for every~$x \in G$, we have $d(I_x, X \setminus S_x) \geq \varepsilon$, which implies that~$G_i$ is a finite union of separated subsets of~$S_i$; then,
$\overline{G_i} \cap \overline{H_i} = \emptyset$ (in~$X$). Then, the intersections $\overline{G_i} \cap |\mathcal{C}|_{n_i}$, $\overline{H_i} \cap |\mathcal{C}|_{n_i}$ are disjoint closed subsets of~$|\mathcal{C}|_{n_i}$, so by Theorem~\ref{tw-wym-przedzialek} there are open subsets of~$|\mathcal{C}|_{n_i}$:
\[ \widetilde{G}_i \supseteq \overline{G_i} \cap |\mathcal{C}|_{n_i}, \qquad \widetilde{H}_i \supseteq \overline{H_i} \cap |\mathcal{C}|_{n_i}, \qquad \qquad (\textrm{ closures of } G_i, H_i \textrm{ taken in } X) \]
which cover $|\mathcal{C}|_{n_i}$ except for some subset of dimension~$\leq k - 1$ (which must contain $\partial \widetilde{G}_i$). 

For any $x \in G$, we denote $|x| = n$ and $T^B_{\theta+1}(x) = i$, and then we define:
\begin{align*}
 D_x & = x x_i^{-1} \cdot \widetilde{G}_i, \\
 E_x & = D_x \setminus \, \bigcup_{y \in G, \, |y| = n, \, T^B_{\theta+1}(y) < T^B_{\theta+1}(x)} D_y, \\
 F_x & = \partial E_x, \\
 G_x & = \intr E_x.
\end{align*}
Note that $\partial G_x = \overline{G_x} \setminus G_x \subseteq \overline{E_x} \setminus \intr E_x = \partial E_x = F_x$, as desired in the claim.

% Pozostałą część dowodu przeprowadzimy w~następujących krokach (wśród których punkty~\textbf{(a)} i~\textbf{(b)} są pomocnicze):
The remaining part of the proof consists of verifying the following claims (of which \textbf{(a)} and \textbf{(b)} are auxiliary):
\begin{itemize}[nolistsep]
 \item[\textbf{(a)}] $\mathcal{D} = \{ D_x \}_{x \in G}$ covers~$\mathcal{C}$;
 \item[\textbf{(b)}] $\mathcal{E} = \{ E_x \}_{x \in G}$ is disjoint and covers~$\mathcal{C}$;
 \item[\textbf{(c)}] $\mathcal{F} = \{ F_x \}_{x \in G}$ is of dimension~$\leq k - 1$;
 \item[\textbf{(d)}] $\mathcal{G} = \{ G_x \}_{x \in G}$ is disjoint;
 \item[\textbf{(e)}] $F_x \subseteq S_x$ for every~$x \in G$;
 \item[\textbf{(f)}] $\mathcal{F} \sqcup \mathcal{G}$ covers~$\mathcal{C}$;
 \item[\textbf{(g)}] $\mathcal{F}$ and~$\mathcal{G}$ are $(\theta+2)$-weakly invariant.
\end{itemize}

\textbf{(a)} 
To verify that $\mathcal{D}$ covers $\mathcal{C}$, choose an arbitrary $p \in |\mathcal{C}|_n$; then $p$ lies in some $S_x \in \mathcal{S}_n$. Denote 
\[ \widetilde{\imath} = T^B_{\theta + 2}(x), \qquad \gamma = x\widetilde{x}_{\widetilde{\imath}}^{-1}, \qquad p' = \gamma^{-1} \cdot p. \]
Then, $p' \in S_{\widetilde{x}_{\widetilde{\imath}}}$. By the definition of $\varepsilon$, we have $B(p', \varepsilon) \subseteq S_y$ for some $S_y \in \mathcal{S}_{|\widetilde{x}_{\widetilde{\imath}}|}$; then $p'$ lies in~$I_y$. Now, let
\[ 
j = T^B_{\theta+1}(y), \qquad \beta = yx_j^{-1}, \qquad p'' = \beta^{-1} \cdot p'. \]
By definition, $p''$ lies in $G_j$; moreover, by applying Lemma~\ref{fakt-wym-homeo-na-calej-kuli} twice we obtain that $p'' \in |\mathcal{C}|_{|x_j|}$. On the other hand, since $S_y$ intersects non-trivially with $S_{x_i}$ and $T^B_{\theta+2}(\gamma x_i) = T^B_{\theta+2}(x_i)$, by~\qhlink{f2} and Lemma~\ref{fakt-wym-typ-wyznacza-sasiadow}b we have:
\[ S_{\gamma \beta x_j} \ = \ S_{\gamma y} \ = \ \gamma \cdot S_y \ \ni \ \gamma \cdot p' \ = \ p, \qquad T^B_{\theta+1}(\gamma \beta x_j) = T^B_{\theta+1}(\gamma y) = T^B_{\theta+1}(y) = j, \]
which implies that 
\[ p \ = \ \gamma \beta \cdot p'' \ \in \ \gamma \beta \cdot (G_j \cap |\mathcal{C}|_{|x_j|}) \ \subseteq \ \gamma \beta \cdot \widetilde{G}_j \ \subseteq \ D_{\gamma \beta x_j}. \]

\textbf{(b)} 
Suppose that $p \in E_x \cap E_y$ for some $x \neq y$. Then, $S_x \cap S_y \neq \emptyset$, so by Lemma~\ref{fakt-wym-typ-wyznacza-sasiadow}a we have $T^B_{\theta+1}(x) \neq T^B_{\theta+1}(y)$; assume w.l.o.g. that $T^B_{\theta+1}(x)$ is the smaller one. Then, by definition, $p \in E_x \subseteq D_x$ cannot belong to $E_y$. This proves that $\mathcal{E}$ is disjoint.

Now, let~$p \in |\mathcal{C}|_n$ and let $x \in G$ of length~$n$ be chosen so that $p \in D_x$ and $T^B_{\theta+1}(x)$ is the lowest possible. Then, by definition, $p \in E_x$. This means that $\mathcal{E}$ covers $\mathcal{C}$.

\textbf{(c)} 
First, note that, if $x \in G$ and $T^B_{\theta+1}(x) = i$, then, by Lemma~\ref{fakt-wym-homeo-na-calej-kuli} and the definitions of~$D_x$ and~$\widetilde{G}_i$, we have:
\[ \dim \partial D_x = \dim \partial \big( xx_i^{-1} \cdot \widetilde{G}_i \big) = \dim \big( xx_i^{-1} \cdot \partial \widetilde{G}_i \big) = \dim \partial \widetilde{G}_i \leq k - 1. \]

Fix some $n \geq 0$. Recall that, for any subsets $Y, Z$ in any topological space, we have $\partial(Y \setminus Z) \subseteq \partial Y \cup \partial Z$. By applying this fact finitely many times in the definition of every $E_x$ with $|x| = n$, we obtain that $|\mathcal{F}|_n = \bigcup_{|x| = n} \partial E_x$ is contained in $\bigcup_{|x| = n} \partial D_x$, i.e. in a~finite union of closed subsets of~$X$ of dimension~$\leq k - 1$. By Theorem 1.5.3 in~\cite{E}, such union must have dimension $\leq k-1$, which proves that $\mathcal{F}$ is of dimension~$\leq k - 1$.

\textbf{(d)} follows immediately from~\textbf{(b)}. 

\textbf{(e)} Note first that, for every $1 \leq i \leq K$, we have
\[ \overline{D_{x_i}} = \overline{\widetilde{G}_i} \subseteq |\mathcal{C}|_{n_i} \setminus \widetilde{H}_i \subseteq X \setminus \overline{H_i} = \intr S_{x_i}. \]
Now, let $x \in G$ and denote $T^B_{\theta+1}(x) = i$ and $\gamma = xx_i^{-1}$. The left translation by~$\gamma$ is clearly a~homeomorphism mapping $S_{x_i}$ to~$S_x$ and $D_{x_i}$ to $D_x$; moreover, by Lemma~\ref{fakt-wym-homeo-na-calej-kuli}, it preserves interiors and closures computed within the appropriate spaces $|\mathcal{C}|_n$. Hence, we have:
\[ F_x \subseteq \overline{E_x} \subseteq \overline{D_x} = \gamma \cdot \overline{D_{x_i}} \subseteq \gamma \cdot \intr S_{x_i} = \intr S_x. \]

\textbf{(f)}
follows easily from \textbf{(b)}:
\[ |\mathcal{C}|_n \setminus |\mathcal{G}|_n = |\mathcal{C}|_n \setminus \bigcup_{|x| = n} G_x \subseteq \bigcup_{|x| = n} \big( E_x \setminus G_x \big) \subseteq \bigcup_{|x| = n} \partial E_x = |\mathcal{F}|_n.  \]

\textbf{(g)}
Let $T^B_{\theta+2}(x) = T^B_{\theta+2}(y)$ for some $x, y \in G$ and denote $\gamma = yx^{-1}$. By Lemma~\ref{fakt-wym-homeo-na-calej-kuli}, it is sufficient to check that $E_y = \gamma \cdot E_x$. We will show that $\gamma \cdot E_x \subseteq E_y$; the other inclusion is analogous.

Let $i = T^B_{\theta+1}(x) = T^B_{\theta+1}(y)$ and let $p \in E_x$. Then, in particular, $p \in D_x$, so
\[ \gamma \cdot p = yx_i^{-1} \cdot ( x_ix^{-1} \cdot p ) \in D_y. \]
Suppose that $\gamma \cdot p \notin E_y$; then, there must be some $y' \in G$ such that
\[ |y'| = |y|, \qquad T^B_{\theta+1}(y') < i, \qquad \gamma \cdot p \in D_{y'}. \]
In particular, we have $\emptyset \neq D_y \cap D_{y'} \subseteq S_y \cap S_{y'}$. By~Lemma~\ref{fakt-wym-typ-wyznacza-sasiadow}b, setting $x' = \gamma^{-1} y'$ we obtain that
\[ |x'| = |x|, \qquad T^B_{\theta+1}(x') = T^B_{\theta+1}(y'). \]
In particular, since $T^B_{\theta+1}(x') = T^B_{\theta+1}(y')$ and $x' = \gamma^{-1} y'$, we must have $D_{x'} = \gamma^{-1} \cdot D_{y'} \ni p$. This contradicts the assumption that $p \in E_x$ because $T^B_{\theta+1}(x') < T^B_{\theta+1}(x)$.
\end{proof}

\subsection{Weakly invariant neighbourhoods}
\label{sec-wym-kolnierzyki}

\begin{lem}
\label{lem-wym-kolnierzyki}
Let $k \geq 0$, $\theta \geq 0$ and suppose that:
\begin{itemize}
 \item $\mathcal{C} = \{ C_x \}_{x \in G}$ is a semi-closed, $\theta$-weakly invariant system;
 \item $\mathcal{D} = \{ D_x \}_{x \in G}$ is a closed, $(\theta+1)$-weakly invariant subsystem in~$\mathcal{C}$ of rank~$\leq k$.
\end{itemize}
Then, there exists an open, $(\theta+1)$-weakly invariant subsystem $\mathcal{G} = \{ G_x \}_{x \in G}$ in~$\mathcal{C}$ such that $\mathcal{G}$ covers $\mathcal{D}$ and moreover the system of closures $\overline{\mathcal{G}} = \{ \overline{G_x} \}_{x \in G}$ is $(\theta+1)$-weakly invariant and of rank~$\leq k$.
\end{lem}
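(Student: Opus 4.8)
The plan is to imitate step~(v) of the informal dimension-theory argument sketched earlier in this section, replacing ``pick an $\varepsilon$ from the finitely many disjoint pairs'' by a construction that is uniform over the finitely many $T^B_{\theta+2}$-types, so that the resulting neighbourhoods are again $(\theta+1)$-weakly invariant. Concretely: first I would fix, for each type $\widetilde{\imath}$, the representative $\widetilde{x}_{\widetilde{\imath}}$ and the bound $M = \max_{\widetilde{\imath}} |\widetilde{x}_{\widetilde{\imath}}|$, and choose a single $\varepsilon > 0$ smaller than every Lebesgue number of the covers $\mathcal{S}_1, \dots, \mathcal{S}_M$ and small enough that, for every $n \le M$ and every pair $D_x, D_y \in \mathcal{D}_n$ with $\overline{D_x} \cap \overline{D_y} = \emptyset$, the distance between $\overline{D_x}$ and $\overline{D_y}$ exceeds $4\varepsilon$ (finitely many conditions, since there are only finitely many types below level $M$ and, by \qhlink{d}, only boundedly many sets of $\mathcal{D}_n$ meet any given one). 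Then for $x \in G$ with $|x| = n$ I would define $G_x$ to be the $\varepsilon$-neighbourhood of $D_x$ taken \emph{inside} $|\mathcal{C}|_n$, i.e. $G_x = \{ p \in |\mathcal{C}|_n \mid d(p, D_x) < \varepsilon \}$; by the convention of Section~\ref{sec-wym-wyscig} the interior and frontier are then also computed in $|\mathcal{C}|_n$.

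Next I would check the three required properties. \emph{Openness and covering:} $G_x$ is open in $|\mathcal{C}|_n$ by construction and contains $D_x$, so $\mathcal{G}$ covers $\mathcal{D}$; moreover $\overline{G_x} \subseteq \{ p \in |\mathcal{C}|_n \mid d(p, D_x) \le \varepsilon \}$. \emph{$(\theta+1)$-weak invariance of $\mathcal{G}$ and of $\overline{\mathcal{G}}$:} suppose $T^B_{\theta+1}(x) = T^B_{\theta+1}(gx)$ and set $\gamma = gx \cdot x^{-1}$; since $\mathcal{D}$ is $(\theta+1)$-weakly invariant we have $D_{gx} = \gamma \cdot D_x$, and since $\mathcal{C}$ is $\theta$-weakly invariant, $|\mathcal{C}|_{|gx|} \cap S_{gx} = \gamma \cdot (|\mathcal{C}|_{|x|} \cap S_x)$ by Lemma~\ref{fakt-wym-homeo-na-calej-kuli}; as $\gamma$ acts by an isometry of $G$ that matches $S_x$ with $S_{gx}$, and $D_x \subseteq S_x$ (here I need that $\mathcal{D}$, being $(\theta+1)$-weakly invariant, has each $D_x \subseteq S_x$ with $\overline{D_x} \subseteq S_x$ up to the $\varepsilon$-margin — this is where the choice $\varepsilon <$ Lebesgue number of $\mathcal{S}$ enters, guaranteeing $\overline{G_x} \subseteq S_x$ as well), the $\varepsilon$-neighbourhood is carried to the $\varepsilon$-neighbourhood: $G_{gx} = \gamma \cdot G_x$, hence also $\overline{G_{gx}} = \gamma \cdot \overline{G_x}$. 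Thus both systems satisfy \qhlink{f1} with the type function $T^B_{\theta+1}$, and each $G_x \subseteq S_x$, so they are $(\theta+1)$-weakly invariant.

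The main work — and the step I expect to be the genuine obstacle — is the \emph{rank bound} $\rank \overline{\mathcal{G}}_n \le k$. Suppose $\overline{G_{x_1}}, \dots, \overline{G_{x_{k+1}}}$ are pairwise distinct members of $\overline{\mathcal{G}}_n$ with a common point $p$. I would like to deduce that $\overline{D_{x_1}}, \dots, \overline{D_{x_{k+1}}}$ have a common point, contradicting $\rank \mathcal{D}_n \le k$; but a priori the $D_{x_j}$ need only lie within $\varepsilon$ of $p$, not contain it. The resolution is the usual ``shrink-to-a-small-scale'' trick: by \qhlink{f2}/\qhlink{d} all the $x_j$ lie within $D$ of each other, so using a $T^B_{\theta+2}$-type-preserving translation $\gamma$ I can move the whole configuration $\{x_j\}$ so that one of them, say $\gamma x_1$, has length $\le M$; since $\varepsilon$ was chosen below the Lebesgue numbers of $\mathcal{S}_1, \dots, \mathcal{S}_M$ and below $\tfrac14$ the separation of disjoint closures of $\mathcal{D}$ at levels $\le M$, the sets $\overline{\gamma D_{x_j}}$ cannot be pairwise disjoint in the offending way: any two of them that both come within $2\varepsilon$ of the common point $\gamma \cdot p$ must actually intersect. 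A short combinatorial argument (if every pair among $k+1$ closed sets with a common $2\varepsilon$-neighbourhood point intersects, one uses that $\mathcal{D}$ has rank $\le k$ at level $\le M$, after possibly passing to the right subcollection) then yields the contradiction. Translating back by $\gamma^{-1}$ — which by weak invariance again preserves the relevant intersection pattern — gives $\rank \overline{\mathcal{G}}_n \le k$, completing the proof.
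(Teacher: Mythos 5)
Your overall strategy is in the right spirit, but the construction you propose for $G_x$ has a genuine gap at exactly the point where the lemma is hard. You define $G_x$ as the metric $\varepsilon$-neighbourhood of $D_x$ inside $|\mathcal{C}|_n$, with a single $\varepsilon$ chosen once and for all, and then claim that a type-preserving translation $\gamma$ ``carries the $\varepsilon$-neighbourhood to the $\varepsilon$-neighbourhood'', so that $G_{gx} = \gamma \cdot G_x$. This is false: the sets $D_x$, $S_x$, $G_x$ live in $X = \partial G$, and the left action of $\gamma$ on $\partial G$ is only a homeomorphism, not an isometry of the visual metric. Indeed, Proposition~\ref{lem-bi-lip-geodezyjne} shows that such a translation rescales distances by a factor of roughly $a^{-(|gx|-|x|)}$, so for elements of different lengths (which is the typical case when one translates to a representative of the type) the $\varepsilon$-neighbourhood of $D_x$ is certainly not mapped to the $\varepsilon$-neighbourhood of $D_{gx}$. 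Consequently your $\mathcal{G}$ fails condition \qhlink{f1} and is not $(\theta+1)$-weakly invariant; the same problem then infects your rank argument, which also relies on translating the configuration to low levels while pretending the $\varepsilon$-neighbourhood structure is preserved.

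The paper's proof avoids this by never taking metric neighbourhoods at general levels. It chooses, by induction on the finitely many $T^B_{\theta+1}$-types $i$, one \emph{model} open set $G_i$ with $D_{x_i} \subseteq G_i \Subset S_{x_i}$, subject to finitely many disjointness conditions involving only elements of length $\leq M$ (this is where compactness and finiteness of types enter, in place of your single $\varepsilon$), and then \emph{defines} $G_x := x x_i^{-1} \cdot G_i$ for every $x$ of type $i$. Weak invariance of $\mathcal{G}$ and of $\overline{\mathcal{G}}$ is then automatic from Lemma~\ref{fakt-wym-homeo-na-calej-kuli}, and the rank bound follows by translating an intersecting pair $\overline{G_x} \cap \overline{G_y} \neq \emptyset$ to length $\leq M$ and invoking the disjointness conditions imposed on the models. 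If you want to repair your argument, you should replace the uniform $\varepsilon$-neighbourhood by this ``one model set per type, translated everywhere else'' device; the metric only ever enters at the finitely many low levels where the models are chosen.
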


\begin{uwaga}
Since $\mathcal{G}$ itself is claimed to be $(\theta+1)$-weakly invariant, the condition that the system of closures $\overline{\mathcal{G}}$ is $(\theta+1)$-weakly invariant reduces to the condition that $\overline{G_x} \subseteq S_x$ for every $x \in G$.
\end{uwaga}

\begin{proof}[Proof of Proposition~\ref{lem-wym-kolnierzyki}]
In the proof, we use the notations and conventions introduced in the beginning of~Section~\ref{sec-wym-wyscig}.
Also, we will frequently (and implicitly) use Lemma~\ref{fakt-wym-homeo-na-calej-kuli} to control the images of interiors/closures (taken ``in~$\mathcal{C}$'') under translations by elements of~$G$.

\textbf{1. }We choose by induction, for $1 \leq i \leq K$, an open subset $G_i \Subset S_{x_i}$ containing $D_{x_i}$ such that:
\begin{gather}
\begin{split}
\label{eq-wym-war-laty}
\textrm{for every } x, y \in G \textrm{ with } |x| = |y| \leq M, \, T^B_{\theta+1}(x) = i, \, T^B_{\theta+1}(y) = j \textrm{ and } D_x \cap D_y = \emptyset, \textrm{ we have:} \\
(xx_i^{-1} \cdot \overline{G_i}) \cap D_y = \emptyset, \textrm{ and moreover } (xx_i^{-1} \cdot \overline{G_i}) \cap (yx_j^{-1} \cdot \overline{G_j}) = \emptyset \textrm{ if } j < i. \qquad \ 
\end{split}
\end{gather}

Such choice is possible because we only require $G_i$ to be an open neighbourhood of~$D_{x_i}$ such that the closure $\overline{G_i}$ is disjoint with the union of sets of the following form:
\[ |\mathcal{C}|_{n_i} \setminus S_{x_i}, \qquad x_i x^{-1} \cdot D_y, \qquad x_i x^{-1} y x_j^{-1} \cdot \overline{G_j}. \]
Since this is a~finite union of closed sets (as we assume $|x|, |y| \leq M$), and we work in a~metric space, it suffices to check that each of these sets is disjoint from~$D_{x_i}$. 

In the case of~$|\mathcal{C}_{n_i}| \setminus S_{x_i}$, this is clear.
If we had $D_{x_i} \cap (x_i x^{-1} \cdot D_y) \neq \emptyset$ for some $x, y$ as specified above, it would follow that
\[ D_x \cap D_y = (x x_i^{-1} \cdot D_{x_i}) \cap D_y \neq \emptyset, \]
contradicting one of the assumptions in~\eqref{eq-wym-war-laty}. Similarly, if we had $D_{x_i} \cap (x_i x^{-1} y x_j^{-1} \cdot \overline{G_j}) \neq \emptyset$ for some $j < i$, then it would follow that $D_x \cap (y x_j^{-1} \cdot \overline{G_j}) \neq \emptyset$, which contradicts the assumption that we have (earlier) chosen $G_j$ to satisfy~\eqref{eq-wym-war-laty}.

\textbf{2. }Now, let
\[ G_x = x x_i^{-1} \cdot G_i \qquad \textrm{ for } \quad x \in G, \, T^B_{\theta+1}(x) = i. \]
Then, the system $\mathcal{G} = \{ G_x \}_{x \in G}$ is obviously open, $(\theta+1)$-weakly invariant and covers~$\mathcal{D}$. The fact that $\overline{\mathcal{G}}$ is also $(\theta+1)$-weakly invariant follows then from Lemma~\ref{fakt-wym-homeo-na-calej-kuli} (because $G_i \Subset S_{x_i}$, and hence $G_x \Subset S_x$ for $x \in G$). It remains to check that $\overline{\mathcal{G}}$ is of rank~$\leq k$. 

\textbf{3. }Let $x, y \in G$ be such that $|x| = |y|$ and $\overline{G_x} \cap \overline{G_y} \neq \emptyset$. Denote $T^B_{\theta+1}(x) = i$, $T^B_{\theta+1}(y) = j$. Let $x'$ be such that $|x'| \leq M$ and $T^B_{\theta+2}(x') = T^B_{\theta+2}(x)$. Since $\overline{G_x} \cap \overline{G_y} \neq \emptyset$ implies $S_x \cap S_y \neq \emptyset$, by Lemma~\ref{fakt-wym-typ-wyznacza-sasiadow}b (combined with~\qhlink{f1} for $\mathcal{G}$) we obtain that 
\[ T^B_{\theta+1}(y') = j, \qquad |y'| = |x'| \leq M, \qquad \overline{G_{x'}} \cap \overline{G_{y'}} = x'x^{-1} \cdot (\overline{G_x} \cap \overline{G_y}) \neq \emptyset, \qquad \quad \textrm{ where } \quad y' = x'x^{-1}y. \]
Then it follows that
\[ \emptyset \neq \overline{G_{x'}} \cap \overline{G_{y'}} = (x'x_i^{-1} \cdot \overline{G_i}) \cap (y'x_j^{-1} \cdot \overline{G_j}), \]
so the condition~\eqref{eq-wym-war-laty} implies that $D_{x'} \cap D_{y'} \neq \emptyset$. Then, since $\mathcal{D}$ is $(\theta+1)$-weakly invariant, we have
\[ D_x \cap D_y = xx'^{-1} \cdot (D_{x'} \cap D_{y'}) \neq \emptyset. \]
This means that, for $x, y \in G$ of equal length, $\overline{G_x}$ and $\overline{G_y}$ can intersect non-trivially only if $D_x$ and $D_y$ do so, whence it follows that the rank of~$\overline{\mathcal{G}}$ is not greater than that of~$\mathcal{D}$. This finishes the proof.
\end{proof}

\subsection{The overall construction}

The following proposition describes the whole inductive construction --- analogous to the one presented in the introduction to this section --- of a cover satisfying the conditions from Proposition~\ref{lem-wym}.

\begin{lem}
\label{lem-wym-cala-historia}
Let $k \geq -1$, $\theta \geq 1$ and let $\mathcal{C}$ be a semi-closed, $\theta$-weakly invariant system of dimension $\leq k$. 
Then, there exist $(\theta + 3(k+1))$-weakly invariant subsystems $\mathcal{D}$, $\mathcal{E}$ in~$\mathcal{C}$ of rank $\leq k+1$ which both cover~$\mathcal{C}$. Moreover, $\mathcal{D}$ is closed and $\mathcal{E}$ is open.
\end{lem}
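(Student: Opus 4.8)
The plan is to prove Proposition~\ref{lem-wym-cala-historia} by induction on~$k$, with the base case $k = -1$ being essentially vacuous (a system of dimension $\leq -1$ consists of empty sets, so we may take $\mathcal{D} = \mathcal{E} = \mathcal{C}$, which trivially has rank $\leq 0 = k+1$). For the inductive step, assume the claim for $k-1$ and let $\mathcal{C}$ be a semi-closed, $\theta$-weakly invariant system of dimension $\leq k$. First I would apply Proposition~\ref{lem-wym-bzdziagwy} to $\mathcal{C}$ (with the same~$\theta$, noting $\theta \geq 1$): this produces a disjoint, open, $(\theta+2)$-weakly invariant subsystem $\mathcal{G}_0 = \{G^0_x\}$ and a closed, $(\theta+2)$-weakly invariant subsystem $\mathcal{F}_0 = \{F^0_x\}$ of dimension $\leq k-1$, with $\mathcal{F}_0 \sqcup \mathcal{G}_0$ covering $\mathcal{C}$ and $\partial G^0_x \subseteq |\mathcal{F}_0|_n$ for all~$n$ and $G^0_x \in (\mathcal{G}_0)_n$.

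Next I would apply the inductive hypothesis to~$\mathcal{F}_0$. Since $\mathcal{F}_0$ is closed in~$\mathcal{C}$ and $|\mathcal{C}|_n$ is closed in~$X$, the system $\mathcal{F}_0$ is semi-closed; it is $(\theta+2)$-weakly invariant and of dimension $\leq k-1$. By induction (with parameter $\theta' = \theta+2 \geq 1$ and dimension bound $k-1$) there are $(\theta+2+3k)$-weakly invariant subsystems $\mathcal{D}_1$ (closed) and $\mathcal{E}_1$ (open) in~$\mathcal{F}_0$, each of rank $\leq k$, both covering~$\mathcal{F}_0$. Note $\theta + 2 + 3k = \theta + 3(k+1) - 1$, which is one step short of the target; this slack will be consumed by the neighbourhood-thickening step. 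I would then apply Proposition~\ref{lem-wym-kolnierzyki} with the semi-closed $(\theta+2+3k)$-weakly invariant system~$\mathcal{C}$ (restricting attention to the relevant ambient $|\mathcal{C}|_n$'s) and the closed $(\theta+2+3k+1) = (\theta+3(k+1))$-weakly invariant subsystem~$\mathcal{D}_1$ of rank $\leq k$ — wait, more carefully: $\mathcal{D}_1$ is a subsystem of $\mathcal{F}_0 \subseteq \mathcal{C}$ and is $(\theta+2+3k)$-weakly invariant; Proposition~\ref{lem-wym-kolnierzyki} is stated for input data at levels $\theta$ and $\theta+1$, so I would invoke it with the shift $\theta \rightsquigarrow \theta + 2 + 3k$, obtaining an open, $(\theta+3+3k)$-weakly invariant subsystem $\mathcal{G}_1 = \{G^1_x\}$ in~$\mathcal{C}$ which covers~$\mathcal{D}_1$ (hence covers~$\mathcal{F}_0$), whose closure system $\overline{\mathcal{G}_1}$ is $(\theta+3+3k) = (\theta+3(k+1))$-weakly invariant and of rank $\leq k$. (One must check that the levels line up: $(\theta+3)+3k = \theta + 3(k+1)$, exactly the target. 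To match the hypothesis of Proposition~\ref{lem-wym-kolnierzyki}, which wants the closed subsystem to be one level higher than the ambient semi-closed one, I use that $\mathcal{C}$ is $\theta$-weakly invariant, hence also $(\theta+2+3k)$-weakly invariant, and $\mathcal{D}_1$ is $(\theta+2+3k)$-weakly invariant; since $(\theta+2+3k)+1 > \theta+2+3k$ we are in good shape after relabelling.)

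Now I would assemble the outputs. For the open system, set $\mathcal{E} = \mathcal{G}_0 \sqcup \mathcal{G}_1$ — but more precisely I'd shrink $\mathcal{G}_0$ slightly away from $|\mathcal{F}_0|_n$ exactly as in step~(v) of the introduction (using a Lebesgue-number $\varepsilon$ to define $G^0_x$ minus a closed $\tfrac{\varepsilon}{8}$-neighbourhood of $|\mathcal{F}_0|_n$), ensuring the shrunk pieces $\{G^{0,\mathrm{shr}}_x\}$ still cover $\mathcal{C} \setminus |\mathcal{F}_0|$ and have closures meeting $|\mathcal{F}_0|_n$ trivially; this shrinking can be done $(\theta+3(k+1))$-weakly invariantly by performing it on the finitely many model sets and translating via Lemma~\ref{fakt-wym-homeo-na-calej-kuli}. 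Then $\mathcal{E} = \{G^{0,\mathrm{shr}}_x\} \sqcup \mathcal{G}_1$ is open, $(\theta+3(k+1))$-weakly invariant, covers~$\mathcal{C}$ (every point is either in the interior of some $E^0_x$, now covered by $\mathcal{G}_1 \supseteq \mathcal{F}_0 \ni \partial E^0_x$ together with the shrunk interiors, or … ), and has rank $\leq k+1$ by Lemma~\ref{fakt-wym-suma-pokryc}: the disjoint system $\{G^{0,\mathrm{shr}}_x\}$ has rank $\leq 1$ and $\mathcal{G}_1$ has rank $\leq k$ (its closure system does, hence so does it). For the closed system, take $\mathcal{D} = \overline{\{G^{0,\mathrm{shr}}_x\}} \sqcup \overline{\mathcal{G}_1}$ (or $\{\overline{G^0_x}\} \sqcup \mathcal{D}_1$, adjusting); it is closed, $(\theta+3(k+1))$-weakly invariant, covers~$\mathcal{C}$, and has rank $\leq 1 + k = k+1$ again by Lemma~\ref{fakt-wym-suma-pokryc}, using that $\overline{G^{0,\mathrm{shr}}_x} \cap \overline{G^{0,\mathrm{shr}}_y} \subseteq |\mathcal{F}_0|$ is disjoint from both for $x \neq y$, and that $\overline{\mathcal{G}_1}$ has rank $\leq k$.

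**The main obstacle** will be bookkeeping the weak-invariance levels so they come out to exactly $\theta + 3(k+1)$ at the end: each application of Proposition~\ref{lem-wym-bzdziagwy} costs $+2$, the inductive call costs $+3k$ (by hypothesis), and Proposition~\ref{lem-wym-kolnierzyki} costs $+1$, totalling $+3(k+1)$ — so the arithmetic is tight and there is no room for slack, which is presumably why the constant $3$ appears. A secondary but genuine subtlety is verifying that the covering and rank properties survive the passage between the three ambient systems ($\mathcal{C}$, $\mathcal{F}_0$, and the $|\mathcal{C}|_n$'s): one must repeatedly use that interiors and frontiers are taken "in~$\mathcal{C}$" (equivalently in the appropriate $|\mathcal{C}|_n$), that $|\mathcal{F}_0|_n$ is closed in $|\mathcal{C}|_n$, and Lemma~\ref{fakt-wym-homeo-na-calej-kuli} to push all the model-set constructions around equivariantly. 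The disjointness-plus-rank bookkeeping (Lemma~\ref{fakt-wym-suma-pokryc}) is routine once the pieces are correctly identified, but care is needed to confirm the "glue" set $|\mathcal{F}_0|$ genuinely separates the two disjoint pieces of each summand. Everything else — openness/closedness, semi-closedness for the inductive call, the shrinking argument — is straightforward adaptation of the classical dimension-theory template recalled in the section's introduction.
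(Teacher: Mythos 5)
Your proposal follows the paper's proof exactly through its first three steps: the vacuous base case $k=-1$, the application of Proposition~\ref{lem-wym-bzdziagwy} to split $\mathcal{C}$ into a disjoint open piece $\mathcal{G}$ and a lower-dimensional closed piece $\mathcal{F}$, the inductive call on $\mathcal{F}$ with parameters $(k-1,\theta+2)$, and the thickening of the resulting closed system via Proposition~\ref{lem-wym-kolnierzyki}; your level bookkeeping ($+2$, $+3k$, $+1$) is the paper's, up to an off-by-one relabelling in the safe direction. Where you diverge is the final assembly. The paper simply sets $E_x = G_x \cup G'_x$ and $D_x = (G_x \setminus |\mathcal{G}'|_n) \cup F'_x$: no shrinking at all is needed for $\mathcal{E}$ (disjoint $\sqcup$ rank-$k$ gives rank $k+1$ directly by Lemma~\ref{fakt-wym-suma-pokryc}), and for $\mathcal{D}$ the set $G_x \setminus |\mathcal{G}'|_n$ is automatically closed because $\partial G_x \subseteq |\mathcal{F}|_n \subseteq |\mathcal{D}'|_n \subseteq |\mathcal{G}'|_n$, so the closures are honestly disjoint with no metric argument. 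You instead import the $\varepsilon$-shrinking from step~(v) of the section's introductory sketch. That route can be made to work, but it is where your write-up is thinnest: the $G$-action on $\partial G$ is by homeomorphisms, not isometries, so you cannot remove a metric $\tfrac{\varepsilon}{8}$-neighbourhood level by level; you must shrink the finitely many model sets and transport the shrunk sets by Lemma~\ref{fakt-wym-homeo-na-calej-kuli}, and then re-verify that the removed collar lies inside $|\mathcal{G}_1|_n$ at every level (your covering argument currently trails off in an ellipsis exactly there). The paper's set-difference trick is precisely the device that makes this entire verification unnecessary, which is worth internalizing: once $\partial G_x$ is known to be swallowed by the open thickening $\mathcal{G}'$, closedness and disjointness come for free.
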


\begin{proof}
We proceed by induction on~$k$. If $k = -1$, the system $\mathcal{C}$ must consist of empty sets, so we can set~$\mathcal{D} = \mathcal{E} = \mathcal{C}$.

Now, let $k > -1$. Denote $\Theta = \theta + 3(k+1)$. We perform the following steps:

\textbf{1. }By applying Proposition~\ref{lem-wym-bzdziagwy} to the system $\mathcal{C}$, we obtain some $(\theta + 2)$-weakly invariant systems $\mathcal{G} = \{ G_x \}_{x \in G}$ and $\mathcal{F}$ with additional properties described in the claim of the proposition.

\textbf{2. }Since $\mathcal{F}$ is closed (and then also semi-closed), $(\theta+2)$-weakly invariant and has dimension~$\leq k - 1$, it satisfies the assumptions of the current proposition (with parameters $k-1$ and $\theta+2$). Therefore, by the inductive hypothesis, there exists a $(\Theta-1)$-weakly invariant closed system $\mathcal{D}'$ of rank $\leq k$ which covers $\mathcal{F}$.

\textbf{3. }Since $\mathcal{C}$ is $\theta$-weakly invariant, it is also $(\Theta-2)$-weakly invariant, which means that the systems $\mathcal{C}$, $\mathcal{D}'$ satisfy the assumptions of Proposition~\ref{lem-wym-kolnierzyki} (with parameters $k$ and $\Theta-2$). Then, there exists an open, $(\Theta-1)$-weakly invariant subsystem $\mathcal{G}' = \{ G'_x \}_{x \in G}$ in~$\mathcal{C}$ which covers~$\mathcal{D}'$ and such that the system of closures $\mathcal{F}' = \{ \overline{G'_x} \}_{x \in G}$ is $(\Theta-1)$-weakly invariant and of rank~$\leq k$.

\textbf{4. }Now, we define two subsystems $\mathcal{D} = \{ D_x \}_{x \in G}$ and $\mathcal{E} = \{ E_x \}_{x \in G}$ as follows:
\begin{align} 
\label{eq-wym-konstr-wycinanka}
D_x = (G_x \setminus |\mathcal{G}'|_n) \cup F'_x, \qquad E_x = G_x \cup G'_x \qquad \textrm{ for } \quad x \in G, \, |x| = n. 
\end{align}
Observe that $G_x \setminus |\mathcal{G}'|_n$ is closed because the claim of Proposition~\ref{lem-wym-bzdziagwy} implies that $\partial G_x \subseteq |\mathcal{F}|_n \subseteq |\mathcal{D}'|_n \subseteq |\mathcal{G}'|_n$, so $G_x \setminus |\mathcal{G}'|_n = \overline{G_x} \setminus |\mathcal{G}'|_n$ is a difference of a closed and an open subset (in~$|\mathcal{C}|_n$). Hence, $D_x$ is closed. On the other hand, $E_x$ is clearly open in~$|\mathcal{C}|_n$.

Since $\mathcal{G}$, $\mathcal{F}'$ and~$\mathcal{G}'$ are all $(\Theta-1)$-weakly invariant, $\mathcal{D}$ and~$\mathcal{E}$ must both be $\Theta$-weakly invariant (more precisely: $\mathcal{E}$ is obviously $(\Theta-1)$-weakly and then also $\Theta$-weakly invariant, while for~$\mathcal{D}$ we apply Lemma~\ref{fakt-wym-homeo-na-calej-kuli}). It is also easy to see that
\[ |\mathcal{E}_n| \ = \ |\mathcal{G}|_n \cup |\mathcal{G}'|_n \ \supseteq \ |\mathcal{G}|_n \cup |\mathcal{F}|_n \ = \ |\mathcal{C}|_n, 
\qquad 
|\mathcal{D}|_n \ = \ \big( |\mathcal{G}|_n \setminus |\mathcal{G}'|_n \big) \cup |\mathcal{F}'|_n \ = \ |\mathcal{G}|_n \cup |\mathcal{F}'|_n \ \supseteq \ |\mathcal{E}|_n, \]
so $\mathcal{D}$ and~$\mathcal{E}$ both cover~$\mathcal{C}$. Finally, since $\mathcal{G}$ is disjoint and $\mathcal{F}'$ (and so also $\mathcal{G}'$) is of rank $\leq k$, it follows that $\mathcal{D}$ and~$\mathcal{E}$ must be of rank~$\leq k +1$ by Lemma~\ref{fakt-wym-suma-pokryc}.
\end{proof}

\subsection{Conclusion: The complete proof of Theorem~\ref{tw-kompakt}}

\label{sec-wymd-podsum}

\begin{proof}[{\normalfont \textbf{Proof of Proposition~\ref{lem-wym}}}]
The claim follows from applying Proposition~\ref{lem-wym-cala-historia} to the system~$\mathcal{S}$ (defined in Section~\ref{sec-konstr-pokrycia}). This is a semi-closed and~$0$-weakly (and hence also $1$-weakly) invariant system, so the proposition ensures that there exists an open $(3k+4)$-weakly invariant subsystem $\mathcal{E}$ of rank~$\leq k + 1$ which covers~$\mathcal{S}$. 

% Skoro $\mathcal{S}$ jest systemem pokryć, zaś~$\mathcal{E}$ jest otwarty i~pokrywa~$\mathcal{S}$, to $\mathcal{E}$ również jest systemem pokryć. Zatem z~lematu~\ref{lem-wym-slabe-sa-qi} wynika, że (dla funkcji typu $T^B_{3k+5}$) $\mathcal{E}$ jest systemem quasi-$G$-niezmienniczym. Tym samym $\mathcal{E}$ spełnia wszystkie żądane własności.
Since $\mathcal{S}$ is a system of covers, while~$\mathcal{E}$ is open and covers~$\mathcal{S}$, it follows that $\mathcal{E}$ is also a system of covers. Then, it follows from Proposition~\ref{lem-wym-slabe-sa-qi} that $\mathcal{E}$ is quasi-$G$-invariant (with the type function~$T^B_{3k+5}$). This means that $\mathcal{E}$ has all the desired properties.
\end{proof}

\begin{proof}[{\normalfont \textbf{Proof of Theorem~\ref{tw-kompakt}}}]
We use the quasi-$G$-invariant system of covers~$\mathcal{E}$ obtained in the proof of Proposition~\ref{lem-wym}. By Lemma~\ref{fakt-konstr-sp-zal}, there is $L \geq 0$ such that the system $(\widetilde{\mathcal{E}}_{Ln})_{n \geq 0}$ (where $\widetilde{\mathcal{E}}_n$ denotes $\mathcal{E}_n$ with empty members removed) is admissible. Then, by Theorems~\ref{tw-konstr} and~\ref{tw-kompakt-ogolnie}, the corresponding system of nerves $(K_n, f_n)$ is Markov, barycentric and has mesh property. 

Since the type function $T^B_{3k+5}$ associated with this system is stronger than $T^B$, Theorem~\ref{tw-sk-opis} ensures that the simplex types used in the system $(K_n, f_n)$ can be strengthened to make this system simultaneously Markov and has the distinct types property. (Barycentricity and mesh property are clearly preserved as the system itself does not change). Moreover, for every $n \geq 0$ we have
\[ \dim K_n = \rank \widetilde{\mathcal{E}}_{Ln} - 1 = \rank \mathcal{E}_{Ln} - 1 \leq \dim \partial G, \]
where the last inequality follows from the property of~$\mathcal{E}$ claimed by Proposition~\ref{lem-wym}. Finally, since $\mathcal{E}$ is $(3k+4)$-weakly invariant, it is in particular inscribed into~$\mathcal{S}$, which means in view of Theorem~\ref{tw-bi-lip} that the homeomorphism $\varphi : \partial G \simeq \liminv K_n$ obtained from Theorem~\ref{tw-konstr} is in fact a bi-Lipschitz equivalence (in the sense specified by Theorem~\ref{tw-bi-lip}).

This shows that the system~$(K_n, f_n)_{n \geq 0}$ has all the properties listed in Theorem~\ref{tw-kompakt}, which finishes the proof.
\end{proof}

\section{$\partial G$ as a semi-Markovian space}
\label{sec-sm}

The aim of this section is to show that the boundary $\partial G$ of a hyperbolic group~$G$ is a \textit{semi-Markovian space} (see Definition \ref{def-sm-ps}). In Section \ref{sec-sm-def}, we introduce notions needed to formulate the main result, which appears at its end as Theorem \ref{tw-semi-markow-0}. The remaining part of the section contains the proof of this theorem.

\subsection{Semi-Markovian sets and spaces}
\label{sec-sm-def}

Let $\Sigma$ be a finite alphabet and $\Sigma^\mathbb{N}$ denote the set of infinite words over $\Sigma$.

In the set $\Sigma$ we define the operations of \textit{shift} $S : \Sigma^\mathbb{N} \rightarrow \Sigma^\mathbb{N}$ and \textit{projection} $\pi_F : \Sigma^\mathbb{N} \rightarrow \Sigma^F$ (where $F \subseteq \mathbb{N}$) by the formulas:
\[ S \big( (a_0, a_1, \ldots) \big) = (a_1, a_2, \ldots), \qquad \pi_F \big( (a_0, a_1, \ldots) \big) = (a_n)_{n \in F}. \]

\begin{df}[{\cite[Chapter~2.3]{zolta}}]
\label{def-sm-cylinder}
A subset $C \subseteq \Sigma^\mathbb{N}$ is called a \textit{cylinder} if $C = \pi_F^{-1}(A)$ for some finite $F \subseteq \mathbb{N}$ and for some $A \subseteq \Sigma^F$. 

(Intuitively: the set $C$ can be described by conditions involving only a finite, fixed set of positions in the sequence $(a_n)_{n \geq 0} \in \Sigma^\mathbb{N}$).
\end{df}

\begin{df}[{\cite[Definition~6.1.1]{zolta}}]
\label{def-sm-zb}
A subset $M \subseteq \Sigma^\mathbb{N}$ is called a \textit{semi-Markovian set} if there exist cylinders $C_1$, $C_2$ in~$\Sigma^\mathbb{N}$ such that $M = C_1 \cap \bigcap_{n \geq 0}^\infty S^{-n}(C_2)$. 
\end{df}

\begin{uwaga}
\label{uwaga-sm-proste-zbiory}
In particular, for any subset $\Sigma_0 \subseteq \Sigma$ and binary relation $\rightarrow$ in $\Sigma$, the following set is semi-Markovian:
\[ M(\Sigma_0, \, \rightarrow) = \big\{ (a_n)_{n \geq 0} \,\big|\, a_0 \in \Sigma_0, \, a_n \rightarrow a_{n+1} \textrm{ for } n \geq 0 \big\}.  \]
\end{uwaga}

We consider the space of words $\Sigma^\mathbb{N}$ with the natural Cantor product topology (generated by the base of cylinders). In this topology, all semi-Markovian sets are closed subsets of $\Sigma^\mathbb{N}$.

Before formulating the next definition, we introduce a natural identification of pairs of words and words of pairs of symbols:
\[ J : \quad \Sigma^\mathbb{N} \times \Sigma^\mathbb{N} \quad \ni \quad \Big( \big( (a_n)_{n \geq 0}, \, (b_n)_{n \geq 0} \big) \Big) \qquad \mapsto \qquad \big( (a_n, \, b_n) \big)_{n \geq 0} \quad \in \quad (\Sigma \times \Sigma)^\mathbb{N}. \]

\begin{df}
\label{def-sm-rel}
A binary relation $R \subseteq \Sigma^\mathbb{N} \times \Sigma^\mathbb{N}$ will be called a \textit{semi-Markovian relation} if its image under the above identification $J(R) \subseteq (\Sigma \times \Sigma)^\mathbb{N}$ is a semi-Markovian set (over the product alphabet $\Sigma \times \Sigma$).
\end{df}

\begin{df}[{\cite[Definition~6.1.5]{zolta}}]
\label{def-sm-ps}
A topological Hausdorff space $\Omega$ is called a \textit{semi-Markovian space} if it is the topological quotient of a semi-Markovian space (with the Cantor product topology) by a semi-Markovian equivalence relation.
\end{df}

We can now re-state the main result of this section:
% Our aim is to prove the following theorem:

\begin{twsm}
% \label{tw-semi-markow}
The boundary of any hyperbolic group $G$ is a semi-Markovian space.
\end{twsm}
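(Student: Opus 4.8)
The plan is to follow the architecture of \cite{zolta}, replacing the fellow-traveller bookkeeping that works only in the torsion-free case by the $B$-type constructed in Section~\ref{sec-sm-abc-b}. Concretely, one fixes a large $N$ so that Lemma~\ref{lem-sm-kuzyni} applies, and one chooses the alphabet $\Sigma$ to consist of a designated start symbol together with all $C$-types of elements of $G$ (the $C$-type being the promised enrichment of $T^B$ described in the introduction to Section~\ref{sec-abc}, whose defining properties are deferred to Section~\ref{sec-sm-zyczenia}). To each infinite geodesic ray $\alpha$ starting at $e$ one associates the word whose $n$-th letter records the $C$-type of $\alpha(n)$ (with the $0$-th letter being the start symbol). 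The key point is that, because the $C$-type is children-deterministic and distinguishes $r$-fellows, the set $M$ of words arising this way is cut out by finitely many local conditions: a word $(a_n)$ lies in $M$ iff $a_0$ is the start symbol, each consecutive pair $(a_n,a_{n+1})$ belongs to a fixed ``admissible transition'' relation (saying that $a_{n+1}$ is the type of some child of an element of type $a_n$, compatibly with prioritised-ancestor data), and — this is the genuinely non-trivial cylinder condition — a finite-window constraint ruling out words that would correspond to non-geodesic paths. That $M$ is semi-Markovian then follows from Remark~\ref{uwaga-sm-proste-zbiory} together with an argument that the geodesic constraint is indeed expressible by a single shift-invariant cylinder $C_2$, which is where Cannon's theorem (Proposition~\ref{lem-kulowy-wyznacza-stozkowy}) and Lemma~\ref{lem-potomkowie-dla-kulowych} do the work.

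\textbf{The quotient relation.} Next one must exhibit the equivalence relation $\sim$ on $M$ identifying words that represent the same boundary point, and show it is semi-Markovian in the sense of Definition~\ref{def-sm-rel}. Two rays $\alpha,\beta$ are close iff $d(\alpha(n),\beta(n))$ stays bounded (by $4\delta$, after the standard normalisation); so one wants to say that a pair of words $\big((a_n),(b_n)\big)$ lies in $J(R)$ iff at every level the two elements are $r$-fellows for a fixed $r$ (some multiple of $\delta$) \emph{and} this fellowship propagates correctly. The $r$-fellow-distinguishing property of $T^B$ (hence of $C$-type) is exactly what makes ``being an $r$-fellow at level $n$'' a condition on the \emph{pair} of $C$-types $(a_n,b_n)$ alone — this is the crucial improvement over \cite{zolta} — and children-determinism makes the propagation a transition rule on consecutive pairs. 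Thus $J(R)$ is again of the form $C_1\cap\bigcap_n S^{-n}(C_2)$, i.e.\ semi-Markovian. One then checks $R$ is an equivalence relation (reflexivity, symmetry clear; transitivity via the triangle inequality and the bound $R$ from~\eqref{eq-sm-zbiory-torsji}, enlarging $r$ once and for all if needed).

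\textbf{Identifying the quotient with $\partial G$.} Finally one proves that the natural map $M/{\sim}\to\partial G$ sending a word to the endpoint of the corresponding ray is a homeomorphism. Surjectivity is immediate since every boundary point is represented by some geodesic ray from $e$; injectivity is the statement that $\sim$ is exactly the closeness relation, which holds by construction once one verifies that two rays with the same endpoint are everywhere $r$-fellows (Corollary~\ref{wn-krzywe-geodezyjne-pozostaja-bliskie} with the diagonal argument of Lemma~\ref{fakt-geodezyjne-przekatniowo}). Continuity of the map follows from the definition of the topology on $G\cup\partial G$ via Gromov products, and since $M$ is compact (closed in $\Sigma^{\mathbb N}$) and $\partial G$ Hausdorff, the induced map on the quotient is a homeomorphism. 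One should also flag here the correction to \cite{zolta} promised in Remark~\ref{uwaga-zolta}: the original argument mishandles the case of geodesics that are not $C$-type-determined by a bounded window, and the fix is precisely to route everything through the children-deterministic $C$-type rather than ad hoc fellow-traveller estimates.

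\textbf{Main obstacle.} The hard part is not the soft topology but setting up the $C$-type so that \emph{both} the geodesic-recognition cylinder $C_2$ for $M$ \emph{and} the fellowship-transition cylinder for $R$ are simultaneously finite-window and shift-invariant, while keeping the type finite. This is exactly the balancing act Section~\ref{sec-sm-zyczenia} is meant to resolve, and it relies essentially on Lemma~\ref{lem-sm-kuzyni} (distinguishing cousins), Proposition~\ref{lem-sm-B-dzieci} (children-determinism of $T^B$), and the torsion bound $R$ from~\eqref{eq-sm-zbiory-torsji}; the torsion case is what forces the passage from $T^b_N$ to $T^B$, since in a torsion-free group Lemma~\ref{fakt-kuzyni-lub-torsje} already gives the fellow-distinguishing property for free.
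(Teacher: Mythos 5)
Your high-level strategy matches the paper's (encode boundary points by words of $C$-types, quotient by a semi-Markovian closeness relation, use the $B$-type to handle torsion), but there is a genuine structural gap in how you set up the coding. You attach a word to \emph{every geodesic ray from $e$}, read off at every step. The paper instead codes boundary points by \emph{compatible sequences}: chains $(g_n)$ with $g_n = g_{n+1}^\Uparrow$ built from the \emph{prioritised} parent/child structure of Definition~\ref{def-sm-nic-sympleksow}, sampled at scale $L$. This is not a cosmetic difference. First, for the quotient map $M/{\sim}\to\partial G$ to be well defined you need the word to determine the ray (otherwise one word could arise from rays with different endpoints); nothing in your setup forces this, since distinct children of an element can share the same $C$-type. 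The paper manufactures injectivity by hand: the $A$-type (Definition~\ref{def-sm-typ-A}) contains a ``descendant number'' $n_g$ that artificially distinguishes all p-grandchildren of a fixed element, and this is what makes condition~\textbf{(a)} of Lemma~\ref{fakt-sm-kryt-zbior} hold. Your proposal never mentions the prioritised ancestry, the scale $L$, or the descendant numbers, and without them the map from words to rays is not injective and the construction collapses.

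Second, your ``genuinely non-trivial cylinder condition ruling out words that would correspond to non-geodesic paths'' is a phantom: you assert it exists ``where Cannon's theorem does the work'' but give no construction, and it is precisely the step that would fail, because local admissibility of consecutive type-pairs does not by itself certify that a global ray realizing the whole word exists when the next step is not determined by the type. The paper needs no such extra cylinder: since each element has a \emph{unique} p-parent and the transition relation $\tau\rightarrow\tau'$ is defined via p-grandchildren (whose set depends only on the type, Proposition~\ref{lem-sm-C-dzieci}), the image of $T^*$ is exactly $M(\Sigma_0,\rightarrow)$ as in Remark~\ref{uwaga-sm-proste-zbiory} --- every locally admissible word is realized by a unique compatible sequence built inductively. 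Similarly, your description of the equivalence relation glosses over the actual content of condition~\textbf{(c)} of Lemma~\ref{fakt-sm-kryt-zbior}: one must show that neighbourhood at level $n+1$ is forced by the types at levels $n$ and $n+1$ together with neighbourhood at level $n$, and the proof (Proposition~\ref{lem-sm-C-klejenie}) crucially uses that the $C$-type of $g^\Uparrow$ records the $B$-types of \emph{all} its $8\delta$-fellows and that $B$-types distinguish fellows (Proposition~\ref{lem-sm-kuzyni}), so that the comparison translation matches up the four elements uniquely. You correctly identify the ingredients (Proposition~\ref{lem-sm-B-dzieci}, Proposition~\ref{lem-sm-kuzyni}, the torsion bound), but the architecture that makes them suffice --- compatible sequences and the $A$-type's sibling-distinguishing device --- is missing.
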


The proof of Theorem \ref{tw-semi-markow-0} --- preceded by a number of auxiliary facts --- is given at the end of this section. Roughly, it will be obtained by applying Corollary~\ref{wn-sm-kryt} to the \textit{$C$-type} function which will be defined in Section~\ref{sec-sm-abc-c}.

\begin{uwaga}
 Theorem~\ref{tw-semi-markow-0} has been proved (in~\cite{zolta}) under an additional assumption that $G$ is torsion-free.
We present a proof which does not require this assumption; the price for it is that our reasoning (including the results from Section~\ref{sec-abc}, which will play an important role here) is altogether significantly more complicated.

However, in the case of torsion-free groups, these complications mostly trivialise (particularly, so does the construction of $B$-type) --- and the remaining basic structure of the reasoning (summarised in Lemma \ref{fakt-sm-kryt-zbior}) is analogous to that in the proof from~\cite{zolta}. Within this analogy, a key role in our proof is played by Proposition~\ref{lem-sm-kuzyni}, corresponding to Lemma 7.3.1 in~\cite{zolta} which particularly requires $G$ to be torsion-free. More concrete remarks about certain problems related with the proof from \cite{zolta} will be stated later in Remark~\ref{uwaga-zolta}.
\end{uwaga}

\begin{uwaga}
Theorem \ref{tw-semi-markow-0} can be perceived as somewhat analogous to a known result stating automaticity of hyperbolic groups (described for example in \cite[Theorem~12.7.1]{CDP}).
The relation between those theorems seems even closer if we notice that --- although the classical automaticity theorem involves the Cayley graph of a group --- it can be easily translated to an analogous description of the Gromov boundary. Namely, the boundary is the quotient of some ``regular'' set of infinite words by some ``regular'' equivalence relation (in a sense analogous to Definition \ref{def-sm-rel}) where ``regularity'' of a set $\Phi \subseteq \Sigma^\mathbb{N}$ means that there is a finite automaton $A$ such that any infinite word $(a_n)_{n \geq 0}$ belongs to $\Phi$ if and only if $A$ accepts all its finite prefixes.

However, such regularity condition is weaker than the condition from Definition \ref{def-sm-zb}, as the following example shows:
\[ \Phi = \big\{ (x_n)_{n \geq 0} \in \{ a, b, c \}^\mathbb{N} \ \big|\ \forall_n \ (x_n = b \ \Rightarrow \ \exists_{i < n} \ x_i = a) \big\}, \qquad \quad A: \ \ 
\raisebox{-3.5ex}{
\begin{tikzpicture}[scale=0.15]
 \node[draw, circle, double] (s0) at (0, 0) {};
 \node[draw, circle, double] (s1) at (10, 0) {};
 \node[draw, circle] (s2) at (20, 0) {};
 \draw[->] (-3,0) -- (s0);
 \draw[->] (s0) -- (s1) node [midway, above, draw=none, inner sep=2] {\footnotesize $a$};
 \draw (s0) edge [in=110,out=70,loop] node [midway, above, inner sep=2] {\footnotesize $c$} ();
 \draw (s1) edge [in=110,out=70,loop] node [midway, above, inner sep=2] {\footnotesize $a,b,c$} ();
 \draw (s2) edge [in=110,out=70,loop] node [midway, above, inner sep=2] {\footnotesize $a,b,c$} ();
 \draw[->] (s0) edge[bend right=20] node [midway, sloped, below, inner sep=2] {\footnotesize $b$} (s2);
\end{tikzpicture}
}
\]
It is easy to check that the set $\Phi$ corresponds to the automaton $A$ in the sense described above, while it is not semi-Markovian.
The latter claim can be shown as follows. Assume that there exists a presentation  $\Phi = C_1 \cap \bigcap_{n \geq 0} S^{-n} (C_2)$ as required by Definition \ref{def-sm-zb}, and let the cylinder $C_1$ have form $\pi_F^{-1}(A)$, according to Definition \ref{def-sm-cylinder}. Denote by $N$ the maximal element of $F$. Then, the word $\alpha_1 = \underbrace{cc\ldots{}cc}_{N+1}\underbrace{aa\ldots{}aa}_{N+1}\underbrace{cc\ldots{}cc}_{N+1}bb\ldots$ belongs to $\Phi$, while $\alpha_2 = \underbrace{cc\ldots{}cc}_{N+1}bb\ldots$ does not belong to $\Phi$. However, these words have a common prefix of length $N+1$ (so $\alpha_2$ cannot be rejected by $C_1$) and moreover $\alpha_2$ is a suffix of $\alpha_1$ (so $\alpha_2$ cannot be rejected by $C_2$).
\end{uwaga}

\subsection{Compatible sequences}
\label{sec-sm-nici}

In the remainder of Section \ref{sec-sm}, we work under the assumptions formulated in the introduction to Section \ref{sec-abc}.

\begin{ozn}
For any $n \geq 0$, we denote
\[ G_n = \big\{ x \in G \,\big|\, |x| = n \big\}. \]
\end{ozn}

\begin{df}
An infinite sequence $(g_n)_{n \geq 0}$ in~$G$ will be called \textit{compatible} if, for all $n \geq 0$, we have $g_n \in G_{Ln}$ and $g_n = g_{n+1}^\Uparrow$. We denote the set of all such sequences by $\mathcal{N}$.
\end{df}

Note that to any compatible sequence $(g_n)_{n \geq 0}$ we can naturally assign a geodesic $\alpha$ in~$G$, defined by the formula
\[ \alpha(m) = g_n^{\uparrow Ln-m} \qquad \textrm{ for } \quad m, n \geq 0, \  Ln \geq m. \]  
(It is easy to check that the value $g_n^{\uparrow Ln-m}$ does not depend on the choice of $n$). This means that every compatible sequence $(g_n)$ has a \textit{limit}: it must converge in~$G \cup \partial G$ to the element $\lim_{m \rightarrow \infty} \alpha(m) = [\alpha] \in \partial G$.

\begin{fakt}
\label{fakt-sm-nici-a-punkty}
Let the map $I : \mathcal{N} \rightarrow \partial G$ assign to a compatible sequence $(g_n)_{n \geq 0}$ its limit in $\partial G$. Then:
\begin{itemize}
\item[\textbf{(a)}] $I$ is surjective;
\item[\textbf{(b)}] for every $(g_n), (h_n) \in \mathcal{N}$, we have
\[ I \big( (g_n) \big) = I \big( (h_n) \big) \qquad \Longleftrightarrow \qquad g_n \leftrightarrow h_n \quad \textrm{ for every } n \geq 0. \]
\end{itemize}
\end{fakt}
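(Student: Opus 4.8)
The plan is to prove the two parts of Lemma~\ref{fakt-sm-nici-a-punkty} by passing freely between a compatible sequence $(g_n)$ and its associated geodesic $\alpha$, exactly as described in the paragraph preceding the statement. Recall that $\alpha$ is the geodesic with $\alpha(m) = g_n^{\uparrow Ln-m}$ for $Ln \ge m$, that $g_n = \alpha(Ln)$, and that $I\big((g_n)\big) = [\alpha] \in \partial G$. Conversely, given any geodesic ray $\beta$ starting at $e$, the sequence $g_n := \beta(Ln)$ is compatible \emph{provided} we know that the chosen p-parent of each $\beta(m+1)$ is $\beta(m)$; this is not automatic, since the p-parent is a \emph{specific} parent, so the first real point is to observe that we are free to replace $\beta$ by a geodesic ray $\beta'$ which, at every step, descends to the p-parent of its current vertex. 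Concretely, one builds $\beta'$ backwards from infinity is impossible, so instead one notes: for a fixed endpoint $x \in \partial G$, pick any geodesic ray $\beta$ to $x$; then the sequence of p-ancestors $\big(\beta(n)^{\uparrow(n-m)}\big)_m$ need not converge to $x$, so instead one should run the construction the other way — define $h_n$ inductively so that $h_n$ is a descendant of $h_{n-1}$ with $h_{n-1} = h_n^\Uparrow$ and $h_n$ stays within bounded distance of $\beta(Ln)$. The cleanest route is: given $x \in \partial G$, take a geodesic ray $\beta$ from $e$ to $x$, and for each $n$ let $g_n = \big(\text{the unique p-ancestor of }\beta(Ln\cdot 2^{?})\big)$... — this is getting complicated; the honest statement is that $I$ surjective follows because every $x$ is the limit of \emph{some} geodesic ray, and every geodesic ray's vertices at levels $Ln$, after replacing the ray by the p-ancestor chain of a sufficiently long initial segment and taking a diagonal limit (as in Lemma~\ref{fakt-geodezyjne-przekatniowo}), yields a compatible sequence with limit $x$.

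So for part \textbf{(a)}, here is the argument I would actually write. Fix $x \in \partial G$ and a geodesic ray $\beta$ from $e$ to $x$. For each $N$, consider the finite geodesic $[e, \beta(N)]$ and its chain of p-ancestors: let $\gamma_N$ be the geodesic segment from $e$ to $\beta(N)$ all of whose vertices are p-ancestors of $\beta(N)$, i.e.\ $\gamma_N(m) = \beta(N)^{\uparrow(N-m)}$. By Lemma~\ref{fakt-geodezyjne-przekatniowo} applied to the rays $\gamma_N$ (extended arbitrarily to rays), there is a subsequence coinciding with a single geodesic ray $\alpha$ on longer and longer initial segments, and $[\alpha]$ is the limit of $\beta(N_i) \to x$, so $[\alpha] = x$. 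Crucially, $\alpha$ is built from p-ancestor chains, so $\alpha(m) = \alpha(m+1)^\uparrow$ for every $m$; hence $g_n := \alpha(Ln)$ defines a compatible sequence with $I\big((g_n)\big) = x$. This proves surjectivity.

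For part \textbf{(b)}, the implication $(\Leftarrow)$ is immediate: if $g_n \leftrightarrow h_n$ for all $n$, then $d(g_n, h_n) \le 8\delta$ for all $n$, and since the associated geodesics $\alpha, \alpha'$ satisfy $\alpha(Ln) = g_n$, $\alpha'(Ln) = h_n$, applying Corollary~\ref{wn-krzywe-geodezyjne-pozostaja-bliskie} (or simply the narrow-triangles Lemma~\ref{fakt-geodezyjne-pozostaja-bliskie}) shows $\alpha$ and $\alpha'$ stay within a uniformly bounded distance (something like $16\delta$) of each other along their whole length, whence $[\alpha] = [\alpha']$ by the definition of $\sim$ recalled in Section~\ref{sec-def-hip}. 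For the converse $(\Rightarrow)$, suppose $I\big((g_n)\big) = I\big((h_n)\big) = x$. Then $\alpha$ and $\alpha'$ are two geodesic rays from $e$ converging to the same boundary point $x$, so they are close: $d(\alpha(m), \alpha'(m)) \le C$ for all $m$ with $C = 4\delta$ (by the choice of constant recalled in Section~\ref{sec-def-hip}). Actually one must be slightly careful: closeness of the \emph{rays} $\alpha,\alpha'$ as \emph{sequences} is what the equivalence relation gives, and specialising $m = Ln$ yields $d(g_n, h_n) \le 4\delta \le 8\delta$, i.e.\ $g_n \leftrightarrow h_n$.

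The main obstacle, and the step I would spend the most care on, is part \textbf{(a)}: one must be sure that the diagonal/limit construction really produces a \emph{compatible} sequence, i.e.\ that the limiting geodesic $\alpha$ inherits the property $\alpha(m) = \alpha(m+1)^\uparrow$ from the p-ancestor chains $\gamma_N$. This works because this is a property of individual consecutive pairs of vertices: if $\alpha$ agrees with $\gamma_{N_i}$ on $[0, i]$ and $i$ grows, then for each fixed $m$ eventually $\alpha(m)$ and $\alpha(m+1)$ both lie on $\gamma_{N_i}$, where the p-parent relation holds by construction; a minor subtlety is that "$u$ is the p-parent of $v$" depends only on $v$ (and the generating set), not on the ambient geodesic, so the relation transfers to $\alpha$. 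The $(\Rightarrow)$ direction of \textbf{(b)} has the smaller subtlety that the equivalence $\sim$ on rays is defined via an arbitrary bound $C$, but Section~\ref{sec-def-hip} records that $C = 4\delta$ suffices, which gives exactly the neighbour bound $8\delta$ (with room to spare); if a slightly larger constant came out one could instead invoke Lemma~\ref{fakt-waskie-trojkaty} on the ideal triangle with vertices $e, x, x$ degenerating, but the $4\delta$ bound makes this unnecessary.
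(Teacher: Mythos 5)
Your proof follows essentially the same route as the paper's: part \textbf{(a)} by taking the p-ancestor chains of points along a fixed geodesic ray to $x$ and passing to a diagonal limit via Lemma~\ref{fakt-geodezyjne-przekatniowo}, and part \textbf{(b)} by the $4\delta$-closeness of asymptotic rays from $e$ for $(\Rightarrow)$ and a uniform bound on $d(\alpha(m),\alpha'(m))$ for $(\Leftarrow)$. The one loose end is in \textbf{(a)}: if the finite chains $\gamma_N$ are extended to rays \emph{arbitrarily}, Lemma~\ref{fakt-geodezyjne-przekatniowo} identifies $[\alpha]$ with the limit of the boundary points of those arbitrary extensions, not with $x$; the paper closes this by extending each $\gamma_N$ with the tail of $\beta$ itself (checking the concatenation is still a geodesic), so that every extended ray converges to $x$. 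Alternatively, since $\gamma_N$ and $\beta|_{[0,N]}$ share both endpoints, Lemma~\ref{fakt-geodezyjne-pozostaja-bliskie} keeps $\alpha$ within $8\delta$ of $\beta$, giving $[\alpha]=[\beta]=x$ directly.
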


\begin{proof}

\textbf{(a)} Let $x \in \partial G$ and let $\alpha$ be any infinite geodesic going from $e$ towards $x$. For $k \geq 0$, we define a geodesic $\alpha_k$ in~$G$ by the formula
\[ \alpha_k(n) = \begin{cases}
                  \alpha(k)^{\uparrow k-n} & \textrm{ for } n \leq k, \\
                  \alpha(n) & \textrm{ for } n \geq k.
                 \end{cases}
 \]
(For $n = k$, both branches give the same result). Then, for any $n \geq 0$, we have $|\alpha_k(n)| = n$ and moreover $d \big( \alpha_k(n), \alpha_k(n+1) \big) = 1$, which proves that $\alpha_k$ is a geodesic. Moreover, we have $\alpha_k(0) = e$ and $\lim_{n \rightarrow \infty} \alpha_k(n) = x$ because $\alpha_k$ ultimately coincides with $\alpha$. 

Using Lemma~\ref{fakt-geodezyjne-przekatniowo} to the sequence $(\alpha_k)$, we obtain some subsequence $(\alpha_{k_i})$ and a geodesic $\alpha_\infty$ such that $\alpha_\infty$ coincides with~$\alpha_{k_i}$ on the segment~$[0, i]$. By choosing a subsequence if necessary, we can assume that $k_i \geq i$; then we have $\alpha_\infty(i-1) = \alpha_\infty(i)^\uparrow$ for every $i \geq 1$, so the sequence $\big( \alpha_\infty(Ln) \big)_{n \geq 0}$ is compatible. On the other hand, Lemma \ref{fakt-geodezyjne-przekatniowo} also ensures that $I \big( (g_n) \big) = [\alpha_\infty] = \lim_{i \rightarrow \infty} [\alpha_{k_i}] = x$, which proves the claim. 

\textbf{(b)} The implication $(\Rightarrow)$ follows directly from the inequality (1.3.4.1) in~\cite{zolta}. On the other hand, if $g_n \leftrightarrow h_n$ for every $n \geq 0$, and if $\alpha, \beta$ are the geodesics corresponding to the compatible sequences $(g_n)$ and~$(h_n)$, then we have
\[ d \big( \alpha(Ln), \beta(Ln) \big) = d(g_n, h_n) \leq 8\delta \qquad \textrm{ for } \quad n \geq 0, \]
so from the triangle inequality we deduce that $d \big( \alpha(m), \beta(m) \big) \leq 2L + 8\delta$ for all $m \geq 0$, and so in $\partial G$ we have $[\alpha] = [\beta]$.
\end{proof}

\subsection{Desired properties of the type function}
\label{sec-sm-zyczenia}

The presentation of $\partial G$ as a semi-Markovian space will be based on an appropriate type function (see the introduction to Section \ref{sec-abc}). Since the ball type $T^b_N$ used in the previous sections has too weak properties for our needs, we will use some its strengthening. In this section, we state (in Lemma \ref{fakt-sm-kryt-zbior}) a list of properties of a type function which are sufficient (as we will prove in Corollary \ref{wn-sm-kryt}) to give a semi-Markovian structure on $\partial G$. The construction of a particular function $T^C$ satisfying these conditions will be given in Section \ref{sec-sm-abc-c}.

\begin{df}
Let $T$ be any type function in $G$ with values in a~finite set $\mathcal{T}$. For a compatible sequence $\nu = (g_n)_{n \geq 0} \in \mathcal{N}$, we call its \textit{type} $T^*(\nu)$ the sequence $\big( T(g_n) \big)_{n \geq 0} $.
\end{df}

Then, using the definition of a semi-Markovian space, it is easy to show the following lemma.

\begin{fakt}
\label{fakt-sm-kryt-zbior}
Let $T$ be a type function in $G$ with values in $\mathcal{T}$. Then:
\begin{itemize}
 \item[\textbf{(a)}] If, for every element of~$G$, all its p-grandchildren have pairwise distinct types, then the function $T^* : \mathcal{N} \rightarrow \mathcal{T}^{\mathbb{N}}$ is injective;
 \item[\textbf{(b)}] If the set of p-grandchildren of $g \in G$ depends only on the type of $g$, then the image of $T^*$ is a semi-Markovian set over $\mathcal{T}$;
 \item[\textbf{(c)}] Under the assumptions of parts \textbf{(a)} and \textbf{(b)}, if for any $g, g' \in G_{L(n+1)}$, $h, h' \in G_{L(m+1)}$ the conditions
 \begin{gather*} 
 T(g) = T(h), \qquad T(g') = T(h'), \qquad T(g^\Uparrow) = T(h^\Uparrow), \qquad T({g'}^\Uparrow) = T({h'}^\Uparrow), \\
 g^\Uparrow \leftrightarrow g'^\Uparrow, \qquad g \leftrightarrow g', \qquad h^\Uparrow \leftrightarrow {h'}^\Uparrow,
 \end{gather*}
 imply that $h \leftrightarrow h'$, then the equivalence relation $\sim$ in the set $T^*(\mathcal{N})$, given by the formula \linebreak \mbox{$T^*(\nu) \sim T^*(\nu') \ \Leftrightarrow \ I(\nu) = I(\nu')$}, is a semi-Markovian relation.
\end{itemize}
\end{fakt}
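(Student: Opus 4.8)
The plan is to treat the three parts in order, each time unwinding the relevant definition (semi-Markovian set, semi-Markovian relation) into a concrete cylinder/shift description and then feeding in the hypothesis. For part~\textbf{(a)}, I would argue by contradiction: if $\nu = (g_n)$ and $\nu' = (g'_n)$ are distinct compatible sequences with $T^*(\nu) = T^*(\nu')$, let $n$ be minimal with $g_n \neq g'_n$; then $n \geq 1$ and $g_{n-1} = g'_{n-1} =: g$, so both $g_n$ and $g'_n$ are p-grandchildren of~$g$. Since $T(g_n) = T(g'_n)$, the distinct-types hypothesis forces $g_n = g'_n$, a contradiction. Hence $T^*$ is injective on $\mathcal{N}$.

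For part~\textbf{(b)}, I would exhibit $T^*(\mathcal{N})$ in the form required by Definition~\ref{def-sm-zb}, using the template of Remark~\ref{uwaga-sm-proste-zbiory}. Define $\mathcal{T}_0 \subseteq \mathcal{T}$ to be the set of types $T(g_0)$ over all compatible sequences (equivalently, the types realised by elements of length~$0$, i.e.\ just $T(e)$ if $L>0$, or in general the set of types of possible ``starting'' elements), and define a relation $\to$ on $\mathcal{T}$ by $\tau \to \tau'$ iff some (equivalently, by the hypothesis of~\textbf{(b)}, every) element $g$ of type~$\tau$ has a p-grandchild of type~$\tau'$. A sequence $(g_n) \in \mathcal{N}$ then has $T(g_n) \to T(g_{n+1})$ for all $n$, so $T^*(\mathcal{N}) \subseteq M(\mathcal{T}_0, \to)$; conversely, given a sequence $(\tau_n)$ in $M(\mathcal{T}_0,\to)$, one builds a compatible sequence realising it inductively — having chosen $g_n$ of type $\tau_n$, the set of its p-grandchildren depends only on $\tau_n$ and contains one of type $\tau_{n+1}$, so pick it as $g_{n+1}$; the starting element exists since $\tau_0 \in \mathcal{T}_0$. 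Thus $T^*(\mathcal{N}) = M(\mathcal{T}_0,\to)$, which is semi-Markovian by Remark~\ref{uwaga-sm-proste-zbiory}.

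For part~\textbf{(c)}, using \textbf{(a)}–\textbf{(b)} the relation $\sim$ on $T^*(\mathcal{N})$ is well defined (injectivity of $T^*$ lets us transport the relation $I(\nu) = I(\nu')$ from $\mathcal{N}$ to its image). By Fact~\ref{fakt-sm-nici-a-punkty}\textbf{(b)}, $I(\nu) = I(\nu')$ iff $g_n \leftrightarrow g'_n$ for all $n$. So I would describe $J(\sim) \subseteq (\mathcal{T}\times\mathcal{T})^{\mathbb{N}}$ as the set of pairs of type-sequences $\big((\tau_n),(\tau'_n)\big)$, each individually lying in $T^*(\mathcal{N})$, such that the pair ``is realised by a neighbouring pair of compatible sequences''. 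Concretely: define a relation $\Rightarrow$ on $\mathcal{T}\times\mathcal{T}$ by declaring $(\tau,\tau') \Rightarrow (\sigma,\sigma')$ iff there exist $g \leftrightarrow g'$ (of some common length $Ln$) with $T(g)=\tau, T(g')=\tau'$ admitting p-grandchildren $h \leftrightarrow h'$ with $T(h)=\sigma, T(h')=\sigma'$, together with an appropriate set $(\mathcal{T}\times\mathcal{T})_0$ of admissible initial pairs (those realised by neighbouring elements of length~$0$, which are automatically equal). The hypothesis of~\textbf{(c)} is exactly what makes this relation ``Markovian'', i.e.\ independent of the choice of witnesses: it says that once $(\tau_n,\tau'_n)$, $(\tau_{n+1},\tau'_{n+1})$ and the neighbouring relations among p-grandparents are fixed, the neighbouring relation $h \leftrightarrow h'$ is forced — so if $(\tau_n,\tau'_n) \Rightarrow (\tau_{n+1},\tau'_{n+1})$ holds for every $n$ then one can inductively build neighbouring compatible sequences $(g_n) \leftrightarrow (g'_n)$ realising both type-sequences, the key point being that at step $n$ the hypothesis guarantees the chosen p-grandchildren $g_{n+1} \leftrightarrow g'_{n+1}$ remain neighbours. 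Hence $J(\sim) = \big(T^*(\mathcal{N})\times T^*(\mathcal{N})\big) \cap M\big((\mathcal{T}\times\mathcal{T})_0, \Rightarrow\big)$ up to the identification $J$; being an intersection of two semi-Markovian sets — the first by \textbf{(b)} applied to the product alphabet (semi-Markovian sets are closed under finite intersection, since cylinders are and the $\bigcap_n S^{-n}$ part intersects cleanly) and the second by Remark~\ref{uwaga-sm-proste-zbiory} — it is semi-Markovian, so $\sim$ is a semi-Markovian relation.

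I expect the main obstacle to be part~\textbf{(c)}: the bookkeeping needed to phrase $J(\sim)$ genuinely as a cylinder intersected with shift-invariant conditions over the product alphabet, and in particular verifying that the hypothesis of~\textbf{(c)} is precisely strong enough to make the relation $\Rightarrow$ well defined (witness-independent) so that the inductive reconstruction of a neighbouring pair of compatible sequences actually goes through. The individual membership conditions ``$(\tau_n) \in T^*(\mathcal{N})$'' must also be folded into the shift-invariant part, which is routine given~\textbf{(b)} but needs care. Everything else is a direct unwinding of definitions.
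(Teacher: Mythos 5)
Your proposal is correct and follows essentially the same route as the paper: part (b) via the set $M(\Sigma_0,\rightarrow)$ of Remark~\ref{uwaga-sm-proste-zbiory}, and part (c) by encoding $\sim$ through a relation on $\mathcal{T}\times\mathcal{T}$ whose pairs are realised by neighbouring p-grandparent/p-grandchild witnesses, then using Lemma~\ref{fakt-sm-nici-a-punkty}(b) and the hypothesis of (c) to inductively reconstruct a neighbouring pair of compatible sequences. The only cosmetic difference is that you present $J(\sim)$ as an intersection with $T^*(\mathcal{N})\times T^*(\mathcal{N})$, whereas the paper writes $\sim = M(A_0,\leadsto)$ directly (the containment in $T^*(\mathcal{N})\times T^*(\mathcal{N})$ being automatic); both are fine.
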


\begin{proof}
Part \textbf{(a)} is clear. If the assumption of part~\textbf{(b)} holds, it is easy to check that
$T^*(\mathcal{N}) = M(\Sigma_0, \rightarrow)$, where $\Sigma_0 = \{ T(e) \}$ and $\tau \rightarrow \tau'$ if and only if $\tau = T(g^\Uparrow)$ and $\tau' = T(g)$ for some $n \geq 0$ and $g \in G_{L(n+1)}$.

Analogously, it is easy to check that, under the assumptions of part~\textbf{(c)}, the relation $\sim$ has the form $M(A_0, \leadsto)$, where
\begin{gather*}
 A_0 = \big\{ \big( T(e), T(e) \big) \}, \\
 \big( T(g^\Uparrow), T({g'}^\Uparrow) \big) \ \leadsto \ \big( T(g), T(g') \big) \qquad \textrm{ for } \quad g, g' \in G_{L(n+1)}, \ g^\Uparrow \leftrightarrow g'^\Uparrow, \, g \leftrightarrow g', \ n \geq 0.
\end{gather*}
Indeed: the containment $\sim \subseteq M(A_0, \leadsto)$ results from Lemma \ref{fakt-sm-nici-a-punkty}b. On the other hand, if a sequence $\big( (\tau_n, \tau_n') \big)_{n \geq 0}$ belongs to $M(A_0, \leadsto)$, then the sequences $(\tau_n)_{n \geq 0}$ and $(\tau'_n)_{n \geq 0}$ belong to the set $M(\Sigma_0, \rightarrow)$
defined in the previous paragraph, so they are types of some compatible sequences $(h_n)_{n \geq 0}$ and correspondingly $(h'_n)_{n \geq 0}$. Moreover, it is easy to check by induction that $h_n \leftrightarrow h_n'$ for every $n \geq 0$: for $n = 0$ this holds since $h_0 = h'_0 = e$, and for $n > 0$ one can use the relation $h_{n-1} \leftrightarrow h'_{n-1}$, the condition $(\tau_{n-1}, \tau'_{n-1}) \leadsto (\tau_n, \tau'_n)$ and the assumptions of part~\textbf{(c)}. Therefore, we obtain that $h_n \leftrightarrow h_n'$ for $n \geq 0$, and so by Lemma~\ref{fakt-sm-nici-a-punkty}b we deduce that $(\tau_n) \sim (\tau_n')$.
\end{proof}

\begin{wn}
\label{wn-sm-kryt}
Under the assumptions of parts~\textbf{(a-c)} in Lemma~\ref{fakt-sm-kryt-zbior}, $\partial G$ is a semi-Markovian space.
\end{wn}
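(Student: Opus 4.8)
The plan is to deduce Corollary~\ref{wn-sm-kryt} directly from Lemma~\ref{fakt-sm-kryt-zbior} together with the definition of a semi-Markovian space (Definition~\ref{def-sm-ps}) and the properties of the map $I : \mathcal{N} \to \partial G$ established in Lemma~\ref{fakt-sm-nici-a-punkty}. Concretely, suppose $T$ is a type function on $G$ satisfying the hypotheses of parts \textbf{(a)}, \textbf{(b)} and \textbf{(c)} of Lemma~\ref{fakt-sm-kryt-zbior}, with values in a finite set $\mathcal{T}$. I will chase the following diagram: $\mathcal{N} \xrightarrow{T^*} \mathcal{T}^{\mathbb{N}}$ realises $T^*(\mathcal{N})$ as a semi-Markovian subset of $\mathcal{T}^{\mathbb{N}}$ (part~\textbf{(b)}), the map $T^*$ is injective (part~\textbf{(a)}), and $I$ descends, via $T^*$, to a surjection of $T^*(\mathcal{N})$ onto $\partial G$ whose kernel relation $\sim$ is semi-Markovian (part~\textbf{(c)}). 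So $\partial G$ will be exhibited as the quotient $T^*(\mathcal{N})/{\sim}$, and it only remains to check that this quotient map is a topological quotient, i.e.\ that the bijection $T^*(\mathcal{N})/{\sim} \to \partial G$ it induces is a homeomorphism.

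First I would set the notation: let $M = T^*(\mathcal{N}) \subseteq \mathcal{T}^{\mathbb{N}}$, a semi-Markovian set by Lemma~\ref{fakt-sm-kryt-zbior}\textbf{(b)}, hence a closed subset of the compact metrisable space $\mathcal{T}^{\mathbb{N}}$, hence itself compact. Since $T^*$ is injective (part~\textbf{(a)}), there is a well-defined bijection $\iota : \mathcal{N} \to M$, and $I \circ \iota^{-1} : M \to \partial G$ is a surjection by Lemma~\ref{fakt-sm-nici-a-punkty}\textbf{(a)}. Denote this surjection by $\bar I$. Its fibre relation is exactly the relation $\sim$ from Lemma~\ref{fakt-sm-kryt-zbior}\textbf{(c)}: indeed $\bar I(T^*(\nu)) = \bar I(T^*(\nu'))$ iff $I(\nu) = I(\nu')$, which by definition is $T^*(\nu) \sim T^*(\nu')$. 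Part~\textbf{(c)} tells us $\sim$ is a semi-Markovian relation. So $\bar I$ induces a bijection $\widehat I : M/{\sim} \to \partial G$.

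The remaining — and really the only substantive — point is continuity and the quotient property. For continuity of $\bar I : M \to \partial G$: a word $(g_n)_{n\ge 0} \in \mathcal{N}$ close (in the Cantor topology) to $(h_n)_{n\ge 0}$ agrees with it on a long initial segment, say $g_n = h_n$ for $n \le k$; then the associated geodesics agree on $[0, Lk]$, so the limits $I((g_n)), I((h_n))$ lie in $\sppan(g_k)$, whose diameter is at most $C \cdot a^{-Lk}$ by Lemma~\ref{fakt-spany-male}. Pulling this back along $T^*$ (which is injective but need not be open, so one should phrase this as: a point of $M$ determines, via $\iota^{-1}$, a compatible sequence, and agreement of $\mathcal{T}$-types on an initial segment does \emph{not} force agreement of the $g_n$'s — here one must instead argue that $M$, being compact, and $\bar I$ being a continuous-enough map, suffices). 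A cleaner route: observe that $\bar I \colon M \to \partial G$ is continuous because it factors as $M \xrightarrow{\iota^{-1}} \mathcal{N} \xrightarrow{I} \partial G$ where $\mathcal{N}$ carries the subspace topology from $G^{\mathbb{N}}$ (the product of discrete sets of the appropriate spheres), $\iota^{-1}$ is continuous since the $n$-th coordinate of $\iota^{-1}$ is determined by a bounded-length initial segment of the $\mathcal{T}$-type via the finitely many choices available, and $I$ is continuous by the span-diameter estimate just described. Then $M$ compact and $\partial G$ Hausdorff force $\bar I$ to be a closed map, hence a quotient map; therefore $\widehat I : M/{\sim} \to \partial G$ is a homeomorphism. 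This exhibits $\partial G$ as the topological quotient of the semi-Markovian set $M$ by the semi-Markovian equivalence relation $\sim$, which is precisely Definition~\ref{def-sm-ps}, completing the proof.

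I expect the main obstacle to be exactly the continuity/openness bookkeeping for $\iota^{-1}$ and $\bar I$: one has to be careful that $T^*$ is only injective, not a homeomorphism onto its image a priori, so the compactness argument (continuous bijection-free surjection from a compact space to a Hausdorff space is a quotient map) is what does the real work, and it has to be invoked at the level of $\bar I$ on $M$ rather than naively transported from $\mathcal{N}$. Everything else — finiteness of $\mathcal{T}$, the semi-Markovian nature of $M$ and $\sim$, surjectivity of $I$ — is handed to us by the preceding lemmas, so the corollary is genuinely a short assembly step once the topology is handled with care.
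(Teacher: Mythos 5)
Your proposal is correct and follows essentially the same route as the paper: surjectivity of $I \circ (T^*)^{-1}$ from Lemma~\ref{fakt-sm-nici-a-punkty}\textbf{(a)}, continuity via the observation that (by the hypothesis of part~\textbf{(a)} and induction from $g_0 = e$) a prefix of the type sequence determines the corresponding prefix of the compatible sequence, hence convergence of the associated geodesics, and then compactness plus Hausdorffness to upgrade the continuous surjection to a topological quotient. Your intermediate worry that ``agreement of types on an initial segment does not force agreement of the $g_n$'s'' is unfounded --- it does, precisely by the distinct-types-of-p-grandchildren assumption --- but your ``cleaner route'' resolves this correctly, so the argument stands.
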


\begin{proof}
Since the map $I \circ (T^*)^{-1} : T^*(\mathcal{N}) \rightarrow \partial G$ is surjective by Lemma \ref{fakt-sm-nici-a-punkty}a, to verify that it is a homeomorphism we need only to check its continuity. Let $(\tau_n^{(i)}) \mathop{\longrightarrow}\limits_{i \rightarrow \infty} (\tau_n)$ in the space $T^*(\mathcal{N})$; this means that there exists a sequence $n_i \rightarrow \infty$ such that for every $i \geq 0$ the sequences $(\tau_n^{(i)})$ and $(\tau_n)$ coincide on the first $n_i$ positions. Then, the assumptions of part~\textbf{(a)} imply that the corresponding compatible sequences $(g_n^{(i)})$, $(g_n)$ also coincide on the first $n_i$ positions; in particular, $g_{n_i}^{(i)} = g_{n_i}$. Then, also the geodesics $\alpha^{(i)}$ corresponding to the sequences $(g_n^{(i)})$ are increasingly coincident with the geodesic $\alpha$ corresponding to the sequence $(g_n)$, which means by the definition of $\partial G$ that $I \big( (g_n^{(i)}) \big) = [\alpha^{(i)}] \rightarrow [\alpha] = I \big( (g_n) \big)$ in $\partial G$.
\end{proof}

\begin{uwaga}
\label{uwaga-zolta}
The main ``skeleton'' of the proof of Theorem \ref{tw-semi-markow-0} presented in Lemma \ref{fakt-sm-kryt-zbior} is taken from \cite{zolta}. The proof given there uses the ball type $T^b_N$ (defined in Section~\ref{sec-typy-kulowe}) as the type function, and $1$ as the value of~$L$. However, in the case of a torsion group, this type function does not have to satisfy the assumptions of part~\textbf{(a)} in Lemma \ref{fakt-sm-kryt-zbior}. Moreover, even in the torsion-free case, the verification of the assumptions of part~\textbf{(b)} ---  given in \cite{zolta} on page 125 (Chapter~7, proof of Proposition~2.4) --- contains a~defect in line 13. More precisely, it is claimed there that if one takes $N$ sufficiently large, $L = 1$ and $x', y' \in G$ such that $y'$ ``follows'' (in our terms: is a~child of) $x'$, then every element of the set $B_N(y') \setminus B_N(x')$ ``can be considered as belonging to the tree $T_{geo,x'}$'' (in our terms: to $x' T^c(x')$). 
% This seems to be either false or at least substantially non-obvious. 
Our approach avoids this problem basically by choosing $L$ and $N$ so large that the analogous claim must indeed hold, as one could deduce e.g. from Lemma~\ref{fakt-kulowy-duzy-wyznacza-maly} combined with the proof of Proposition~\ref{lem-potomkowie-dla-kulowych}.
\end{uwaga}

\subsection{Extended types and the $C$-type}

\label{sec-sm-abc-c}

\begin{df}
\label{def-sm-typ-plus}
Let $T$ be an arbitrary type function in $G$ with values in a finite set $\mathcal{T}$ and let $r \geq 0$. Let $g \in G$, and let $P_r(g)$ denote the set of $r$-fellows of $g$ (see Definition \ref{def-konstr-towarzysze}). We define the \textit{extended type} of element $g$ as the function $T^{+r}(g) : P_r(g) \rightarrow \mathcal{T}$, defined by the formula:
\[ \big( T^{+r}(g) \big)(h) = T(gh) \qquad \textrm{ for } \quad h \in P_r(g). \]
\end{df}

Since $P_r(g)$ is contained in a bounded ball $B(e, r)$, the extended type function $T^{+r}$ has, in an obvious way, finitely many possible values.

\begin{df}
\label{def-sm-typ-C}
We define the \textit{$C$-type} of an element $g \in G$ as its $B$-type extended by $8\delta$:
\[ T^C(g) = (T^B)^{+8\delta}(g) \qquad \textrm{ for } g \in G. \]
% \[ T^C(s) = \Big\{ \big( T^B(s), \  T^B(s'), \  \gamma_{s'} \, \gamma_s^{-1} \big) \ \Big|\ s' \in N_s \Big\}. \]
\end{df}

Note that, by comparing Definitions~\ref{def-konstr-towarzysze} and~\ref{def-sm-sasiedzi}, we obtain that the set $P_{8\delta}(g)$ contains exactly these $h \in G$ for which $g \leftrightarrow gh$. This means that the $C$-type of $g$ consists of the $B$-type of~$g$ and of $B$-types of its neighbours (together with the knowledge about their relative location).

\begin{fakt}
\label{fakt-sm-C-bracia}
For every $g \in G$, all p-grandchildren of $g$ have pairwise distinct $C$-types.
\end{fakt}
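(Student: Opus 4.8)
The goal is to show that, for a fixed $g\in G$, any two \emph{distinct} p-grandchildren $g_1\neq g_2$ of $g$ (so $|g_1|=|g_2|=|g|+L$, both p-descendants of $g$ with $g_i^\Uparrow=g$) have $T^C(g_1)\neq T^C(g_2)$. Recall that $T^C(g_i)=(T^B)^{+8\delta}(g_i)$ encodes $T^B(g_i h)$ for all $h\in P_{8\delta}(g_i)$, i.e.\ the $B$-types of $g_i$ and of all its neighbours (in the sense $g_i\leftrightarrow g_i h$), together with the relative positions $g_i^{-1}(g_ih)=h$. So it suffices to find, inside this data, something that differs between $g_1$ and $g_2$.

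The plan is to reduce to the already-proved distinguishing property of $T^B$. First I would observe that since $g_1,g_2$ are both p-grandchildren of $g$ and $L\ge\tfrac R2+4\delta$ (one of the standing assumptions, cf.\ Lemma~\ref{fakt-sm-rodzic-kuzyna}), the common p-grandparent forces $g_1,g_2$ to be close: applying Lemma~\ref{fakt-geodezyjne-pozostaja-bliskie} to geodesics $[e,g_1]$, $[e,g_2]$ (which agree up to level $|g|=|g_i|-L$ by compatibility of the p-ancestry chain), we get $d(g_1,g_2)\le 8\delta$, i.e.\ $g_1\leftrightarrow g_2$ — they are neighbours. Consequently $h:=g_1^{-1}g_2\in P_{8\delta}(g_1)$, so the value $T^B(g_1h)=T^B(g_2)$ is one of the entries recorded in $T^C(g_1)$, indexed by the displacement $h$; symmetrically $T^B(g_2 g_2^{-1}g_1)=T^B(g_1)$ is recorded in $T^C(g_2)$ at displacement $h^{-1}$.

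Now suppose for contradiction $T^C(g_1)=T^C(g_2)$. In particular the two extended types have the same domain shape and values; reading off the entry of $T^C(g_1)$ at the element $e$ gives $T^B(g_1)$, and similarly $T^B(g_2)$ from $T^C(g_2)$, so $T^B(g_1)=T^B(g_2)$. Since $g_1\leftrightarrow g_2$ means $|g_1|=|g_2|$ and $d(g_1,g_2)\le 8\delta\le 16\delta$, Proposition~\ref{lem-sm-kuzyni} (valid for $N$ sufficiently large, which is among our standing assumptions) applies and yields $g_1=g_2$, contradicting $g_1\neq g_2$. Hence $T^C(g_1)\neq T^C(g_2)$, which is the claim.

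\textbf{Main obstacle.} The only genuinely delicate point is the closeness estimate $d(g_1,g_2)\le 8\delta$: one must be careful that the two p-ancestry chains from $g_1$ and $g_2$ down to $g$ really do produce geodesics from $e$ that coincide up to level $|g|$, and then that Lemma~\ref{fakt-geodezyjne-pozostaja-bliskie} (applied with $n=|g_1|$, $m=|g|$, $k=d(g_1,g_2)$, using $n-m=L\ge\tfrac k2+4\delta$ since $k\le R$) indeed gives the bound $8\delta$ rather than something larger. Once $d(g_1,g_2)\le 8\delta$ is in hand, everything else is a direct invocation of Remark~\ref{uwaga-sm-B-wyznacza-A}-style bookkeeping on the structure of $T^C$ and of Proposition~\ref{lem-sm-kuzyni}; in fact this closeness is precisely why the $C$-type is defined by extending the $B$-type by $8\delta$ (the neighbour radius from Definition~\ref{def-sm-sasiedzi}) rather than by a smaller constant.
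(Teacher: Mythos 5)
There is a genuine gap, and it sits exactly at the step you yourself flag as the ``only genuinely delicate point'': the claim that two distinct p-grandchildren $g_1\neq g_2$ of $g$ satisfy $d(g_1,g_2)\le 8\delta$ is false, and the lemmas you invoke do not give it. Lemma~\ref{fakt-geodezyjne-pozostaja-bliskie} bounds the distance between the \emph{level-$m$ points} of two geodesics in terms of the distance $k$ between their \emph{endpoints}; with $n=|g_1|$, $m=|g|$ it tells you something about $d(g,g)=0$ given $k=d(g_1,g_2)$, not the reverse. Likewise Lemma~\ref{fakt-sm-rodzic-kuzyna} goes from cousins up to their p-grandparents, not from a common p-grandparent down to its p-grandchildren. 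Concretely, in a free group every descendant of $g$ at distance $L$ has a unique parent, hence is a p-grandchild of $g$, and two such elements can be at distance $2L$, which vastly exceeds $16\delta$ since $L\ge 14\delta$. Once $g_1\leftrightarrow g_2$ fails, Proposition~\ref{lem-sm-kuzyni} is not applicable and your contradiction never gets off the ground. (The entry of $T^C(g_2)$ that you compare with $T^C(g_1)$ also need not exist, since $g_1^{-1}g_2\notin P_{8\delta}(g_1)$ in general.)

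The statement is nevertheless true for a much more elementary reason, which is the entire point of how the $A$-type was built: $T^C$ determines $T^B$ (it is its extension by $8\delta$, evaluated at $e$), $T^B$ determines $T^A$ (Remark~\ref{uwaga-sm-B-wyznacza-A}), and $T^A(h)=(T^Z(h),n_h)$ contains the descendant number $n_h$ of Definition~\ref{def-sm-numer-dzieciecy}, which is by construction an injective labelling of the p-grandchildren of any fixed element (the translation $\gamma_{g}$ carries them bijectively onto the p-grandchildren of the model element $g_{T^Z(g)}$, which are enumerated without repetition). So distinct p-grandchildren of $g$ already have distinct $A$-types, hence distinct $B$-types, hence distinct $C$-types. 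No metric estimate and no appeal to Proposition~\ref{lem-sm-kuzyni} is needed; that proposition serves the different purpose of distinguishing \emph{neighbours with distinct ancestries}, not siblings.
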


\begin{proof}
By definition, the $C$-type of an element $h \in G$ contains its $B$-type, which in turn contains its $A$-type and finally its descendant number $n_h$, which by definition distinguishes all the p-grandchildren of a fixed element $g \in G$.
\end{proof}

\begin{lem}
\label{lem-sm-C-dzieci}
The set of $C$-types of all p-grandchildren of a given element $g \in G$ depends only on $T^C(g)$.
\end{lem}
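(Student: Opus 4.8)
The statement to be proved is Lemma~\ref{lem-sm-C-dzieci}: the set of $C$-types of all p-grandchildren of $g$ depends only on $T^C(g)$. Recall that $T^C(g) = (T^B)^{+8\delta}(g)$, so knowing $T^C(g)$ is the same as knowing $T^B(g')$ for every $g'$ with $g \leftrightarrow g'$ (i.e. $g' = gh$, $h \in P_{8\delta}(g)$), together with the relative positions $h = g^{-1}g'$. The plan is to reduce the claim about $C$-types of p-grandchildren of $g$ to a claim about $B$-types of p-grandchildren of the \emph{neighbours} of $g$, and then invoke Proposition~\ref{lem-sm-B-dzieci} and Lemma~\ref{fakt-sm-rodzic-kuzyna}.

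\textbf{Key steps.} First I would fix a p-grandchild $g_2$ of $g$ and analyse what $T^C(g_2)$ records: by Definition~\ref{def-sm-typ-C} it is the collection of $B$-types $T^B(g_2')$ for all $g_2'$ with $g_2 \leftrightarrow g_2'$, indexed by $g_2^{-1}g_2'$. So the set of $C$-types of p-grandchildren of $g$ is determined once we know, for each p-grandchild $g_2$ of $g$ and each neighbour $g_2'$ of $g_2$, the $B$-type $T^B(g_2')$ and the offset $g_2^{-1}g_2'$. Second, I would observe that if $g_2 \leftrightarrow g_2'$ with $|g_2| = |g_2'| = |g| + L$, then (taking $L \geq \tfrac{R}{2}+4\delta$, which holds under the standing assumptions via Lemma~\ref{fakt-sm-rodzic-kuzyna}, since neighbours are cousins) the p-grandparents satisfy $g_1 := g_2^\Uparrow \leftrightarrow g_1' := {g_2'}^\Uparrow$; in particular $g_1'$ is a neighbour of $g$, so $T^B(g_1')$ and the offset $g^{-1}g_1'$ are part of the data encoded in $T^C(g)$ — here I use that a neighbour relation is an $8\delta$-bound and that $T^C(g)=(T^B)^{+8\delta}(g)$. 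Third, knowing $T^B(g_1')$ and that $g_2'$ is a p-grandchild of $g_1'$, Proposition~\ref{lem-sm-B-dzieci} tells us that the left translation $\gamma_{g_1'} = g_{\tau} (g_1')^{-1}$ (to the chosen representative of $T^B(g_1')$) carries p-grandchildren of $g_1'$ to p-grandchildren of the representative and preserves their $B$-type; thus $T^B(g_2')$ is determined by $T^B(g_1')$ together with the ``relative address'' of $g_2'$ among p-grandchildren of $g_1'$. The remaining bookkeeping is to check that this relative address, and the offset $g_2^{-1}g_2'$, are themselves recoverable from the combinatorial data in $T^C(g)$: the offset $g^{-1}g_2 = (g^{-1}g_1)\cdot(g_1^{-1}g_2)$ and similarly $g^{-1}g_2'$ can be read off once we track which p-grandchild of $g$ is $g_2$ (encoded, via the $A$-type component of $B$-type, by the descendant number $n_{g_2}$) and which p-grandchild of $g_1'$ is $g_2'$; subtracting gives $g_2^{-1}g_2'$. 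This is exactly parallel to the proof of Proposition~\ref{lem-sm-B-dzieci} and uses Corollary~\ref{wn-sm-kulowy-wyznacza-potomkow} and Lemma~\ref{fakt-sm-przen-a} to legitimise the translations.

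\textbf{Main obstacle.} The delicate point is the coherence of the addressing: a single p-grandchild $g_2'$ of $g$ may be a neighbour of several p-grandchildren $g_2$ of $g$, and correspondingly its p-grandparent $g_1'$ may be a neighbour of several neighbours $g_1$ of $g$; I must make sure that the $B$-type $T^B(g_2')$ obtained does not depend on which ``route'' through a neighbour of $g$ we used to reach it, and that the full neighbourhood structure of each $g_2$ (needed to assemble $T^C(g_2)$) is consistently reconstructed. The clean way to handle this is to fix, once and for all using $T^C(g)$, representatives and enumerations (as in Definitions~\ref{def-sm-numer-dzieciecy}–\ref{def-sm-typ-B}), and then argue that the map sending a p-grandchild of $g$ to its $C$-type factors through finite data extracted from $T^C(g)$; any two elements $g, h$ with $T^C(g) = T^C(h)$ then yield, via the translation $\gamma = hg^{-1}$ and Proposition~\ref{lem-sm-B-dzieci} applied to each neighbour of $g$ (legitimate since $T^B(g_1') = T^B(\gamma g_1')$ for all neighbours), a type-preserving bijection between p-grandchildren of $g$ and of $h$, whence the two sets of $C$-types coincide. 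I expect the write-up to mirror closely the four-step structure of the proof of Proposition~\ref{lem-sm-B-dzieci}, with the extra ``$+8\delta$'' layer being the only genuinely new ingredient.
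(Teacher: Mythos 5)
Your proposal is correct and follows essentially the same route as the paper: reduce $T^C$ of a p-grandchild to $B$-types of its neighbours, use Lemma~\ref{fakt-sm-rodzic-kuzyna} to see that the p-grandparent of such a neighbour is a neighbour of $g$ (hence its $B$-type is recorded in $T^C(g)$), and then apply Proposition~\ref{lem-sm-B-dzieci}. The ``addressing coherence'' worry in your last paragraph dissolves exactly as you note there — the paper works directly with the single global translation $\gamma = h_1 g_1^{-1}$, which preserves all offsets automatically, so no route-independence check is needed.
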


\begin{proof}
Let $g_1, h_1 \in G$ satisfy $T^C(g_1) = T^C(h_1)$; denote $\gamma = h_1g_1^{-1}$. By Lemma \ref{fakt-sm-przen-a} we know that the left translation by $\gamma$
gives a bijection between p-grandchildren of $g_1$ and p-grandchildren of $h_1$. Let $g_2$ be a p-grandchild of $g_1$ and $h_2 = \gamma g_2$; our goal is to prove that $T^C(g_2) = T^C(h_2)$. For this, choose any $g_2' \in G$ such that $g_2 \leftrightarrow g_2'$; we need to prove that $T^B(g_2') = T^B(h_2')$, where $h_2' = \gamma g_2'$.

Denote $g_1' = g_2'^\Uparrow$ and $h_1' = \gamma g_1'$. Since $g_2 \leftrightarrow g_2'$, by Lemma~\ref{fakt-sm-rodzic-kuzyna} we have $g_1 \leftrightarrow g_1'$; then from the equality $T^C(g_1) = T^C(h_1)$ we obtain that $T^B(g_1') = T^B(h_1')$. In this situation, Proposition \ref{lem-sm-B-dzieci} ensures that $T^B(g_2') = T^B(h_2')$, q.e.d.
\end{proof}

\begin{lem}
\label{lem-sm-C-klejenie}
The type function $T^C$ satisfies the condition stated in part~\textbf{(c)} of Lemma \ref{fakt-sm-kryt-zbior}.
\end{lem}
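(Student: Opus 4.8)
The plan is to verify directly the implication required by Lemma~\ref{fakt-sm-kryt-zbior}\textbf{(c)}, namely: given $g, g' \in G_{L(n+1)}$ and $h, h' \in G_{L(m+1)}$ with $T^C(g) = T^C(h)$, $T^C(g') = T^C(h')$, $T^C(g^\Uparrow) = T^C(h^\Uparrow)$, $T^C({g'}^\Uparrow) = T^C({h'}^\Uparrow)$, $g^\Uparrow \leftrightarrow {g'}^\Uparrow$, $g \leftrightarrow g'$ and $h^\Uparrow \leftrightarrow {h'}^\Uparrow$, we want to conclude $h \leftrightarrow h'$. First I would set $\gamma_1 = h^\Uparrow (g^\Uparrow)^{-1}$ and $\gamma_2 = {h'}^\Uparrow ({g'}^\Uparrow)^{-1}$, so that $h = \gamma_1 g$ (since $T^C(g^\Uparrow) = T^C(h^\Uparrow)$ and $T^C$ determines $T^B$, hence by Lemma~\ref{lem-sm-B-dzieci} the translation by $\gamma_1$ maps the p-grandchild $g$ of $g^\Uparrow$ to the p-grandchild $h$ of $h^\Uparrow$ — and this $h$ is determined by $T^C(g) = T^C(h)$ via the descendant number, cf.\ Lemma~\ref{fakt-sm-C-bracia}) and similarly $h' = \gamma_2 g'$.

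The heart of the argument is to show $\gamma_1 = \gamma_2$ on the relevant elements, which will force $d(h, h') = d(\gamma_1 g, \gamma_1 g') = d(g, g') \le 8\delta$, giving $h \leftrightarrow h'$. Here is where the $C$-type (rather than merely the $B$-type) is essential: since $g^\Uparrow \leftrightarrow {g'}^\Uparrow$, the element ${g'}^\Uparrow$ is among the neighbours of $g^\Uparrow$, so $T^C(g^\Uparrow)$ records $(g^\Uparrow)^{-1}{g'}^\Uparrow$ together with $T^B({g'}^\Uparrow)$; the equality $T^C(g^\Uparrow) = T^C(h^\Uparrow)$ then yields $(h^\Uparrow)^{-1}(h'')^\Uparrow = (g^\Uparrow)^{-1}{g'}^\Uparrow$ for the neighbour $(h'')^\Uparrow := \gamma_1 {g'}^\Uparrow$ of $h^\Uparrow$, and moreover $T^B((h'')^\Uparrow) = T^B({g'}^\Uparrow) = T^B({h'}^\Uparrow)$. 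So $(h'')^\Uparrow$ and ${h'}^\Uparrow$ are both neighbours of $h^\Uparrow$ (using $h^\Uparrow \leftrightarrow {h'}^\Uparrow$) sharing the same $B$-type; by Proposition~\ref{lem-sm-kuzyni} (applied since $d((h'')^\Uparrow, {h'}^\Uparrow) \le 8\delta + 8\delta = 16\delta$, both being neighbours of $h^\Uparrow$) we get $(h'')^\Uparrow = {h'}^\Uparrow$, i.e.\ $\gamma_1 {g'}^\Uparrow = {h'}^\Uparrow = \gamma_2 {g'}^\Uparrow$, hence $\gamma_1 = \gamma_2 =: \gamma$.

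Once $\gamma_1 = \gamma_2 = \gamma$, I still need to pass from the p-grandparents to $g, g'$ themselves. We have $h = \gamma g$ from the first paragraph; applying Lemma~\ref{lem-sm-B-dzieci} again, now to the pair $({g'}^\Uparrow, {h'}^\Uparrow) = ({g'}^\Uparrow, \gamma {g'}^\Uparrow)$, the translation by $\gamma$ maps the p-grandchild $g'$ of ${g'}^\Uparrow$ to a p-grandchild of ${h'}^\Uparrow$ with the same $C$-type as $g'$; since $T^C(g') = T^C(h')$ and p-grandchildren of ${h'}^\Uparrow$ are distinguished by their $C$-type (Lemma~\ref{fakt-sm-C-bracia}), that p-grandchild is exactly $h'$, so $h' = \gamma g'$. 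Therefore $d(h, h') = d(\gamma g, \gamma g') = d(g, g') \le 8\delta$ because left translation by $\gamma$ is an isometry of the Cayley graph, and $|h| = |h'|$ is immediate; this is precisely $h \leftrightarrow h'$.

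I expect the main obstacle to be the bookkeeping around which $B$-type equalities are actually guaranteed by $T^C(g^\Uparrow) = T^C(h^\Uparrow)$ versus which need a separate invocation of Lemma~\ref{lem-sm-B-dzieci} or Proposition~\ref{lem-sm-kuzyni} — in particular making sure the distances involved stay within the $16\delta$ range required by Proposition~\ref{lem-sm-kuzyni} (the triangle inequality through $h^\Uparrow$ gives exactly $16\delta$, which is why $r = 16\delta$ was fixed in Section~\ref{sec-abc}), and confirming that $T^{+8\delta}$ records neighbours with the correct relative-position data so that the labelled-element $(h^\Uparrow)^{-1}(h'')^\Uparrow$ genuinely equals $(g^\Uparrow)^{-1}{g'}^\Uparrow$. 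Everything else is a routine chase through the definitions of $\Uparrow$, the neighbour relation $\leftrightarrow$, and the already-established transport lemmas for $B$- and $C$-types.
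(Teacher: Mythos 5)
Your proposal is correct and follows essentially the same route as the paper's proof: use $T^C(g^\Uparrow) = T^C(h^\Uparrow)$ to transport the neighbour ${g'}^\Uparrow$ to the unique neighbour of $h^\Uparrow$ with the right $B$-type (uniqueness via Proposition~\ref{lem-sm-kuzyni}), then use Lemma~\ref{lem-sm-B-dzieci} together with the fact that $B$-types distinguish p-grandchildren to identify $h = \gamma g$ and $h' = \gamma g'$, whence $d(h,h') = d(g,g') \leq 8\delta$. Your explicit bookkeeping with $\gamma_1$ and $\gamma_2$ and the proof that they coincide is just a more carefully spelled-out version of what the paper does with a single translation $\gamma$.
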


\begin{proof}
Let $g, g', h, h'$ be as in part~\textbf{(c)} of Lemma \ref{fakt-sm-kryt-zbior}. In particular, we assume that  $T^C(g^\Uparrow) = T^C(h^\Uparrow)$. By the definition of $C$-type, this means that the left translation by $\gamma = h^{-1}g$ neighbours of $g$ to neighbours of $h$, preserving their $B$-type, which in turn implies by Proposition \ref{lem-sm-B-dzieci} that this shift preserves the children of these neighbours, together with their $B$-types. In particular:
\begin{itemize} 
 \item the element ${g'}^\Uparrow$ must be mapped to ${h'}^\Uparrow$ since by assumption we have $T^B({g'}^\Uparrow) = T^B({h'}^\Uparrow)$, and moreover ${h'}^\Uparrow$ is the only neighbour of $h^\Uparrow$ with the appropriate $B$-type (by Proposition~\ref{lem-sm-kuzyni});
 \item the elements $g$, $g'$ must be mapped correspondingly to $h$, $h'$ since by assumption the corresponding $B$-types coincide, and moreover $h$, $h'$ are the only p-grandchildren of $h^\Uparrow$, ${h'}^\Uparrow$ with the appropriate $B$-types (because by Remark~\ref{uwaga-sm-B-wyznacza-A} the $B$-type determines the $A$-type, which in turn distinguishes all the p-grandchildren of a given element).
\end{itemize}
Therefore, we have $d(h, h') = d(\gamma g, \gamma g') = d(g, g') \leq 8\delta$, q.e.d.
\end{proof}

\begin{proof}[{\normalfont \textbf{Proof of Theorem~\ref{tw-semi-markow-0}}}]
By Corollary~\ref{wn-sm-kryt} it suffices to ensure that the type~$T^C$ (which is finitely valued by Lemma~\ref{fakt-sm-B-skonczony} and Definition~\ref{def-sm-typ-C}) satisfies the conditions \textbf{(a-c)} from Lemma~\ref{fakt-sm-kryt-zbior}. The conditions of parts~\textbf{(b)} and~\textbf{(c)} follow correspondingly from Propositions~\ref{lem-sm-C-dzieci} and~\ref{lem-sm-C-klejenie}, while the condition of part~\textbf{(a)} follows from the fact that, by definition, the value $T^C(x)$ for a given $x \in G$ determines $T^B(x)$ and further $T^A(x)$, while the $A$-types of all p-grandchildren of a given element are pairwise distinct by definition. This finishes the proof.
\end{proof}

\bibliographystyle{plain}
\bibliography{markow-nowy3.bib}

\end{document}